\newtheoremstyle{pl}
{3pt}
{3pt}
{\itshape}
{}
{\scshape}
{.}
{.5em}
{}
\newtheoremstyle{pl*}
{3pt}
{3pt}
{\itshape}
{}
{\bfseries}
{.}
{.5em}
{}
\newtheoremstyle{pl**}
{3pt}
{3pt}
{\itshape}
{}
{\bfseries}
{.}
{.5em}
{}
\newtheoremstyle{mythm}
{3pt}
{3pt}
{\itshape}
{}
{\bfseries}
{.}
{.5em}
{\thmnote{#1 }#3}
\newtheoremstyle{myappendix}
{3pt}
{3pt}
{\itshape}
{}
{\scshape}
{.}
{.5em}
{\thmnote{#1 }#3}
\newtheoremstyle{df}
{3pt}
{3pt}
{\normalfont}
{}
{\scshape}
{.}
{.5em}
{}
\newtheoremstyle{rm}
{3pt}
{3pt}
{\normalfont}
{}
{\scshape}
{.}
{.5em}
{}
\theoremstyle{pl}
\newtheorem{thm}{Theorem}[section]
\newtheorem{lem}[thm]{Lemma}			
\newtheorem{cor}[thm]{Corollary}
\newtheorem{pro}[thm]{Proposition}
\newtheorem*{qn*}{Question}
\newtheorem*{lem*}{Lemma}
\theoremstyle{pl*}
\newtheorem{thm*}{Theorem}
\newtheorem{pro*}[thm*]{Proposition}
\newtheorem{cor*}[thm*]{Corollary}
\theoremstyle{pl**}
\newtheorem*{thm**}{Theorem}
\theoremstyle{mythm}
\newtheorem*{mythm}{Theorem}
\theoremstyle{myappendix}
\newtheorem*{lemma}{Lemma}
\newtheorem*{proposition}{Proposition}
\theoremstyle{df}
\newtheorem*{dfn}{Definition}
\theoremstyle{rm}
\newcommand{\ep}{
\epsilon
}
\newcommand{\qa}[1]{
\left[#1\right]
}
\newcommand{\ga}[1]{
\left\{#1\right\}
}
\newcommand{\mc}[1]{
\mathcal{#1}
}
\newcommand{\mb}[1]{
\mathbb{#1}
}
\newcommand{\T}{
\mc{T}
}
\def\epsilon{\varepsilon}
\begin{document}

\title{Uniform models and short curves for random 3-manifolds}

\author[Feller]{Peter Feller}
\address{Department of Mathematics, ETH Z\"{u}rich, Switzerland}
\email{peter.feller@math.ch}

\author[Sisto]{Alessandro Sisto}
	\address{Department of Mathematics, Heriot-Watt University, Edinburgh, UK}
	\email{a.sisto@hw.ac.uk}

\author[Viaggi]{Gabriele Viaggi}
\address{Department of Mathematics, Heidelberg University, Germany}
\email{gviaggi@mathi.uni-heidelberg.de}

\def\subjclassname{\textup{2020} Mathematics Subject Classification}
\expandafter\let\csname subjclassname@1991\endcsname=\subjclassname
\expandafter\let\csname subjclassname@2000\endcsname=\subjclassname
\subjclass{57K30, 57K32, 30F60, 20P05}
\keywords{Heegaard splittings, random 3-manifolds, hyperbolization, short curves, model metrics}

\begin{abstract}
We provide two constructions of hyperbolic metrics on 3-manifolds with Heegaard splittings that satisfy certain topological conditions, which both apply to random Heegaard splitting with asymptotic probability 1.

These constructions provide a lot of control on the resulting metric, allowing us to prove various results about the coarse growth rate of geometric invariants, such as diameter and injectivity radius, and about arithmeticity and commensurability in families of random 3-manifolds. For example, we show that the diameter of a random Heegaard splitting grows coarsely linearly in the length of the associated random walk.

The constructions only use tools from the deformation theory of Kleinian groups, that is, we do not rely on the solution of the Geometrization Conjecture by Perelman. In particular, we give a proof of Maher's result that random 3-manifolds are hyperbolic that bypasses Geometrization.
%
\end{abstract}

\maketitle

\section{Introduction}
Every closed orientable 3-manifold $M$ can be presented as a {\em Heegaard splitting}. This means that $M$ is diffeomorphic to a 3-manifold $M_f$ obtained by gluing together two handlebodies (taking the second one with opposite orientation) of the same genus $H_g$ along an orientation preserving diffeomorphism $f\in  \mathrm{Diff}^+(\Sigma)$ of their boundary $\Sigma:=\partial H_g$ 
\[
M_f=H_g\cup_{f:\partial H_g\to\partial H_g}H_g.
\]
For a fixed genus $g$, however, not all 3-manifolds can arise. In this article, we restrict our attention to the family of those 3-manifolds that can be described as Heegaard splittings of a {fixed genus} $g\ge 2$.

The problem of finding hyperbolic {{metrics}} on ``most'' 3-manifolds with a splitting of a fixed genus $g\ge 2$ was originally raised by Thurston (as Problem 24 in~\cite{Thu82}) and made more precise by Dunfield and Thurston (see Conjecture 2.11 of \cite{DT06}) by introducing the notion of { random Heegaard splittings}. 

This notion is based on the observation that the diffeomorphism type of $M_f$ only depends on the isotopy class of the gluing map $f$, so it is well-defined for elements in the {\em mapping class group} 
\[
[f]\in{\rm Mod}(\Sigma):={\rm Diff}^+(\Sigma)/{\rm Diff}_0(\Sigma).
\] 
Therefore, Heegaard splittings of genus $g\ge 2$ are naturally parametrized by mapping classes $[f]\in{\rm Mod}(\Sigma)$.

A family $(M_n)_{n\in\mb{N}}$ of random Heegaard splittings of genus $g\ge 2$, or {\em random 3-manifolds}, is one of the form $M_n=M_{f_n}$ where $(f_n)_{n\in\mb{N}}$ is a random walk on the mapping class group ${\rm Mod}(\Sigma)$ driven by some initial probability measure $\mu$ whose finite support generates ${\rm Mod}(\Sigma)$. If $(f_n)_{n\in\mb{N}}$ is such a random walk, we will denote by $\mb{P}_n$ the distribution of the $n$-th step $f_n$ and by $\mb{P}$ the distribution of the path $(f_n)_{n\in\mb{N}}$. 

Exploiting work of Hempel~\cite{Hempel:Heegaard} and the solution of the Geometrization Conjecture by Perelman, Maher showed in \cite{Maher:Heegaard} that a random Heegaard splitting 
of genus $g\ge 2$ admits a hyperbolic metric, thus answering Dunfield and Thurston's conjecture.

The main goal of this article is to provide a {constructive} and {effective} approach to the hyperbolization of random 3-manifolds. Before describing our main contributions (Theorems~\ref{main1} and~\ref{thm:main}),
we note that 
this approach yields a proof of Maher's result that does not rely on Perelman's work, which we can informally state as follows.

\begin{thm*}
\label{main2}
There is a Ricci flow free hyperbolization for random 3-manifolds.
\end{thm*}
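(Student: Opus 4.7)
The plan is to produce hyperbolic metrics on $M_{f_n}$ directly via Kleinian deformation theory, bypassing Ricci flow and any appeal to Perelman's work. The argument naturally splits into a probabilistic input that controls the combinatorial geometry of the gluing $f_n$, and a deterministic geometric construction that upgrades this data into an honest hyperbolic structure.

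On the probabilistic side, I would invoke random walk estimates on $\mathrm{Mod}(\Sigma)$ in the spirit of Maher, Calegari--Maher, and Maher--Tiozzo to show that with $\mb{P}_n$-probability tending to $1$ the element $f_n$ has large Hempel distance and, more strongly, that the hierarchy of subsurface projections between the two handlebody disc sets $\mc{D}_1, \mc{D}_2 \subset \mc{C}(\Sigma)$ under $f_n$ behaves uniformly like that along a long pseudo-Anosov axis. This is the randomized counterpart of the high-distance hypothesis exploited in the work of Namazi and Namazi--Souto on hyperbolization of Heegaard splittings.

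On the geometric side, I would assemble a candidate model metric on $M_{f_n}$ by gluing two geometrically finite handlebody pieces (with prescribed conformal ends encoding $\mc{D}_1$ and $f_n(\mc{D}_2)$) along a long quasi-Fuchsian-type bridge whose end invariants are determined by the hierarchy for $f_n$. Following the Brock--Canary--Minsky template for surface groups, the uniform subsurface projection control from the previous step lets one read off that this model is uniformly bilipschitz to a smooth negatively curved metric with the global topology of $M_{f_n}$. This step, though intricate, is largely a book-keeping exercise once the combinatorial input is in hand.

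The heart of the argument, and its main obstacle, is promoting this model to a genuine hyperbolic structure on $M_{f_n}$ without invoking Geometrization. The strategy I would follow is to realize $\pi_1(M_{f_n})$ as an algebraic and geometric limit of geometrically finite Kleinian groups approximating the prescribed end invariants, using only pre-Perelman deformation theory: the Brock--Bromberg drilling and filling theorem, Minsky's pleated surface and uniform injectivity arguments, and the convergence/compactness results of Thurston, Ohshika, and Anderson--Canary. The bilipschitz model forces the limiting representation to be discrete and faithful with no accidental pinching, and standard tameness together with a covering-space argument then identifies the limiting quotient with $M_{f_n}$, yielding a hyperbolic metric manifestly bilipschitz to the model and completing the Ricci flow free hyperbolization.
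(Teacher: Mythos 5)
Your proposal matches the paper's overall philosophy in the first two steps: the probabilistic input (large Hempel distance, good subsurface projection behavior of $\mc{D}$ and $f_n\mc{D}$, coming from Maher's work and ergodic properties of the random walk) and the construction of a candidate negatively curved model by gluing two handlebody pieces across a quasi-Fuchsian-type middle block (the paper's Proposition~\ref{model}, with maximally cusped pieces $H_1\cup\Omega_1\cup Q\cup\Omega_2\cup H_2$) are essentially what the paper does, modulo details.

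The final step, however, contains a fundamental gap. You propose to realize $\pi_1(M_{f_n})$ as an algebraic and geometric limit of geometrically finite Kleinian groups approximating prescribed end invariants. This cannot work as stated: $M_{f_n}$ is a \emph{closed} 3-manifold, so $\pi_1(M_{f_n})$ has no end invariants to approximate, and any discrete faithful representation of $\pi_1(M_{f_n})$ with closed quotient is rigid and isolated by Mostow--Weil, not an accumulation point of a deformation space. The Thurston/Ohshika/Anderson--Canary compactness theory, Minsky's machinery, and Brock--Bromberg drilling are all built for Kleinian groups of infinite covolume (or finite-volume cusped groups), and their limits are again such manifolds, never closed ones. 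Using this framework you could certainly hyperbolize the middle block $Q$ (a structure on $\Sigma\times\mb{R}$), but you cannot ``close up'' the manifold by taking a limit of geometrically finite representations of a fixed closed-manifold group.

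The mechanism the paper uses to close the manifold, and the one you are missing, is the hyperbolic cone manifold deformation of Hodgson and Kerckhoff (Theorem~\ref{hk surgery}): one first produces a complete finite-volume \emph{cusped} hyperbolic 3-manifold whose topology is a drilled version of $M_f$ (either the glued $\mb{M}=H_1\cup\Omega_1\cup Q\cup\Omega_2\cup H_2$, or $M_f-\gamma$ hyperbolized via Thurston's Haken hyperbolization), and then performs effective Dehn filling along slopes verified to have large normalized length to interpolate, through a path of cone manifolds, to a nonsingular hyperbolic metric on the closed $M_f$. Brock--Bromberg drilling enters only to compare metrics \emph{across} this cone deformation and obtain the uniform bilipschitz control, not to produce the closed structure. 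The intermediate manifolds in the Dehn filling have different fundamental groups from $M_f$, which is exactly why no limit of representations of $\pi_1(M_f)$ itself can play this role. You should replace the ``algebraic/geometric limit of Kleinian groups with $\pi_1(M_{f_n})$'' step by: (i) certify that the drilled manifold admits a complete finite-volume hyperbolic metric; (ii) check that the canonical filling slopes have normalized length exceeding the Hodgson--Kerckhoff threshold; (iii) conclude by effective hyperbolic Dehn surgery.
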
  

By Ricci flow free hyperbolization we mean that we construct the hyperbolic metric only using tools from the deformation theory of Kleinian groups. More specifically, the main tools we use are the model manifold technology by Minsky~\cite{M10} and Brock, Canary and Minsky~\cite{BrockCanaryMinsky:ELC2}, as well as the effective version of Thurston's Hyperbolic Dehn Surgery by Hodgson and Kerckhoff~\cite{HK:universal}.


We develop two different approaches to Theorem~\ref{main2}, both bringing new and more refined information than the mere existence of a hyperbolic metric. 

One approach provides a so-called {model metric} that captures, up to {uniform bilipschitz distortion}, the geometry of 3-manifolds admitting suitable Heegaard splitting and allows the computation of its geometric invariants. The hyperbolic metric is constructed explicitly by gluing elementary building blocks. The other approach provides a purely topological criterion for a Heegaard splitting to admit a hyperbolic metric for which a specific simple closed curve on the Heegaard surface is short.
Both criteria apply to a large class of Heegaard splittings, not just random splittings.

We describe these two approaches and our main results (Theorem~\ref{main1} and Theorem~\ref{thm:main}) in the next two subsections. But first we list some applications to random 3-manifolds, which will be discussed in more detail later.

The first two results describe the behaviour of geometric invariants, namely diameter and injectivity radius.

\begin{thm*}
\label{main4}
There exists $c>0$ such that
\[
\mb{P}_n[M_f\text{ is hyperbolic and }{\rm diam}(M_f)\in[n/c,cn]]\stackrel{n\to\infty}{\longrightarrow}1.
\]
\end{thm*}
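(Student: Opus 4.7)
The plan is to use Theorem \ref{main1} to replace the hyperbolic metric on $M_{f_n}$ by its $\ep$-model, reduce the diameter computation to the middle piece $Q$, and then match that diameter to the pants graph distance traveled by the random walk.

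First, fix a small $\ep\in(0,1/2)$ and a $K>1$. By Theorem \ref{main1}, with $\mb{P}_n$-probability tending to one, $M_{f_n}$ carries a hyperbolic metric $K$-bilipschitz to an $\ep$-model metric $\rho$ with decomposition $M_{f_n}=H_1\cup\Omega_1\cup Q\cup\Omega_2\cup H_2$. A $K$-bilipschitz change of metric distorts diameter by at most a factor $K$, so it suffices to exhibit a uniform $c'>0$, depending only on $\Sigma$ and on the step distribution $\mu$, such that $\text{\rm diam}_\rho(M_{f_n})\in[n/c',c'n]$ with probability tending to one.

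Next, I would observe that $H_1,H_2,\Omega_1,\Omega_2$ contribute only a bounded amount to the diameter. In the construction outlined just before Theorem \ref{main1}, these four pieces are assembled from convex cores of maximally cusped handlebodies and maximally cusped $I$-bundles built along the pants decompositions $P_1,P_4$ and (the boundary collars of) $P_2,P_3$. Those pants decompositions are chosen so that the geometry of these four building blocks is bounded uniformly in $n$, hence $\text{\rm diam}_\rho(M_{f_n})=\text{\rm diam}_\rho(Q)+O(1)$. The task is therefore reduced to showing $\text{\rm diam}_\rho(Q)\asymp n$.

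Now I exploit the third axiom of an $\ep$-model: $Q$ embeds almost isometrically in a complete hyperbolic manifold $N\cong\Sigma\times\mb{R}$. By construction of the model and the bounded combinatorics of the $P_j$, the two markings at the boundary components of $Q$ differ from a fixed base marking $\mu_0$ and from $f_n(\mu_0)$ by $O(1)$ in the pants graph $\mc{P}(\Sigma)$. Standard consequences of the Minsky--Brock--Canary--Minsky model~\cite{M10,BrockCanaryMinsky:ELC2} then give that the diameter of the corresponding slab of $N$ is coarsely bilipschitz to $d_{\mc{P}(\Sigma)}(\mu_0,f_n(\mu_0))$: the upper bound from a Lipschitz interpolating path in the model, and the lower bound from the fact that pleated surfaces at small geometric distance in $N$ realize markings at small pants-graph distance. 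Random walks finish the argument: the upper bound $d_{\mc{P}(\Sigma)}(\mu_0,f_n(\mu_0))\le Cn$ is immediate since $\mu$ has finite support and each increment shifts $\mu_0$ a bounded amount in $\mc{P}(\Sigma)$, while the lower bound $d_{\mc{P}(\Sigma)}(\mu_0,f_n(\mu_0))\ge n/C$ with $\mb{P}_n\to 1$ follows from positive drift on the pants (or curve) graph, a consequence of Maher's theorem on linear growth of the Hempel distance and the Baik--Gekhtman--Hamenst\"adt refinements already invoked in the paper. Assembling these estimates yields Theorem \ref{main4}.

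I expect the main obstacle to be the coarse comparison $\text{\rm diam}_\rho(Q)\asymp d_{\mc{P}(\Sigma)}(\mu_0,f_n(\mu_0))$, specifically the lower bound. The upper bound is a straightforward Lipschitz-path argument, but the lower bound requires extracting from the Minsky--Brock--Canary--Minsky model uniform constants---independent of the $n$-dependent end invariants---that control how much the marking supported on a pleated level surface can change per unit vertical geometric distance. This step is manageable but is where one genuinely uses the detailed combinatorial description of $\Sigma\times\mb{R}$-hyperbolic manifolds.
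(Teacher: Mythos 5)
The central quantitative claim in your plan, that $\text{\rm diam}_\rho(Q)$ is ``coarsely bilipschitz to $d_{\mc{P}(\Sigma)}(\mu_0,f_n(\mu_0))$,'' is not a standard consequence of the Minsky--Brock--Canary--Minsky technology, and in fact it is false without bounded-combinatorics hypotheses. What the model gives is that the \emph{volume} of the convex core is coarsely $d_{\mc{P}(\Sigma)}$ (Brock), while the \emph{electric} diameter of the non-cuspidal part is coarsely the \emph{curve graph} distance $d_{\mc{C}(\Sigma)}$ (this is the content of Theorem~\ref{blocksep}, due to Bowditch and Brock--Bromberg, that the paper invokes). The ordinary diameter sits between these: it is at least the electric diameter, hence coarsely $\gtrsim d_{\mc{C}}$, but it also picks up contributions from the depths of Margulis tubes, and in general it is not controlled by $d_{\mc{C}}$ from above, let alone coarsely equal to $d_{\mc{P}}$. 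Blocks in the model live in nested subsurfaces, not in a linear stack, so counting them (which is what $d_{\mc{P}}$ does) overcounts diameter badly.

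For the \emph{lower} bound this is not fatal: replacing $d_{\mc{P}}$ by $d_{\mc{C}}$ and invoking Theorem~\ref{blocksep} recovers the paper's argument, and the linear drift of the random walk on $\mathcal C$ (Maher) gives $d_{\mc{C}}(P_2^n,P_3^n)\asymp n$ with probability tending to one. For the \emph{upper} bound, however, your route has a genuine gap. Even with the correct comparison $\text{\rm diam}\asymp d_{\mc{C}}+\text{(tube-depth terms)}$, the tube depths are not a priori bounded: a minimizing geodesic crossing a Margulis tube of radius $R$ picks up length $\asymp R$, and controlling $R$ requires a lower bound on the systole, which one does not yet have (the injectivity-radius estimate in the paper, Theorem~\ref{main6}, is one-sided). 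The paper avoids this circularity entirely: the upper bound is proved by a \emph{topological} argument, via White's theorem bounding $\text{\rm diam}(M)$ by the presentation length of $\pi_1(M)$, together with the observation that $M_{f_n}$ has a triangulation with $O(|f_n|_S)=O(n)$ simplices. So the two bounds in Theorem~\ref{main4} come from entirely different sources, and the model metric is used only for the lower bound. To salvage your approach you would either need to independently establish a systole lower bound good enough to control tube radii, or replace your upper bound step with a presentation-length or volume-entropy type argument.
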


\begin{thm*}
\label{main6}
There exists $c>0$ such that
\[
\mb{P}_n\left[M_f\text{ is hyperbolic and }{\rm inj}(M_f)\le c/\log(n)^2\right]\stackrel{n\to\infty}{\longrightarrow}1.
\]
\end{thm*}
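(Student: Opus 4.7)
The plan is to combine Theorem~\ref{thm:main} with a random walk estimate on the growth of subsurface projections provided by~\cite{ST:large_proj}. The idea is to exhibit, with probability tending to one, a non-separating simple closed curve $\gamma_n\subset\Sigma$ whose complement $W_n:=\Sigma-\gamma_n$ carries a very large subsurface projection $d_{W_n}(\mc{D},f_n\mc{D})$, then deduce via Theorem~\ref{thm:main} a short closed geodesic in $M_{f_n}$, and finally read off a bound on the injectivity radius.

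First, I would invoke~\cite{ST:large_proj}: along the random walk driven by $\mu$, with $\mb{P}_n$-probability tending to one there exists a proper essential subsurface $Y_n\subsetneq\Sigma$ and a constant $c'>0$ such that
\[
d_{Y_n}(\mc{D},f_n\mc{D})\ge c'\log(n)^2.
\]
A refinement of this statement allows us to take $Y_n$ to be of the form $\Sigma-\gamma_n$ for some non-separating simple closed curve $\gamma_n$ that additionally satisfies $d_\mc{C}(\gamma_n,\mc{D})\ge 3$ and $d_\mc{C}(\gamma_n,f_n\mc{D})\ge 3$. The additional conditions on $\gamma_n$ are generic, using that the disk sets are small quasi-convex subsets of the curve graph~\cite{MM95} while $\mc{D}$ and $f_n\mc{D}$ are driven apart linearly by the random walk. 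By the remark following Theorem~\ref{thm:main}, the two distance bounds on $\gamma_n$ imply that both $(H_g,\gamma_n)$ and $(H_g,f_n(\gamma_n))$ are pared acylindrical handlebodies, verifying hypothesis~(a).

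Setting $W_n:=\Sigma-\gamma_n$, for $n$ large enough so that also hypothesis~(b) holds ($d_{W_n}(\mc{D},f_n\mc{D})\ge C_\Sigma$), Theorem~\ref{thm:main} provides a hyperbolic structure on $M_{f_n}$ in which $\gamma_n$ is realized by a closed geodesic of length
\[
\ell_{M_{f_n}}(\gamma_n)\le \frac{C_\Sigma}{d_{W_n}(\mc{D},f_n\mc{D})}\le \frac{C_\Sigma}{c'\log(n)^2}.
\]
Since the injectivity radius of a hyperbolic 3-manifold is at most half the length of any closed geodesic, we obtain $\text{\rm inj}(M_{f_n})\le C_\Sigma/\bigl(2c'\log(n)^2\bigr)$ on the above event, which has probability tending to one. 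Taking $c:=C_\Sigma/(2c')$ yields the theorem.

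The main obstacle is the refinement used in the first step: one needs to extract from~\cite{ST:large_proj} a subsurface $Y_n$ that is in fact the complement of a non-separating simple closed curve $\gamma_n$, and simultaneously ensure that $\gamma_n$ avoids the $2$-neighborhoods of $\mc{D}$ and $f_n\mc{D}$. This is essentially an exercise in combining genericity properties of the random walk on $\text{\rm Mod}(\Sigma)$ with the geometry of the curve graph, but it requires some care to preserve the $\log(n)^2$ projection estimate while restricting the class of subsurfaces considered.
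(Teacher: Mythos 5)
Your plan cannot give the $1/\log(n)^2$ rate, and the paper explicitly flags this. The first step of your argument asserts that~\cite{ST:large_proj} produces, with high probability, a non-annular subsurface $Y_n$ with $d_{Y_n}(\mc{D},f_n\mc{D})\ge c'\log(n)^2$. This is false: the scale at which large subsurface projections appear along an $n$-step random walk, for both annular and non-annular subsurfaces, is $\Theta(\log(n))$, not $\log(n)^2$. Indeed the paper's own Theorem~\ref{thm:log_proj2} provides a non-separating curve $\gamma_n$ with an \emph{annular} projection $d_{\gamma_n}(\delta,f_n\delta)\ge\ep_0\log(n)$ and \emph{bounded} projections to every proper subsurface of $\Sigma-\gamma_n$. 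Since Theorem~\ref{thm:main} bounds the length \emph{linearly} in $d_W(\mc{D},f\mc{D})$ (where $W=\Sigma-\gamma$ is non-annular), feeding a $\log(n)$-sized projection into it only yields $\text{\rm inj}(M_{f_n})=O(1/\log(n))$. The paper says exactly this in Subsection~\ref{subsec:thm1viathm3}: the route through Theorem~\ref{thm:main} gives a curve of length $\le 1/\log(n)$, and the improvement to $1/\log(n)^2$ requires the model metric.

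The extra power of $\log(n)$ in the paper's proof of Theorem~\ref{main6} does not come from a larger projection, but from a \emph{quadratic} length formula: working inside the middle piece $Q(P_2^n,P_3^n)$ of the $\ep$-model metric from Theorem~\ref{main1}, the Length Bound Theorem of~\cite{BrockCanaryMinsky:ELC2} bounds the length of a curve $\gamma$ by roughly $D\,S_\gamma/(d_\gamma^2+S_\gamma^2)$, where $d_\gamma$ is the \emph{annular} projection and $S_\gamma$ collects the projections to subsurfaces of $\Sigma-\gamma$. Theorem~\ref{thm:log_proj2} then supplies a curve $\gamma_n$ with $d_{\gamma_n}\gtrsim\log(n)$ and $S_{\gamma_n}$ uniformly bounded, and the quadratic denominator gives $1/\log(n)^2$. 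So the key ingredient your argument is missing is the squaring effect of the Brock--Canary--Minsky length formula (or, equivalently, the quadratic dependence on normalized length in the Hodgson--Kerckhoff estimate when applied to a tube whose \emph{longitude}, rather than cross-section, is forced to be long). Replacing the non-annular projection hypothesis of Theorem~\ref{thm:main} with a large annular projection would not fix your argument directly either, since Theorem~\ref{thm:main} is formulated, for good geometric reasons the paper discusses, only for non-annular $W$.
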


Our methods would also allow us to obtain a result similar to Theorem~\ref{main4} for the volume, but this (and more) is known already in that case, as we discuss below.

As described in~\cite{ST:large_proj}, $1/\log(n)^2$ is exactly the coarse decay rate for the length of the shortest curve in random mapping tori. Our methods, however, only give an upper bound in the case of random Heegaard splittings.

The third application has a more algebraic flavor, showing that random 3-manifolds are not arithmetic and belong to multiple commensurability classes.

\begin{thm*}
\label{main3}
With asymptotic probability $1$ the following holds
\begin{enumerate}
\item{$M_f$ is not arithmetic.}
\item{$M_f$ is not in a fixed commensurability class $\mc{R}$.}
\end{enumerate}
\end{thm*}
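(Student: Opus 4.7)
The plan is to derive both statements from Theorem~\ref{main6}, which forces a rapid decay of the injectivity radius and hence of the systole: with asymptotic probability one, $\text{sys}(M_f) \leq 2c/\log(n)^2 \to 0$.

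For part (2), the argument is direct. If $M_f$ belongs to the fixed commensurability class $\mc{R}$, then $M_f$ is a finite-sheeted cover of the unique minimal orbifold $O_\mc{R}$. Pulling back the length spectrum along the covering, every closed geodesic of $M_f$ has length at least that of its projection in $O_\mc{R}$, so
\[
\text{sys}(M_f)\ \geq\ \text{sys}(O_\mc{R})\ =\ \ell_0(\mc{R})\ >\ 0,
\]
a strictly positive constant depending only on $\mc{R}$. For $n$ large enough that $2c/\log(n)^2 < \ell_0(\mc{R})$, Theorem~\ref{main6} contradicts this lower bound, so $\mb{P}_n[M_f \in \mc{R}] \to 0$.

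For part (1), the natural strategy is to rule out all arithmetic commensurability classes simultaneously. The inputs I would combine are: (i) Borel's finiteness theorem, which asserts that for each $V$ there are only finitely many arithmetic commensurability classes represented by a hyperbolic 3-manifold of covolume at most $V$; (ii) Theorem~\ref{main1}, yielding $\text{vol}(M_f) \leq Cn$ with asymptotic probability one, so that if $M_f$ is arithmetic it must lie in one of only finitely many classes $\mc{R}_1^{(n)},\dots,\mc{R}_{N(n)}^{(n)}$; and (iii) a quantitative systolic lower bound for arithmetic hyperbolic 3-manifolds that, together with (ii), upgrades the pointwise argument of part (2) to a uniform one. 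With such inputs, a union bound over the $N(n)$ classes followed by Theorem~\ref{main6} gives the desired conclusion.

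The main obstacle is precisely step (iii). The known systolic lower bounds for arithmetic 3-manifolds (of Gelander--Levit type) take the form $\text{sys}(M) \geq \phi(\text{vol}(M))$ with $\phi$ decaying only polynomially in volume, which is too weak to beat the $1/\log(n)^2$ upper bound of Theorem~\ref{main6} uniformly in the $N(n)$ classes. I would circumvent this by exploiting the explicit $\ep$-model of Theorem~\ref{main1}: inside the middle piece $Q$ one can locate specific elements $\gamma \in \pi_1(M_f)$ whose complex translation length is controlled, up to bilipschitz error, by the random walk data on $\text{Mod}(\Sigma)$. Ergodic properties of the random walk (as in Baik--Gekhtman--Hamenst\"adt) then make it plausible that these complex lengths generically avoid the countable locus of complex lengths compatible with arithmeticity, thereby defeating every commensurability class at once rather than one at a time.
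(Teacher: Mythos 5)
Your proposal has a genuine gap in part (1), plus a logical misstep in part (2), and takes a route that is quite different from the paper's.

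On part (2): your claim that $M_f$ covers ``the unique minimal orbifold $O_{\mc{R}}$'' is false when $\mc{R}$ is an \emph{arithmetic} commensurability class — in that case the commensurator of $\pi_1(M_f)$ is dense in $\mathrm{PSL}_2\mb{C}$ and there is no minimal orbifold at all. The uniqueness you invoke is a theorem of Margulis that holds precisely for non-arithmetic lattices, where the commensurator is discrete. The correct logical structure (and the one the paper uses) is to first dispose of arithmeticity, so that in proving (2) one may assume $\mc{R}$ is non-arithmetic; under that assumption your direct systole argument ($\text{\rm sys}(M_f)\ge\text{\rm sys}(O_{\mc{R}})>0$, contradicting Theorem~\ref{main6}) is correct, and in fact slightly more elementary than what the paper does.

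On part (1): this is where the real gap lies, and you acknowledge it yourself. The Borel-finiteness-plus-systole strategy does not close, because the available systolic lower bounds for arithmetic 3-manifolds are polynomial in $1/\mathrm{vol}$, and combined with $\mathrm{vol}(M_{f_n})\lesssim n$ from Theorem~\ref{main1} this gives a lower bound on $\text{\rm inj}$ that decays like a negative power of $n$ — far weaker than, and hence compatible with, the upper bound $c/\log(n)^2$ of Theorem~\ref{main6}. Your proposed repair via complex translation lengths avoiding a countable ``arithmetic locus'' is not a proof: Theorem~\ref{main1} gives only $K$-bilipschitz control, which distorts complex lengths by a bounded multiplicative factor and so cannot certify that a length lies off any given countable set, and ``generically avoids'' needs an actual probabilistic argument, not a plausibility appeal. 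The paper proves (1) by an entirely different mechanism: Proposition~\ref{main5} shows that along any event of positive upper probability one can extract a subsequence $(M_{f_{n_j}},x_{n_j})$ converging geometrically to a doubly degenerate structure $Q_\infty$ on $\Sigma\times\mb{R}$ with $\text{\rm inj}(Q_\infty)>0$; arithmeticity forces the maximal orbifolds $\mc{O}_{n_j}$ to be congruence with $\lambda_1\ge 3/4$; Proposition 4.3 of Biringer--Souto then shows the $\mc{O}_{n_j}$ must eventually stabilize to a single orbifold $\mc{O}$; and a fixed closed orbifold cannot be finitely covered by manifolds whose injectivity radius tends to $0$. The spectral gap and geometric-limit inputs do the work that no known systole estimate can.
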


\subsection*{Uniform bilipschitz models for random 3-manifolds}

The notion of model manifold that we use is similar to the ones considered by Brock, Minsky, Namazi and Souto in~\cite{Na05}, \cite{NS09}, \cite{BMNS16}. A Riemannian metric $(M_f,\rho)$ is a {\em $\ep$-model metric} for $\ep<1/2$ if there is a decomposition into five pieces $M_f=H_1\cup\Omega_1\cup Q\cup\Omega_2\cup H_2$ satisfying the three requirements 
\begin{enumerate}
\item{
$H_1$ and $H_2$ are homeomorphic to genus $g$ handlebodies while $Q,\Omega_1$ and $\Omega_2$ are homeomorphic to $\Sigma\times[0,1]$.}
\item{
$\rho$ has negative curvature ${\rm sec}\in(-1-\ep,-1+\ep)$, but outside the region $\Omega=\Omega_1\cup\Omega_2$ the metric is purely hyperbolic, i.e. ${\rm sec}=-1$.}
\item{The piece $Q$ is almost isometrically embeddable in a complete hyperbolic 3-manifold diffeomorphic to $\Sigma\times\mb{R}$.}
\end{enumerate} 

The importance of the last requirement is due to the fact that we understand explicitly hyperbolic 3-manifolds diffeomorphic to $\Sigma\times\mb{R}$ thanks to the work of Minsky~\cite{M10} and Brock, Canary and Minsky \cite{BrockCanaryMinsky:ELC2}. 

The following is a more precise version of Theorem~\ref{main2}.

\begin{thm*}
\label{main1}
For every $0<\ep<1/2$ and $K>1$ we have
\[
\mb{P}_n[\text{\rm $M_f$ has a hyperbolic metric $K$-bilipschitz to a $\ep$-model metric}]\stackrel{n\to\infty}{\longrightarrow}1.
\]
\end{thm*}

We remark that $\ep$-model metrics on random Heegaard splittings, similar to the ones that we build here, are constructed in~\cite{HV}. There, the existence of a underlying hyperbolic metric is guaranteed by Maher's result, and it is unclear whether the $\ep$-model metrics are uniformly bilipschitz to it. 

As an aside, we also mention that, using an unpublished result by Tian~\cite{Ti90}, the mere fact that a metric $\rho$ is a $\ep$-model metric and that the regions $\Omega_1,\Omega_2$ where it is not hyperbolic have uniformly bounded diameter (as follows from~\cite{HV}), implies, if $\ep>0$ is sufficiently small, that $\rho$ is uniformly close up to third derivatives to a hyperbolic metric. However, we do not rely on Tian's result. Instead, in order to provide a uniform bilipschitz control, we exploit ergodic properties of the random walk and drilling and filling theorems by Hodgson and Kerckhoff~\cite{HK:universal} and Brock and Bromberg~\cite{BB04}.   

Our methods follow closely~\cite{BMNS16} and~\cite{BD15} where uniform $\ep$-model metrics are constructed for special classes of 3-manifolds. 

The idea is the following. We can obtain a hyperbolic metric on $M_f$ by Hodgson and Kerckhoff's effective Hyperbolic Dehn Surgery~\cite{HK:universal}
from a complete finite volume hyperbolic metric on a {\em drilled} manifold $\mb{M}$ which has the following form. Let $\Sigma\times[1,4]$ be a tubular neighborhood of $\Sigma\subset M_f$. We consider 3-manifolds diffeomorphic to
\[
\mb{M}=M_f-(P_1\times\{1\}\cup P_2\times\{2\}\cup P_3\times\{3\}\cup P_4\times\{4\})
\]
where $P_j$ is a pants decomposition of the surface $\Sigma\times\{j\}$. A finite volume hyperbolic metric on such a manifold can be constructed explicitly by gluing together the convex cores of two maximally cusped handlebodies $H_1,H_2$ and three maximally cusped I-bundles $\Omega_1,Q,\Omega_2$.
\[
\mb{M}=H_1\cup\Omega_1\cup Q\cup\Omega_2\cup H_2.
\]
Most of our work consists of finding suitable pants decompositions for which the Dehn surgery slopes needed to pass from $\mb{M}$ to $M_f$ satisfy the assumptions of the effective Hyperbolic Dehn Surgery Theorem~\cite{HK:universal}. In order to find them we crucially need two major tools. On one hand an explicit control on the geometry of hyperbolic handlebodies similar to the one obtained in~\cite{HV}, and on the other hand ergodic properties of the random walks proved by Baik, Gekhtman and Hamenstädt~\cite{BGH16}.


\subsection*{Short curves via knots on Heegaard surfaces}
We now discuss short closed geodesics in hyperbolic Heegaard splittings and random 3-manifolds. We identify purely topological conditions on a simple closed curve on the Heegaard surface $\gamma\subset\Sigma$ that ensure that $M_f$ has a hyperbolic metric and $\gamma$ is a very short geodesic in it.

Let $\mc{D}$ and $f\mc{D}$ be the {\em disk sets} of the Heegaard surface $\Sigma\subseteq M_f$, i.e.~$\mc{D}$ and $f\mc{D}$ are the subsets of the {curve graph} $\mc{C}$ of the Heegaard surface $\Sigma$ given by the essential simple closed curves $\delta\subset\Sigma$ that {compress} to in the first and second handlebody of the Heegaard surface, respectively. For a subsurface $W\subset\Sigma$, we denote by
$d_W(\mc{D},f\mc{D})$ the distance in the curve graph of $W$ of the subsurface projections of $\mc{D}$ and $f\mc{D}$.

\begin{thm*}
\label{thm:main}
Let $\Sigma:=\partial H_g$ for a fixed $g\geq 2$. There exists a constant $C_\Sigma>0$ such that the following holds. Let $\gamma\subset\Sigma$ be a non-separating simple closed curve with complement $W:=\Sigma-\gamma$. Let $f\in{\rm Mod}(\Sigma)$ be a mapping class. Suppose that
\begin{enumerate}[(a)]
\item{Both $(H_g,\gamma)$ and $(H_g, f(\gamma))$ are pared acylindrical handlebodies.}
\item{We have a large subsurface projection $d_W(\mc{D},f\mc{D})\ge C_\Sigma$.} 
\end{enumerate}
Then $M_f$ has a hyperbolic metric. Moreover, the length of $\gamma$ in $M_f$ is bounded by
\[
\ell_{M_f}(\gamma)\le C_\Sigma/d_{W}(\mc{D},f\mc{D}).
\] 
\end{thm*}

We recall that, informally speaking, the pair $(H_g,\gamma)$ is called a {\em pared acylindrical handlebody} if $\Sigma-\gamma$ is incompressible and there are no non-trivial essential cylinders in $(H_g,\gamma)$ and $(H_g,\Sigma-\gamma)$. These objects arise naturally in the study of cusped hyperbolic {{metrics}} on $H_g$ (see Thurston~\cite{Thu86}). Many pairs $(H_g,\gamma)$ have this property. For example, if $\gamma\subset\Sigma$ satisfies $d_{\mc{C}}(\gamma,\mc{D})\ge 3$, then $(H_g,\gamma)$ is pared acylindrical. As the disk set $\mc{D}$ is a small quasi-convex subset of $\mc{C}$ by Masur and Minsky~\cite{MM95}, non-separating curves $\gamma\in\mc{C}$ that are far from $\mc{D}$ are abundant. 

Theorem~\ref{thm:main} builds upon the groundbreaking work of Minsky~\cite{M10} on hyperbolic {{metrics}} on $\Sigma\times\mb{R}$. In that setting, the collection of simple closed curves on $\Sigma$ that are isotopic to very short closed geodesics on a hyperbolic {{metrics}} on $\Sigma\times\mb{R}$ can be read off the list of {subsurface coefficients} associated to the {end invariants} of such a {{metric}}. 
To some extent, in a complicated Heegaard splitting $M_f$, the role of the end invariants can be, as a first approximation, replaced by the disk sets $\mc{D}$ and $f\mc{D}$ of the splitting. For sufficiently complicated hyperbolic Heegaard splittings we have the following conjectural description of length of curves. A curve $\gamma\subset\Sigma$ is isotopic to a short geodesic if and only if it lies on the boundary $\gamma\subset\partial W$ of a proper essential subsurface $W\subset\Sigma$ where the disk sets $\mc{D}$ and $f\mc{D}$ have large {subsurface projection} $d_W(\mc{D},f\mc{D})$. We understand Theorem~\ref{thm:main} as a first step in the direction of making precise this conjectural description.

The idea for the proof of Theorem~\ref{thm:main} is the following. We associate to $\gamma\subset\Sigma$ the 3-manifold $M_f-\gamma$. Notice that it decomposes as
\[
M_f-\gamma=(H_g-\gamma)\cup(H_g-f(\gamma)).
\]
If both $(H_g,\gamma)$ and $(H_g,f(\gamma))$ are pared acylindrical handlebodies, then the JSJ theory tells us that the complement $M_f-\gamma$ is irreducible and atoroidal. Moreover it is also Haken since we can choose $\Sigma-\gamma\subset M_f-\gamma$ as a Haken surface. Hence, by Thurston's Hyperbolization Theorem, the 3-manifold $M_f-\gamma$ admits a complete finite volume hyperbolic {{metric}}.

As in the proof of Theorem~\ref{main1}, we find a hyperbolic metric on $M_f$ for which $\gamma$ is a short curve provided that the filling slope in the complete finite volume hyperbolic metric on $M_f-\gamma$ satisfies the assumptions of the effective Hyperbolic Dehn Surgery Theorem \cite{HK:universal}. To this extent we argue that the size of a standard torus horosection of the cusp of $M_f-\gamma$ is comparable with the subsurface projection $d_W(\mc{D},f\mc{D})$. In order to check this we use tools from the model manifold technology by Minsky~\cite{M10} and Brock, Canary and Minsky \cite{BrockCanaryMinsky:ELC2}. 


Ergodic properties of random walks on the mapping class group imply that the condition of large subsurface projection of $\mc{D}$ and $f_n\mc{D}$ on the complement $W_n$ of some non-separating curves $\gamma_n\subset\Sigma$ holds with asymptotic probability one, that is, with $\mb{P}_n\to1$. Thus, Theorem~\ref{thm:main} applies to random 3-manifolds and gives a proof of Theorem~\ref{main2} that does not rely on Perelman's work.

We briefly comment on the assumption (a) of Theorem~\ref{thm:main}. Its use is twofold: It implies that the complement $M_f-\gamma$ is hyperbolizable and it also implies that the inclusions $\Sigma-\gamma,\Sigma-f(\gamma)\subset H_g$ are {doubly incompressible} (as defined by Thurston~\cite{Thu86}). The latter plays a central role in the proof via Thurston's Uniform Injectivity~\cite{Thu86}.
As mentioned above, since $\mc{D}$ and $f\mc{D}$ are a small quasi-convex subsets of the curve graph, there are plenty of non-separating curves that satisfy $d_{\mc{C}}(\gamma,\mc{D}\cup f\mc{D})\ge 3$ and, hence, condition (a). 

In a direction opposite to Theorem~\ref{thm:main}, one can ask whether every very short curve on a complicated hyperbolic Heegaard splitting arises from a Hyperbolic Dehn Surgery on a complete hyperbolic manifold of the form $M_f-\gamma$ with $\gamma\subset\Sigma$ a simple closed curve. This is the case for {strongly irreducible} hyperbolic Heegaard splittings $M_f$ as proved by Souto~\cite{S08} and Breslin~\cite{Bre11}. They show that there is a constant $\ep_\Sigma>0$ such that, every closed geodesic of length at most $\ep_\Sigma$ in $M_f$ is isotopic to a simple closed curve $\gamma$ on the Heegaard surface $\Sigma\subset M_f$.



\subsection*{On the applications}

Now that we discussed our constructions, we can explain how we obtain the applications that we listed earlier, i.e.~Theorems~\ref{main4},~\ref{main6}, and~\ref{main3}.

We exploit the geometric control given by the $\ep$-model metric to compute the coarse growth or decay rate of the geometric invariants along the family $(M_{f_n})_{n\in\mb{N}}$ (see also \cite[Section 11.4]{Riv14}).
Our general strategy is the following. We use the model manifold technology \cite{M10}, \cite{BrockCanaryMinsky:ELC2} and compute the geometric invariants for the middle piece $Q$ of the $\ep$-model metric. Then, we argue that the invariants of $Q$ are uniformly comparable with those of $M_f$. 
In this way, Theorem~\ref{main1} allows for a uniform approach to several results.

For example, combined with a result of Brock \cite{Brock:pants}, Theorem~\ref{main1} allows the computation of the coarse growth rate of the volume, which is well-known to be linear as explained in~\cite{Maher:Heegaard} (see also \cite{HV}). In fact, there is even a law of large numbers for the volume \cite{V:volume}. Combined with results of Baik, Gekhtman and Hamenstädt~\cite{BGH16} Theorem~\ref{main1} shows that the smallest positive eigenvalue of the Laplacian behaves like $1/n^2$ as computed in~\cite{HV}. 
We do not carry out those computations because they are already well established. 

To control the diameter of random 3-manifolds (Theorem~\ref{main4}), the ingredients of the proof are again Theorem~\ref{main1} and a result by White~\cite{W01}. 

Instead, to control the injectivity radius (Theorem~\ref{main6}), we use work of the second author and Taylor~\cite{ST:large_proj}. 

Finally, the proof of Theorem~\ref{main3} about arithmeticity and commensurability classes combines a study of geometric limits of random 3-manifolds, see Proposition~\ref{main5}, with arguments by Biringer and Souto~\cite{BS11}.

\subsection*{Overview}
The paper is organized as follows.
Section \ref{preliminaries} introduces some ingredients and tools that we need in our constructions.
The rest of the article is divided into three parts as follows.
Part~\ref{part:shortcurves}: Short curves on Heegaard splittings (Sections~\ref{short curves} and~\ref{large horosection}).
Part~\ref{part:model}: The construction of the model metric (Sections~\ref{overview} and~\ref{examples}).
Part~\ref{part:random&appl}: The application to random 3-manifolds (Section~\ref{proof1}) and the computation of the coarse growth rate of geometric invariants (Section~\ref{applications}). Next, we briefly describe the content of each section.

In Section~\ref{short curves} we describe the topological part of the proof of Theorem~\ref{thm:main}, in particular, we check that $M_f-\gamma$ is hyperbolizable. The geometric part is developed, instead, in Section~\ref{large horosection} where we use the model manifold technology to check that the cusp of $M_f-\gamma$ has a large horosection.  

In Section~\ref{overview} we outline the construction of the $\ep$-model metric. In Section~\ref{examples} we build many examples to which the model metric construction applies. 

In Section~\ref{proof1} we prove Theorems~\ref{main2} and~\ref{main1} by showing that the examples of Section~\ref{examples} and the ones described by Theorem~\ref{thm:main} are generic from the point of view of a random walk. 
Lastly, in Section~\ref{applications} we prove Theorems~\ref{main4},~\ref{main6} and~\ref{main3}. 

\subsection*{Acknowledgements}

Part of this work is contained in the PhD thesis of GV. He wishes to thank Ursula Hamenstädt for her help and support. He also acknowledges the financial support of the Max Planck Institute for Mathematics of Bonn and partial funding by the DFG 427903332.

PF and AS want to follow up on GV's acknowledgement above.
A previous arxiv version~\cite{Viaggi_V1} of this article contained GV's path to Theorem 1, a result PF and AS had independently announced in~\cite{OWR}.
PF and AS want to acknowledge that the GV had independently pursued and worked out a complete proof of Theorem~\ref{main2} via Theorem~\ref{main1} as part of his impressive PhD-work.
They are most thankful for GV's insight and flexibility in merging the two works into a coherent work, which exceeds the sum of its parts and which, in particular, includes further applications we would not have found separately.

We thank an anonymous referee for feedback that led to substantial reworking and improvement of Section~\ref{examples}.

PF and AS gratefully acknowledge support by the Swiss National Science Foundation (grants \#181199 and \#182186, respectively). 

\section{Preliminaries}
\label{preliminaries}

In this section we review some of the main ingredients used in one or both of the two constructions that we are going to describe. 

In both constructions, geometrically finite hyperbolic structures on handlebodies $H_g$ and I-bundles $\Sigma\times[0,1]$ play an important role: They appear as building blocks (in Sections \ref{overview} and \ref{examples}) or coverings (in Sections \ref{short curves} and \ref{large horosection}) of our drilled Heegaard splittings $M_f$. We introduce in this section some terminology and facts from the deformation theory of hyperbolic {{metrics}} on 3-manifolds (as discussed, for example, in Chapter 7 of \cite{CMcC}). 

The curve graph $\mc{C}$ and the disk set $\mc{D}\subset\mc{C}$ are also objects that appear in both constructions. By fundamental work of Minsky \cite{M10} and Brock, Canary and Minsky \cite{BrockCanaryMinsky:ELC2}, the geometry and combinatorics of these graphs captures the internal geometry of hyperbolic manifolds diffeomorphic to $\Sigma\times\mb{R}$. The combinatorics of $\mc{C}$ also carries a lot of information on the topology of Heegaard splittings $M_f$: For example, by work of Hempel \cite{Hempel:Heegaard}, if the distance $d_{\mc{C}}(\mc{D},f\mc{D})$ is at least 3, then the splitting $M_f$ satisfies the topological assumptions of the Geometrization Conjecture. In this section we will briefly describe both aspects of these graphs.      

The last common ingredient that we present is the Hodgson and Kerckhoff~\cite{HK:universal} deformation theory of hyperbolic 3-manifolds $M$ with cone singularities along geodesic links $\Gamma\subset M$.
The Drilling Theorem of Brock and Bromberg \cite{BB04} allows to quantitatively keep under control the change in the hyperbolic {{metric}} along the deformation.  

\subsection{Hyperbolic 3-manifolds}
Let $M$ be a connected oriented 3-manifold (without boundary). A \emph{hyperbolic metric} on $M$ is a complete Riemannian metric of constant sectional curvature $-1$. A {\em hyperbolic structure} on $M$ is a pair $(M,\rho)$, where $\rho$ is a complete hyperbolic metric. Every hyperbolic structure is isometric to a quotient $
\mb{H}^3/\Gamma$ of the hyperbolic 3-space $\mb{H}^3$ by a discrete torsion free subgroup $\Gamma<{\rm Isom}^+(\mb{H}^3)$. We refer to a connected oriented 3-manifold that admits a hyperbolic structure as a \emph{hyperbolic manifold}. Hyperbolic structures of finite volume are unique, up to homotopy, by the Mostow–Prasad rigidity theorem. Hyperbolic manifolds with totally geodesic boundary can also be defined and will play a role in Part \ref{part:model}.

The group $\Gamma$ acts on the boundary $\mb{CP}^1=\partial\mb{H}^3$ by Möbius transformations. The action $\Gamma\curvearrowright\mb{CP}^1$ preserves the {\em limit set} $\Lambda\subset\mb{CP}^1$, which is the set of accumulation points of an orbit $\Gamma o\subset\mb{H}^3\cup\partial\mb{H}^3$, and its complement $\Omega:=\mb{CP}^1-\Lambda$, the {\em domain of discontinuity}. The action $\Gamma\curvearrowright\Omega$ is free and properly discontinuous. The quotient $\mb{H}^3\cup\Omega/\Gamma$ is a 3-manifold with boundary $\Omega/\Gamma$ and the boundary has a natural conformal structure. If $\Gamma$ is finitely generated and non-abelian, then, by Ahlfors Finiteness Theorem~\cite{Ah64}, the quotient $\Omega/\Gamma$ is a surface of finite type. In particular, $\Omega/\Gamma$ admits a {\em Poincaré metric} which is the unique complete finite area hyperbolic metric in the same conformal class. 

\subsection{Teichmüller space}
We denote by $\T(S)$ or $\T$ the Teichmüller space of conformal structures or, equivalently, hyperbolic metrics on a surface $S$ up to isotopy. For a given $\ep>0$, we denote by $\T_\ep\subset\T$ the subset of Teichmüller space consisting of those hyperbolic surfaces in which no closed geodesic is shorter than $\ep$. By the Mumford Compactness Theorem (see Theorem 12.6 of~\cite{primer}), the mapping class group ${\rm Mod}(\Sigma)$ acts cocompactly on $\T_\ep$. 

\subsection{Geometrically finite structures}
We describe some flexible classes of geometrically finite hyperbolic structures on a pair of fixed topological spaces, namely handlebodies $H_g$ and I-bundles $\Sigma\times[a,b]$. When dealing with such hyperbolic structures with cusps, the notion of pared (acylindrical) manifold naturally occurs (see Thurston \cite{Thu86}, \cite{Thu86(3)}). 

\begin{dfn}[Pared Acylindrical]
Let $N$ be either $H_g$ or $\Sigma\times[a,b]$. Set $A:=S^1\times[0,1]$. Let $P\subset\partial N$ be a multicurve with regular neighborhood $U\subset\partial N$. We say that $(N,P)$ is a {\em pared 3-manifold} provided that the following holds.
\begin{enumerate}
\item{The inclusion $\partial N-P\subset N$ is $\pi_1$-injective.}
\item{Every $\pi_1$-injective homotopy $(A,\partial A)\to (N,U)$ is homotopic as a map of pairs to a map that sends $A$ to $U$.}
\end{enumerate}
We say that $(N,P)$ is also {\em acylindrical} if furthermore
\begin{enumerate}
\setcounter{enumi}{2}
\item{Every essential map $(A,\partial A)\to (N,\partial N-U)$ is homotopic as a map of pairs into $\partial N-U$.}
\end{enumerate}
\end{dfn}

We refer to Chapter 5 of \cite{CMcC} for the topological features of pared (acylindrical) manifolds. We can now give the following definition:

\begin{dfn}[Geometrically Finite]
Let $N$ be either $H_g$ or $\Sigma\times[a,b]$. Let $P\subset\partial N$ be a (possibly empty) multicurve such that $(N,P)$ is pared. A hyperbolic metric $\rho$ on $M={\rm int}(N)$ is {\em geometrically finite} with parabolic locus $P\subset\partial N$ if there is an orientation preserving diffeomorphism $f:N-P\to\mb{H}^3\cup\Omega/\Gamma$ that is isometric on $M={\rm int}(N)$. Such an identification induces a conformal structure on the boundary via $f:\partial N-P\to\Omega/\Gamma$.
\end{dfn}

If we have different identifications $f:N-P\to\mb{H}^3\cup\Omega/\Gamma$ and $f':N-P\to\mb{H}^3\cup\Omega'/\Gamma'$, the composition $g=f'f^{-1}:\mb{H}^3\cup\Omega/\Gamma\to\mb{H}^3\cup\Omega'/\Gamma'$ restricts to an isometry $g:\mb{H}^3\cup\Omega/\Gamma\to\mb{H}^3\cup\Omega'/\Gamma'$ and, hence, the restriction to the boundary $g:\Omega/\Gamma\to\Omega'/\Gamma'$ is conformal. Thus, every geometrically finite hyperbolic metric induces a well-defined Riemann surface structure on $\partial N-P$, called the {\em conformal boundary}, which we can think as a point in $\T(\partial N-P)$.

Two geometrically finite hyperbolic metrics $\rho,\rho'$ on $M={\rm int}(N)$ are said to be \emph{equivalent} if there is a diffeomorphism isotopic to the identity $\phi:N-P\to N-P$ such that $\rho=\phi^*\rho'$. The restriction of the diffeomorphism to the boundary $\phi:\partial N-P\to\partial N-P$ is isotopic to the identity and conformal with respect to the Riemann surface structures induced by $\rho$ and $\rho'$. Therefore, equivalent geometrically finite hyperbolic metrics have equivalent conformal boundaries.

\begin{dfn}[Convex Core]
 Every geometrically finite hyperbolic metric on $M={\rm int}(N)$ admits a {\em convex core} $\mc{CC}(N)\subset M$, which is the smallest convex closed subset whose inclusion in $N$ is a homotopy equivalence. 
\end{dfn}

\subsection{Maximally cusped structures}
Maximally cusped structures play a crucial role in Part \ref{part:model}. Let $N$ be either $H_g$ or $\Sigma\times[a,b]$.

\begin{dfn}[Maximally Cusped]
A hyperbolic structure on $M={\rm int}(N)$ is {\em maximally cusped} or {\em maximally cusped at $P$} if it is geometrically finite and has parabolic locus $P$ which is a pants decomposition of $\partial N$.
\end{dfn}


Work of Maskit~\cite{Maskit83}, Keen, Maskit and Series~\cite{KLMBS93}, and Ohshika~\cite{Oh98} gives the following.

\begin{thm}[see Theorem 7.2.9 in \cite{CMcC}]
\label{maximal cusps}
The following holds.
\begin{itemize}
\item{For every pants decomposition $P$ of $\Sigma=\partial H_g$ such that $(H_g,P)$ is a {\rm pared acylindrical 3-manifold} there exists a unique (up to isotopy) hyperbolic metric on ${\rm int}(H_g)$ which is maximally cusped at $P$. We denote such a metric by $H(P)$.}
\item{For every pants decompositions $P,R$ of $\Sigma$ such that $(\Sigma\times[a,b],P\times\{a\}\sqcup R\times\{b\})$ is a {\rm pared acylindrical 3-manifold} there exists a unique (up to isotopy) hyperbolic metric on $\Sigma\times[a,b]$ which is maximally cusped at $P\times\{a\}\sqcup R\times\{b\}$. We denote such a metric by $Q(P,R)$.}
\end{itemize}
\end{thm}


Regarding convex cores, work of Keen, Maskit and Series~\cite{KLMBS93} provides the following description.

\begin{thm}
\label{kms}
The following holds.
\begin{itemize}
\item{Let $P$ be a pants decomposition of $\Sigma=\partial H_g$ such that $(H_g,P)$ is {\rm pared acylindrical}. Let $H(P)$ be the corresponding maximally cusped structure. The convex core $\mc{CC}(H(P))\subset H_g$ is isotopic to $H_g-P$ and has totally geodesic boundary.}
\item{Let $P,R$ be pants decompositions of $\Sigma$ such that $(\Sigma\times[a,b],P\times\{a\}\sqcup R\times\{b\})$ is {\rm pared acylindrical}. Let $Q(P,R)$ be the corresponding maximally cusped structure. The convex core $\mc{CC}(Q(P,R))\subset\Sigma\times[a,b]$ is isotopic to $\Sigma\times[a,b]-(P\times\{a\}\cup R\times\{b\})$ and has totally geodesic boundary.}
\end{itemize}
\end{thm}

\subsection{Curve graph and disk set}
We now introduce the curve graph and the disk sets and relate them to the topology of Heegaard splittings and handlebodies. We also describe some of their geometric properties.

\begin{dfn}[Curve Graph and Disk Set]
The {\em curve graph} $\mc{C}:=\mc{C}(\Sigma)$ of a compact orientable surface $\Sigma$ is the graph whose vertices are the isotopy classes of essential non-peripheral simple closed curves. Two vertices are joined by an edge of length 1 if the corresponding curves can be realized disjointly on $\Sigma$. We endow the graph $\mc{C}$ with the intrinsic path metric $d_{\mc{C}}$. In fact, we will only consider the vertex set of $\mc{C}$ endowed with its induced metric, which we also denoted by $\mc{C}$ by abuse of notation.

If $\Sigma=\partial H_g$ is the boundary of a handlebody $H_g$, we have an associated \emph{disk set} $\mc{D}\subset\mc{C}$ consisting of those essential simple closed curves of $\Sigma$ that bound a properly embedded disk in $H_g$.  

If $f:\Sigma\to\Sigma$ is a gluing map defining the Heegaard splitting $M_f=H_g\cup_f H_g$, then the {\em Hempel distance} of $f$ is the quantity
\[
d_{\mc{C}}(\mc{D},f\mc{D}).
\]
\end{dfn}

Splittings with large Hempel distance have strong topological properties: 

\begin{thm}[Hempel \cite{Hempel:Heegaard}]
\label{hempel}
Let $f$ be a gluing map. If the Hempel distance of $f$ is at least 3, then the Heegaard splitting $M_f$ is irreducible, does not contain any essential torus or Klein bottle, and is not Seifert fibered.
\end{thm}

Combined with the solution of the Geometrization Conjecture by Perelman, Theorem \ref{hempel} implies that if $d_{\mc{C}}(\mc{D},f\mc{D})\ge 3$, then the splitting $M_f$ is hyperbolic. In \cite{Maher:Heegaard}, Maher proved that the Hempel distance of a random mapping class grows coarsely linearly and, hence, that random Heegaard splittings are hyperbolic.

In our random 3-manifolds setup, we will also exploit the curve graph to control the topology of our building blocks. For example, we will use it to check that the blocks satisfy the assumptions of Theorems \ref{maximal cusps} and \ref{uniforminj}: From a curve graph point of view, we have the following useful criterion.

\begin{lem}
\label{lem:d>=3impliesparedacyl}
Let $\gamma,\gamma'\subset\Sigma$ be essential multicurves. We have
\begin{enumerate}[{\rm (i)}]
\item{If $d_{\mc{C}}(\gamma,\mc{D})\ge 2$ then $(H_g,\gamma)$ is pared.}
\item{If $d_{\mc{C}}(\gamma,\mc{D})\ge 3$ then $(H_g,\gamma)$ is pared acylindrical.}
\item{If $d_{\mc{C}}(\gamma,\gamma')\ge 1$ then $(\Sigma\times[0,1],\gamma\times\{0\}\sqcup\gamma'\times\{1\})$ is pared.}
\item{If $d_{\mc{C}}(\gamma,\gamma')\ge 3$ then $(\Sigma\times[0,1],\gamma\times\{0\}\sqcup\gamma'\times\{1\})$ is pared acylindrical.}
\end{enumerate}
\end{lem}

The proof can be found in Appendix~\ref{appendix c}.

From a geometric point of view, Masur and Minsky proved that the curve graph $\mc{C}$ is a Gromov hyperbolic space (see \cite{MasurMinsky:I}) and the disk set $\mc{D}\subset\mc{C}$ is a uniformly quasi-convex subspace (see \cite{MM95}). 

As discovered by Minsky \cite{M00}, \cite{M01}, \cite{M10}, and Brock, Canary and Minsky \cite{BrockCanaryMinsky:ELC2} in groundbreaking work that led to the solution of the Ending Lamination Conjecture, the geometry of the curve graph is related to the internal geometry of hyperbolic {metrics} on $\Sigma\times[0,1]$: The relation is established via {\em end invariants} and the devices of {\em subsurface projections} and {\em hierarchies of tight geodesics} introduced in \cite{MasurMinsky:II}.

We briefly recall the definition of subsurface projection in the case that is most relevant for us.

\begin{dfn}[Non-annular Subsurface Projection]
Let $W\subset\Sigma$ be a proper compact connected non-annular subsurface of $\Sigma$ that is not a three-holed sphere. Let $\alpha\in\mc{C}$ be any curve. The {\em subsurface projection} of $\alpha$ to $W$ is the (possibly empty) subset $\pi_W(\alpha)$ of the curve graph $\mc{C}(W)$ of all possible essential surgeries of $\alpha\cap W$ (see Section 2 of \cite{MasurMinsky:II} for more details).

If both $\alpha$ and $\beta$ intersect $W$ essentially, we define
\[
d_W(\alpha,\beta)={\rm diam}_{\mc{C}(W)}(\pi_W(\alpha)\cup\pi_W(\beta)).
\] 
\end{dfn}

The geometry of the curve graph is also tied to the geometry of Teichmüller space via the \emph{shortest curves projection} $\Upsilon:\T\to\mc{C}$, a coarsely defined map that associates to every marked hyperbolic surface $X\in\T$ a shortest geodesic on it $\Upsilon(X)$.

We recall a few properties of $\Upsilon$ due to Masur and Minsky \cite{MasurMinsky:I}. It follows from~\cite{MasurMinsky:I} that $\Upsilon$ is Lipschitz and sends Teichmüller geodesics to uniform {unparametrized} quasi-geodesics. The latter means that there is a constant $B$ only depending on $\Sigma$ such that 
\[
d_{\mc{C}}(\Upsilon(X),\Upsilon(Z))\ge d_{\mc{C}}(\Upsilon(X),\Upsilon(Y))+d_{\mc{C}}(\Upsilon(Y),\Upsilon(Z))-B
\]
for every $X<Y<Z$ aligned on a Teichmüller geodesic. The situation improves when we restrict $\Upsilon$ to a Teichmüller geodesic entirely contained in $\T_\ep$: By work of Hamenstädt~\cite{H10}, there exists $L_\ep$ such that if $l:I\to\T$ is a Teichmüller geodesic entirely contained in $\T_\ep$, then $\Upsilon l:I\to\mc{C}$ is a $L_\ep$-quasi-geodesic.

We will need an observation, which says roughly the following. Consider a Teichmüller segment  $\gamma$ that starts at a point projecting in the curve graph to the disk set $\mc{D}$. Then the projection of $\gamma$ to the curve graph has an initial segment that fellow-travels $\mc{D}$, and then the projection starts moving linearly away from $\mc{D}$.

\begin{lem}
\label{far from disks}
There exists $h_0$ such that the following holds. For every Teichmüller segment $\gamma:[0,T]\to\T$ with $\gamma(0)=o$ and $\Upsilon(o)\in\mc{D}$ there exist $0\le t_0\le t_1\le T$ with the following properties
\begin{itemize}
\item{$d_{\mc{C}}(\Upsilon\gamma(t),\mc{D})\le h_0$ for every $t\in[0,t_0]$ and $d_{\mc{C}}(\Upsilon\gamma(t),\mc{D})\ge 2h_0$ for every $t\in[t_1,T]$.}
\item{If $t_1<s<s'<T$ then 
\[
d_{\mc{C}}(\Upsilon\gamma(s),\mc{D})\ge d_{\mc{C}}(\Upsilon\gamma(s),\Upsilon\gamma(s'))+d_{\mc{C}}(\Upsilon\gamma(s),\mc{D})-h_0.
\]
}
\end{itemize}
\end{lem}

\begin{proof} 
Recall that $\Upsilon(o)\in\mc{D}$. If the composition $\Upsilon\gamma$ was a geodesic, then, the lemma would follow from the quasi-convexity of $\mc{D}\subset\mc{C}$ and hyperbolicity of $\mc{C}$: Initially, on a subsegment $[0,t_0]$, $\Upsilon\gamma(t)$ stays close to $\mc{D}$. As soon as $\Upsilon\gamma(t)$ gets sufficiently far from $\mc{D}$ (if ever), say at time $t_1\ge t_0$, it never comes back, meaning that the distance between $\mc{D}$ and $\Upsilon\gamma(t)$ starts to increase linearly. All the properties follow from this picture. 

Now, $\Upsilon\gamma$ is not exactly a geodesic. However, the restriction is a uniform unparametrized quasi-geodesic, meaning that there is a constant $L$ only depending on $\Sigma$ and a homeomorphism $\rho:[0,T']\to[0,T]$ such that the composition $\Upsilon\gamma\rho$ is a $L$-quasi-geodesic. Hence, as $\mc{C}$ is hyperbolic, $\Upsilon\gamma[0,T]$ lies uniformly close to a geodesic $[\Upsilon\gamma(0),\Upsilon\gamma(T)]$ for which the above description holds. A careful bookkeeping yields the desired properties.
\end{proof}

\subsection{Filling and drilling}
One of the most important tools needed in our constructions is the Universal Dehn Surgery Theorem by Hodgson and Kerckhoff~\cite{HK:universal}. We briefly recall the basic setup and statements.

Let $M$ be a compact oriented $3$-manifold with boundary. For a component of the boundary $T$ that is a torus, a choice $\mu$ of a homotopy class of a non-null homotopic simple closed curve in $T$, called a {\em slope}. Gluing a solid torus $S^1\times D^2$ to $M$ via a gluing homeomorphism
$\partial (S^1\times D^2)=S^1\times S^1\to T$ that sends the homotopy class of $\{1\}\times S^1$ to the slope $\mu$ is called {\em Dehn filling of $M$ along the slope $\mu$} and yields an compact oriented $3$-manifold that contains $M$ as a submanifold. We call the slope along which we perform a Dehn filling the {\em filling slope}.

Let $\eta_3>0$ be a Margulis constant which we fix once and for all. 

Let $\mb{M}$ be a complete finite volume hyperbolic 3-manifold with cusps. Each cusp $\gamma$ of $\mb{M}$ has a standard $\eta_3$-Margulis neighborhood $\mb{T}_{\eta_3}(\gamma)\subset\mb{M}$ isometric to a quotient $\mc{O}/\mb{Z}^2$ where $\mc{O}\subset\mb{H}^3$ is a horoball and $\mb{Z}^2<{\rm Isom}^+(\mb{H}^3)$ consists of parabolic isometries stabilizing $\mc{O}$. The boundary $T_\gamma:=\partial\mb{T}_{\eta_3}(\gamma)\subset\mb{M}$ is a {\em standard torus horosection} of $\gamma$. The intrinsic metric is of $T_\gamma$ is flat.

\begin{dfn}[Normalized Length]
Let $\mu\subset T_\gamma$ be a simple closed geodesic. The {\em normalized length} of $\mu$, is the quantity defined by 
\[
{\rm nl}(\mu):=\ell_{T_\gamma}(\mu)/\sqrt{{\rm Area}(T_\gamma)}.
\]
\end{dfn}

By Dehn filling $\mb{M}$ along a slope $\mu$ of $T_\gamma$, we mean Dehn filling the underlying manifold of $\mb{M}\setminus{\rm int}(\mb{T}_{\eta_3}(\gamma))$ along a slope $\mu$ of the boundary component $T_\gamma$ of~$M$.
Hodgson and Kerckhoff proved the following:

\begin{thm}[Hodgson-Kerckhoff \cite{HK:universal}]
\label{hk surgery}
Let $\eta_3>0$ be a Margulis constant. Let $\mb{M}$ be a complete finite volume hyperbolic 3-manifold with cusps. For each cusp $\gamma$ of $\mb{M}$, let $\mu$ be a simple closed geodesic on the standard torus horosection $T_\gamma=\partial\mb{T}_{\eta_3}(\gamma)$ of $\gamma$. Suppose that the normalized length of each $\mu$ is at least $10.6273$. Then there exists a hyperbolic metric on the result of Dehn filling $M$ along the slopes defined by $\mu$ such that core corves of the added solid tori are geodesics.
\end{thm}

If we have a geodesic link $\Gamma\subset M$ in a complete finite volume hyperbolic 3-manifold $M$ and the complement $M-\Gamma$ admits a complete finite volume hyperbolic {{metric}} as well, then, by the techniques from~\cite{HK:universal,HK:shapeDSspace}, there is a relation between the length of the link $\Gamma\subset M$ and the normalized length of the standard meridians of $\Gamma$ in $M-\Gamma$ made fully explicit in~\cite{FPS19}.

\begin{thm}[{\cite[Corollary~6.13]{FPS19}}]
\label{deformation}
Let $\Gamma\subset M$ be a geodesic link in a complete finite volume hyperbolic 3-manifold. Suppose that one of the following holds
\begin{enumerate}
\item{$M-\Gamma$ has a complete finite volume hyperbolic metric for which the total normalized length $L$ of the meridians of $\Gamma$ is at least $L\ge 7.823$.}
\item{In 
$M$ each component of $\Gamma$ has length at most 0.0996 and the total length of $\Gamma$ is at most $\ell\le 0.1396$.}
\end{enumerate}
Then
\[
\frac{2\pi}{L^2+16.17}<\ell<\frac{2\pi}{L^2-28.78}
\]
\end{thm}

Brock and Bromberg studied the change in the geometry of a family $M_t$ of hyperbolic cone manifold structures on a topological model $M$ in a very general setup and proved that away from the singular locus $\Gamma$ one can get uniform bilipschitz control only depending on the length of $\Gamma$. This is the content of the following Drilling Theorem.

\begin{thm}[{\cite[Theorem 6.2]{BB04}}]
\label{drilling thm}
For every $K\in(1,2)$ there exists $\eta_{{\rm drill}}\in(0,\eta_3)$ such that the following holds. Let $M$ be a complete finite volume hyperbolic 3-manifold. Let $\Gamma\subset M$ be a geodesic link and let $M'$ be a complete hyperbolic structure on $M-\Gamma$.
Suppose that $\ell_M(\Gamma)<\eta_{{\rm drill}}$. Then, there exists a $K$-bilipschitz diffeomorphism of pairs
\[
\left(M'-\bigsqcup_{\alpha\subset\Gamma}{\mb{T}_{\eta_3}(\alpha)},\bigsqcup_{\alpha\subset\Gamma}{T_\alpha}\right)\to\left(M-\bigsqcup_{\alpha\subset\Gamma}{\mb{T}_{\eta_3}(\alpha)},\bigsqcup_{\alpha\subset\Gamma}{T_\alpha}\right)
\]
where $\mb{T}_{\eta_3}(\alpha)$ denotes a standard $\eta_3$-Margulis neighborhood for $\alpha$ and $T_\alpha=\partial\mb{T}_{\eta_3}(\alpha)$ its boundary.
\end{thm}

\part{Short curves on Heegaard splittings}\label{part:shortcurves}
\section{Outline and the topology of Heegard splittings}
\label{short curves}

In this and the next section we 
discuss short curves on Heegaard splittings. The goal is to prove Theorem~\ref{thm:main}. The family of examples that arise from Theorem~\ref{thm:main} is shown to be generic from the point of view of the random walk in Section~\ref{proof1}. 

\subsection{Outline}
We now outline the strategy to produce a hyperbolic metric on $M_f$ for which $\gamma\subset\Sigma$ is a short geodesic.

We start by associating to $\gamma$ the 3-manifold   
\[
M_f-\gamma=(H_g-\gamma)\cup_f(H_g-f(\gamma)).
\]
The Heegaard splitting $M_f$ is obtained from $M_f-\gamma$ by Dehn filling along a filling slope which is completely determined by the topology. 

According to Thurston's Hyperbolization Theorem, the manifold $M_f-\gamma$ admits a complete hyperbolic metric provided that it is {irreducible}, {atoroidal} and {Haken}; see, for example, Theorem 1.42 in~\cite{Kapovich:hyperbolization}. We take~\cite{Kapovich:hyperbolization} as our general reference for basic 3-manifold topology. Notice, however, that, since it is the interior of a compact manifold with non-empty boundary, by basic 3-manifold topology, if $M_f-\gamma$ is irreducible, then it will also be automatically Haken (see Corollary 1.24 in~\cite{Kapovich:hyperbolization}). 

Both irreducibility and atoroidality will follow from our assumption that both $(H_g,\gamma)$ and $(H_g,f(\gamma))$ are so-called {pared acylindrical handlebodies}. This is condition (a) of Theorem~\ref{thm:main} and is borrowed from the theory of Jaco, Shalen and Johannson (see Chapter 5 of~\cite{CMcC}).

Once we have a hyperbolic {{metric}} on $M_f-\gamma$ we use it to find a hyperbolic metric on $M_f$. 
The tool for such an operation is Hodgson and Kerckhoff effective version of the Hyperbolic Dehn Surgery Theorem (see Theorem~\ref{hk surgery}). We need to certify that the canonical filling slope has large normalized length on a standard torus horosection $T:=\partial\mb{T}_{\eta_3}(\gamma)$ of the cusp of $M_f-\gamma$. 

Actually, we will check something stronger, namely that, if condition (b) of Theorem~\ref{thm:main} holds, then the flat torus $T$ itself will be long and skinny so that {\em every} filling slope different from the one corresponding to $(\Sigma-\gamma)\cap T$ will have large normalized length.

The central point of the proof is to show that $T$ is long and skinny provided that the subsurface projection $d_{\Sigma-\gamma}(\mc{D},f\mc{D})$ is large. The main ingredient for the argument is the model manifold technology by Minsky~\cite{M10} and Brock, Canary and Minsky~\cite{BrockCanaryMinsky:ELC2}. 

At this point, before going on, we need a further consequence of the condition (a): If $(H_g,\gamma)$ is pared acylindrical, then the inclusion $\Sigma-\gamma\subset H_g$ is {\em doubly incompressible}. Double incompressibility allows to use Thurston's Uniform Injectivity for Pleated Surfaces \cite{Thu86} and its consequences. In particular, we show that there are two simple closed curves $\alpha\subset\Sigma-\gamma$ and $\beta\subset\Sigma-f(\gamma)$ which are represented by geodesics in $M_f-\gamma$ with moderate length and which are combinatorially close to the disk set projections as follows:
$d_{\Sigma-\gamma}(\mc{D},\alpha)\le 2$ and $d_{\Sigma-\gamma}(f\mc{D},\beta)\le 2$. 
Notice that, since $d_{\Sigma-\gamma}(\mc{D},f\mc{D})$ is large, also $d_{\Sigma-\gamma}(\alpha,\beta)$ will be large. 
In order to produce the moderate length curves $\alpha$ and $\beta$ it turns out to be convenient to work with the 
coverings of $M_f-\gamma$ corresponding to $\pi_1(H_g-\gamma)$ and $\pi_1(H_g-f(\gamma))$ instead of $M_f-\gamma$ itself.

As a last step, incompressibility of $\Sigma-\gamma\subset M_f-\gamma$ and the fact that the curves $\alpha,\beta\subset\Sigma-\gamma$ are represented by moderate length curves in $M_f-\gamma$ and are combinatorially far apart in the curve graph $\mc{C}(\Sigma-\gamma)$ implies, via the model manifold technology, that $T=\partial\mb{T}_{\eta_3}(\gamma)$ is long and skinny.  

This concludes the outline. The remainder of this section is dedicated to the topological part of the proof. Establishing the topological properties that imply that $M_f-\gamma$ is hyperbolic and checking that $\Sigma-\gamma$ is doubly incompressible. The geometric part of the argument will be discussed in the next section.

\subsection{Hyperbolizable drilled Heegaard splittings}
Gluing together two pared acylindrical handlebodies produces an irreducible and atoroidal 3-manifold.

\begin{pro}
\label{hyperbolizable}
If $(H_g,\gamma)$ and $(H_g,f(\gamma))$ are both pared acylindrical then $M_f-\gamma$ is irreducible, atoroidal and Haken.
\end{pro}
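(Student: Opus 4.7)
The plan is to verify irreducibility and atoroidality separately; Haken-ness will then be automatic via Corollary 1.24 of~\cite{Kapovich:hyperbolization} (already cited in the excerpt), since $M_f-\gamma$ is the interior of a compact $3$-manifold with non-empty (torus) boundary. The common input to both arguments is that the splitting surface $S:=\Sigma-\gamma$ is incompressible in $M_f-\gamma$: pared condition (1) on both $(H_g,\gamma)$ and $(H_g,f(\gamma))$ gives that the two inclusions $S\hookrightarrow H_g-\gamma$ and $S\hookrightarrow H_g-f(\gamma)$ are $\pi_1$-injective, and van Kampen applied to the amalgamated product then transfers this across the decomposition.

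For irreducibility, one first notes that each piece $H_g-\gamma$ is itself irreducible: a half-tubular neighborhood of $\gamma$ in $H_g$ deformation retracts onto an annular neighborhood of $\gamma$ in $\Sigma$, so $H_g-\gamma$ has the homotopy type of the handlebody $H_g$ and is the interior of a compact, irreducible $3$-manifold. Given an embedded $S^2\subset M_f-\gamma$, put it transverse to $S$ and remove intersection circles by innermost-disk surgeries using the incompressibility of $S$; the resulting isotopic sphere lies in a single piece and bounds a ball.

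Atoroidality is the substantive step. Let $T\subset M_f-\gamma$ be an incompressible torus, placed transverse to $S$. Innermost-disk and outermost-arc reductions (legitimate because both $T$ and $S$ are incompressible) let us assume every component of $T\cap S$ is essential in both surfaces. If $T\cap S=\emptyset$ then $T$ lies in a single piece $H_g-\gamma$; but $\pi_1(H_g-\gamma)=F_g$ has no $\mathbb{Z}^2$ subgroup, so $T$ is compressible in the piece and therefore in $M_f-\gamma$, contradicting incompressibility. Otherwise $T\cap(H_g-\gamma)$ (and symmetrically on the other side) is a non-empty disjoint union of annuli with essential boundary on $S$. Each such annulus, viewed in $(H_g,\Sigma)$, is an essential annulus with boundary in $\Sigma-N(\gamma)$: by acylindrical condition (3) it is properly homotopic into $\Sigma$, while condition (2) rules out the remaining possibility that the annulus is absorbed into $N(\gamma)$. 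Promoting this pared homotopy to an ambient isotopy in $M_f-\gamma$ either strictly decreases $|T\cap S|$, iterating back to the previous case, or identifies $T$ (up to isotopy) with the cusp torus $\partial N(\gamma)$, meaning $T$ is boundary-parallel. Combined with the $T\cap S=\emptyset$ case, this says every incompressible torus is boundary-parallel, i.e.\ $M_f-\gamma$ is atoroidal.

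The main obstacle is exactly this last reduction: pared acylindricity produces homotopies inside the closed handlebody $H_g$, whereas one needs isotopies in the drilled manifold $H_g-\gamma$ that strictly simplify $T\cap S$ without reintroducing trivial crossings. Conditions (2) and (3) were designed to jointly control annuli with boundary either on $N(\gamma)$ or on $\Sigma-N(\gamma)$, which exhausts the two types of parallelism that can obstruct the simplification, but the bookkeeping near the cusp of $\gamma$ is the delicate part. The remainder is standard Haken/JSJ-style $3$-manifold topology.
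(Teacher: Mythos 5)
Your incompressibility and irreducibility arguments match the paper's proof essentially step for step, and your overall strategy for atoroidality (reduce to annuli of $T\cap H_g$, invoke acylindricity to show they are parallel into $\Sigma$, iterate) is the right one. However, you have correctly located but not closed a genuine gap. The step ``promoting this pared homotopy to an ambient isotopy in $M_f-\gamma$'' is not automatic: condition (3) only produces a \emph{homotopy} of the (possibly singular) annulus into $\Sigma$, and upgrading this to the statement that the embedded annulus $A\subset T\cap H_g$ is isotopic into $\Sigma$ requires a separate argument. The paper does this via a Claim proved by boundary-compressing $A$ along a Dehn disk (obtained from Dehn's Lemma applied to the curve $I'\cup I$), reducing the region $N$ between $A$ and the annulus $A'\subset\Sigma$ to a solid torus, and thereby certifying that $A$ is boundary-parallel. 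Incidentally, your invocation of pared condition (2) at this point is misplaced: the annuli of $T\cap H_g$ have boundary in $\Sigma-N(\gamma)$, so condition (3) governs them and condition (2) (which concerns annuli with boundary on $N(\gamma)$) does not enter here.

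A second, related gap is in your endgame. You say the iteration ``either strictly decreases $|T\cap S|$... or identifies $T$ (up to isotopy) with the cusp torus $\partial N(\gamma)$,'' but the second alternative needs justification: the dichotomy the paper actually proves is that the spanning annulus $A'\subset\Sigma$ either misses $\gamma$ (so one can push $A$ across and reduce the intersection number) or is isotopic to $N(\gamma)$, in which case $A$ can be isotoped into a fixed regular neighborhood $N$ of $\gamma$ in $M_f$. After finitely many such moves, \emph{all} the annuli of $T$ sit inside $N-\gamma$, and one then needs a classification result to conclude that an incompressible torus inside the drilled solid torus $N-\gamma$ is boundary parallel; the paper cites Waldhausen's classification of incompressible surfaces in $S^1$-bundles (\cite[Satz 2.8]{Waldhausen_incomp_surf_bundles}) for precisely this. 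Without the Claim and this final citation, your argument does not yet establish atoroidality.
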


In particular, by Thurston's Hyperbolization Theorem for Haken manifolds, the drilled splitting $M_f-\gamma$ admits a hyperbolic structure, which is also unique by Mostow-Prasaad rigidity.

We split the proof of Proposition~\ref{hyperbolizable} into small steps, Lemmas~\ref{incompressible},~\ref{irreducible} and~\ref{atoroidal}. First we observe that $\Sigma-\gamma\subset M_f-\gamma$ is incompressible.

\begin{lem}
\label{incompressible}
The surface $\Sigma-\gamma\subset M_f-\gamma$ is $\pi_1$-injective. The fundamental group $\pi_1(M_f-\gamma)$ decomposes as $\pi_1(H_g-\gamma)*_{\pi_1(\Sigma-\gamma)}\pi_1(H_g-f(\gamma))$.
\end{lem}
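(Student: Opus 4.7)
The plan is to cut $M_f-\gamma$ open along the Heegaard surface and apply Seifert--van Kampen, with the $\pi_1$-injectivity of $\Sigma-\gamma$ in each handlebody reduced to condition (1) of pared.

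Let $N(\gamma)\subset M_f$ be an open tubular neighborhood of $\gamma$, so $M'_f:=M_f-N(\gamma)$ is homotopy equivalent to $M_f-\gamma$. The Heegaard surface $\Sigma$ meets $\partial\overline{N(\gamma)}$ transversally in two meridional circles and cuts $M'_f$ along the compact surface $\Sigma':=\Sigma-N(\gamma)$, of genus $g-1$ with two boundary components; this surface is connected precisely because $\gamma$ is non-separating. Let $H'_i:=H_g-N(\gamma)$ denote the corresponding cut-open handlebodies, so that $M'_f=H'_1\cup_{\Sigma'}H'_2$.

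First I would show that $\pi_1(\Sigma')\hookrightarrow\pi_1(H'_i)$. Writing $\widetilde N:=\overline{N(\gamma)}\cap H_g$, a half-solid torus meeting $\Sigma$ in an annulus, Seifert--van Kampen applied to $H_g=H'_g\cup\widetilde N$ collapses to an isomorphism $\pi_1(H'_g)\cong\pi_1(H_g)$, since the intersection is an annulus whose $\pi_1$ maps isomorphically onto $\pi_1(\widetilde N)=\mb{Z}$. Under this identification $\Sigma'\hookrightarrow H'_g$ induces the same map on $\pi_1$ as $\Sigma-\gamma\hookrightarrow H_g$, which is injective by condition (1) of pared; the same argument covers the second handlebody with $f(\gamma)$ in place of $\gamma$.

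Second I would apply Seifert--van Kampen to $M'_f=H'_1\cup H'_2$ along the connected surface $\Sigma'$, using small open thickenings whose intersection deformation retracts onto $\Sigma'$. Since both edge maps $\pi_1(\Sigma')\to\pi_1(H'_i)$ are injective, the outcome is the amalgamated product
\[
\pi_1(M_f-\gamma)\cong\pi_1(H_g-\gamma)*_{\pi_1(\Sigma-\gamma)}\pi_1(H_g-f(\gamma)),
\]
and the standard normal form theorem for amalgamated free products then implies $\pi_1(\Sigma-\gamma)\hookrightarrow\pi_1(M_f-\gamma)$. The only genuine obstacle is the bookkeeping in the reduction $\pi_1(H_g-\gamma)\cong\pi_1(H_g)$ and the compatibility of the various inclusion-induced maps on $\pi_1$; everything else is a direct application of standard combinatorial group theory.
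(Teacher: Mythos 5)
Your proof is correct and follows the same route the paper intends: the paper's one‑line proof invokes Dehn's Lemma and Seifert--van Kampen, and you have filled in exactly that argument, cutting along $\Sigma'=\Sigma-N(\gamma)$, reducing $\pi_1(\Sigma')\hookrightarrow\pi_1(H'_i)$ to condition (1) of the pared hypothesis (which is precisely where Dehn's Lemma is used in the paper, via Lemma~\ref{lem:d>=3impliesparedacyl}), and then applying Seifert--van Kampen together with the normal form theorem for amalgams. The only stylistic difference is that you verify $\pi_1(H'_g)\cong\pi_1(H_g)$ by another Seifert--van Kampen collapse rather than by the more immediate observation that $H_g-\gamma$ deformation retracts onto $H_g$; both are fine.
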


\begin{proof}
This is a consequence of Dehn's Lemma and Seifert--van Kampen's Theorem.
\end{proof}

We now use transversality arguments together with acylindricity of the handlebodies to establish irreducibility and atoroidality of $M_f-\gamma$.

\begin{lem}
\label{irreducible}
$M_f-\gamma$ is irreducible.
\end{lem}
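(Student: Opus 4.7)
The plan is to run a standard innermost-disk/transversality argument, using the incompressibility of $\Sigma-\gamma \subset M_f-\gamma$ established in Lemma~\ref{incompressible} together with irreducibility of the two pieces $H_g-\gamma$ and $H_g-f(\gamma)$. Let $S \subset M_f-\gamma$ be an embedded 2-sphere; I want to produce a 3-ball bounded by $S$.

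First I would verify the auxiliary irreducibility of each handlebody piece, say $H_g-\gamma$. Any embedded 2-sphere $S_0 \subset H_g-\gamma$ is a 2-sphere in the irreducible manifold $H_g$, so it bounds a 3-ball $B \subset H_g$. As $\gamma$ is disjoint from $S_0$, it lies entirely in one component of $H_g - S_0$. If $\gamma \subset B$, then $\gamma$ would be null-homotopic in $H_g$, forcing a curve on $\Sigma - \gamma$ parallel to $\gamma$ to be null-homotopic in $H_g$ as well; this contradicts the $\pi_1$-injectivity of $\Sigma-\gamma \hookrightarrow H_g$ that comes from the pared half of hypothesis (a). Hence $B \subset H_g-\gamma$, as desired.

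Now make $S$ transverse to the properly embedded surface $\Sigma-\gamma \subset M_f-\gamma$, so that $S \cap (\Sigma - \gamma)$ is a disjoint union of simple closed curves in the interior of $\Sigma-\gamma$. If this intersection is empty, $S$ lies in a single handlebody piece and bounds a ball by the previous paragraph. Otherwise, pick a circle $c$ innermost on $S$, bounding a disk $D_S \subset S$ whose interior is disjoint from $\Sigma-\gamma$; then $D_S$ lies in one of the two pieces, say $H_g-\gamma$. Since $c$ bounds $D_S$ in $M_f-\gamma$, the $\pi_1$-injectivity furnished by Lemma~\ref{incompressible} forces $c$ to be null-homotopic in $\Sigma-\gamma$, and since $c$ is a simple closed curve on a surface, it bounds an embedded disk $D_\Sigma \subset \Sigma-\gamma$. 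The 2-sphere $D_S \cup D_\Sigma$ sits inside $H_g-\gamma$ and bounds a 3-ball there by the auxiliary irreducibility. Isotoping $S$ across this ball is a move supported in one piece, so it strictly decreases $|S \cap (\Sigma-\gamma)|$ without creating new intersections; induction on the number of intersection circles completes the argument.

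The only genuinely delicate step is the auxiliary irreducibility of $H_g-\gamma$ and $H_g-f(\gamma)$, where the pared condition is what prevents the pathology of $\gamma$ being compressible in $H_g$. I do not expect to need the acylindricity strengthening of (a) for this lemma; that should instead enter Lemma~\ref{atoroidal} to rule out essential tori via cylinders.
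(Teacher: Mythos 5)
Your proof is correct and takes essentially the same route as the paper's: both run an innermost-disk argument on $S\cap(\Sigma-\gamma)$, invoking irreducibility of the handlebody and $\pi_1$-injectivity of $\Sigma-\gamma$ (from the pared condition alone, as you observe). One small simplification: the null-homotopy detour to rule out $\gamma\subset B$ in your auxiliary step is unnecessary, since $\gamma\subset\partial H_g$ while a ball bounded by a sphere in $\text{int}(H_g)$ lies entirely in the interior and is therefore automatically disjoint from $\gamma$.
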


\begin{proof}
Assume towards a contradiction that $M_f-\gamma$ is reducible, and let $S$ be an embedded 2-sphere that does not bound a ball. We may and do assume that $S$ intersects $\Sigma-\gamma$ transversally and such that any component of $(\Sigma-\gamma)\cap S$ that is innermost in $S$ is not homotopically trivial in $\Sigma-\gamma$. Indeed, otherwise this innermost component bounds a disc in $\Sigma-\gamma$ and a disk in $S$ such that their union is contained in one of the two handlebodies. However, this sphere bounds a ball $B$ in that handlebody by irreducibility of handlebodies, and thus $B$ can be used to guide an isotopy of $S$ that removes that component of $(\Sigma-\gamma)\cap S$. Note that $\Sigma\cap S$ is nonempty since otherwise $S$ would be contained in one handlebody and bound a ball, again by irreducibility of handlebodies. Hence, any component $\alpha$ of $\Sigma\cap S$ that is innermost in $S$ compresses in one of the two handlebodies via the disc $D$ in $S$ with $D\cap \Sigma=\alpha$. This contradicts $\pi_1$-injectivity of $\Sigma-\gamma$ into both handlebodies. 
\end{proof}

\begin{lem}
\label{atoroidal}
$M_f-\gamma$ is atoroidal. 
\end{lem}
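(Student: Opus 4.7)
The strategy mirrors the proof of Lemma~\ref{irreducible}: combine transversality, minimization of intersections, and the pared acylindricity hypotheses to rule out essential tori. Suppose, for contradiction, that $T\subset M_f-\gamma$ is an essential torus, meaning an embedded incompressible torus that is not isotopic to the cusp torus $\partial\mathbb{T}_{\eta_M}(\gamma)$. Isotope $T$ into general position with respect to $\Sigma-\gamma$, and within the isotopy class of $T$ choose a representative minimizing the number of components of $T\cap(\Sigma-\gamma)$. Using incompressibility of $\Sigma-\gamma$ in $M_f-\gamma$ (Lemma~\ref{incompressible}), incompressibility of $T$, and irreducibility of $M_f-\gamma$ (Lemma~\ref{irreducible}), the standard innermost-disk and innermost-circle cleanup lets us assume that every component of $T\cap(\Sigma-\gamma)$ is essential in both surfaces.

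If $T\cap(\Sigma-\gamma)\neq\emptyset$, the intersection components are mutually parallel essential circles on $T$, and they decompose $T$ into a cyclic sequence of properly embedded $\pi_1$-injective annuli $A_1,\ldots,A_n$, each lying in one of the two handlebody halves and alternating between them, with $\partial A_i\subset\Sigma-\gamma$. After a small perturbation, each $\partial A_i$ lies in $\Sigma-N(\gamma_\star)$, where $\gamma_\star$ denotes $\gamma$ or $f(\gamma)$ according to the side. Condition~(3) of pared acylindricity for $(H_g,\gamma_\star)$ then forces $A_i$ to be homotopic as a map of pairs into $\Sigma$; since $A_i$ is embedded and incompressible with incompressible boundary, this homotopy can be upgraded by a standard surface-in-3-manifold argument to a proper isotopy exhibiting $A_i$ as parallel to an annulus in $\Sigma-N(\gamma_\star)$. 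Pushing $T$ through the resulting product region removes the two circles of $\partial A_i$ from $T\cap(\Sigma-\gamma)$, contradicting the minimality assumption.

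Suppose instead that $T\cap(\Sigma-\gamma)=\emptyset$, so that $T$ lies entirely in one of the handlebody pieces, say $T\subset H_g-\gamma$. The composition $\pi_1(T)\hookrightarrow\pi_1(H_g-\gamma)\to\pi_1(H_g)\cong F_g$ has nontrivial kernel, since $F_g$ contains no $\mathbb{Z}^2$. A simple essential loop $\alpha\subset T$ in this kernel bounds an embedded disk $D\subset H_g$ by Dehn's lemma. By incompressibility of $T$ in $H_g-\gamma$, such a $D$ must meet $\gamma$; choose $D$ among all such disks so that $|D\cap\gamma|$ is minimal. An outermost-arc analysis on $D\cap\gamma$, combined with the non-existence of boundary-parallel arcs, produces either a compressing disk for $T$ disjoint from $\gamma$ (which contradicts incompressibility) or an essential annulus $(A,\partial A)\to(H_g,N(\gamma))$ running from $T$ to the cusp annulus. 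Condition~(2) then homotopes such an annulus into $N(\gamma)$ and, together with the preceding step applied to further parallel annuli, forces $T$ to cobound a product region $T\times[0,1]$ with $\partial\mathbb{T}_{\eta_M}(\gamma)$, so that $T$ is isotopic to the cusp torus, contradicting our choice of $T$.

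The main obstacle is this last case, where $T$ is disjoint from $\Sigma-\gamma$: what is needed is the atoroidality of an individual pared acylindrical handlebody. Extracting it directly from conditions~(2)–(3), rather than invoking Thurston's hyperbolization (which would partially circularize the paper's philosophy), requires a careful outermost-arc manipulation of the Dehn disk for $\alpha$ together with the conversion of disk-intersection data into essential annuli forbidden by~(2). Once this handlebody-side atoroidality is in place, the gluing case is cleanly handled by the minimal-position argument via condition~(3) above.
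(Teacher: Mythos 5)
Your high-level plan mirrors the paper's — cut $T$ along $\Sigma-\gamma$, use pared acylindricity to control the resulting annuli — but the execution has two genuine gaps, and both of them are precisely the places where the argument is delicate.

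First, in the non-empty intersection case, your claimed contradiction to minimality does not hold. Condition~(3) gives that $A_i$ is properly homotopic into $\Sigma$, and from that one obtains a parallel annulus $A'\subset\Sigma$; but $A'$ may very well be (isotopic to) $N(\gamma_\star)$ itself, in which case the product region between $A_i$ and $A'$ meets $\gamma_\star$ and the isotopy cannot be performed in $M_f-\gamma$. This is not a degenerate corner case that you can dismiss: it is exactly the situation that arises when $T$ is boundary parallel, which is what a correct proof must allow. You wrote that each $A_i$ is parallel to an annulus in $\Sigma-N(\gamma_\star)$, but condition~(3) only homotopes into $\Sigma$, not into $\Sigma-N(\gamma_\star)$, and for the cusp torus (and for tori that eventually reduce to it) the parallel annulus really is $N(\gamma_\star)$. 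The paper handles this branch explicitly: after exhausting the isotopies available when $A'$ avoids $\gamma$, one is left with $T$ entirely contained in a tubular neighborhood $N$ of $\gamma$ in $M_f$, hence an incompressible surface in $N-\gamma$, a solid-torus-minus-core; then Waldhausen's classification of incompressible surfaces in $S^1$-bundles forces $T$ to be boundary parallel. That final citation is the missing ingredient in your write-up.

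Second, the disjoint case $T\cap(\Sigma-\gamma)=\emptyset$, which you flag as the "main obstacle," is actually the easy one, and the route you sketch is both wrong and unnecessary. Your claim that the Dehn disk $D$ must meet $\gamma$ is false: $\gamma$ lies on $\partial H_g$ and $\partial D\subset T$ lies in the interior, so $D$ can be pushed off $\partial H_g$ entirely; the "outermost-arc analysis on $D\cap\gamma$" then has no content, since $D\cap\gamma$ is a finite set of points, not arcs. But none of this machinery is needed: if $T\subset H_g-\gamma$ were incompressible in $M_f-\gamma$, then $\pi_1(T)\cong\mathbb Z^2$ would inject into $\pi_1(H_g-\gamma)\cong F_g$, which is impossible. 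So this case simply cannot occur, with no appeal to pared acylindricity, Thurston hyperbolization, or any outermost-arc argument. Your worry about circularity is unfounded.
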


\begin{proof}
We show that $M_f-\gamma$ is topologically atoroidal (i.e.~every incompressible torus is boundary parallel), which implies atoroidal for Haken manifolds (which $M_f-\gamma$ is); see Section~1.2 in~\cite{Kapovich:hyperbolization}.
 Let $T$ be an incompressible torus in $M_f-\gamma$. We show that $T$ is boundary parallel.
 Arrange that $T$ is in general position with respect to $\Sigma-\gamma$, and take $\{\alpha_i\}_{i=1,\dots,n}$ to be the simple closed curves that are the components of the intersection $(\Sigma-\gamma)\cap T$.
By an innermost argument, if some $\alpha_i$ was null-homotopic in $T$, then some $\alpha_j$ would bound a disk in one of the handlebodies $H$ bounded by $\Sigma$. However, this implies that $\alpha_j$ is trivial in $\Sigma-\gamma$ since $\Sigma-\gamma$ maps $\pi_1$-injectively to both handlebodies. Then, the union of the disks in $S_K$ and $T$ with boundary $\alpha_j$ bounds a ball in $H$ by irreducibility of $H$, and we can use this ball to reduce the number of components of $(\Sigma-\gamma)\cap T$. In view of this argument, we can assume that each $\alpha_i$ is an essential curve on $T$. Moreover, notice that each $\alpha_i$ is then automatically an essential curve on $\Sigma-\gamma$, whence on $\Sigma$, too, for otherwise some $\alpha_j$ would bound a compressing disk.
We reindex the $\alpha_i$ to make sure that consecutive ones (modulo $n$) bound an annulus in $T$.

Consider now some $\alpha_i$, and let $H$ be the handlebody containing the annulus $A\subseteq T$ bounded by $\alpha_i$ and $\alpha_{i+1}$ (notice that if there is one $\alpha_i$, then in fact there are at least two because $\Sigma-\gamma$ separates $M_f-\gamma$, so that $(\Sigma-\gamma)\cap T$ needs to have at least two connected components).

{\bf Claim:} $A$ is boundary parallel.

{\em Proof of the Claim.}
Using that $(H,\gamma)$ is pared acylindrical, we note 
that $\alpha_i$ and $\alpha_{i+1}$ are two boundaries of an embedded annulus $A'$ in $\Sigma$ that is homotopic rel boundary to $A$ in $H$. Since $\partial H\setminus (\partial A)$ is disconnected, also $H\setminus A$ is disconnected. 
Let $N$ be the 3-manifold given as the closure of the component with boundary $A\cup A'$. To establish that $A$ is boundary parallel, we show that $N$ is a solid torus.

Let $I\subset A$ be a properly embedded interval connecting the two boundary components of $A$. And let $I'\subset A'$ be an arc that is homotopic rel boundary to $I$ in $H$. An innermost circle argument gives that $I'\cup I$ is also null-homotopic in $N$, hence by Dehn's Lemma there exists a disc $D\subset N$ with boundary $I'\cup I$.
Let $S$ be the sphere obtained as the union of the disc given by boundary compression of $A$ along $D$ and the disc in $\Sigma$ with the same boundary.
This sphere $S$ bounds a ball in $H$, and thus $N$, by irreducibility of handlebodies. Thus $N$ compresses to a ball; hence, $N$ is a solid torus as desired. \qed

Let $A'$ be the annulus in $\Sigma$ that union $A$ forms a torus that bounds a solid torus in $H$.
If $A'$ does not contain $\gamma$, then we can reduce the number of components of $(\Sigma-\gamma)\cap T$ by isotoping $A$ in $M_f-\gamma$ to the other handlebody. Otherwise, $A'$ is (up to isotopy in $\Sigma$) $N(\gamma)$, and we can apply an isotopy in $M_f$ to move $A$ inside a regular neighborhood $N$ of $\gamma$ in $M_f$. Notice that there is at least one $\gamma_i$ for otherwise $T$ would be contained in one of the handlebodies; however, handlebodies do not contain incompressible tori since incompressible tori are $\pi_1$-injective and the fundamental groups of handlebodies do not contain $\mathbb Z^2$ subgroups. In particular, $T$ is a union of annuli as above, and hence, after applying finitely many isotopies, we reduce to the case that every annulus as $T$ is entirely contained in $N$ (we can assume that the isotopies we found above move $N$ inside itself). Hence, $T$ can be thought of as an incompressible surface in $N-\gamma$. There is a classification of incompressible surfaces in $S^1$--bundles; see \cite[Satz 2.8]{Waldhausen_incomp_surf_bundles}, which in our case implies that $T$ is boundary parallel, as required.
\end{proof}

Hence $M_f-\gamma$ is irreducible and atoroidal. As mentioned before, irreducibility is already enough to ensure that it is also Haken. Incidentally, we observe that $\Sigma-\gamma\subset M_f-\gamma$ is a Haken surface. 

Combining with Lemma~\ref{lem:d>=3impliesparedacyl}, we get the following.

\begin{cor}
Let $\gamma\subset\Sigma$ be a non-separating simple closed curve. If $d_{\mc{C}}(\gamma,\mc{D}\cup f\mc{D})\ge 3$, then $M_f-\gamma$ is hyperbolizable.
\end{cor}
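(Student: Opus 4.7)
The plan is to combine Lemma~\ref{lem:d>=3impliesparedacyl} with Proposition~\ref{hyperbolizable} and then invoke Thurston's Hyperbolization Theorem. The statement is really an immediate packaging of the preceding work, so I do not expect any serious obstacle.

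First, I unpack the hypothesis $d_{\mc{C}}(\gamma,\mc{D}\cup f\mc{D})\ge 3$ into the two inequalities
\[
d_{\mc{C}}(\gamma,\mc{D})\ge 3\quad\text{and}\quad d_{\mc{C}}(\gamma,f\mc{D})\ge 3.
\]
The first immediately gives, by Lemma~\ref{lem:d>=3impliesparedacyl}(ii), that $(H_g,\gamma)$ is pared acylindrical. For the second handlebody, I use that the mapping class $f$ acts by isometries on $\mc{C}(\Sigma)$, so that (using the convention that $f\mc{D}$ records the disk set of the second handlebody pulled back to $\Sigma$ via the gluing) the inequality $d_{\mc{C}}(\gamma,f\mc{D})\ge 3$ translates into $d_{\mc{C}}(f(\gamma),\mc{D})\ge 3$. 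Applying Lemma~\ref{lem:d>=3impliesparedacyl}(ii) once more then yields that $(H_g,f(\gamma))$ is pared acylindrical as well.

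With both pared acylindrical conditions verified, Proposition~\ref{hyperbolizable} applies and shows that $M_f-\gamma$ is irreducible, atoroidal and Haken. Since $M_f-\gamma$ is the interior of a compact orientable 3-manifold with non-empty boundary (consisting of the two genus-$g$ Heegaard surfaces glued along $\gamma$, together with the torus boundary coming from drilling $\gamma$), Thurston's Hyperbolization Theorem for Haken manifolds produces a complete hyperbolic structure on $M_f-\gamma$. The non-separating assumption on $\gamma$ is not needed for the hyperbolization itself, but is inherited from the setting of Theorem~\ref{thm:main} and ensures that $\Sigma-\gamma$ is connected so that the decomposition $M_f-\gamma=(H_g-\gamma)\cup_f(H_g-f(\gamma))$ is glued along a single incompressible surface.

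The only point requiring care is to make sure the conventions for $f\mc{D}$ and for the gluing in $M_f-\gamma=(H_g-\gamma)\cup_f(H_g-f(\gamma))$ match up, but this is a bookkeeping check rather than a mathematical difficulty.
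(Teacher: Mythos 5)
Your approach is essentially the same as the paper's (which does not spell out the proof but simply combines Lemma~\ref{lem:d>=3impliesparedacyl} with Proposition~\ref{hyperbolizable} and Thurston's Hyperbolization Theorem), and the overall structure is correct. One small precision point: the isometry identity gives $d_{\mc{C}}(\gamma,f\mc{D})=d_{\mc{C}}(f^{-1}(\gamma),\mc{D})$, not $d_{\mc{C}}(f(\gamma),\mc{D})$, so as literally written your translation step has a sign slip. What you actually obtain from the hypothesis is that $(H_g,\gamma)$ and $(H_g,f^{-1}(\gamma))$ are pared acylindrical, and you must then match this against the decomposition $M_f-\gamma=(H_g-\gamma)\cup(H_g-f(\gamma))$ used in Proposition~\ref{hyperbolizable}; the two reconcile because the Heegaard gluing map is only defined up to replacing $f$ by $f^{-1}$ (swapping the roles of the two handlebodies gives $M_{f^{-1}}\cong M_f$ and exchanges $\mc{D}$ with $f\mc{D}$), but the bookkeeping deserves to be stated explicitly rather than absorbed silently into the isometry computation. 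This is exactly the ``conventions'' caveat you flag at the end, so nothing is genuinely broken, but the step as written is not a correct instance of equivariance of the metric.
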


\subsection{Double incompressibility}
The second crucial topological property of a pared acylindrical handlebody $(H_g,\gamma)$ that we need is the fact that the inclusion of the boundary $\Sigma-\gamma\subset H_g$ is {doubly incompressible}. The following definition is due to Thurston \cite{Thu86}.

\begin{dfn}[Doubly Incompressible]
Let $\gamma\subset\Sigma$ be an essential simple closed curve with a tubular neighborhood $N(\gamma)$ in $\Sigma$ and $U(\gamma)$ in $H_g$. The inclusion $\Sigma-\gamma\subset H_g$ is {\em doubly incompressible} if it satisfies
\begin{enumerate}[(a)]
\item{The inclusion $\Sigma-\gamma\subset H_g$ is $\pi_1$-injective.}
\item{Essential relative homotopy classes of maps $(I,\partial I)\to (\Sigma-N(\gamma),\partial N(\gamma))$ are mapped injectively to relative homotopy classes of maps $(I,\partial I)\to (H_g-U(\gamma),\partial U(\gamma))$.}
\item{There are no essential cylinders in $\Sigma-N(\gamma)$: This means that every essential map $f:(A,\partial A)\to (H_g,\Sigma-N(\gamma))$ is either homotopic into $U(\gamma)$ or the restriction of $f$ to $\partial A$ extends to a map into $\Sigma-N(\gamma)$.}
\item{Each maximal abelian subgroup of $\pi_1(\Sigma-\gamma)$ is mapped to a maximal abelian subgroup of $\pi_1(H_g)$.}
\end{enumerate}
Since maximal abelian subgroups of a free group are infinite cyclic group, the last condition is equivalent to 
\begin{enumerate}[(a)]
\setcounter{enumi}{4}
\item{each maximal cyclic subgroup of $\pi_1(\Sigma-\gamma)$ is mapped to a maximal cyclic subgroup of $\pi_1(H_g)$.}
\end{enumerate}
\end{dfn}

As Thurston observes (see Section 7 of \cite{Thu86}) we have the following

\begin{pro}
\label{doubleincomp}
If $(H_g,\gamma)$ is pared acylindrical, then the inclusion $\Sigma-\gamma\subset H_g$ is doubly incompressible.
\end{pro}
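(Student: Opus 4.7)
\medskip

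The plan is to verify the four conditions (a)--(d) of double incompressibility one at a time, deducing each from the corresponding piece of the pared acylindrical hypothesis together with standard 3-manifold topology and the fact that $\pi_1(H_g) = F_g$ is a free group.

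Condition (a) is identical to condition (1) in the definition of a pared handlebody, so there is nothing to show. For condition (c), start with an essential map $f:(A,\partial A)\to(H_g,\Sigma-N(\gamma))$. The acylindricity condition (3) provides a homotopy of pairs to a map $f'$ whose image lies in $\Sigma$. If $f'(A)\subset\Sigma-N(\gamma)$, then $f\vert_{\partial A}$ extends to a map into $\Sigma-N(\gamma)$ and we are done. Otherwise $f'(A)$ meets $N(\gamma)$; using that $N(\gamma)$ is an annulus and $f'\vert_{\partial A}\subset\Sigma-N(\gamma)$, an innermost-arc analysis of $f'^{-1}(N(\gamma))\subset A$ produces sub-annuli of $A$ mapping essentially into $(H_g, N(\gamma))$, which by the pared condition (2) are homotopic into $N(\gamma)$. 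Iterating, we either extend as required or push $f$ entirely into $U(\gamma)$.

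Condition (b) is the most delicate. Take two essential arcs $\alpha_0,\alpha_1\subset\Sigma-N(\gamma)$ with endpoints on $\partial N(\gamma)$, and suppose they are homotopic rel $\partial U(\gamma)$ through $H_g-U(\gamma)$. The homotopy is a square whose horizontal edges are $\alpha_0,\alpha_1$ and whose vertical edges are arcs in the annulus $\partial U(\gamma)\cap H_g$. Cap off the vertical edges by arcs on $\partial N(\gamma)\subset\Sigma$ (the two annular components of $\partial U(\gamma)\setminus\partial N(\gamma)$ retract to $\partial N(\gamma)$ inside $U(\gamma)$) to produce a cylinder $(A,\partial A)\to(H_g,\Sigma-N(\gamma))$ with $\partial A=\alpha_0\cup(\text{arc on }\partial N(\gamma))\cup\alpha_1^{-1}\cup(\text{arc on }\partial N(\gamma))$ sitting on $\Sigma-N(\gamma)$. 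If $A$ is essential, acylindricity (condition~(c), just proved) lets us homotope it into $\Sigma-N(\gamma)$, giving $\alpha_0\simeq\alpha_1$ on the surface; if $A$ is compressible, a standard reduction using $\pi_1$-injectivity of $\Sigma-\gamma\subset H_g$ again concludes the same.

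For condition (d), suppose towards a contradiction that some primitive $w\in\pi_1(\Sigma-\gamma)$ maps to $v^n\in\pi_1(H_g)$ with $n\ge 2$ and $v$ primitive. Represent $w$ by an embedded loop $\alpha\subset\Sigma-N(\gamma)$ and $v$ by a loop $\beta\subset H_g$; the equality $w=v^n$ in the free group $F_g$ produces a map of an annulus realizing $\alpha\simeq\beta^n$ in $H_g$, which after a homotopy yields an essential annulus $(A,\partial A)\to(H_g,\Sigma-N(\gamma))$ one of whose boundary components is $\alpha$ and the other wraps $n$ times. By (c) this annulus is either homotopic into $U(\gamma)$ or its boundary extends into $\Sigma-N(\gamma)$. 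The first possibility forces $w$ to be a power of the peripheral element of $\pi_1(N(\gamma))$, contradicting that $w$ is primitive in $\pi_1(\Sigma-\gamma)$ unless $w$ itself is that peripheral element (which is already primitive in $F_g$); the second possibility contradicts $w$ being primitive in $\pi_1(\Sigma-\gamma)$.

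The main obstacle is condition (b): one must carefully convert a relative homotopy whose endpoint tracks live on the torus $\partial U(\gamma)$ into an honest surface homotopy, and the natural capping-off arcs on $\partial N(\gamma)$ create book-keeping of winding numbers on the annulus $A_\gamma=\partial U(\gamma)\cap H_g$. The crucial geometric input that makes this work is that $\pi_1(A_\gamma)=\mathbb{Z}$ is abelian, so the choice of capping arcs only changes the resulting annulus by a power of the peripheral element, which is absorbed by the pared condition~(2).
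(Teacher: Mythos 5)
Your overall plan---checking the conditions of double incompressibility one at a time---matches the paper's proof, which (after noting that conditions (a) and (c) follow essentially by definition) also concentrates on the arc condition (b) and the maximal abelian subgroup condition (d)/(e). However, your arguments for (b) and (d) both contain genuine gaps.

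For condition (b), the object you produce by capping off the homotopy square is not an annulus. Its boundary is the \emph{single} circle $\alpha_0\cup(\text{arc})\cup\alpha_1^{-1}\cup(\text{arc})$, so the domain is a disk; the pared-acylindrical condition (3) and the double-incompressibility condition (c) are statements about \emph{annuli} and cannot be applied to it. The correct use of your setup would be to view $\partial A$ as a loop $\kappa\subset\Sigma-\gamma$, show $\kappa$ is nullhomotopic in $H_g$, and then conclude from $\pi_1$-injectivity of $\Sigma-\gamma$ (condition (a), not (c)) that $\kappa$ is nullhomotopic in $\Sigma-\gamma$. Even this cleaner route needs care: the cap arcs $c_i\subset\partial N(\gamma)$ must be chosen so that $v_i c_i^{-1}$ is a multiple of the meridian of $\partial U(\gamma)$ (otherwise $\kappa$ need not die in $\pi_1(H_g)$), and the phrase ``the two annular components of $\partial U(\gamma)\setminus\partial N(\gamma)$ retract to $\partial N(\gamma)$'' is not a retraction (an annulus does not retract to its two boundary circles) and does not pin down this choice. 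The paper instead proves a stronger variant of (b) in which the cap arcs are allowed to cross $\gamma$, so the resulting loop $\kappa$ lies in $\Sigma$ but not in $\Sigma-\gamma$; this forces a Loop Theorem argument followed by a surgery claim (``$i(\eta,\gamma)\le 2$ yields an essential disk or annulus disjoint from $\gamma$'') that is absent from your proof.

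For condition (d), the step ``which after a homotopy yields an essential annulus $(A,\partial A)\to(H_g,\Sigma-N(\gamma))$ one of whose boundary components is $\alpha$ and the other wraps $n$ times'' is unjustified. The annulus $f\colon A\to H_g$ realizing $\alpha\simeq\beta^n$ sends one boundary circle to $\alpha\subset\Sigma-N(\gamma)$, but the other boundary circle maps to $\beta$, which is an a priori arbitrary loop in $H_g$; there is no reason it can be homotoped into $\Sigma-N(\gamma)$, and indeed deciding whether $\beta$ lies (up to homotopy) in $\Sigma-\gamma$ is exactly the content of what you are trying to prove, so this is circular. The paper avoids this by the Canary--McCullough device: factor $f$ through the quotient $A_0$ of $A$ obtained by identifying points of $\partial_2 A$ differing by a $2\pi/n$-rotation, embed $A_0$ in a solid torus $\mathbb{T}$ with $\partial_2 A_0$ the core curve, extend the map over $\mathbb{T}$ using the deformation retraction $\mathbb{T}\searrow A_0$, and then homotope the solid torus map into $\Sigma-N(\gamma)$ piece by piece, using the Annulus Theorem together with acylindricity on the annulus $V\subset\partial\mathbb{T}$, asphericity of $H_g$ on the meridian disk and interior 3-ball. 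Without some such extension argument, the claim that your annulus lies in $(H_g,\Sigma-N(\gamma))$ does not hold.
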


The Proposition~\ref{doubleincomp} is probably well-known to experts and follows from JSJ theory. However, it might not be easy to extract from the literature. For this reason, and for the sake of being more self-contained, we include a proof in Appendix~\ref{appendix b}.

\section{Long skinny cusp horosection}
\label{large horosection}

In this section we show that a standard torus horosection $T$ of the cusp of $M_f-\gamma$ is {long} and {skinny}, so that {every} filling slope different from the one coming from $(\Sigma-\gamma)\cap T$ will have large normalized length, provided that $d_W(\mc{D},f\mc{D})$ is sufficiently large. This is the geometric part of the proof of Theorem~\ref{thm:main} and rests on the model manifold technology of Minsky~\cite{M10} and Brock, Canary and Minsky~\cite{BrockCanaryMinsky:ELC2}. 

Here, the standard torus horosection $T$ is $\partial\mb{T}_{\eta_3}(\gamma)$, where $\eta_3>0$ is a fixed Margulis constant and $\mb{T}_{\eta_3}\subset M_f-\gamma$ is the cusp of $M_f-\gamma$ that forms a connected component of the $\eta_3$-thin part of $M_f-\gamma$.

The proof is divided into two steps. The first one consists of finding simple closed curves $\alpha,\beta\subset W:=\Sigma-\gamma$ that are represented by closed geodesics in $M_f-\gamma$ with moderate length and such that $d_W(\alpha,\mc{D})$, $d_W(\beta,f\mc{D})\le 2$. This is the content of Proposition~\ref{shortsurgeries} and Corollary~\ref{shortcurves}. As a second step, once we have such curves $\alpha$ and $\beta$, we argue that $d_W(\alpha,\beta)$ gives a coarse lower bound for the length of any slope on $T$ that does not come from the Heegaard surface. We prove this in Proposition~\ref{width}.

\subsection{Handlebody covering}
In order to find the curves $\alpha$ and $\beta$, we work with {\em handlebody coverings}, which we now describe. 

Consider first the pared handlebody $(H_g,\gamma)$. The fundamental group of $H_g-\gamma\subset M_f-\gamma$ injects into $\pi_1(M_f-\gamma)$ (see Lemma~\ref{incompressible}) and determines a covering $N$ of $M_f-\gamma$ to which $H_g-\gamma$ lifts homeomorphically. By slight abuse of notation we will not distinguish between $H_g-\gamma$ and its homeomorphic lift to $N$. Recall from the outline, that we want to find a simple closed curve $\alpha\subset\Sigma-\gamma$ that has moderate length representative in $N$ and satisfies $d_W(\alpha,\mc{D})\le 2$.

As both $H_g-\gamma$ and $N$ are aspherical, the inclusion $H_g-\gamma\subset N$ is a homotopy equivalence. Since the pair $(N,H_g-\gamma)$ has also the homotopy extension property (e.g.~since it can be give the structure of a CW-pair), it follows that the manifold $N$ deformation retracts to $H_g-\gamma$ (this is a general fact; see, for example, \cite[Corollary~0.20]{Hatcher_AT}). Therefore, according to Proposition~\ref{doubleincomp}, the inclusion $W=\Sigma-\gamma\subset N$ is doubly incompressible.


Even though we will not need it, for the sake of clarity, we provide a more complete description of the covering. The hyperbolic structure on $N$ is isometric to a geometrically finite structure on $\textrm{int}(H_g)$ with a rank one cusp at $\gamma$ and the submanifold $H_g-\gamma\subset N$ is a so-called relative Scott core for $N$~\cite{Sc73,McC86,KS89}. The fact that $N$ is homeomorphic to the interior of $H_g$ follows from Bonahon's Tameness Theorem~\cite{Bo86}. Geometric finiteness is, instead, a consequence of Canary and Thurston's Covering Theorem~\cite{C96}.
 
We now come back to double incompressibility. Crucially, it allows to use Thurston's Uniform Injectivity for Pleated Surfaces~\cite{Thu86}.

\begin{thm}[Uniform Injectivity, {\cite[Theorem 5.7]{Thu86}}]
\label{uniforminj}
Fix $\eta>0$, a Margulis constant. For every $\ep>0$ there exists $\delta>0$ such that for any type preserving doubly incompressible pleated surface $g:W\to N$ with pleating locus $\lambda$ and induced metric $\sigma$, if $x,y\in\lambda$ lie in the $\eta$-thick part of $(W,\sigma)$, then
\[
d_{{\bf P}(N)}({\bf p}_g(x),{\bf p}_g(y))\le\delta\Longrightarrow d_\sigma(x,y)\le\ep.
\]
Here ${\bf p}_g:\lambda\to {\bf P}(N)$ denotes the map induced by $g$ from the lamination $\lambda$ to the projective unit tangent bundle of $N$.
\end{thm}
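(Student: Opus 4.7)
The statement is Thurston's Uniform Injectivity Theorem, which in the paper is cited from \cite{Thu86} rather than reproved. If I were to try to reconstruct a proof, I would follow Thurston's original contradiction/compactness strategy. The plan is to negate the conclusion to obtain $\epsilon>0$ and sequences of doubly incompressible pleated surfaces $g_n : W \to N_n$ together with points $x_n, y_n$ on their pleating laminations $\lambda_n$ lying in the $\eta$-thick part of the induced metrics $\sigma_n$, satisfying $d_{\mathbf{P}(N_n)}(\mathbf{p}_{g_n}(x_n),\mathbf{p}_{g_n}(y_n)) \to 0$ while $d_{\sigma_n}(x_n,y_n) \geq \epsilon$. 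The goal is then to extract a limit configuration and show that it contradicts double incompressibility.

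The first step is to set up compactness. Since the surface $W$ has fixed topology and the points $x_n,y_n$ lie in the $\eta$-thick part of $(W,\sigma_n)$, the induced hyperbolic structures are uniformly bi-Lipschitz to a standard model on a ball of definite radius around each base point. By Thurston's compactness for pleated surfaces, after choosing base frames at $\mathbf{p}_{g_n}(x_n)$ and reindexing, the pleated maps converge geometrically to a pleated map $g_\infty : W_\infty \to N_\infty$, and the points $x_n, y_n$ converge to distinct points $x_\infty \neq y_\infty$ on the limit pleating lamination $\lambda_\infty$ whose images under $\mathbf{p}_{g_\infty}$ are equal; that is, the unit tangent frames along the pleating lamination at the two limit points are identified in $\mathbf{P}(N_\infty)$.

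The second step is leaf propagation. A pleated surface sends leaves of its pleating lamination to complete geodesics of $N$ with the induced direction, so the leaves through $x_\infty$ and $y_\infty$ in $W_\infty$ emit the same geodesic of $N_\infty$ in both directions. Tracking these leaves in $W_\infty$ produces two distinct leaf segments (or entire leaves) which map to the same geodesic in $N_\infty$. Using the train-track/foliation structure of $\lambda_\infty$ and continuity of $\mathbf{p}_{g_\infty}$, this can be upgraded to a homotopy between disjoint leaves, or equivalently to an annular map $(A,\partial A) \to (N_\infty, g_\infty(W_\infty))$ running essentially along $\lambda_\infty$.

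The main obstacle, and the heart of the argument, is the third step: turning this limit annulus into a contradiction with double incompressibility. The candidate annulus produced from coinciding leaves must either be homotopic into the cusp neighborhood $U(\gamma)$ (ruled out by construction since the points are in the $\eta$-thick part), or must factor as a map into the surface $W_\infty$ itself (forbidden because $x_\infty \neq y_\infty$ realize distinct leaves), or must witness that some maximal cyclic subgroup of $\pi_1(W_\infty)$ fails to be maximal in $\pi_1(N_\infty)$. Each alternative directly violates one of the conditions (c), (a), (e) in the definition of double incompressibility, producing the desired contradiction. The delicate point throughout is the construction of this essential annulus from the limit leaf data, which requires Thurston's \emph{efficiency of pleated surfaces} and a careful analysis of how pairs of nearby leaves interact under $g_\infty$; this is where most of the technical work in \cite{Thu86} resides, and where I would expect any self-contained write-up to demand the most care.
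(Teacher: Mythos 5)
You correctly observe that the paper does not reprove this theorem; it cites Thurston directly and treats it as a black box. So there is no paper proof to compare against, and the relevant question is whether your sketch is a faithful reconstruction of Thurston's argument. It is, at the level of strategy: negate the conclusion, invoke compactness for pleated surfaces (justified by the $\eta$-thick hypothesis and the fixed topology of $W$), extract a geometric limit $g_\infty:W_\infty\to N_\infty$ with $x_\infty\neq y_\infty$ having coinciding images in $\mathbf{P}(N_\infty)$, note that coinciding projective tangent data at a single point forces the leaves through $x_\infty$ and $y_\infty$ to map onto the same geodesic, and finally parlay this into an essential annulus violating double incompressibility. You also correctly flag that the annulus construction from limit leaf data is where the real technical work lives.

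One subtlety your sketch glosses over, and that would need real attention in a self-contained write-up, is the stability of double incompressibility under the geometric limit. The hypothesis is on each $g_n:W\to N_n$, but the limit $N_\infty$ may be a genuinely different hyperbolic $3$-manifold and the inclusion-induced map $\pi_1(W_\infty)\to\pi_1(N_\infty)$ need not \emph{a priori} inherit conditions (a)--(e) of the definition. Thurston's actual argument does not simply appeal to the limit object being doubly incompressible; it has to route the contradiction through the approximating sequence, essentially pulling the offending annulus (or the homotopy between distinct leaf segments) back to $N_n$ for $n$ large, where double incompressibility is available by hypothesis. Your phrase ``turning this limit annulus into a contradiction with double incompressibility'' elides this pullback step, which is nontrivial precisely because pleating laminations are generically not unions of closed curves and because the essential annulus must be built from an accumulation of leaf segments rather than from two honest closed curves. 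With that caveat noted, your outline is sound and honestly labels the point of maximal difficulty.
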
 

We use Theorem~\ref{uniforminj} to prove the following

\begin{pro}
\label{shortsurgeries}
There exists $L>0$ such that the following holds. For every $\delta\in\mc{D}$ there exists a pleated surface $g:(W,\sigma)\to N$ in the proper homotopy class of the inclusion $W=\Sigma-\gamma\subset N$ that realizes $\lambda:=\pi_W(\delta)$ as a sublamination of its pleating locus and such that one of the arcs $\delta_0$ of $\lambda\cap W_0$ satisfies $\ell_\sigma(\delta_0)\le L$. Here $W_0$ is the $\eta_0$-non cuspidal part of $(W,\sigma)$ where $\eta_0>0$ is a universal constant.
\end{pro}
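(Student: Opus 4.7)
The plan is to construct a suitable pleated surface realizing $\lambda=\pi_W(\delta)$ in its pleating locus, then exploit the compressing disk bounded by $\delta$ to produce a bounded-length arc in $N$, and finally transfer the bound to the intrinsic metric $\sigma$ via Thurston's Uniform Injectivity.

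First, I would extend $\lambda$ to a maximal geodesic lamination $\mu$ on $W$ (adding a completion, for instance a pants decomposition extending the multicurve $\lambda$). Since the inclusion $W\subset N$ is doubly incompressible by Proposition~\ref{doubleincomp}, the standard realization theorems of Thurston, in the type-preserving form worked out by Canary, produce a pleated surface $g:(W,\sigma)\to N$ in the proper homotopy class of the inclusion that realizes $\mu$, hence $\lambda$, as a sublamination of its pleating locus. By construction $g$ is a local isometry along every leaf of $\mu$, so the length in $\sigma$ of any sub-arc of a leaf agrees with the length in $N$ of its image.

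Second, I would exploit the compressing disk $E\subset H_g$ with $\partial E=\delta$. Put $E$ in general position with $\gamma$ so that $|E\cap\gamma|$ is minimal, and consider the punctured disk $E'=E-(E\cap\gamma)$, which is a properly embedded planar surface in $H_g-\gamma\subset N$ whose boundary consists of $\delta$ together with small meridional loops around each puncture $p_i\in E\cap\gamma$. This yields a topological decomposition of $\delta$ into the arcs $a\subset\delta\cap W$ together with small arcs near the $p_i$ homotopic into the cusp of $\gamma$. A pigeonhole argument on the arcs of $\delta\cap W$, using minimality of $|E\cap\gamma|$ together with double incompressibility (to rule out accidental parabolics and to force each arc of $\pi_W(\delta)\cap W_0$ to have a well-defined efficient geodesic straightening), produces at least one arc $a$ whose geodesic representative rel endpoints on $\partial\mathbf T_{\eta_M}(\gamma)$ in $N$ has length bounded by a universal constant $L_0$ depending only on $\Sigma$ and the Margulis constant.

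Finally, I would transfer the bound to $\sigma$. Let $\delta_0$ be the arc of $\lambda\cap W_0$ corresponding to $a$ (the parts of the surgered curve $c\in\pi_W(\delta)$ lying near $\partial W$ sit in the cuspidal thin part and are removed upon intersecting with $W_0$). Since $g$ is isometric on leaves of the pleating locus, $\ell_\sigma(\delta_0)=\ell_N(g(\delta_0))$, and Uniform Injectivity (Theorem~\ref{uniforminj}), applicable thanks to double incompressibility, guarantees that the arc $g(\delta_0)$ does not fold back on itself in $N$, so the bound from the second step passes cleanly to $\sigma$, yielding $\ell_\sigma(\delta_0)\le L$ for some universal $L$.

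The main obstacle is the second step: converting the purely topological fact that $\delta$ bounds a disk into a uniform geometric bound on a geodesic arc in $N$, independent of the combinatorial complexity of $\delta$. The delicate point is that, a priori, every arc of $\delta\cap W$ could have large length in $N$; ruling this out requires combining a careful minimal-position analysis of $E\cap\gamma$ with the control on the ambient cusp geometry near $\gamma$, together with double incompressibility to translate the topological null-homotopy of $\delta$ into metric control on pleated straightenings.
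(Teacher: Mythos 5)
Your scaffolding is right (realize $\lambda$ as a sub-lamination of the pleating locus, use the fact that $\delta$ bounds a disk, transfer the length bound from $N$ to $\sigma$), but the core of the argument --- your second step --- is not actually an argument, and the way you invoke Uniform Injectivity in the third step is misplaced.

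On the second step: you write that a ``pigeonhole argument on the arcs of $\delta\cap W$, using minimality of $|E\cap\gamma|$ together with double incompressibility, produces at least one arc $a$ whose geodesic representative rel endpoints ... has length bounded by a universal constant.'' This is exactly the content of what has to be proven, and nothing in the proposal explains where the universal bound would come from. Nothing prevents \emph{all} the straightened arcs from being long; the desired bound cannot be obtained locally, arc by arc, from the disk $E$ or from minimal position. The paper's actual argument is global and by contradiction: first decompose $\delta$ into a concatenation $\alpha_1*\cdots*\alpha_{2n}$ where the $\alpha_{2j}$ are geodesic sub-arcs of the pleating locus (proper arcs rel the horosection $T=\partial\mb{T}_{\eta_M}(\gamma)$) and the $\alpha_{2j+1}$ are flat geodesic arcs on $T$; then observe that $\delta$ is null-homotopic in $N$ (it bounds a disk in $H_g\subset N$), so the concatenation lifts to a \emph{closed} broken path in $\mb{H}^3$; and finally invoke Lemma~\ref{concatenation} (the quasi-geodesic concatenation lemma for alternating horospherical/orthogonal segments), which forces the conclusion that some $\alpha_{2i}$ has length below a universal threshold $L_0$ --- otherwise the concatenation would be a long quasi-geodesic and could not close up. This hyperbolic-geometry input is what your pigeonhole sketch is missing.

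On Uniform Injectivity: you use it at the end to ``guarantee that $g(\delta_0)$ does not fold back on itself'' and hence transfer the length bound to $\sigma$. But that transfer is automatic and needs no theorem: the pleated surface is an isometry along its pleating locus, so $\ell_\sigma(\delta_0)=\ell_N(g(\delta_0))$ by definition --- a fact you yourself state in the same sentence. Where Uniform Injectivity is genuinely needed is \emph{before} Lemma~\ref{concatenation}, to secure the hypothesis that the horospherical arcs $\alpha_{2j+1}$ on $T$ have length bounded away from zero by some uniform $\ep>0$: if some $\alpha_{2j+1}$ were too short, its two endpoints --- lying on the pleating locus and, by Lemma~\ref{thick-to-thick}, at the threshold of the $\eta_0$-thick part of $(W,\sigma)$ --- would be close in the projective tangent bundle of $N$ and hence, by Uniform Injectivity, close in $\sigma$, which one rules out. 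Without that uniform lower bound the concatenation lemma does not apply, and the contradiction evaporates. So the role of Uniform Injectivity in your write-up is both redundant where you place it and missing where it is essential.
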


Finally, the moderate length segments provided by Proposition~\ref{shortsurgeries} can be promoted to moderate length curves $\alpha$ and $\beta$ in a simple way

\begin{cor}
\label{shortcurves}
There exists $L>0$, only depending on $\Sigma$ such that there are simple closed curves $\alpha,\beta\subset W$ with $d_W(\alpha,\pi_W(\mc{D})),d_W(\beta,\pi_W(f\mc{D}))\le 2$ and satisfying $\ell_{M_f-\gamma}(\alpha^*),\ell_{M_f-\gamma}(\beta^*)\le L$, where $\alpha^*,\beta^*$ are the geodesic representatives of $\alpha,\beta$ in $M_f-\gamma$.
\end{cor}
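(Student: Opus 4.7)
The strategy is to surger the bounded-length geodesic arc produced by Proposition~\ref{shortsurgeries} against boundary horocycles of $W_0$ to extract a simple closed curve, and then transport the length bound from the pleated surface through the handlebody cover down to $M_f - \gamma$. The curve $\beta$ is produced by the entirely symmetric argument applied to the second handlebody.

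Let $N_1$ be the cover of $M_f - \gamma$ corresponding to $\pi_1(H_g - \gamma) \hookrightarrow \pi_1(M_f - \gamma)$ (the injectivity is Lemma~\ref{incompressible}). Since $(H_g, \gamma)$ is pared acylindrical, $W \hookrightarrow N_1$ is doubly incompressible by Proposition~\ref{doubleincomp}. Pick any $\delta \in \mc{D}$ and invoke Proposition~\ref{shortsurgeries} to obtain a pleated surface $g:(W,\sigma) \to N_1$ realizing $\lambda := \pi_W(\delta)$ in its pleating locus, together with an arc $\delta_0$ of $\lambda \cap W_0$ with $\ell_\sigma(\delta_0) \leq L_0$. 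Let $c_1, c_2 \subset \partial W_0$ be the boundary horocycles (possibly $c_1 = c_2$) containing the endpoints of $\delta_0$, and define $\alpha \subset W$ as a non-peripheral boundary component of a regular neighborhood of $\delta_0 \cup c_1 \cup c_2$. This is exactly the procedure by which the subsurface projection $\pi_W$ extracts a simple closed curve from a single arc of $\delta \cap W$, so $\alpha$ is one of the representatives of $\pi_W(\delta) \subset \pi_W(\mc{D})$; in particular $d_W(\alpha, \pi_W(\mc{D})) \leq 2$, with the slack reflecting the set-valued nature of $\pi_W$. Essentiality and non-peripherality of $\alpha$ are automatic from $\delta_0$ being an essential geodesic sub-arc of a realized lamination.

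Each horocycle $c_i$ has length $\eta_0$ in $(W,\sigma)$, so $\ell_\sigma(\alpha) \leq 2\ell_\sigma(\delta_0) + 2\eta_0 \leq 2L_0 + 2\eta_0$. Pleated surfaces are $1$-Lipschitz, so $g(\alpha) \subset N_1$ is a loop of length at most $2L_0 + 2\eta_0$ in the free homotopy class of $\alpha \subset W \subset N_1$. Since $N_1 \to M_f - \gamma$ is a local isometry carrying this homotopy class to that of $\alpha \subset W \subset M_f - \gamma$, the geodesic representative satisfies $\ell_{M_f - \gamma}(\alpha^*) \leq 2L_0 + 2\eta_0$. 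For $\beta$, let $N_2$ be the cover corresponding to $\pi_1(H_g - f(\gamma))$: pared acylindricity of $(H_g, f(\gamma))$ again yields double incompressibility of $W \hookrightarrow N_2$, and since the proof of Proposition~\ref{shortsurgeries} depends only on double incompressibility (via Thurston's Uniform Injectivity), its statement holds verbatim for $N_2$ with $\mc{D}$ replaced by $f\mc{D}$. The same surgery then produces $\beta$ with the required properties, and taking $L = 2L_0 + 2\eta_0$ finishes the proof. The only delicate point is confirming that the surgery yields an essential, non-peripheral simple closed curve representing an element of the subsurface projection; this is the standard description of $\pi_W$ in terms of arc-boundary surgery.
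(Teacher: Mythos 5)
Your proof is correct and takes essentially the same approach as the paper's: apply Proposition~\ref{shortsurgeries} to get the moderate-length arc $\delta_0\subset\lambda\cap W_0$, surger it against the boundary horocycles of $W_0$ to produce a simple closed curve $\alpha$, observe that $\alpha\in\pi_W(\delta)$ (hence at distance $\le 2$ from $\pi_W(\mc{D})$), and push the length bound through the $1$-Lipschitz pleated map and the covering projection. Your version spells out explicitly the $1$-Lipschitz and local-isometry steps that the paper leaves implicit, but there is no substantive difference.
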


\begin{proof}
We only provide the argument for $\alpha$ since the one for $\beta$ is completely analogous. Choose $\delta\in\mc{D}$ arbitrarily. Let $g:(W,\sigma)\to N$ and $\delta_0$ be the pleated surfaces and the moderate length arc in $\lambda\cap W_0$ provided by Proposition~\ref{shortsurgeries}. The length of $\partial W_0$ is bounded by $2\eta_0$. One of the boundary components of a regular neighborhood of $\delta_0\cup\partial W_0$ is an essential simple closed curve $\alpha$ of length at most $2L+2\eta_0$  with the property $d_W(\alpha,\pi_W(\delta))\le 2$. 
\end{proof}

The proof of Proposition~\ref{shortsurgeries} is where we fully exploit the assumption that $W$ is the complement of a non-separating simple closed curve. 
The main ingredients of the proof are Thurston's Uniform Injectivity and a technical lemma about quasi geodesic concatenations in $\mb{H}^3$. We also use the following general property of pleated surfaces observed by Thurston in~\cite{Thu86}.

\begin{lem}[{\cite[Lemma~3.1]{M00}}]
\label{thick-to-thick}
For every Margulis constant $\eta_0>0$ there exists $\eta_3$, only depending on $\eta_0$ and the topological type of $W$, such that, if $g:(W,\sigma)\to N$ is a $\pi_1$-injective pleated surface, then only the $\eta_0$-thin part of $(W,\sigma)$ can enter the $\eta_3$-thin part of $N$.
\end{lem}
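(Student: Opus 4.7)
The plan is to argue by contrapositive and compactness, exploiting that pleated surfaces are $1$-Lipschitz from the intrinsic metric $\sigma$ to $N$, carry an intrinsic CAT$(-1)$ structure, and enjoy Gauss--Bonnet area control $\mathrm{Area}(W,\sigma)=2\pi|\chi(W)|$. Fix $\eta_0>0$ and suppose towards a contradiction that the conclusion fails: there is a sequence of $\pi_1$-injective pleated surfaces $g_n:(W,\sigma_n)\to N_n$, with $W$ of fixed topological type, and points $x_n\in W$ in the $\eta_0$-thick part of $(W,\sigma_n)$, while $y_n:=g_n(x_n)$ sits in the $(1/n)$-thin part of $N_n$. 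Because $x_n$ is $\eta_0$-thick, the intrinsic disc $B_{\sigma_n}(x_n,\eta_0/4)$ is isometrically embedded in $(W,\sigma_n)$ and its area is bounded below by a universal constant $A_0=A_0(\eta_0)>0$.

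Next I would pass to geometric limits. The fixed topology plus the global area bound $2\pi|\chi(W)|$ yield, after a subsequence, convergence of the pointed hyperbolic surfaces $(W,\sigma_n,x_n)$ to a pointed complete hyperbolic surface $(W_\infty,\sigma_\infty,x_\infty)$ with $x_\infty$ still in the $\eta_0$-thick part. The $1$-Lipschitz property and the $(1/n)$-thinness at $y_n$ force subconvergence $(N_n,y_n)\to(N_\infty,y_\infty)$ with $y_\infty$ lying on a parabolic cusp of $N_\infty$ (the Margulis tubes through $y_n$ necessarily degenerate). By the standard compactness of $\pi_1$-injective pleated surfaces of bounded area under geometric limits (\cite{Thu86}, \cite{C96}), after a further subsequence the maps $g_n$ converge to a $\pi_1$-injective limit pleated map $g_\infty:(W_\infty,\sigma_\infty)\to N_\infty$ with $g_\infty(x_\infty)=y_\infty$.

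I would then derive a contradiction from $g_\infty$ at the limit cusp. Let $P\leq \pi_1(N_\infty,y_\infty)$ be the maximal parabolic subgroup of the cusp through $y_\infty$. Since $\pi_1(W_\infty)$ is a surface group and therefore contains no $\mathbb{Z}^2$ subgroup, $\pi_1$-injectivity of $g_\infty$ implies that $(g_{\infty*})^{-1}(P)$ is cyclic (possibly trivial), generated by some $\psi\in\pi_1(W_\infty,x_\infty)$. Lift $g_\infty$ to $\tilde g_\infty:\mathbb{H}^2\to\mathbb{H}^3$ taking $\tilde x_\infty$ to $\tilde y_\infty$, and let $\xi\in\partial\mathbb{H}^3$ be the parabolic fixed point of $P$. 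The embedded $\eta_0/4$-disc around $\tilde x_\infty$ is mapped by $\tilde g_\infty$ into arbitrarily deep horoball neighborhoods of $\xi$, since $y_\infty$ sits at the tip of the limit cusp. A $1$-Lipschitz CAT$(-1)$ disc of area at least $A_0$ cannot embed inside an arbitrarily deep horoball neighborhood: horospheres are intrinsically Euclidean, and an isoperimetric comparison inside the horoball forces the image to meet a $P$-translate of itself. By $\pi_1$-injectivity this identification descends to a loop at $x_\infty$ of length less than $\eta_0$ representing $\psi$, contradicting the $\eta_0$-thickness of $x_\infty$ in $(W_\infty,\sigma_\infty)$.

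The main obstacle is the last step: quantitatively ruling out that a bounded-area CAT$(-1)$ disc pushed arbitrarily deep into a cusp can embed without producing a short loop in the surface. This is essentially an isoperimetric comparison between CAT$(-1)$ discs and the flat horospherical geometry of the cusp, and it is the geometric heart of the lemma. The preceding compactness and geometric-limit steps are standard applications of Gauss--Bonnet, Arzelà--Ascoli, and the compactness theory of pleated surfaces in geometric limits.
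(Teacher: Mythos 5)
First, note that the paper does not actually prove this lemma: it is quoted as Lemma 3.1 of \cite{M00} and attributed to an observation of Thurston, so the relevant comparison is with the standard argument in the literature rather than with a proof in this paper. Your proposal, however, has two genuine gaps. The first is the geometric-limit setup. You base the pointed limits at the points $y_n=g_n(x_n)$, which by construction have injectivity radius tending to $0$; no pointed geometric limit $(N_\infty,y_\infty)$ which is a hyperbolic $3$-manifold can exist there (the Chabauty limit of the conjugated groups contains elements arbitrarily close to the identity, hence is non-discrete), and the claim that ``$y_\infty$ lies on a parabolic cusp'' cannot hold: every point of a cusp has positive injectivity radius, so basepoints with vanishing injectivity radius escape down the cusp rather than converge. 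Worse, $\pi_1$-injectivity does not pass to geometric limits: essential curves of $W$ whose lengths in $N_n$ tend to $0$ --- which is exactly the putative counterexample scenario you are trying to rule out --- become parabolic or die in the limit, so asserting that $g_\infty$ is a $\pi_1$-injective pleated surface essentially assumes the conclusion.

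The second gap is the final step, which you yourself flag as unproved and which, as formulated, does not give the contradiction even granting the limit. Forcing the lifted image disc to meet a $P$-translate of itself only says that the composed map into the cusp quotient fails to be injective on the disc; maps need not be injective, so this is no contradiction. To contradict $\eta_0$-thickness at $x_\infty$ you must exhibit an essential loop \emph{in the surface} of length less than $\eta_0$, and intersection of image sets does not produce one without an equivariance argument identifying a deck transformation of $\widetilde W_\infty$ with the parabolic $p$. The standard proof (Thurston's, as written up in Minsky's Lemma 3.1, the statement cited here) avoids all of this and is direct: through any point of the $\eta_0$-thick part of $(W,\sigma)$ there pass two essential loops of length at most $L=L(\eta_0,\chi(W))$ whose classes do not commute in $\pi_1(W)$; since tube radii blow up as the core length shrinks, one can choose $\eta_M$ so small that the $\eta_M$-thin part of $N$ lies at depth greater than $L$ inside the corresponding Margulis tube or cusp, so that any loop of length at most $L$ through a point of the $\eta_M$-thin part is homotopic into that tube; applying this to the images of the two loops shows their classes land in the abelian (cyclic or rank-two) tube group, hence commute in $\pi_1(N)$, contradicting $\pi_1$-injectivity. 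I would recommend replacing the limit argument by this direct one, or at least re-basing your limits at points of definite injectivity radius and supplying the missing equivariance step.
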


\subsection{Quasi geodesic concatenations}
Beside the use of Uniform Injectivity, our proof of Proposition~\ref{shortsurgeries} is elementary and rests on the following fact about piecewise broken geodesics in $\mb{H}^3$, which we state without proof.

\begin{lem}
\label{broken geo}
There exists $L>0$ and $A=A(L)>0$ such that the following holds. Let $\gamma:I\to\mb{H}^3$ be a broken piecewise geodesic $\gamma=\gamma_1*\dots*\gamma_m$ with breaking angles $\angle\gamma_{i-1}\gamma_i$ contained in $(0,\pi/2)$ and geodesic segments of length at least $L$. Then $\gamma$ is a $A$-quasi geodesic. 
\end{lem}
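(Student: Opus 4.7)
The plan is a standard local-to-global argument in the $\delta$-hyperbolic space $\mb{H}^3$, bootstrapping from an explicit two-segment estimate via the hyperbolic law of cosines.

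First I would establish the two-segment case: any concatenation $\gamma_{i-1}*\gamma_i$ of two consecutive segments of lengths $\ell_{i-1},\ell_i\ge L$ joined at a vertex with breaking angle $\theta\in(0,\pi/2)$ is, together with all its subpaths that cross the breakpoint, a $(1,c_0)$-quasi-geodesic for a universal constant $c_0$. Interpreting $\theta$ as the angle of deviation from straight continuation so that the interior angle of the associated hyperbolic triangle at the breakpoint equals $\pi-\theta$, the hyperbolic law of cosines yields
\[
\cosh d(\gamma_{i-1}(0),\gamma_i(\ell_i))=\cosh\ell_{i-1}\cosh\ell_i+\sinh\ell_{i-1}\sinh\ell_i\cos\theta\ge\tfrac{1}{2}\cosh(\ell_{i-1}+\ell_i),
\]
since $\cos\theta>0$. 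Hence $d\ge\ell_{i-1}+\ell_i-\log 2$. The identical computation, with $\ell_{i-1},\ell_i$ replaced by the lengths of the restrictions, handles any pair of points sitting on opposite sides of the breakpoint, while for points on a common segment the distance equals the arc length. So every subpath of $\gamma_{i-1}*\gamma_i$ is a $(1,\log 2)$-quasi-geodesic.

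Next I would invoke Gromov's local-to-global principle for quasi-geodesics in hyperbolic spaces (see e.g. Bridson--Haefliger, Theorem III.H.1.13): there exist constants $L_0=L_0(\delta)$ and $A=A(\delta)$ such that every $L_0$-local $(1,\log 2)$-quasi-geodesic in $\mb{H}^3$ is a global $A$-quasi-geodesic. Choose $L\ge L_0$. Since each segment of $\gamma$ has length at least $L\ge L_0$, any subpath of $\gamma$ of length at most $L_0$ crosses at most one breakpoint, hence is contained in a two-segment concatenation of the type controlled in the first step, and is therefore a $(1,\log 2)$-quasi-geodesic. Applying the local-to-global lemma promotes $\gamma$ to a global $A$-quasi-geodesic, as required.

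The main delicate point is pinning down the angle convention so that the sign in the law of cosines cooperates: the hypothesis $\theta\in(0,\pi/2)$ is precisely what keeps the triangle at the vertex sufficiently \emph{open}, preventing the path from folding back on itself and guaranteeing the additive inequality above. Once this is settled, the remaining content is a citation of standard hyperbolic geometry together with the local-to-global principle, and the explicit dependence $A=A(L)$ enters only through the threshold $L_0(\delta)$.
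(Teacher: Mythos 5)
Your proof is correct and follows exactly the route the paper indicates: the paper states the lemma without proof, remarking only that it follows from the local-to-global principle for quasi-geodesics together with the (``not difficult to check'') fact that $\gamma$ is a local quasi-geodesic, and your use of the hyperbolic law of cosines is the natural way to supply that local estimate. Your reading of the angle convention is also the right one: the application in the proof of Lemma~\ref{concatenation} (orthogonal segment meeting a chord inside a horoball) only makes sense if $\theta$ is the deviation from straight continuation, so that the interior angle of the triangle at the breakpoint is $\pi-\theta\in(\pi/2,\pi)$ and the cross term $\sinh\ell_{i-1}\sinh\ell_i\cos\theta$ comes in with a plus sign, exactly as you have it. One tiny quantitative remark: from $\cosh c\ge\tfrac12\cosh(\ell_{i-1}+\ell_i)$ the clean bound valid for all lengths is $c\ge\ell_{i-1}+\ell_i-\log 4$ (your $\log 2$ is only the asymptotic constant), but this plainly does not affect the argument --- any absolute additive constant feeds into the local-to-global theorem in the same way.
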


In particular, if the length of the geodesic segments is large enough, only depending on $A$, then $\gamma$ cannot be a loop. Lemma~\ref{broken geo} follows from the fact that, in a hyperbolic space, local quasi geodesics, such as $\gamma$ (as is not difficult to check), are global quasi geodesics.

Using Lemma~\ref{broken geo}, we find the following. 

\begin{lem}
\label{concatenation}
For each $\ep>0$ there exists $L_0>0$ such that the following holds. Let $X=\mb{H}-\sqcup_{1\le j\le n}{\mc{O}_j}$ be the complement in $\mathbb H^3$ of a family of pairwise disjoint open horoballs. Let $\gamma=\gamma_1*\dots*\gamma_{2n}$ be a concatenation of paths in $X$ such that
\begin{itemize}
 \item{$\gamma_{2j+1}$ is a geodesic of length at least $\ep$ on the horosphere $\mc{H}_j=\partial\mc{O}_j$.}
 \item{$\gamma_{2j}$ is the orthogonal segment connecting $\mc{H}_{j-1}$ and $\mc{H}_j$. We require that $\gamma_{2j}$ has length at least $L_0$.}
\end{itemize}
Then $\gamma$ is not a loop.
\end{lem}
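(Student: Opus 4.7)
The plan is to reduce to Lemma~\ref{broken geo} by replacing each horocycle arc of $\gamma$ with the corresponding $\mb{H}^3$-geodesic chord, obtaining a piecewise $\mb{H}^3$-geodesic $\tilde\gamma$ with the same endpoints as $\gamma$. The chord $\tilde\gamma_{2j+1}$ joining the endpoints of $\gamma_{2j+1}\subset\mc{H}_j$ enters the open horoball $\mc{O}_j$, so $\tilde\gamma$ need not lie in $X$, but it is a well-defined broken geodesic in $\mb{H}^3$ and is a loop precisely when $\gamma$ is.

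To bound the breaking angles of $\tilde\gamma$, I work in the upper half-space model with $\mc{H}_j=\{z=1\}$ and $\mc{O}_j=\{z>1\}$, placing the two endpoints of a horocycle arc of length $d\ge\ep$ at $(0,0,1)$ and $(d,0,1)$. The adjacent orthogonal segment arrives at $(0,0,1)$ perpendicular to $\mc{H}_j$ with $\mb{H}^3$-unit tangent $(0,0,1)$, while the chord departs with $\mb{H}^3$-unit tangent $(2,0,d)/\sqrt{4+d^2}$. A direct computation then gives the breaking angle $\arctan(2/d)\le\arctan(2/\ep)<\pi/2$ uniformly in $d\ge\ep$, with the analogous bound at the other endpoint of each chord; so every breaking angle of $\tilde\gamma$ lies in the compact subinterval $(0,\arctan(2/\ep)]$ of $(0,\pi/2)$.

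Since the orthogonal segments of $\tilde\gamma$ have length $\ge L_0$ and each chord has $\mb{H}^3$-length $\ge 2\sinh^{-1}(\ep/2)>0$, choosing $L_0$ large enough that all segments exceed the threshold $L$ from Lemma~\ref{broken geo} lets me apply that lemma and conclude that $\tilde\gamma$ is an $A$-quasi-geodesic in $\mb{H}^3$. A sufficiently long quasi-geodesic in $\mb{H}^3$ cannot be a loop, and $\gamma$ and $\tilde\gamma$ share their endpoints, so $\gamma$ is not a loop either.

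The main technical point is to reconcile the fact that Lemma~\ref{broken geo} demands every segment to have length at least the fixed constant $L$, whereas our chord-length bound $2\sinh^{-1}(\ep/2)$ can be smaller than $L$ when $\ep$ is small. This is handled by absorbing each short chord into the two adjacent orthogonal segments: since the breaking angles between chord and orthogonal are strictly bounded away from $\pi/2$ by $\arctan(2/\ep)$, the three consecutive segments group into a single effective broken-geodesic piece of length $\ge L_0-O_\ep(1)$ whose residual breaking angles remain bounded strictly away from $\pi/2$ by a constant depending only on $\ep$. Lemma~\ref{broken geo} then applies directly to this reduced broken geodesic, yielding the desired quasi-geodesic conclusion and hence the non-triviality of $\gamma$.
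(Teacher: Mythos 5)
Your core idea — replace each horocycle arc by its $\mathbb H^3$-geodesic chord, bound the breaking angles, then invoke Lemma~\ref{broken geo} — is exactly the paper's strategy, and your angle computation is correct (in the notation where the breaking angle is the deviation from straight, the angle between the incoming orthogonal ray and the chord at a shared endpoint is $\arctan(2/d)\le\arctan(2/\ep)<\pi/2$, and the chord has length $2\sinh^{-1}(d/2)\ge 2\sinh^{-1}(\ep/2)$). The difference is in how the two arguments deal with the fact that $2\sinh^{-1}(\ep/2)$ may be smaller than the threshold $L$ from Lemma~\ref{broken geo}. The paper's trick is to first \emph{inflate every horoball radially by a fixed $r=r(\ep,L)$}, so that the horosphere arcs expand by the factor $e^r$ and their chords have length comparable to $\sinh^{-1}(e^r\ep)\ge L$; the orthogonal segments are shortened by $2r$, so choosing $L_0>L+2r$ keeps every segment long and Lemma~\ref{broken geo} applies directly, with no further estimates.

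Your proposed substitute — "absorbing each short chord into the two adjacent orthogonal segments" — does contain a correct idea, but as written it is a gap, for two reasons. First, the phrasing "three consecutive segments group into a single effective piece" is ambiguous: if you group each chord with a \emph{full} adjacent orthogonal segment (say $(o_i,c_i)\mapsto g_i$), then at the junction of $g_i$ with $o_{i+1}$ the new geodesic $g_i$ arrives nearly tangent to the horosphere while $o_{i+1}$ departs perpendicularly into the opposite region, and the breaking angle is close to $\pi$ (not $<\pi/2$) for small $\ep$, so Lemma~\ref{broken geo} does not apply. The grouping that \emph{does} work is the midpoint split: replace $(\text{second half of }o_i,\,c_i,\,\text{first half of }o_{i+1})$ by the geodesic $g_i$ joining the two midpoints. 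Second, even with the correct grouping, the needed facts — $\ell(g_i)\ge L_0-O(1)$ and that the angle between $g_i$ and $o_{i+1}$ at the midpoint is $o_{L_0}(1)$ uniformly in the chord length $d\ge\ep$ — are genuine hyperbolic-trigonometry estimates (one can extract them from the hyperbolic laws of sines and cosines, and they do hold once $L_0$ is large enough relative to $\ep$), and they must be proved, not asserted; in particular the bound depends on $L_0$ as well as $\ep$, which is exactly the dependence the lemma's quantifier structure requires you to set up. The paper's inflation sidesteps all of this, which is why it is the cleaner route.
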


\begin{proof}
We use the following fact that can be easily checked in the upper half space model of $\mb{H}^3$. If $x,y$ lie on the same horosphere $\mc{H}$, then we have 
\[
\sinh(d_{\mb{H}^3}(x,y))=d_{\mc{H}}(x,y).
\]
Denote by $\mc{H}_j=\partial\mc{O}_j$ the boundary horosphere of the horoball $\mc{O}_j$. Observe that, by assumption, the flat geodesic $\gamma_{2j}\subset\mc{H}_j$ has length $\ell(\gamma_{2j})\ge\ep$. 

If we expand $\mc{H}_j$ radially from its center at infinity, the intrinsic geometry of the horosphere expands exponentially with exponent equal to the increase in the radius. Hence, if we inflate all the horoballs $\mc{O}_j$ by $r$, then each $\gamma_{2j+1}$ is shortened to an arc $\gamma_{2j+1}^r$ of length 
\[
\ell(\gamma_{2j+1}^r)=\ell(\gamma_{2j+1})-2r\ge L-2r,
\]
while the length of the inflated $\gamma_{2j}$, denoted by $\gamma_{2j}^r$, becomes
\[
\ell(\gamma_{2j}^r)=e^r\ell(\gamma_{2j}).
\]
We now straighten all the $\gamma_{2j}^r$ relative to the endpoints and obtain geodesic arcs $\alpha_{2j}^r$ of length $\ell(\alpha_{2j}^r)=\sinh^{-1}(e^r\ell(\gamma_{2j}))\ge\sinh^{-1}(e^r\ep)$. 

Let us now consider the angles between the segments $\gamma_{2j-1}$ and $\alpha_{2j}$. Observe that, by assumption, $\gamma_{2j-1}^r$ is orthogonal to $\mc{H}_j^r$, the inflated horosphere. Therefore, as, by convexity of horoballs, $\alpha_{2j}$ is contained in $\mc{O}_j^r$, the angle $\angle\gamma_{2j-1}^r\alpha_{2j}^r$ between the two geodesics $\gamma_{2j-1}$ and $\alpha_{2j}$ is in $(0,\pi/2)$. 

In conclusion, if the length of each $\gamma_{2j+1}$ is much larger than $L+2r$ and $\sinh^{-1}(e^r\ep)\ge L$, where $L$ is the constant of Lemma~\ref{broken geo}, then the broken piecewise geodesic $\gamma=\gamma_1^r*\alpha_2^r*\dots*\gamma_{2n-1}^r*\alpha_{2n}^r$ is a $A$-quasi geodesic. If it is also sufficiently long compared to $A$, which can be again achieved by assuming that each $\gamma_{2j+1}$ is long enough, it cannot be a loop.  
\end{proof}

\subsection{Moderate length surgeries}
We now prove Proposition~\ref{shortsurgeries}.

\begin{proof}[Proof of Proposition~\ref{shortsurgeries}]
Pick $\delta\in\mc{D}$ arbitrarily. Since $(H_g,\gamma)$ is pared acylindrical, $\delta$ intersects $\gamma$ essentially. Consider $\lambda=\delta\cap W$, it is a multi-arc in $W=\Sigma-\gamma$.

Now, after collapsing parallel components to a single one, $\lambda$ can be realized as a sub-lamination of the pleating locus of some pleated surface $g:(\Sigma-\gamma,\sigma)\to N$ in the proper (relative to cusps) homotopy class of the inclusion $\Sigma-\gamma\subset N$ (see for example Theorems I.5.3.6 and I.5.3.9 in~\cite{CEG:notes_on_notes}). 
 
When regarding $\delta$ as a simple closed curve on $\Sigma$, we can think of it as a concatenation $\delta=\beta_1*\dots*\beta_{2n}$ of arcs $\beta_{2j}$ in $W$ and arcs $\beta_{2j+1}$ crossing a regular annular neighborhood of $\gamma$. Here we are using the fact that $W$ is the complement of a non-separating simple closed curve.

Using the proper homotopy between the inclusion $W=\Sigma-\gamma\subset N$ and $g$, we simultaneously straighten all the $\beta_{2j}$'s to subarcs $\alpha_{2j}$ of the geodesic leaves of $g(\lambda)$ that start and end in the standard horosection $T=\partial\mb{T}_{\eta_3}(\gamma)$. Then, again, using the proper homotopy between $\Sigma-\gamma\subset N$ and the deformation retraction of $N$ to $N-\mb{T}_{\eta_3}(\gamma)$, we replace the arcs $\beta_{2j+1}$ with arcs $\alpha_{2j+1}$ on the horosection $T$ that are geodesic with respect to the intrinsic flat metric and join the endpoints of the previously obtained $\alpha_{2j}$ and $\alpha_{2(j+1)}$.

We have that $\delta\subset\Sigma$ is homotopic in $N$ to a concatenation $\alpha_1*\dots*\alpha_{2n}$ of closed arcs $\alpha_j$, where each $\alpha_{2j+1}$ is a geodesic in the boundary of the standard horosection $T$, while each $\alpha_{2j}$ is a proper geodesic arc in $N-\mb{T}_{\eta_3}(\gamma)$ contained in the pleating locus $g(\lambda)$. 

Notice that, by Lemma~\ref{thick-to-thick}, if $\eta_3$ is sufficiently small, then, when we look at $\alpha_{2j}$ in the intrinsic metric of the pleated surface, it will join two points of the $\eta_0$-cuspidal part for some universal $\eta_0>0$.

Now, by Theorem~\ref{uniforminj}, which applies to $g$ because it is doubly incompressible, there is a uniform $\ep>0$ such that the length of each $\alpha_{2j+1}$, is at least $\ep$. Let $L_0$ be as in Lemma~\ref{concatenation}, for the given $\ep$. Notice that this constant only depends on $\Sigma$. Being homotopically trivial in $N$, the loop $\alpha_1*\dots*\alpha_{2n}$ lifts to a closed loop in $\mb{H}^3$. Hence, by Lemma~\ref{concatenation}, some $\alpha_{2i}$ must have length less than $L_0$.
\end{proof}

\subsection{Size of the standard horosection}
Consider $T=\partial\mb{T}_{\eta_3}$, the boundary of the standard horosection. We show that $T$ is long and skinny, meaning that the length of any slope $\mu\subset T$ different from the one coming from $(\Sigma-\gamma)\cap T$ is very long, provided that $d_W(\alpha,\beta)$ is sufficiently large. 

\begin{pro}
\label{width}
There exists $c=c(\Sigma)>0$ such that
\[
d_W(\alpha,\beta)\le c\ell_T(\mu)+c
\] 
for every slope $\mu$ in $T$ that is not homotopic to a component of $(\Sigma-\gamma)\cap T$.
\end{pro}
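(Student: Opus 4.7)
My plan is to realize $\alpha$ and $\beta$ as pleating loci of pleated surfaces mapping $W$ into $N := M_f - \gamma$, and then translate the geometric bound on the length of the cusp slope $\mu$ into a combinatorial bound on $d_W(\alpha,\beta)$ via the model manifold technology of~\cite{M10, BrockCanaryMinsky:ELC2}.

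First, using that $\alpha$ and $\beta$ have geodesic representatives in $N$ of length at most a uniform constant $L$ (Corollary~\ref{shortcurves}), I can realize $\alpha$ (resp.\ $\beta$) as (part of) the pleating locus of a type-preserving pleated surface $g_\alpha:(W,\sigma_\alpha)\to N$ (resp.\ $g_\beta$) in the proper homotopy class of the inclusion $W=\Sigma-\gamma \subset N$. Each such pleated surface has a cusp region about each component of $\partial W$; these regions map into $\mathbb{T}_{\eta_M}(\gamma)$, and their boundaries trace closed horocyclic curves on the flat torus $T$, each isotopic to the canonical $\Sigma$-slope $s_\Sigma := (\Sigma-\gamma)\cap T$.

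The core of the argument is to bound $d_W(\alpha,\beta)$ by the \emph{transverse height} $h(T)$ of $T$, by which I mean the length of a shortest simple closed curve on $T$ not isotopic to $s_\Sigma$. On the one hand, any slope $\mu$ not isotopic to $s_\Sigma$ has nontrivial algebraic intersection number with $s_\Sigma$, and elementary flat torus geometry gives $\ell_T(\mu) \ge h(T)$. On the other hand, to bound $d_W(\alpha,\beta)$ by $h(T)$ I pass to the cover $N_W \to N$ corresponding to $\pi_1(W)$: this is a geometrically finite hyperbolic $3$-manifold homeomorphic to $W\times\mathbb{R}$, whose only rank-one cusps lift the cusp of $N$ at $\gamma$. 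The Minsky--Brock--Canary--Minsky model describes $N_W$ up to uniform bilipschitz equivalence in terms of subsurface projections of its end invariants, and in particular controls the shape of the lifted cusp torus in terms of subsurface projections supported on $W$: large $d_W$-projection between the two ends forces the lifted Margulis torus to be long and skinny in the direction transverse to $s_\Sigma$. Since $\alpha$ and $\beta$ have moderate length in $N$ (hence in $N_W$), they can be used, up to bounded additive error, in place of the two end invariants of $N_W$ on the two sides of $W$. This yields $d_W(\alpha,\beta) \le c_0\, h(T) + c_0$ for some $c_0 = c_0(\Sigma)$, and combining with $\ell_T(\mu) \ge h(T)$ gives the desired inequality.

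The main obstacle lies in the last step: the quantitative application of the model manifold machinery to the cover $N_W$, specifically substituting the moderate-length pleated-surface curves $\alpha, \beta$ for the actual end invariants of $N_W$ and transferring subsurface-projection bounds into estimates on the shape of the lifted rank-one cusp torus. The analogous statements in the closed or purely geometrically finite setting without rank-one cusps are standard, but here one must carefully combine the pleated surface realizations from Proposition~\ref{shortsurgeries} with the uniform injectivity and end-invariant machinery of~\cite{Thu86, M10, BrockCanaryMinsky:ELC2}, while keeping track of how the rank-one cusp interacts with both the ends and the pleated surfaces.
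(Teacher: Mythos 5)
Your proposal follows the same overall strategy as the paper (pass to the cover of $M_f-\gamma$ corresponding to $\pi_1(W)$, apply the Minsky/Brock--Canary--Minsky model there, use the moderate-length curves $\alpha,\beta$ from Corollary~\ref{shortsurgeries}), but there is a genuine gap at the crucial step, which you yourself flag as ``the main obstacle.''

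Specifically, the phrase ``the lifted Margulis torus'' is misleading: in the cover $Q$ corresponding to $\pi_1(W)$, the rank-two cusp $\mb{T}_{\eta_M}(\gamma)$ of $M_f-\gamma$ does \emph{not} lift to a torus. Its preimage decomposes into two rank-one cusps of $Q$ (one for each boundary component of $W$) together with other components that are not cusps of $Q$ at all (and that, as the paper's Lemma~\ref{cusp lift} shows, lie entirely outside the convex core once $\eta_M$ is small). So there is no intrinsic ``shape'' of a lifted cusp torus that the model directly reads off, and you cannot bound the transverse height $h(T)$ that way. The model manifold technology gives you control on \emph{electric distances inside} $Q$ (this is what the paper uses, via Bowditch/Brock--Bromberg, to spread out the moderate-length curves $\alpha_i^*$), but relating these to the length of a flat slope $\mu$ on the quotient torus $T$ requires an additional argument: one takes a short flat path $\xi$ on $T$ joining $p(x)$ on $\partial\mc{CC}(Q)$ to the image $g(w)$ of a middle pleated surface, lifts it to $Q$, and must show this lift has large electric length. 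The key dichotomy (Case~I/Case~II in the paper) and, in particular, the algebraic Lemma~\ref{conjugate} (which uses the amalgamated free product structure $\pi_1(H_g-\gamma)*_{\pi_1(\Sigma-\gamma)}\pi_1(H_g-f(\gamma))$ and the pared acylindricity hypothesis to show that conjugation by a non-$\pi_1(W)$ element moves non-peripheral elements out of $\pi_1(W)$) are exactly what forces the lifted path to cross $\partial\mc{CC}(Q)$ when its endpoint is on a ``new'' component of the preimage of the cusp. Without something playing the role of Lemma~\ref{conjugate} and Lemma~\ref{cusp lift}, the step from the electric-distance control in $Q$ to the conclusion $d_W(\alpha,\beta)\lesssim\ell_T(\mu)$ does not go through, so your outline as written does not constitute a proof.

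Two smaller points: (1) You state that $N_W$ has ``only rank-one cusps lifting the cusp of $N$ at $\gamma$''; in fact the full preimage of $\mb{T}_{\eta_M}(\gamma)$ contains many non-cuspidal components, and ruling these out requires the uniform injectivity/thick-to-thick argument underlying Lemma~\ref{cusp lift}. (2) You write $\ell_T(\mu)\ge h(T)$, where $h(T)$ is the shortest length of a slope not isotopic to $s_\Sigma$; this is fine, but the paper instead uses the (slightly stronger and more directly usable) estimate $\mathrm{diam}(T)\le(\ell_T(\mu)+D')/2$, where $D'$ uniformly bounds the length of $s_\Sigma$, to directly bound the length of the flat path $\xi$.
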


\begin{proof}
 Notice that there is a bound $D'$, depending only on $\Sigma$, on the length of a component of $(\Sigma-\gamma)\cap T$. In fact, the boundary of the cusp of a complete hyperbolic surface of finite area can be bounded in terms of the topological type of the surface only, as the area of the cusp is an increasing function of the length of its boundary, while the total area of the surface only depends on the topological type (e.g. by the Gauss-Bonnet theorem for non-compact surfaces).

Observe now that the intrinsic diameter of $T$ satisfies 
\[
\text{\rm diam}(T)\le D:=(\ell_T(\mu)+D')/2;
\]
hence, any two points on $T$ can be joined by a flat geodesic of length at most $D$.

Consider the covering $p:Q\to M_f-\gamma$ corresponding to $\pi_1(W=\Sigma-\gamma)<\pi_1(M_f-\gamma)$. By Bonahon's Tameness~\cite{Bo86} $Q$ is geometrically and topologically tame. Moreover, since $\Sigma-\gamma$ is not a virtual fiber (for example, because it separates $M_f-\gamma$), the covering $Q$ is a geometrically finite hyperbolic {{metric}} on $W\times\mb{R}$ (without accidental parabolics) by Thurston-Canary Covering Theorem~\cite{C96}. Denote by $X\sqcup Y=\partial\mc{CC}(Q)$ the boundary of the convex core.

By work of Minsky~\cite{M10} (see also Theorem 2.1.3 of Bowditch~\cite{Bowditch:ELT}), there exists a uniform quasi-geodesic $l:=\alpha_0,\alpha_1,\dots,\alpha_n$ in $\mc{C}(W)$, that is
\[ 
\frac{1}{C}|i-j|-C\le d_W(\alpha_i,\alpha_j)\le C|i-j|+C
\]
for some uniform constant $C>0$, with the following properties
\begin{enumerate}[(i)]
\item{Every $\alpha_j$ has a geodesic representative $\alpha_j^*$ in $Q$ of moderate length, that is $\ell(\alpha_j^*)\le L$ for some uniform $L>0$.}
\item{The initial and terminal curves have moderate length on the $X$ and $Y$ boundary components, that is $\ell_X(\alpha_0),\ell_Y(\alpha_n)\le L$, with $L$ as before.}
\item{Every curve $\beta\in\mc{C}(W)$ such that $\ell_Q(\beta)\le L$ lies uniformly close to the quasi geodesic $l$, that is $d_W(\beta,l)\le R$ for some uniform $R>0$.}
\end{enumerate} 

We notice that 
\[
n\ge c_0d_W(\alpha,\beta)-c_0
\]
for some uniform $c_0>0$. In fact, on one hand, by property (iii), we have $d_W(\alpha,l),d_W(\beta,l)\le R$. On the other hand, the uniform quasi geodesic property of $l:=\alpha_0,\dots,\alpha_n$ and hyperbolicity of the curve graph $\mc{C}(W)$ gives $d_W(\alpha_0,\alpha_n)\ge d_W(\alpha,\beta)-R_1$ for some uniform $R_1>0$. Combined together the two properties give the estimate above. 

The moderate length geodesics $\alpha_j^*$ are well spaced in $Q$. This is a consequence of the model manifold technology, which we use in the form of the following result of Bowditch~\cite{Bowditch:ELT} and Brock and Bromberg~\cite{BB11}.

\begin{thm}[see Theorem 2.1.4 of~\cite{Bowditch:ELT} and Theorem 7.16 of~\cite{BB11}]
\label{blocksep}
For every $L>0$ there exists $A>1$ such that the following holds. Let $Q$ a hyperbolic structure on $W\times\mb{R}$ for which the boundary $\partial W$ is parabolic. Let $Q_0=Q-\mb{T}_{\eta_3}(\partial W)$ be the complement of the standard cusp neighborhoods. Suppose that $\alpha,\beta\in\mc{C}(W)$ are simple closed curves represented in $Q$ by closed geodesics of length at most $L$. Then 
\[
\frac{1}{A}d_{\mc{C}(W)}(\alpha,\beta)-A\le\rho_{Q_0}\left(\mb{T}_{\eta_3}(\alpha),\mb{T}_{\eta_3}(\beta)\right)\le Ad_{\mc{C}(W)}(\alpha,\beta)+A
\]
Where $\rho_{Q_0}$ denotes the {$\eta_3$-electric distance} in $Q_0$.
\end{thm}

The {\em $\eta_3$-electric distance} $\rho_{Q_0}(x,y)$ between two points $x,y\in Q_0$ is defined to be the infimum of the {electric lengths} of all paths joining them in $Q_0$. The \emph{electric length} of a path $\delta$ in $Q_0$ is the length of the portion of $\delta$ that lies in the $\eta_3$-thick part of $Q_0$. Observe that, by definition, the electric distance satisfies $\rho_{Q_0}\le d_{Q_0}$.

The electric distance is also defined on hyperbolic surfaces $(W,\sigma)$, and it is a fact (bounded diameter lemma; see Lemma 1.10 of~\cite{Bo86}) that for any fixed Margulis constant $\eta>0$, if $(W,\sigma)$ has finite area, then the $\eta$-electric diameter of $(W,\sigma)$ is uniformly bounded only in terms of $\eta$ and $\chi(W)$. This fact applies in our setting: The $\eta_3$-electric diameter of any pleated surface is uniformly bounded. 

It follows from Theorem~\ref{blocksep} and the fact that the sequence $\alpha_0,\cdots,\alpha_n$ is a uniform quasi-geodesic that 
\[
\frac{1}{B}|i-j|-B\le\rho_{Q_0}(\alpha_i^*,\alpha_j^*)\le B|i-j|+B.
\]
for some uniform $B>0$. Also notice that $X$ and $Y$ are uniformly close to $\alpha_0^*$ and $\alpha_n^*$ respectively, meaning that 
\[
\rho_{Q_0}(\alpha_0^*,X),\rho_{Q_0}(\alpha_n^*,Y)\le\log(2L/\eta_3).
\]
This follows from the following standard fact, which we state without proof.

\begin{lem}
Let $\alpha\subset Q$ be a closed curve homotopic to a geodesic $\alpha^*$. Then 
\[
d_Q(\alpha,\mb{T}_{\eta_3}(\alpha^*))\le\log(2\ell(\alpha)/\eta_3).
\]
\end{lem}


For simplicity, from now on, we assume $n=2m$ since it does not affect the argument and simplifies the notation. Consider a pleated surface $G:(W,\sigma)\to Q$ realizing the middle curve $\alpha_{m=n/2}$ in $Q$. Under the covering projection $G$ descends to the pleated surface $g:=pG:(W,\sigma)\to M_f-\gamma$ realizing $\alpha_m$ in $M_f-\gamma$. 

The manifold $Q$ has two cusps that cover $\mb{T}_{\eta_3}(\gamma)$; we fix one of those and denote it by $\mb{T}^1_Q$, with boundary $T^1_Q=\partial\mb{T}^1_Q$. Then, we choose points $x\in T^1_Q\cap X$ and $w\in W$ such that $G(w)\in T^1_Q$. Observe that $\text{\rm inj}_x(X),\text{\rm inj}_w(W,\sigma)\ge\eta_3$. As a consequence, since the electric diameter of pleated surfaces is uniformly bounded, we have 
\[
\rho_{Q_0}(G(w),\alpha_m^*),\rho_{Q_0}(x,\alpha_0^*)\le K
\] 
for some uniform $K>0$. Now connect $p(x)$ to $g(w)$ via a shortest flat geodesic $\xi$ between them on $T$. Denote by $\delta$ the lift of $\xi$ to $Q$ with basepoint $G(w)$. We have $\ell(\delta)\le D=(\ell_T(\mu)+D')/2$.

In order to conclude the proof of Proposition~\ref{width} it remains to show that

{\bf Claim}: We have $\ell(\delta)\ge cn-c$ for some uniform constant $c>0$. 

We divide the proof of this claim into two cases.

{\bf Case I}. The endpoint $z$ of $\delta$ different from $G(w)$ coincides with $x$. 

In this case, we have
\[
\ell(\delta)\ge\rho_{Q_0}(x,G(w))\ge\rho_{Q_0}(\alpha_0^*,\alpha_m^*)-2K\ge\frac{m}{B}-B-2K,
\]
as desired.

{\bf Case II}. The endpoint $z$ of $\delta$ different from $G(w)$ differs from $x$.

In this case, let $\tau$ be a non peripheral loop on $X$ based at $x$ that has moderate length, say $\ell_X(\tau)\le L_1$ for some uniform $L_1>0$ only depending on $W$. 

We now observe that $p(\tau)$ does not lift to a loop based at $z$ in $Q$. In fact, we claim that $p(\tau)$ admits a unique lift which is a loop based at a point on the chosen cusp of $Q$ and such a lift is $\tau$, which is based at $x\neq z$. In general, lifts of $p(\tau)$ based at a point on $p^{-1}(p(x))\cap T^1_Q$ correspond to elements $\kappa\in\pi_1(T,p(x))$ such that $\kappa p(\tau)\kappa^{-1}\in p_*\pi_1(X,x)$.

\begin{lem}
\label{conjugate}
If $c\in\pi_1(\Sigma-\gamma)$ is not peripheral and $\kappa\in\pi_1(M_f-\gamma)- \pi_1(\Sigma-\gamma)$, then
\[
\kappa c\kappa^{-1}\not\in\pi_1(\Sigma-\gamma).
\]
\end{lem}

\begin{proof}
Recall that $\pi_1(M_f-\gamma)$ is a free product with amalgamation
\[
\pi_1(M_f-\gamma)=\pi_1(H_g-\gamma)*_{\pi_1(\Sigma-\gamma)}\pi_1(H_g-f(\gamma)).
\]
Write $\kappa$ as a reduced word $\kappa=a_1b_1\dots a_nb_n$ with $a_j\in\pi_1(H_g-\gamma)-\pi_1(\Sigma-f(\gamma))$ for $j>1$ and $b_j\in\pi_1(H_g-\gamma)-\pi_1(\Sigma-\gamma)$ for $j<n$. Since $\kappa\notin\pi_1(\Sigma-\gamma)$, either $b_n\not\in\pi_1(\Sigma-\gamma)$, or we can take $b_n$ to be the identity and $a_n\not\in\pi_1(\Sigma-\gamma)$.

The two cases can be dealt with in the same way, so we only consider the first case, that is, we assume that $b_n\not\in\pi_1(\Sigma-\gamma)$. We have
\[
\kappa c\kappa^{-1}=a_1b_1\dots a_nb_ncb_n^{-1}a_n^{-1}\dots b_1^{-1}a_1^{-1}.
\]
We claim that $b_ncb_n^{-1}\in\pi_1(H_g-\gamma)$ is not in $\pi_1(\Sigma-\gamma)$ provided that $c$ is not a peripheral element: This follows from the fact that $(H_g,\gamma)$ is pared acylindrical. In fact, suppose that $b_ncb_n^{-1}\in\pi_1(\Sigma-\gamma)$ and consider the homotopy between $c$ and $b_ncb_n^{-1}$ which takes place in $H_g-\gamma$. We have the following possibilities: If the homotopy is deformable into the cusp, then $c$ would be peripheral, which is ruled out by our initial assumption. As $(H_g,\gamma)$ is pared acylindrical, if the homotopy is not deformable to the cusp, then it is deformable to the boundary $\Sigma-\gamma$. In this case $b_n$ would be contained in $\pi_1(\Sigma-\gamma)$, which is again a contradiction. Therefore, if $c$ is not peripheral, the word $\kappa c\kappa^{-1}=a_1b_1\dots a_n(b_ncb_n^{-1})a_n^{-1}\dots b_1^{-1}a_1^{-1}$ is still reduced, and it contains a term not in $\pi_1(\Sigma-\gamma)$. Hence it cannot represent an element in $\pi_1(\Sigma-\gamma)$.
\end{proof}

We now return to the main argument for Proposition~\ref{width}. By Lemma~\ref{conjugate}, the loop $g_n(\tau)$ lifts to an arc $\eta$ with basepoint $z$ on the preferred cusp and another endpoint $u$ on a different component of $p^{-1}(\mb{T}_{\eta_3}(\gamma))$. We now observe that, if $\eta_3$ has been chosen sufficiently short in the beginning, no component of $p^{-1}(\mb{T}_{\eta_3}(\gamma))$ different from the cusps of $Q$ intersects the convex core $\mc{CC}(Q)$. 

\begin{lem}
\label{cusp lift}
If $\eta_3$ is sufficiently small, only depending on the topological type of $W$, then
\[
p^{-1}\mb{T}_{\eta_3}(\gamma)\cap\mc{CC}(Q)=\text{\rm cusp}(Q).
\]
\end{lem}

\begin{proof}
Let $p^{-1}\mb{T}_{\eta_3}(\gamma)=\bigsqcup_{j\in I}\mc{O}_j$ be the full preimage of the cusp under the covering projection $p:Q\to M_f-\gamma$. Suppose that a component $X\subset\partial\mc{CC}(Q)$ of the boundary of the convex core intersects one of the components $\mc{O}_j$ of the lift of the Margulis tube. Note that $p:X\to M_f-\gamma$ is a type preserving pleated surface in the homotopy class of the inclusion $\Sigma-\gamma\subset M_f-\gamma$ and that $p(\mc{O}_j)=\mb{T}_{\eta_3}(\gamma)$.
By Lemma~\ref{thick-to-thick}, the pleated surface $p(X)$ can only intersect $\mb{T}_{\eta_3}(\gamma)$ in its $\eta_0$-cuspidal part, for some uniform $\eta_0$, if $\eta_3$ has been chosen sufficiently small in the beginning. This means that $\mc{O}_j$ intersects the cuspidal part on $X$ and, hence, $\mc{O}_j$ is one of the cusps of~$Q$.
\end{proof}

We are now able to conclude: By Lemmas~\ref{conjugate} and~\ref{cusp lift}, the arc $\delta*\eta$ has an endpoint $G(w)\in T^1_Q\cap\mc{CC}(Q)$ and another one $u\in p^{-1}(\mb{T}_{\eta_3}(\gamma))-\mc{CC}(Q)$ outside the convex core. Therefore, it must intersect $\partial\mc{CC}(Q)=X\sqcup Y$. Say it intersects $X$. In particular $\rho_{Q_0}(G(w),X)\le d_{Q_0}(G(w),X)\le\ell(\delta*\eta)$, which, combined with the previously established inequalities gives us
\begin{flalign*}
\ell(\delta)+L_1 &\ge\ell(\delta*\eta)\\
 &\ge\rho_{Q_0}(G(w),X)\\
 &\ge\rho_{Q_0}(\alpha_m^*,\alpha_0^*)-\rho_{Q_0}(X,\alpha_0^*)-2K\\
 &\ge m/B-B-2K-\log(2L/\eta_3).
\end{flalign*}
This concludes the proof of the claim.
\end{proof}

\subsection{The proof of Theorem~\ref{thm:main}}
Combining Proposition~\ref{width} and Theorem~\ref{hk surgery}, we complete the proof of Theorem~\ref{thm:main} as follows.

\begin{proof}[Proof of Theorem~\ref{thm:main}]
We endow $M_f-\gamma$ with a complete finite volume hyperbolic structure, which exists by the assumption of $(H_g,\gamma)$ and $(H_g,f(\gamma))$ being pared acylindrical.

Let $\mu$ be the flat geodesic on $T=\partial\mb{T}_{\eta_3}(\gamma)$ that represents the filling slope needed to pass from $M_f-\gamma$ to $M_f$ (also known as the meridian of $\gamma$). By Theorem~\ref{hk surgery} and Theorem~\ref{deformation}, if $\text{\rm nl}(\mu)\ge 11$, then there exists a hyperbolic metric on $M_f$ for which $\gamma$ is a geodesic of length $\ell_{M_f}(\gamma)\le a/\text{\rm nl}(\mu)^2$ for some universal constant $a>0$.

By Proposition~\ref{width}, we have
\[
\ell(\mu)\ge cd_W(\alpha,\beta)-c,
\]
which is larger than $\frac{1}{2}cd_W(\mc{D},f\mc{D})$ provided that $d_W(\mc{D},f\mc{D})$ is sufficiently large.
Notice that $\text{\rm Area}(T)\le\eta_3\ell(\mu)$ whenever $\eta_3<\ell(\mu)$ (which we have given that $d_W(\mc{D},f\mc{D}))$ is sufficiently large); hence, 
\[
\text{\rm nl}(\mu)=\ell(\mu)/\sqrt{\text{\rm Area}(T)}\ge\sqrt{\ell(\mu)/\eta_3}.
\]

Thus $\text{\rm nl}(\mu)\ge 11$ if the subsurface projection of the disk sets to $W$ is sufficiently large. This shows that $M_f$ is hyperbolic.

In order to conclude, it remains to bound the length of $\gamma$ in $M_f$. This follows again from Theorem~\ref{hk surgery}: 
\[
\ell(\gamma)\le\frac{a}{\text{\rm nl}(\mu)^2}\le\frac{a}{\ell(\mu)/\eta_3}\le\frac{2a\eta_3/c}{d_W(\mc{D},f\mc{D})}.\qedhere
\] 
\end{proof}
\part{Model manifolds}\label{part:model}
\section{A gluing scheme}
\label{overview}

Here we outline a construction for the $\ep$-model metric which follows closely ideas of Brock and Dunfield~\cite{BD15} and Brock, Minsky, Namazi and Souto~\cite{BMNS16}. At the end of the discussion we formulate a criterion of applicability. In the entire section we fix a gluing map $f\in{\rm Diff}^+(\Sigma)$.

\subsection{Assembling simple pieces}
We identify a tubular neighborhood of the Heegaard surface $\Sigma\subset M_f=H_g\cup_fH_g$ with $\Sigma\times[1,4]$ such that $\Sigma$ is identified with $\Sigma\times\{2.5\}$. 

\begin{figure}[h]
\begin{center}
\begin{overpic}{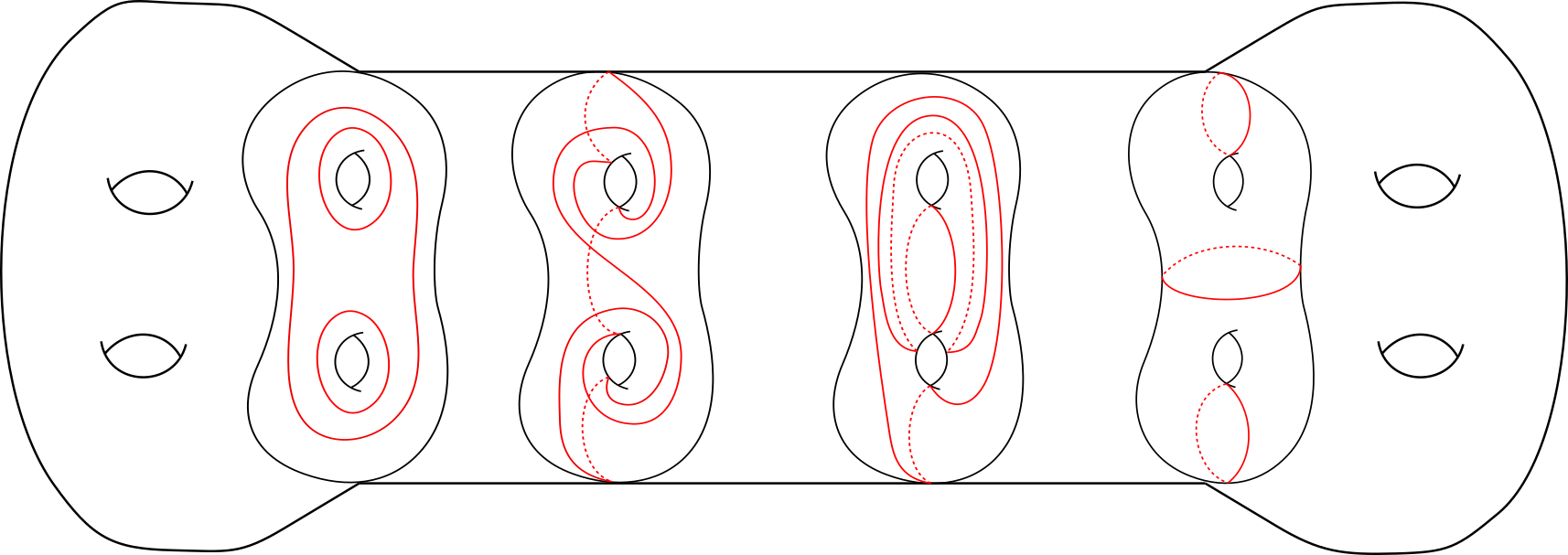}
\put (15,1) {$P_1\times\{1\}$}
\put (33,1) {$P_2\times\{2\}$}
\put (52,1) {$P_3\times\{3\}$}
\put (71,1) {$P_4\times\{4\}$}
\put (10,16) {$H_1$}
\put (30,16) {$\Omega_1$}
\put (48.5,16) {$Q$}
\put (67,16) {$\Omega_2$}
\put (85,16) {$H_2$}
\end{overpic}
\end{center}
\caption{Gluing.}
\label{fig:figure1}
\end{figure}

Given pants decompositions $P_1$, $P_2$, $P_3$, and $P_4$ in $\Sigma$, we consider
\[
\mb{M}:=M_f-(P_1\times\{1\}\sqcup P_2\times\{2\}\sqcup P_3\times\{3\}\sqcup P_4\times\{4\}).
\]

We isolate the five pieces (see Figure \ref{fig:figure1})
\begin{align*}
&H_1:=H_g-(\Sigma\times(1,2.5]\sqcup P_1\times\{1\}),\\
&\Omega_1:=\Sigma\times[1,2]-(P_1\times\{1\}\sqcup P_2\times\{2\}),\\
&Q:=\Sigma\times[2,3]-(P_2\times\{2\}\sqcup P_3\times\{3\}),\\
&\Omega_2:=\Sigma\times[3,4]-(P_3\times\{3\}\sqcup P_4\times\{4\})\text{, and}\\
&H_2:=H_g-(\Sigma\times[2.5,4)\sqcup P_4\times\{4\}).
\end{align*}

\begin{lem}
\label{five pieces}
Provided that
\begin{align}
&(H_g-\Sigma\times(1,2.5],P_1\times\{1\}),\nonumber\\
&(\Sigma\times[1,2],P_1\times\{1\}\sqcup P_2\times\{2\}),\nonumber\\
&(\Sigma\times[2,3],P_2\times\{2\}\sqcup P_3\times\{3\}),\label{5pieces}\tag{A}\\
&(\Sigma\times[3,4],(P_3\times\{3\}\sqcup P_4\times\{4\})\text{, and}\nonumber\\
&(H_g-\Sigma\times[2.5,4),P_4\times\{4\})\nonumber 
\end{align}
are all pared acylindrical manifolds, there exists a hyperbolic metric on $\mb{M}$ such that the restriction of the metric to each of the pieces is isometric to the convex core of the unique maximally cusped hyperbolic structure on the piece.
\end{lem}

\begin{proof}
As all the pieces are all pared acylindrical manifolds, we can endow each of them with a complete hyperbolic metric with totally geodesic boundary and rank one cusps at the pants decompositions. Indeed, we can take the convex cores of the (up to isotopy unique) maximally cusped hyperbolic structure on the pieces, which exist by Theorem~\ref{kms}. 

Since there is only one hyperbolic metric up to isotopy on a triply punctured sphere (such as the connected components of the complement of the pared loci in the boundaries of the pieces), we can arrange the metrics on the pieces such that they agree on the intersections and match together to give a complete finite volume hyperbolic metric on $\mb{M}$. 
\end{proof}

In order to pass from $\mb{M}$ to the closed 3-manifold $M_f$, we have to perform Dehn fillings on each cusp. This is the second step of the construction. The filling slopes
are completely determined by the identification of $\mb{M}$ with the drilled $M_f$: They are the meridians $\gamma$ of small tubular neighborhoods of the curves in $\alpha\times\{j\}\subset P_j\times\{j\}$ inside $\Sigma\times[1,4]$. Theorem~\ref{hk surgery} gives us sufficient conditions to guarantee that $M_f$ has a hyperbolic metric. Theorem \ref{drilling thm} gives sufficient conditions to ensure that $\mb{M}$ is $K$-bilipschitz to $M_f$ away from its cusps. We make this precise in the next subsection.

\subsection{A filling criterion}
To be able to apply Theorems~\ref{hk surgery}, \ref{drilling thm}, and~\ref{deformation}, we have to certify that the filling slopes have large normalized length. 
We now discuss a criterion to check this condition. 

We define
\begin{align*}
&\mb{N}_1:=H_1\cup\Omega_1=H_g-P_2\times\{2\},\\
&\mb{N}_2:=H_2\cup\Omega_2=H_g-P_3\times\{3\}\text{, and}\\
&\mb{Q}:=\Omega_1\cup Q\cup\Omega_2=\Sigma\times[1,4]-(P_1\times\{1\}\sqcup P_2\times\{2\}\sqcup P_3\times\{3\}\sqcup P_4\times\{4\}). 
\end{align*}

The curves in $P_1$ and $P_4$ represent rank one cusps on $\partial\mb{Q}$ while the curves in $P_2$ and $P_3$ represent rank two cusps. Similarly, the curves $P_2$ and $P_3$ represent rank one cusps of $\mb{N}_1$ and $\mb{N}_2$ while the curves in $P_1$ and $P_4$ are cusps of rank two. 

We now try to understand what happens when we Dehn fill the {rank two} cusps in $\mb{Q},\mb{N}_1,\mb{N}_2$. Choosing the filling slopes to be the canonical meridians of $P_1,P_2,P_3,P_4$ in $M_f$, the Dehn filled manifolds are
\begin{align*}
&\mb{Q}^{{\rm fill}}=\Sigma\times[1,4]-(P_1\times\{1\}\sqcup P_4\times\{4\}),\\
&\mb{N}_1^{{\rm fill}}=H_g-(\Sigma\times(2,2.5]\sqcup P_2\times\{2\})\text{, and}\\
&\mb{N}_2^{{\rm fill}}=H_g-(\Sigma\times[2.5,3)\sqcup P_3\times\{3\}).
\end{align*}

We endow $\mb{Q}^{{\rm fill}},\mb{N}_1^{{\rm fill}},\mb{N}_2^{{\rm fill}}$ with a complete hyperbolic metric with totally geodesic boundary and rank one cusps at the curves $P_1\times\{1\}\sqcup P_4\times\{4\},P_2\times\{2\},P_3\times\{3\}$. Again, this is possible provided that
\begin{align}
&(\Sigma\times[1,4],P_1\times\{1\}\sqcup P_4\times\{4\}),\nonumber\\
&(H_g-\Sigma\times(2,2.5],P_2\times\{2\})\text{, and}\label{3pieces}\tag{B}\\
&(H_g-\Sigma\times[2.5,3),P_3\times\{3\}) \nonumber
\end{align}
are all pared acylindrical.


Recall that our goal is to show that the filling slopes we singled out on the rank two cusps of $\mb{Q},\mb{N}_1,\mb{N}_2$ have very large normalized length. First, observe that $\mb{Q},\mb{N}_1,\mb{N}_2$ isometrically embed in their doubles $D\mb{Q},D\mb{N}_1,D\mb{N}_2$ which are finite volume hyperbolic 3-manifolds. Checking that the filling slopes of the rank two cusps of $\mb{Q},\mb{N}_1,\mb{N}_2$ have large normalized length is the same as checking that they have large normalized length in $D\mb{Q},D\mb{N}_1,D\mb{N}_2$.

Also notice that 
\[
\mb{Q},\mb{N}_1,\mb{N}_2\subset\mb{Q}^{\text{\rm fill}},\mb{N}_1^{{\rm fill}},\mb{N}_2^{{\rm fill}}\subset D\mb{Q}^{\text{\rm fill}},D\mb{N}_1^{{\rm fill}},D\mb{N}_2^{{\rm fill}}.
\]
Again, the doubles are finite volume hyperbolic 3-manifolds.
  
The idea is as follows. Suppose that we can find hyperbolic metrics on $\mb{Q}^{\text{\rm fill}},\mb{N}_1^{{\rm fill}},\mb{N}_2^{{\rm fill}}$ such that the curves 
\[
P_2\times\{2\}\sqcup P_3\times\{3\},P_1\times\{1\},P_4\times\{4\}\subset\mb{Q}^{\text{\rm fill}},\mb{N}_1^{{\rm fill}},\mb{N}_2^{{\rm fill}}
\]
are very short geodesics. Then, $D\mb{Q},D\mb{N}_1,D\mb{N}_2$ are exactly the complete finite volume hyperbolic structures on the complement of those geodesic links $D\mb{Q}^{\text{\rm fill}}-P_1\times\{1\}\sqcup P_4\times\{4\},D\mb{N}_1^{{\rm fill}}-P_2\times\{2\},D\mb{N}_2^{{\rm fill}}-P_3\times\{3\}$.

By Theorem \ref{deformation}, the condition of having large normalized length for the meridians of $P_1\times\{1\}\sqcup P_4\times\{4\},P_2\times\{2\},P_3\times\{3\}$ is equivalent to the condition of being very short for those geodesics. 

The previous discussion leads us to make the following definition.


\begin{dfn}[Filling Criterion]
We say that pants decompositions $P_1,P_2,P_3,P_4$ \emph{satisfy the filling criterion with parameter $\eta>0$}, if~\eqref{5pieces} and~\eqref{3pieces} are all pared acylindrical and the following hold:
\begin{enumerate}
\item{$P_1\times\{1\}$ is isotopic to its geodesic realization in $\mb{N}_1^{\rm fill}$ and each component of this geodesic link has length at most $\eta$,}
\item{$P_4\times\{4\}$ is isotopic to its geodesic realization in $\mb{N}_2^{\rm fill}$ and each component of this geodesic link has length at most $\eta$, and}
\item{$P_2\times\{2\}\sqcup P_3\times\{3\}$ is isotopic to its geodesic realization in $\mb{Q}^{\rm fill}$ and each component of this geodesic link has length at most $\eta$.}
\end{enumerate}
\end{dfn}
By the above discussion we have the following.
For $L>0$, if $P_1,P_2,P_3,P_4$ satisfy the filling criterion with parameter
\[
\eta:=\min\left\{\frac{2\pi}{L^2+16.17},\frac{0.1396}{3g-3}\right\},
\]
then the normalized length of the filling slopes of $\mb{M}$ corresponding to $P_1\times\{1\},P_2\times\{2\},P_3\times\{3\},P_4\times\{4\}\subset M_f$ is at least $L$.
Hence, by Theorems~\ref{hk surgery} and~\ref{deformation} (combined with Theorem \ref{drilling thm} for the furthermore-part), we established the following.

\begin{pro}
\label{model}
Fix $K\in(1,2)$. There exists $\eta>0$ such that the following holds. Suppose that there are four pants decompositions $P_1,P_2,P_3,P_4$ such that the {filling criterion} with parameter $\eta$ is satisfied. Then, $M_f$ admits a hyperbolic metric such that $\Gamma:=\bigsqcup_{j=1,2,3,4}{P_j\times\{j\}}$ is a geodesic link and each component has length at most $\eta$.
Furthermore, if $\mb{M}$ denotes the unique finite volume hyperbolic structure on $M_f-\Gamma$, then we have a $K$-bilipschitz diffeomorphism of pairs 
\[
\left(\mb{M}-\bigsqcup_{\alpha\in P_1\cup P_2\cup P_3\cup P_4}{\mb{T}_{\eta_3}(\alpha)}\right)\cong\left(M_f-\bigsqcup_{\alpha\in P_1\cup P_2\cup P_3\cup P_4}{\mb{T}_{\eta_3}(\alpha)}\right).
\]
\end{pro}

We conclude with a small remark. The model manifold technology of Minsky \cite{M10} and Brock, Canary and Minsky \cite{BrockCanaryMinsky:ELC2}, provides several tools to locate and measure the length of the geodesic representatives of $P_2\times\{2\}$ and $P_3\times\{3\}$ in $\mb{Q}^{{\rm fill}}$. However, the same technology is not available for handlebodies. This is the place where the difficulties arise. We get around the lack of a model for handlebodies by restricting ourselves to the geometry of the collars of $\mb{N}_1^{{\rm fill}},\mb{N}_2^{{\rm fill}}$ where, in some cases, we have a good amount of control (as in \cite{HV}).

\section{A family of examples}
\label{examples}

In this section we construct many examples satisfying Proposition \ref{model} (see Proposition \ref{pa examples}). Later, in Section~\ref{proof1}, we will show that this family is {generic} from the point of view of random walks.

\subsection{A family of hyperbolic mapping tori} 
For every pants decomposition $P\subset\Sigma$, we first construct hyperbolic mapping tori 
\[
T_\psi=\Sigma\times[0,1]/(x,0)\sim(\psi(x),1)
\]
with the property that the multicurve $P\times\{0\}\subset\Sigma\times\{0\}$ is a very short geodesic link. In particular, the infinite cyclic covering of $T_\psi$ (see Figure \ref{fig:figure2}) is a hyperbolic structure ${\hat T}_\psi$ on $\Sigma\times\mb{R}$ where the multicurves $\psi^nP\times\{n\}\subset\Sigma_n:=\Sigma\times\{n\}$ are very short geodesic links for every $n\in\mb{Z}$. 

\begin{figure}[h]
\begin{center}
\begin{overpic}{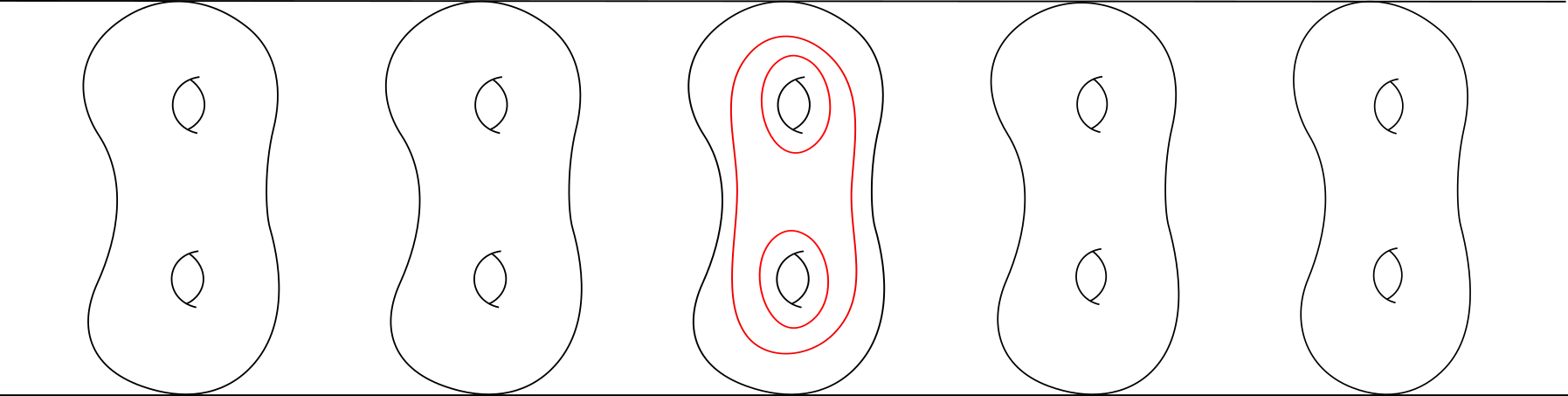}
\put (-1,12) {${\hat T}_\psi=\Sigma\times\mb{R}$}
\put (42,-3) {$\Sigma_j=\Sigma\times\{j\}$}
\put (42,12) {$P_j=\psi^j\times\{j\}$}
\put (73,12) {$[\Sigma_i,\Sigma_{i+1}]$}
\end{overpic}
\end{center}
\caption{Infinite cyclic covering.}
\label{fig:figure2}
\end{figure}


\begin{dfn}[Mapping Class with a Short Pants Decomposition]
Let $\eta>0$ and let $P$ be a pants decomposition of $\Sigma$. If $\psi\in\text{\rm Mod}(\Sigma)$ is a mapping class such that its mapping torus 
\[
T_\psi\coloneqq\left(\Sigma\times[0,1]/(x,0)\sim(\psi x,1)\right)
\]
has a hyperbolic {{metric}} with respect to which $P\times\{0\}\subset T_\psi$ is a geodesic link of length at most $\eta$, then we call $\psi$ a {\em pseudo-Anosov mapping class with a short pants decomposition $P$ of length at most $\eta$}. 
\end{dfn}

These objects are abundant. Concretely, we have the following.

\begin{lem}
\label{pA with short pants}
For every pants decomposition $P\subset\Sigma$ and $\ep>0$, there exists a pseudo-Anosov mapping class $\psi$ with a short pants decomposition $P$ of length at most $\ep$.
\end{lem}
 
\begin{proof}
Let $\phi\in\text{\rm Mod}(\Sigma)$ be a mapping class such that  
\[
(\Sigma\times[0,1],P\times\{0\}\sqcup\phi P\times\{1\})
\]
is pared acylindrical. For example, one can choose $\phi$ to be a large power of any pseudo-Anosov. Consider the convex core $Q$ of the maximally cusped structure $Q(P,\phi P)$. The boundary $\partial Q$ consists of totally geodesic hyperbolic three punctured spheres that are paired according to $\phi$. We glue them together isometrically as prescribed by the pairing. The glued manifold is a finite volume hyperbolic 3-manifold diffeomorphic to 
\[
T_\phi-P\times\{0\}=\left(\Sigma\times[0,1]/(x,0)\sim(\phi x,1)\right)- P\times\{0\}.
\]
The curves in $P\times\{0\}$ represent rank two cusps. Observe that for each $\alpha\in P$ any choice of corresponding boundary torus $\partial\mb{T}(\alpha)\subseteq T_\phi-P\times\{0\}$ has a preferred meridian $m_\alpha$ (given as the boundary of an essential disc in $\mb{T}(\alpha)\subseteq T_\phi$) and a preferred longitude $l_\alpha$ (given as a curve in the fiber $\Sigma\times\{0\}\subseteq T_\phi$ that is parallel to $\alpha$). 

If we perform Dehn surgeries with slopes $m_\alpha+kl_\alpha$, the resulting closed manifold will be diffeomorphic to the mapping torus $T_\psi$, where $\psi=\phi\delta_P^k$ and $\delta_P$ is a Dehn multitwist about the pants decomposition $P$. By Thurston's Hyperbolic Dehn Surgery~\cite[Chapter~E.6]{BP92}) (or by invoking Theorems~\ref{hk surgery} and~\ref{deformation}), if $k$ is large enough, then the resulting manifold carries a hyperbolic metric for which the core curves of the added solid tori are very short geodesics. Hence, for $k$ large enough, $\psi$ is a pseudo-Anosov mapping class with a short pants decomposition $P$ of length at most $\ep$.
\end{proof}


\subsection{The criterion}
We now describe some criteria (Propositions \ref{approximation4} and \ref{approximation5}) to copy and paste the geometry of a hyperbolic mapping torus with a short pants decomposition into the collar geometry of maximally cusped handlebodies $H_g$ and trivial bundles $\Sigma\times[0,1]$. These structures will automatically satisfy the filling criterion for a suitable choice of pants decompositions (Proposition~\ref{pa examples}). 

\begin{pro}
\label{approximation4}
There exists $\eta>0$ with the following property. 
For every $K>0$ and every pseudo-Anosov mapping class $\psi$, there exists an $m=m(\psi,K)>0$ such that the following holds. Let $P$ be a pants decomposition that makes $\psi$ a pseudo-Anosov mapping class with a short pants decomposition of length at most $\eta$. Define $P_{-n}:=\psi^{-n}P$ for $n\in\mb{N}$. Suppose that for some $n\ge m$ we have
\[
d_{\mc{C}}(P_n,\mc{D})\ge d_{\mc{C}}(P,P_n)+d_{\mc{C}}(P,\mc{D})-K.
\]
Consider the maximally cusped structure $N_n=H(P_n)$ on $\textrm{int}(H_g)$. Identify a collar of $\Sigma=\partial H_g$ with $\Sigma\times[0,2]$ where $\partial H_g=\Sigma\times\{0\}$. Then the geodesic realization of $P_{n-1}\times\{1\}$ in $N_n$ is isotopic to $P_{n-1}\times\{1\}$ and has length at most $2\eta$. 
\end{pro} 

We will prove Proposition \ref{approximation4} at the end of the section. The idea is that if $n$ is large enough then the geometry of $N_n$ near the boundary of its convex core
looks like the geometry of the region $[\Sigma_0,\Sigma_n]\subset{\hat T}_\psi$ and there the geodesic realization of $P_{n-1}$ lies as a short geodesic link on the level surface $\Sigma_{n-1}$ (see Figure \ref{fig:figure3}).

\begin{figure}[h]
\begin{center}
\begin{overpic}{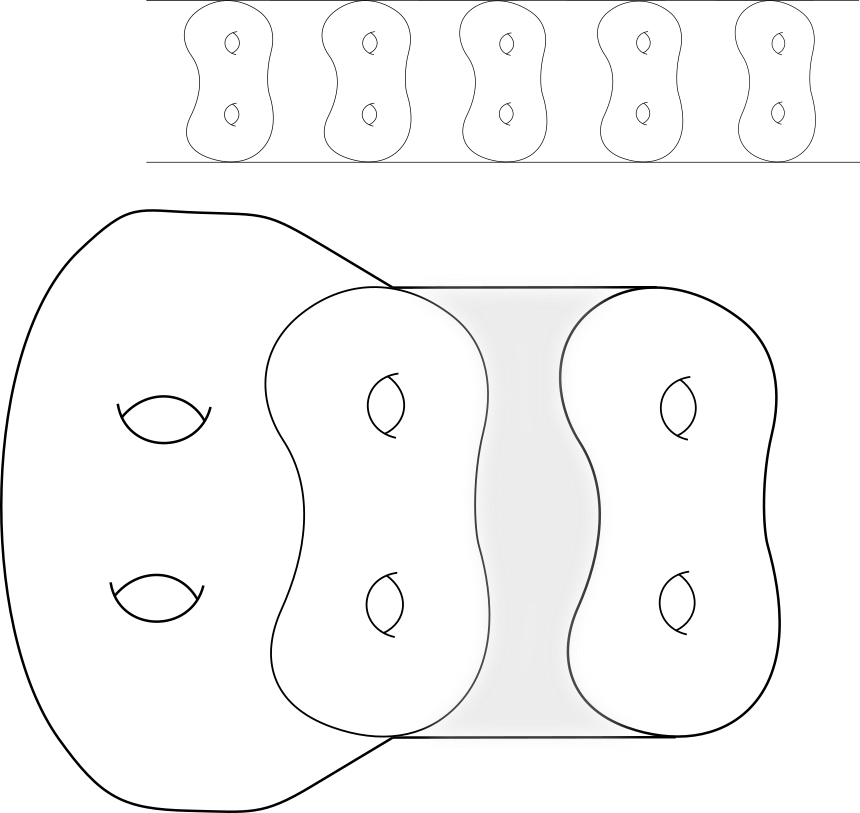}
\put (-22,32) {$H(P_n)$}
\put (5,80) {${\hat T}_\psi$}
\put (60,68) {$\downarrow$}
\put (45,80) {$[\Sigma_0,\Sigma_n]$}
\end{overpic}
\end{center}
\caption{Collars handlebodies.}
\label{fig:figure3}
\end{figure}

Similarly, we will also use the following variation (see also Figure \ref{fig:figure4}):

\begin{pro}
\label{approximation5}
There exists $\eta>0$ with the following property.
For every $K>0$ and every two pseudo-Anosov mapping class $\phi$ and $\psi$,
there exists an $m=m(\phi,\psi,K)>0$ such that the following holds.
 Let $P$ and $R$ be a pants decomposition that makes $\phi$ and $\psi$, respectively, pseudo-Anosov mapping class with a short pants decomposition of length at most $\eta$.
Define $P_n,R_n:=\psi^nP,\phi^nR$ for $n\in\mb{Z}$. Suppose that for some $n\ge m$ we have
\[
d_{\mc{C}}(P_{-n},R_n)\ge d_{\mc{C}}(P_{-n},P)+d_{\mc{C}}(P,R)+d_{\mc{C}}(R,R_n)-K.
\]
Consider the maximally cusped structure $Q_n=Q(P_{-n},R_n)$ on ${\textrm{int}}(\Sigma\times[0,3])$. Then the geodesic realization of $P_{-n+1}\times\{1\}\sqcup R_{n-1}\times\{2\}$ in $Q$ is isotopic to $P_{-n+1}\times\{1\}\sqcup R_{n-1}\times\{2\}$ and has length at most $2\eta$.
\end{pro}


\begin{figure}[h]
\begin{center}
\begin{overpic}{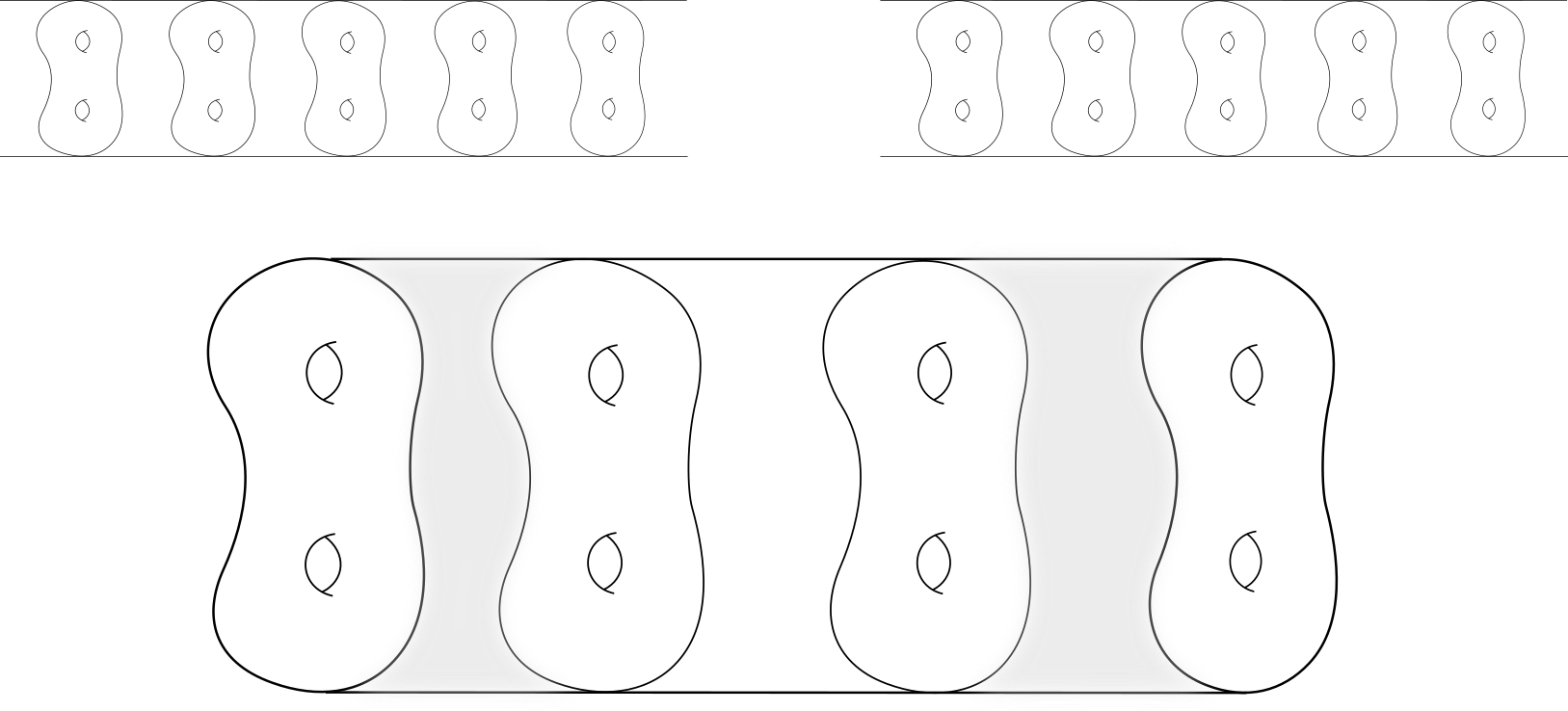}
\put (-5,15) {$Q(P_{-n},R_n)$}
\put (-5,38) {${\hat T}_\psi$}
\put (100,38) {${\hat T}_\phi$}
\put (28,32) {$\downarrow$}
\put (68,32) {$\downarrow$}
\put (23,38) {$[\Sigma_{-n},\Sigma_0]$}
\put (61,38) {$[\Sigma_0,\Sigma_n]$}
\end{overpic}
\end{center}
\caption{Collars I-bundles.}
\label{fig:figure4}
\end{figure}

Having in mind the application to the random 3-manifold setup, we combine the two propositions in the following criterion which produces many examples of pants decompositions that satisfy the assumptions of Proposition \ref{model}:

\begin{pro}
\label{pa examples}
There exists $\eta>0$ with the following property. Let $o\in\T$ be a basepoint such that $\Upsilon(o)\in\mc{D}$. Let $\psi,\phi$ be pseudo-Anosov mapping classes with short pants decompositions $P,R$ of length at most $\eta$. Let $l_\psi,l_\phi:\mb{R}\to\T$ be their Teichmüller geodesics. For every $\delta>0$ there exist $B=B(\delta)$ and $h=h(\delta,\psi,\phi)$ such that the following holds.

Let $f$ be a gluing map. Set $S_0:=o,S_5:=fo$. Suppose that there are four points $S_1,S_2,S_3,S_4\in[o,fo]$ with the following properties:
\begin{enumerate}[\rm (i)]
\item{Along the Teichmüller segment $[o,fo]$ we have $S_j<S_{j+1}$ and their distance is at least $h$.}
\item{$[S_1,S_2]$ and $[S_3,S_4]$ $\delta$-fellow travel $l_\psi$ and $l_\phi$ respectively.}
\item{We have $d_{\mc{C}}(\Upsilon[S_1,S_2],\mc{D})\ge h$ and $d_{\mc{C}}(\Upsilon[S_3,S_4],f\mc{D})\ge h$.}
\end{enumerate}

Then, there exist $a,b\in\mb{Z}$ such that 
\[
(P_1,P_2,P_3,P_4)=(\psi^{a-1}P,\psi^aP,\phi^bR,\phi^{b+1}R)
\]
satisfy the {filling criterion} with parameter $\eta$ for the gluing map $f$ and
\[
d_{\mc{C}}(P_2,\Upsilon[S_1,S_2]),d_{\mc{C}}(P_3,\Upsilon[S_3,S_4])\le B.
\]
\end{pro}

In Section~\ref{proof1}, we will use Proposition \ref{pa examples} to detect whether, for a random gluing map $f$, $M_f$ can be described as one of the examples we constructed simply by staring at the geometry of the Teichmüller segment $[o,fo]$, where $o\in\T$ is some basepoint that we will fix once and for all.

\subsection{The proof of Proposition \ref{pa examples}}
Assuming Propositions~\ref{approximation4} and\ref{approximation5}, we give a proof of Proposition~\ref{pa examples}. 
\begin{proof}[Proof of Proposition \ref{pa examples}]
Let $P_n,R_n:=\psi^nP,\phi^nR$. 

As $\psi,\phi$ are pseudo-Anosov maps, they act on Teichmüller space as hyperbolic isometries with respect to the Teichmüller metric: There are $\psi$-, $\phi$-invariant Teichmüller geodesics $l_\psi,l_\phi:\mb{R}\to\T$, parametrized by arc length, on which $\phi,\psi$ act as $\psi l_\psi(t)=l_\psi(t+L(\psi))$ and $\phi l_\phi(t)=l_\phi(t+L(\phi))$. The quantities $L(\psi),L(\phi)>0$ are the translation lengths of $\psi,\phi$.

Let $L:=\max\{L(\psi),L(\phi)\}$ be the maximum of the translation length of $\psi,\phi$. Let $h:=2nL$, with $n$ much larger than $m(\psi),m(\phi),m(\psi,\phi)$ as given by Propositions \ref{approximation4} and \ref{approximation5}.

As the geodesic representatives of $P_n,R_n$ are very short in the infinite cyclic coverings ${\hat T}_\psi,{\hat T}_\phi$ of the hyperbolic mapping tori $T_\psi,T_\phi$, it follows from work of Minsky \cite{M00} combined with the structural results of Masur and Minsky \cite{MasurMinsky:I}, \cite{MasurMinsky:II},  that there exists a uniform constant $B$, only depending on $\Sigma$, and $t,r\in\mb{R}$ such that 
\[
d_{\mc{C}}(P_j,\Upsilon l_\psi(t+jL(\psi))),d_{\mc{C}}(R_i,\Upsilon l_\phi(r+iL(\phi)))\le B
\] 
for every $j,i\in\mb{Z}$. Up to reparametrization, we can assume $t=r=0$ and simplify the notation by introducing 
\[
E:=l_\psi(0),F:=l_\phi(0)
\]
and 
\[
E_j:=l_\psi(jL(\psi))=\psi^jE,F_i:=l_\phi(iL(\phi))=\phi^iF.
\] 
By the above discussion $d_{\mc{C}}(P_j,\Upsilon(E_j)),d_{\mc{C}}(R_i,\Upsilon(F_i))\le B$ for every $i,j\in\mb{Z}$.

Notice that the $E_j$ and $F_i$ are linearly aligned along $l_\psi$ and $l_\phi$.

By assumption (ii) of Proposition~\ref{pa examples} and the choice of $h\ge 2nL(\psi),2nL(\phi)$, we can find $a,b\in\mb{Z}$ such that the following hold:
\begin{equation}\label{eq:deltaNbh}
\begin{split}
\bullet\;&\text{$l_\psi[(a-n)L(\psi),(a+n)L(\psi)]$ is in a $\delta$-neighborhood of $[S_1,S_2]$.}\\
\bullet\;&\text{$l_\phi[(b-n)L(\phi),(b+n)L(\phi)]$ is in a $\delta$-neighborhood of $[S_3,S_4]$.}
\end{split}
\end{equation}
Again, up to renumbering, we can assume $a=b=0$. 

By~\eqref{eq:deltaNbh} we find surfaces $X_j\in[S_1,S_2],Y_i\in[S_3,S_4]$ that are $\delta$-close to $E_j\in l_\psi[-nL(\psi),nL(\psi)],F_i\in l_\phi[-nL(\phi),nL(\phi)]$, respectively. Furthermore, up to increasing a little bit $\delta$ in a uniform way if necessary, we can also assume that the $X_j$ and $Y_i$ are linearly aligned along $[S_1,S_2]$ and $[S_3,S_4]$ (as the same property holds for $E_j,F_i$). 

As the projection $\Upsilon$ is Lipschitz, we have that $\Upsilon(X_j),\Upsilon(Y_i)$ are uniformly close (in terms of $\delta$) to $\Upsilon(E_j),\Upsilon(F_i)$ and the latter are uniformly close to $P_j,R_i$. 

We now show that 
\[
(P_1,P_2,P_3,P_4)=(P_{-1},P,R,R_1)
\]
satisfies the filling criterion with parameter $2\eta>0$, where $\eta$ is the minimum of the $\eta$s in Propositions~\ref{approximation4} and~\ref{approximation5}.

We first make sure that the eight building blocks \eqref{5pieces} and~\eqref{3pieces} are pared acylindrical
(and hence admit maximally cusped structures).
As the maps $\psi,\phi$ are pseudo-Anosov, the pared acylindrical assumptions are automatically satisfied for $(P_1,P_2)$ and $(P_3,P_4)$. In order to check the others, we use Lemma~\ref{lem:d>=3impliesparedacyl}.

Let us check that $d_{\mc{C}}(P_1,\mc{D}),d_{\mc{C}}(P_2,\mc{D})\ge 3$: We know that $P_1,P_2$ are uniformly close to $\Upsilon(E_{-1}),\Upsilon(E)$ which, in turn, are $\delta$-close to $[S_1,S_2]$. By the assumption (iii), we have $d_{\mc{C}}(\Upsilon[S_1,S_2],\mc{D})\ge h$. Thus, provided that $h$ is large enough, the claim holds. The same argument shows that $d_{\mc{C}}(P_3,f\mc{D}),d_{\mc{C}}(P_4,f\mc{D})\ge 3$ provided that $h$ is sufficiently large.

Let us now check that $d_{\mc{C}}(P_1,P_4),d_{\mc{C}}(P_2,P_3)\ge 3$: Consider $(P_2,P_3)$ first, the argument for the other pair $(P_1,P_4)$ is completely analogous. We know that $P_2,P_3$ are uniformly close to $\Upsilon(E),\Upsilon(F)$. By \eqref{eq:deltaNbh}, we have that $E,E_n=\psi^nE,F_{-n}=\phi^{-n}F,F$ are $\delta$ close to points $X,X_n,Y_{-n},Y$ that appear in this order along the segment $[o,fo]$ (as described above). So, in order to prove that $d_{\mc{C}}(P_2,P_3)\ge 3$ it is enough to show that $d_{\mc{C}}(\Upsilon(X),\Upsilon(Y))$ is large. As the restriction of $\Upsilon$ to a Teichmüller segment is a uniform unparametrized quasi-geodesic, we have
\begin{align*}
d_{\mc{C}}(\Upsilon(X),\Upsilon(Y))\gtrsim d_{\mc{C}}(\Upsilon(X),\Upsilon(X_n))+d_{\mc{C}}(\Upsilon(X_n),\Upsilon(Y_{-n}))\\
+d_{\mc{C}}(\Upsilon(Y_{-n}),\Upsilon(Y)).
\end{align*} 
Here the symbol $\gtrsim$ means greater or equal up to a uniform additive constant.

The terms $d_{\mc{C}}(\Upsilon(X),\Upsilon(X_n)),d_{\mc{C}}(\Upsilon(Y_{-n}),\Upsilon(Y))$ are comparable with $n$ times the translation distances of $\psi,\phi$ on the curve graph, so they are as large as we want provided that $n$ is large enough.

Thus, the pared acylindrical assumptions are satisfied and the eight pieces admit maximally cusped structures. We now check the length-isotopy properties as given by the filling criterion. 

Consider the maximally cusped structures $N_1=H(P),N_2=H(R)$ on $H_g$ and $Q=Q(P_{-1},R_1)$ on $\Sigma\times[0,3]$. Identify a collar of the boundary $\Sigma=\partial H_g$ with $\Sigma\times[0,2]$ in such a way that $\partial H_g=\Sigma\times\{0\}$. Then 

\begin{enumerate}
\item{\label{((1))}The collection $P_{-1}\times\{1\}$ is isotopic to its geodesic realization in $N_1$ and has length at most $\eta$.}
\item{\label{((2))}The collection $R_1\times\{1\}$ is isotopic to its geodesic realization in $N_2$ and has length at most $\eta$.}
\item{\label{((3))}The collection $P\times\{1\}\sqcup R\times\{2\}$ is isotopic to its geodesic realization in $Q$ and has length at most $\eta$.}
\end{enumerate} 

We use Proposition~\ref{approximation4} to check the~\eqref{((1))} and~\eqref{((2))}. We use Proposition~\ref{approximation5} to check~\eqref{((3))}. 

According to Proposition \ref{approximation4} applied to $P_{-n}$ for~\eqref{((1))} and to $R_n$ for~\eqref{((2))}, to prove~\eqref{((1))} and~\eqref{((2))} we only need that:
\begin{enumerate}[(a)]
\item{$n\ge m(\psi),m(\phi)$.}
\item{$d_{\mc{C}}(P,\mc{D})\ge d_{\mc{C}}(P_{-n},P)+d_{\mc{C}}(P_{-n},\mc{D})-K$.}
\item{$d_{\mc{C}}(R,f\mc{D})\ge d_{\mc{C}}(R,R_n)+d_{\mc{C}}(R_n,f\mc{D})-K$.}
\end{enumerate}

According to Proposition \ref{approximation5}, for the last point we need:
\begin{enumerate}[(a')]
\item{$n\ge m(\psi,\phi)$.}
\item{$d_{\mc{C}}(P_{-1},R_1)\ge d_{\mc{C}}(P_{-1},P_n)+d_{\mc{C}}(P_n,R_{-n})+d_{\mc{C}}(R_{-n},R_1)-K$.}
\end{enumerate}

As $P_j,R_i$ are uniformly close to $\Upsilon(X_j),\Upsilon(Y_i)$, it is enough to check the analogue conditions enumerated above for $\Upsilon(X_j),\Upsilon(Y_i)$.

Properties (b) and (c) follow from Lemma \ref{far from disks}: We only discuss (b) as (c) is completely analogous. Again, we uniformly approximate $P_j,R_i$ with $\Upsilon(X_j),\Upsilon(Y_i)$ with $X_j\in[S_1,S_2]$ and $Y_i\in[S_3,S_4]$. Consider the restriction of $\Upsilon$ to the Teichmüller segment $[o,fo]$. By Lemma \ref{far from disks} there exists a uniform $h_0$ such that if $d_{\mc{C}}(S,\mc{D})\ge 2h_0$ for some $S\in[o,fo]$, then for every $S'>S$ we have 
\[
d_{\mc{C}}(\Upsilon(S'),\mc{D})\ge d_{\mc{C}}(\Upsilon(S'),\Upsilon(S))+d_{\mc{C}}(\Upsilon(S),\mc{D})-h_0.
\]

As we assumed that $d_{\mc{C}}(\Upsilon[S_1,S_2],\mc{D})\ge h$, if $h$ is larger than $h_0$, then the above inequality holds for $S=X_{-n}<X_1=S'$. Up to increasing uniformly $h_0$, the same type of inequality holds for $P_{-n},P_1$.   

Property (b') follows from the fact that the restriction of $\Upsilon$ to the Teichmüller geodesic $[o,fo]$ is an unparametrized uniform quasi-geodesics and, along $[o,fo]$, we have $X<X_n<Y_{-n}<Y$, so
\begin{align*}
d_{\mc{C}}(\Upsilon(X),\Upsilon(Y))\gtrsim d_{\mc{C}}(\Upsilon(X),\Upsilon(X_n))&+d_{\mc{C}}(\Upsilon(X_n),\Upsilon(Y_{-n}))\\
&+d_{\mc{C}}(\Upsilon(Y_{-n}),\Upsilon(Y)). \qedhere
\end{align*}
\end{proof}


\subsection{The proof of Proposition~\ref{approximation4}}
We adapt a strategy of Namazi and Souto \cite{NS09} and Brock, Minsky, Namazi, and Souto~\cite{BMNS16} and argue by contradiction.

We briefly outline the argument: Suppose that for large $n$ we have that
\begin{equation}\label{eq:assumtiontowardscont}
d_{\mc{C}}(P_n,\mc{D})\ge d_{\mc{C}}(P_n,P)+d_\mc{C}(P,\mc{D})-K,
\end{equation}
but the geodesic realization of the pants decomposition $P_{n-1}$ in the maximally cusped handlebody $N_n=H(P_n)$ does not satisfy the conclusion of the proposition. We show that the sequence of hyperbolic manifolds $N_n$ 
converges to a hyperbolic manifold $Q$ diffeomorphic to $\Sigma\times(-\infty,0)$ on which the curves $\psi^jP\times\{j\}$ are geodesic links of length at most $\eta$ for $j<0$ and the curves in $P\times\{0\}$ represent rank one cusps. Convergence is essentially in the sense of the algebraic and geometric convergence.
This implies the following. For fixed $T>0$, if $n$ is large enough, then there is a 2-bilipschitz embedding $\phi_n:\Sigma\times[-T,-1/T]\to N_n$ such that the restriction of $\phi_n$ to $\Sigma\times\{-1\}$ is homotopic to the composition of the inclusion $f_n:\Sigma=\partial H_g\to H_g$ with $\psi^n:\Sigma\to\Sigma$. As the map $\phi_n$ is 2-bilipschitz, a small variation of stability of quasi-geodesics (Lemma \ref{isotopy}) gives that $\phi_n(\psi^{-1}P\times\{1\})$ is isotopic to its geodesic realization in $N_n$ which has length at most $2\eta$. Notice that $\phi_n(\psi^{-1}P\times\{-1\})$ is homotopic to $f_n(\psi^{n-1}P)$. A topological argument then shows that $\phi_n(\psi^{-1}P\times\{-1\})$ is isotopic to $\psi^{n-1}P\subset\partial H_g$. Thus, we have shown that, for $n$ large, the geodesic representatives of $\psi^{n-1}P\subset\partial H_g$ in $N_n$ are isotopic to $\psi^{n-1}P$ and have length at most $2\eta$. This contradicts the initial assumptions.

For convenience, the proof, which will take the rest of the section, is divided into small steps. 
In the proof we use the general fact, which we prove in Appendix~\ref{appendix}. 

\begin{lem}
\label{isotopy}
There exists $\eta<\eta_3/2$ such that the following holds. Let $\mb{T}_{\eta_3}$ be a Margulis tube with core geodesic $\alpha$ of length $l(\alpha)\le\eta$. Suppose that there exists a $2$-bilipschitz embedding of the tube in a hyperbolic 3-manifold $f:\mb{T}_{\eta_3}\to M$. Then $f(\alpha)$ is homotopically non-trivial and it is isotopic to its geodesic representative within $f(\mb{T}_{\eta_3})$.
\end{lem}

\begin{proof}[Proof of Proposition~\ref{approximation4}] {We choose the constant $\eta$ as the minimum of the constant from Lemma~\ref{isotopy}
and the constant $\eta_{{\rm drill}}$ for $K=2$ from Theorem~\ref{drilling thm}.} Let $\psi$ be a pseudo-Anosov mapping class with a short pants decomposition $P$ of length at most $\eta$.
For a fixed $K>0$,
assume towards a contradiction that there exist arbitrarily large $n$ for which~\eqref{eq:assumtiontowardscont} holds and either the geodesic realization of $P_{n-1}\times\{1\}$ in $N$ is not isotopic to $P_{n-1}\times\{1\}$ or it is isotopic, but has length strictly larger than $2\eta$.

We begin by discussing the algebraic convergence of the manifolds $N_n$.

Let $\rho_n:\pi_1(\partial H)\to{\rm PSL}_2(\mb{C})$ be the composition of the holonomy $\pi_1(H)\to{\rm PSL}_2(\mb{C})$ of $N_n$ with the map $\pi_1(\partial H)\to\pi_1(H)$ induced by the inclusion of the boundary $\partial H\subset H$. We consider the sequence of representations $\theta_n:=\rho_n\psi^n_*$.

\begin{lem}
\label{asymptotically faithful}
For every $\gamma\in\pi_1(\Sigma)$ there exists $n_\gamma>0$ such that for every $n\ge n_\gamma$ we have $\gamma\not\in{\rm ker}(\theta_n)$.
\end{lem}

\begin{proof}
We can assume that $\gamma$ is primitive. By the Loop Theorem, if $\gamma\in{\rm ker}(\theta_n)$, or, equivalently, $\psi^n(\gamma)\in{\rm ker}(\rho_n)$, then, after representing $\gamma$ as a 4-valent graph on $\Sigma$, there exists an essential simple cycle $\alpha_n\subset\psi^n(\gamma)$ which is also compressible. Since there are only finitely many simple cycles on $\gamma$, we can assume that $\alpha_n=\psi^n(\alpha)$ with $\alpha\subset\gamma$ a fixed simple cycle. Therefore, $\alpha\in{\rm ker}(\theta_n)$ which means $\psi^n(\alpha)\in\mc{D}$. However, 
\begin{align*}
d_{\mc{C}}(\psi^n(\alpha),\mc{D}) &\ge d_{\mc{C}}(\psi^n(P),\mc{D})-d_{\mc{C}}(\psi^n(P),\psi^n(\alpha))\\
 &=d_{\mc{C}}(\psi^n(P),\mc{D})-d_{\mc{C}}(P,\alpha)\\
 &\ge d_{\mc{C}}(\psi^n(P),P)-K\to\infty.\qedhere
\end{align*}
\end{proof}

We now obtain algebraic convergence of the sequence $\theta_n$ in the following sense.

\begin{lem}
\label{algebraic convergence}
Up to subseqences and conjugating, $\theta_n$ algebraically converges to a discrete and faithful representation $\theta:\pi_1(\Sigma)\to{\rm PSL}_2(\mb{C})$.
\end{lem}
In different language, Lemma~\ref{algebraic convergence} states that $\theta_n$ has a subsequence that converges in the character variety.
\begin{proof}
The proof is an application of \cite[Theorem 3.1]{BMNS16}. We briefly recall the statement in our setup. Let $\theta_n:\pi_1(\Sigma\times[-1,1])\to{\rm PSL}_2(\mb{C})$ be an {\em eventually faithful} sequence of representations, that is, a sequence of representations with discrete image satisfying the conclusion of Lemma \ref{asymptotically faithful}. Suppose that 
\begin{itemize}
\item{Each $\theta_n$ maps every curve in $P\times\{1\}$ to a parabolic isometry.}
\item{There exists a sequence of simple closed curve $\zeta_n\subset\Sigma\times\{-1\}$ converging to a filling lamination $\lambda$ and such that $\ell_{\theta_n}(\zeta_n)$ remains bounded.}
\end{itemize} 
Then there exists a subsequence that converges algebraically to a discrete and faithful representation $\theta$. 
(In fact \cite[Theorem 3.1]{BMNS16} allows for $\zeta_n$ to be certain multicurves called markings, but we only apply the theorem for simple closed curves.)

In our situation we can choose $\zeta_n:=\psi^{-n}\delta$, where $\delta\in\mc{D}$ is a fixed disk-bounding curve, so, by construction, $\ell_{\theta_n}(\zeta_n)=\ell_{\rho_n}(\delta)=0$. We further note that for every disk bounding curve $\delta\in\mc{D}$, the sequence $\psi^{-n}\delta\in\mc{PML}$ converges to the projective class  $\lambda\in\mc{PML}$ of the unstable lamination of $\psi$, which is a filling lamination. 
Hence, we can apply \cite[Theorem 3.1]{BMNS16}.
\end{proof}

We fix once and for all a basepoint $o\in\mb{H}^3$. 

Let $Q:=\mb{H}^3/\theta(\pi_1(\Sigma))$. Let $y\in Q$ be the projection of $o$ to $Q$. By work of Thurston \cite[Chapter 8]{ThNotes} and Bonahon~\cite{Bo86}, the manifold $Q$ is diffeomorphic to $\Sigma\times\mb{R}$. Let $f:\Sigma\to Q$ be a marking for $Q$ inducing $\theta$ at the level of the holonomy. We now compute the end invariants of $Q$ in terms of the reference marking $f:\Sigma\to Q$. For a comprehensive discussion of ends and end invariants we refer to Section 2 of \cite{M10}.

We already know that $Q$, has rank one cusps at the curves of $P$ since, by construction as a limit of the $\theta_n$, $\theta$ maps every curve in $P$ to a parabolic isometry.

We now compute the other end invariants. In order to do so, we discuss the geometric convergence of the sequence $N_n$. 

Consider the discrete groups $\Gamma_n:=\theta_n(\pi_1(\Sigma))$ and the hyperbolic 3-manifolds $N_n=\mb{H}^3/\Gamma_n$. Let $x_n$ be the projection of $o$ to $N_n$. 

As $\inf_n\{{\rm inj}_{x_n}(N_n)\}>0$, we have that, up to subsequences, the sequence of groups $\Gamma_n$ converges in the Chabauty topology to a discrete group $\Gamma$ (see Chapter E of \cite{BP92}). This means that 
\begin{enumerate}
\item{For every $\gamma\in\Gamma$ there exists a sequence $\gamma_n\in\Gamma_n$ such that $\gamma_n\to\gamma$.}
\item{If $\gamma_{n_j}\in\Gamma_{n_j}$ converges to $\gamma$ in ${\rm PSL}_2(\mb{C})$, then $\gamma\in\Gamma$.}
\end{enumerate}

In particular, for our sequence $\Gamma_n$ we have $\theta(\pi_1(\Sigma))<\Gamma$ by the second property. Consider $N:=\mb{H}^3/\Gamma$ and let $x\in N$ be the projection of $o\in\mb{H}^3$. As $\theta(\pi_1(\Sigma))<\Gamma$, we have a covering projection $\pi:Q\to N$.

It is well-known that convergence of discrete groups $\Gamma_n\to\Gamma$ in the Chabauty topology implies convergence in the pointed geometric topology of the associated hyperbolic manifolds $(N_n,x_n)\to(N,x)$ (see Theorem E.1.13 of \cite{BP92}). This means that given $R$ and $L>1$, for every $n$ large enough there exists a smooth map $\phi_n:B(x,R)\subset N\to B(x_n,R)\subset N_n$ which is $L$-bilipschitz and maps $\phi_n(x)=x_n$. 

Suppose that $f(\Sigma)\subset B(y,R)\subset Q$ so that $\pi f(\Sigma)\subset B(x,R)\subset N$.

{\bf Fact}. We can assume that $\phi_n\pi f$ induces $\theta_n$ at the level of the holonomy. 

In order to explain this, we briefly recall the construction of the maps $\phi_n$ as in the proof of Theorem E.1.13 in \cite{BP92}: Consider $B(o,R)\subset\mb{H}^3$. For every $\sigma>0$, define the (finite) set
\[
\Gamma(R+\sigma):=\{\gamma\in\Gamma\left|\gamma B(o,R+\sigma)\cap B(o,R+\sigma)\neq\emptyset\right.\}=\{\gamma_1,\cdots,\gamma_t\}.
\]
Let $\{\alpha_1,\cdots,\alpha_m\}$ be a set of generators for $\pi_1(\Sigma)$. By our assumption $\pi f(\Sigma)\subset B(x,R)$, we have $\{\theta(\alpha_1),\cdots,\theta(\alpha_m)\}\subset\Gamma(R)$. By convergence in the Chabauty topology, for every $\gamma_j\in\Gamma(R)$ we can find $\gamma_j^n\in\Gamma_n$ such that $\gamma_j^n\to\gamma_j$ as $n\to\infty$. Notice that, by algebraic convergence $\theta_n\to\theta$, for each $\theta(\alpha_j)\in\Gamma(R)$ we can choose $\theta_n(\alpha_j)$ as an approximating element.

Lemma E.1.16 of \cite{BP92} shows that, up to slightly increasing $R$ and choosing $\sigma$ appropriately small, for every $n$ large enough we have 
\[
\Gamma_n(R+\sigma)=\{\gamma\in\Gamma_n\left|\gamma B(o,R+\sigma)\cap B(o,R+\sigma)\neq\emptyset\right.\}=\{\gamma_1^n,\cdots,\gamma_t^n\}.
\] 

In \cite{BP92} it is then explained how to construct a sequence of smooth embeddings ${\hat \phi}_n:B(o,R)\to B(o,R+\sigma)$ such that
\[
{\hat \phi}_n(\gamma_j p)=\gamma_j^n{\hat \phi}_n(p)
\] 
and ${\hat \phi}_n$ converges to the natural inclusion in the smooth topology.

Such a map ${\hat \phi}_n$ descends to the desired approximating map $\phi_n:B(x,R)\subset N\to B(x_n,R)\subset N_n$. We are now ready to conclude: Let ${\hat f}:{\hat \Sigma}\to\mb{H}^3$ be the lift of the marking $f$ to the universal covers. We have
\[
{\hat \phi}_n{\hat f}(\alpha_i q)={\hat \phi}_n(\theta(\alpha_j){\hat f}(q))=\theta_n(\alpha_j){\hat \phi}_n({\hat f}(q))
\]
for every generator $\alpha_j$. This shows that $\phi_n\pi f$ induces $\theta_n$ at the level of the holonomy.

By the fact, we have that $\phi_n\pi f$ is homotopic to $f_n\psi^n:\Sigma\to N_n$, where $f_n:\Sigma\to N_n$ is the marking inducing $\rho_n\circ(\pi_1(\partial H_g)\to\pi_1(H_g))$ at the level of the holonomy: Both $f_n\psi^n$ and $\phi_n\pi f$ induce the same map $\theta_n$ at the level of the holonomy. Hence, they induce the same map at the level of fundamental groups. As both source and target are aspherical manifolds, the maps are freely homotopic.  

We use this information to prove the following:

\begin{lem}
\label{end invariants}
The lamination $\lambda$ is not realized in $Q$.
\end{lem}

\begin{proof}
Suppose that $\lambda$ is realized. Since $\psi^{-n}\delta$ converges to $\lambda$, we can also realize $\psi^{-n}\delta$ for every $n$ large enough in a neighborhood of $\lambda$. Thus for every $n$ large enough, the curve $f(\psi^{-n}\delta)$ is homotopic to its geodesic representative $\beta_n$ in $Q$ within $B(y,R)$ for some large $R$. Let $\phi_n:B(x,R)\subset N\to N_n$ be the $L$-bilipschitz approximating map provided by geometric convergence. As $L$ can be chosen arbitrarily close to 1, the curve $\phi_n\pi(\beta_n)$ is not null-homotopic in $N_n$. However, $\pi(\beta_n)$ is homotopic within $B(x,R)$ to $\pi f(\psi^{-n}\delta)$ and $\phi_n\pi f$ is homotopic to $f_n\psi^n$.
Hence, $\phi_n\pi f(\psi^{-n}\delta)$ is homotopic to $f_n(\delta)$ which is null-homotopic in $N_n$. This provides a contradiction and finishes the proof.
\end{proof}

Since $P$ and $\lambda$ together fill the surface $\Sigma$, there is no room for other end invariants. By the solution of the Ending Lamination Conjecture by Minsky \cite{M10} and Brock, Canary, and Minsky \cite{BrockCanaryMinsky:ELC2}, we conclude that $Q$ is the unique hyperbolic structure on $\Sigma\times(-\infty,0)$ with end invariants $(\lambda,P)$. We will also denote such a structure by $Q=Q(\lambda,P)$.

We now show that $Q$ coincides with $N$.

\begin{lem}
\label{geometric convergence}
The covering map $\pi:Q\to N$ is trivial.
\end{lem}


\begin{proof}
We prove that every component $E$ of $Q-\mc{CC}(Q)$ embeds in $N$ under the covering map $\pi$ and distinct components $E,E'$ have disjoint images under $\pi$. Then we argue that this is enough to conclude that $\pi$ is trivial.

By the structure of the relative ends of $Q$, the components of $Q-\mc{CC}(Q)$ correspond 1-to-1 to the components $W$ of $\Sigma-P$.

They are all of the following form: Consider a component $W\subset\Sigma-P$. Notice that $\theta_n(\pi_1(W))=\rho_n(\pi_1(\psi^nW))$ leaves invariant a half space $H_n\subset\mb{H}^3$ which is a lift to $\mb{H}^3$ of the the component of $N_n-\mc{CC}(N_n)$ corresponding to $\psi^nW$. Moreover, every element $\rho_n(\gamma)$ with $\gamma\in\pi_1(H_g)-\pi_1(\psi^nW)$ moves $H_n$ off itself, that is, $H_n$ is {\em precisely invariant} under $\theta_n(\pi_1(W))=\rho_n(\pi_1(\psi^nW))$ in $\Gamma_n$. 

By assumption, the sequence of fuchsian representations $\theta_n:\pi_1(W)\to{\rm PSL}_2(\mb{C})$ of the pair of pants $W$ converges to $\theta:\pi_1(W)\to{\rm PSL}_2(\mb{C})$. It is immediate to check that the limit is also a fuchsian representation, that it preserves a half space $H$ which is the limit of the half spaces $H_n$, and that $\theta_n(\pi_1(W))$ converges to $\theta(\pi_1(W))$ in the Chabauty topology. 

The quotient $E=H/\theta(\pi_1(W))$ corresponds to the $W$-component of $Q-\mc{CC}(Q)$.

{\bf Claim}. The restriction of $\pi$ to a component $E$ of $Q-\mc{CC}(Q)$ is a homeomorphism onto the image.

{\em Proof of claim}. Suppose that there exists $\alpha\in\Gamma-\theta(\pi_1(W))$ such that $\alpha H\cap H\neq\emptyset$. Let $\alpha_n\in\Gamma_n$ be a sequence that approximates $\alpha$. Since $H_n\to H$ and $\alpha_n\to\alpha$ we deduce that $\alpha_n H_n\cap H_n\neq\emptyset$ for $n$ large enough. Therefore, $\alpha_n\in\theta_n(\pi_1(W))$ as every element in $\Gamma_n-\theta_n(\pi_1(W))$ moves $H_n$ off itself. This implies $\alpha\in\theta(\pi_1(W))$ as the peripheral subgroups $\theta_n(\pi_1(W))$ are converging to $\theta(\pi_1(W))$ in the Chabauty topology. As a consequence, the restriction of the projection $\pi$ to $E=H/\theta(\pi_1(W))$ is a homeomorphism onto the image.\qed

{\bf Claim}. If $E,E'$ are distinct components of $Q-\mc{CC}(Q)$, then $\pi(E)\cap\pi(E')=\emptyset$.

{\em Proof of claim}. Suppose that $\pi(E)\cap\pi(E')\neq\emptyset$. Represent the components as $E=H/\theta(\pi_1(W))$ and $E'=H'/\theta(\pi_1(W'))$ where $H,H'\subset\mb{H}^3$ are precisely invariant half-spaces under $\theta(\pi_1(W)),\theta(\pi_1(W'))$ and $W,W'$ are distinct components of $\Sigma-P$. 

As $\pi(E)\cap\pi(E')\neq\emptyset$, there exists $\alpha\in\Gamma$ such that $\alpha H'\cap H\neq\emptyset$. In this case, we must have $\alpha H'=H$: If $\alpha H'\neq H$, the circle at infinity bounding $\alpha\partial H'$, which is contained in the limit set of $\theta(\pi_1(\Sigma))$, would intersect the round disk at infinity bounding $H$ and such a disk is contained in the domain of discontinuity of $\theta(\pi_1(\Sigma))$. 

Therefore, as $\alpha H'=H$, we have $\theta(\pi_1(W))=\alpha\theta(\pi_1(W'))\alpha^{-1}$. Since $\Gamma_n$ converges in the Chabauty topology to $\Gamma$, this implies that $\theta_n(\pi_1(W))=\rho_n(\pi_1(\psi^nW))$ and $\theta_n(\pi_1(W'))=\rho_n(\pi_1(\psi^n W'))$ are also conjugate in $\Gamma_n$ for $n$ large enough. However, this can happen only if $W=W'$.\qed

We are now ready to prove that the projection $\pi:Q\to N$ is an isometry: Consider $\pi(E)\subset N$. If $\pi$ is not an isometry, then, by the first claim, there is a component of the pre-image of $\pi(E)$ not contained in $E$. Such component must intersect a component $E'$ of $Q-\mc{CC}(Q)$ different from $E$: For example, because it contains points with arbitrarily large injectivity radius and there is a uniform upper bound on the injectivity radius at points in $\mc{CC}(Q)$ (see Canary \cite{C96}). But distinct components $E,E'$ map to disjoint sets by the second claim.  
\end{proof}

Thus, we have shown that the sequence of $(N_n,x_n)$ converges geometrically to $(Q,y)$. The manifold $Q(\lambda,P)$ looks very similar to the infinite cyclic covering ${\hat T}_\psi$ of the mapping torus $T_\psi$. We do not need to make this assertion very precise as, for our purposes, it is sufficient to prove the following:

\begin{lem}
\label{limit unlinked}
There exists an identification of $Q$ with $\Sigma\times(-\infty,0)$ that induces $\theta:\pi_1(\Sigma)\to{\rm PSL}_2(\mb{C})$ at the level of the holonomy and such that $\psi^{-1}P\times\{-1\}$ for $j<0$ is a geodesic link of length at most $2\eta$.
\end{lem}


\begin{proof}[Proof of Lemma \ref{limit unlinked}]
As $\psi$ is a pseudo-Anosov mapping class with a short pants decomposition $P$ of length at most $\eta$  (by our choice of $\eta$ at the beginning of the proof), the mapping torus 
\[
T_{\psi}=\Sigma\times[0,1]/(x,0)\sim(\psi(x),1)
\]
admits a hyperbolic metric for which the curves $P\times\{0\}$ on the fiber $\Sigma\times\{0\}$ form a geodesic link of length at most $\eta$. Let ${\hat T}_\psi=\Sigma\times\mb{R}$ be the infinite cyclic covering of $T_\psi$. Concretely, $T_\psi={\hat T}_\psi/\Psi$ where
\[
\Psi:{\hat T}_\psi\to{\hat T}_\psi
\]
is the isometry $\Psi(x,t):=(\psi(x),t+1)$ generating the deck group of the covering.

We now show that $Q=Q(\lambda,P)$ is geometrically very close to the region $\Sigma\times(-\infty,0]-P\times\{0\}\subset{\hat T}_\psi$. As the curves in $\psi^{-1}P\times\{-1\}\subset\Sigma\times\{-1\}$ form a geodesic link of length at most $2\eta$, this will imply that the same property holds for $Q$.

Consider the cyclic coverings 
\[
T_{\psi^n}=\Sigma\times[0,n]/(x,0)\sim(\psi^n(x),1)={\hat T}_\psi/\Psi^n
\]
of $T_\psi$. Each of the pants decompositions $\psi^jP\times\{j\}$ on the level surfaces $\Sigma\times\{j\}$ is a geodesic link of length at most $\eta$.

By our choice of $\eta$ at the beginning of the proof, there is a (unique) complete finite volume hyperbolic structure on $T_{\psi^n}-P\times\{0\}$, which we denote by $T'_{\psi^n}$, and by Theorem \ref{drilling thm} there is a 2-bilipschitz map 
\[
\Phi_n:T_{\psi^n}-\bigsqcup_{\gamma\subset P\times\{0\}}{\mb{T}_\ep(\gamma)}\to T'_{\psi^n}-\bigsqcup_{\gamma\subset P\times\{0\}}{\mb{T}_\ep(\gamma)}.
\] 


Let us fix a basepoint $x\in\Sigma\times[0,1]\subset{\hat T}_\psi$ in the $\ep$-thick part. Let $x_n\in\Sigma\times[n-1,n]$ be the projection of $\Psi^n(x)$ to $T_{\psi^n}$. Consider the sequence of pointed manifolds $(T_{\psi}^n,x_n)$ and $(T_{\psi^n}',x_n':=\Phi_n(x_n))$. 

By construction, the sequence $(T_{\psi^n},x_n)$ converges in the pointed geometric topology to the infinite cyclic covering ${\hat T}_\psi$ of $T_\psi$: For example, we can choose the approximating maps to be the restrictions of the covering map ${\hat T}_\psi\to T_{\psi^n}$ to larger and larger metric balls around $x$. 

Notice that, as $\Phi_n$ is 2-bilipschitz, the injectivity radius of $T_{\psi^n}'$ at $x_n'=\Phi_n(x_n)$ is bounded from below by $\ep/2$. Therefore, the sequence $(T_{\psi^n}',x_n')$ also converges in the pointed geometric topology to a hyperbolic manifold $(T',y)$. Notice that each $\ep$-Margulis neighborhood of a rank 2 cusp $\mb{T}_\ep(\gamma)\subset T'_{\psi^n}$ for $\gamma\subset P\times\{0\}$ converges to a rank two cusp $\mb{T}_\ep(\gamma)\subset T'$.

Furthermore, by Ascoli-Arzelà, the sequence of 2-bilipschitz maps $\Phi_n$ converge to a 2-bilipschitz map
\[
\Phi:{\hat T}_\psi-\bigsqcup_{\gamma\subset P\times\{0\}}{\mb{T}_\ep(\gamma)}\to T'-\bigsqcup_{\gamma\subset P\times\{0\}}{\mb{T}_\ep(\gamma)}.
\]

The curves in $\psi^{-1}P\times\{-1\}\subset{\hat T}_\psi$ form a geodesic link of length at most $2\eta$ contained on some level surface $S\subset{\hat T}_\psi-\bigsqcup_{\gamma\subset P\times\{0\}}{\mb{T}_\ep(\gamma)}$. Applying Lemma \ref{isotopy}, we deduce that the geodesic representatives of the curves $\Phi(\psi^{-1}P\times\{-1\})\subset\Phi(S)$ are isotopic to $\Phi(\psi^{-1}P\times\{-1\})$ within their standard $\ep$-Margulis neighborhoods. We lift such isotopy to the $\Phi(S)$-covering $Q'\to T'$.

We now prove that $Q'$ is isometric to $Q=Q(\lambda,P)$ with an isometry in the isotopy class of the identity with respect to the natural marking on $Q'$ and $Q$. This suffices to finish the proof. 

The covering $Q'$ is a hyperbolic structure on $\Sigma\times\mb{R}$. By construction, the curves in $\Phi(P\times\{0\})$ represent rank one cusps. The restriction of $\Phi$ to $\Sigma\times[-1,-\infty)$ lifts to a 2-bilipschitz embedding to $Q'$. As the end $\Sigma\times[-1,-\infty)$ is simply degenerate with ending lamination $\lambda\subset S$ and $\Phi$ is 2-bilipschitz, the image $\Phi(\Sigma\times[-1,-\infty))$ is a simply degenerate end with ending lamination $\Phi(\lambda)\subset\Phi(S)$. By the solution of the Ending Lamination Conjecture, we conclude that $Q'$ is isometric to $Q=Q(\lambda,P)$.
\end{proof}

Lastly, we exploit the geometric convergence of $N_n$ to $Q$ combined with some topological argument to show that the geodesic representative of $\psi^{n-1}P\times\{1\}$ in $N_n$ is short and isotopic to $\psi^{n-1}P\subset\partial H_g$, thus producing the desired contradiction. 

By Lemma \ref{limit unlinked}, we know that we can identify $Q=\Sigma\times(-\infty,0)$ so that $\psi^{-1}P\times\{-1\}$ is a geodesic link of length at most $2\eta$.

We consider some large product region $\Sigma\times[-T,-1/T]$ that contains the union of the Margulis neighborhoods of the curves in $\psi^{-1}P\times\{-1\}$. Let $\phi_n:\Sigma\times[-T,-1/T]\to N_n$ be a 2-bilipschitz approximating map provided by geometric convergence. As in the above discussion, we can assume that the map $\phi_n$ induces $\theta_n$ at the level of the holonomy. 

By Lemma \ref{isotopy}, $\phi_n(\psi^{-1}P\times\{-1\})$ is isotopic to its geodesic representative. So, in order to conclude the proof it is enough to show the following:

\begin{lem}
\label{marking isotopic}
$\phi_n(\psi^{-1}P\times\{-1\})$ is isotopic to $\psi^{n-1}P\subset\partial H_g$.
\end{lem}

Notice that, as $\phi_n$ induces $\theta_n$ at the level of the holonomy, we already know that $\phi_n(\psi^{-1}P\times\{-1\})$ is homotopic to $\psi^{n-1}P\subset\partial H_g$. What we have to do is to replace being homotopic with being isotopic. The idea of the proof is that $\phi_n(\Sigma\times\{-1\})$ is parallel to $\partial H_g$ in such a way that $\phi_n(\gamma\times\{-1\})$ is parallel to $\psi^n(\gamma)\subset\partial H_g$. 

The proof is a little bit tedious so we summarize the main steps. Let us point out that the main issue is to keep track of the markings up to isotopy rather than up to homotopy for the convergence $N_n\to Q$. This is complicated by the fact that the map $\phi_n:\Sigma\times\{-1\}\to N_n$ is compressible.


{\bf Step 1}. We show that $\phi_n(\Sigma\times\{-1\})$ is parallel to $\Sigma=\partial H_g$. 

This immediately implies that $\phi_n:\Sigma\times\{-1\}\to N_n$ is isotopic to the composition $f_nh_n$, where $h_n:\Sigma\to\Sigma$ is a homeomorphism and $f_n:\Sigma\to H_g$ is the natural inclusion. 

{\bf Step 2}. The homeomorphism $h_n$ has the property that $h_n(\gamma)=\psi^n(\gamma)$ for every $\gamma\subset P$.

As a consequence, we have that $DT=h_n\psi^{-n}$ is a homeomorphism of $\Sigma$ that leaves invariant the pants decomposition $P_n=\psi^nP$. Hence, it is a product of Dehn twists around the curves in $P_n$. 

{\bf Step 3}. The homeomorphism $DT$ is isotopic to the identity.

Combining the three steps, we conclude that $\phi_n$ is isotopic to $f_n\psi^n$.

\begin{proof}[Proof of Lemma~\ref{marking isotopic}]
We now prove the steps outlined above.

{\em Proof of Step 1}. The first step comes from geometric convergence and some standard 3-dimensional topology. 

Consider the non-cuspidal part of the convex core $\mc{CC}_0(N_n)\subset N_n$ obtained by removing from the convex core $\mc{CC}(N_n)$ the standard $\ep$-Margulis neighborhoods of the cusps. 

We claim that the boundary $\partial\mc{CC}_0(N_n)$ lies in a uniform neighborhood of the surface $\phi_n(\Sigma\times\{-1\})$. To see this, notice that the boundary $\partial\mc{CC}_0(N_n)$ decomposes as the disjoint union of the $\ep$-non cuspidal parts of the boundary components $\partial\mc{CC}(N_n)$ joined by flat annuli on the boundary of the standard $\ep$-Margulis neighborhoods of the cusps. 

A component $C_W^n$ of $\partial\mc{CC}_0(N_n)$ that comes from $\partial\mc{CC}(N_n)$ is homotopic to $\psi^nW$ for a component $W\subset\Sigma-P$. Moreover, it has the following properties. First, it has uniformly bounded diameter ${\rm diam}(C_W^n)\le D$. Secondly, it contains a (self-intersecting) closed geodesic $\gamma_n^*$ homotopic to $\psi^n\gamma_W$ of uniformly bounded length $\ell(\gamma_n^*)\le L$. Both properties come from the fact that $C_W^n$ is isometric to the $\ep$-thick part of the unique complete finite area hyperbolic metric on $\psi^nW$.

By standard hyperbolic geometry, using the fact that $\phi_n$ is 2-bilipschitz, the distance between $\phi_n(\Sigma\times\{-1\})$ and $C_W^n$ can be bounded in terms of the length of $\gamma_W$ on $\Sigma\times\{-1\}$ and $L$. Thus each component $C_W^n$ lies in a uniform neighborhood of $\phi_n(\Sigma\times\{-1\})$. 

In order to conclude, we just have to consider the distance of the annular components $A_\gamma^n$ of $\partial\mc{CC}_0(N_n)$ from $\phi_n(\Sigma\times\{-1\})$. We already know that the distance of the boundary components of such annuli $A_\gamma^n$ from $\phi_n(\Sigma\times\{-1\})$ is uniformly bounded.  In particular, the distance of the boundary components is also bounded (in terms of ${\rm diam}(\Sigma\times\{-1\})$). As the intrinsic diameter of $A_\gamma^n$ can be bounded in terms of the distance of the boundary components, the claim follows.



Therefore, if $T$ is large enough, then $\phi_n(\Sigma\times[-T,-1/T])$ contains $\partial\mc{CC}_0(N_n)$. 

We now argue that $\partial\mc{CC}_0(N_n)$ is incompressible in $\phi_n(\Sigma\times[-T,-1/T])$. By standard 3-manifold topology (see \cite[Proposition~3.1 and Corollary~3.2]{W69}), this implies that $\partial\mc{CC}_0(N_n)$ is parallel to the boundary surfaces of the product. Since, by properties of convex cores, $\partial\mc{CC}_0(N_n)$ is also parallel to $\partial H_g$, this will be sufficient to prove the claim of the first step.  

If $\partial\mc{CC}_0(N_n)$ was compressible, then one of the following two cases happen. Either, after some compressions, we find a closed incompressible surface in $\phi_n(\Sigma\times[-T,-1/T])$ of genus strictly smaller than the genus of $\Sigma$ or, we have that the entire convex core $\mc{CC}_0(N_n)$ is contained in $\phi_n(\Sigma\times[-T,-1/T])$. The first case is ruled out by the classification of incompressible surfaces in products $\Sigma\times[0,1]$. The second case is ruled out by the fact that the boundary $\phi_n(\Sigma\times\{-T\})$ is contained in $\mc{CC}(N_n)$. Thus, we must have that $\partial\mc{CC}_0(N_n)$ is incompressible in $\phi_n(\Sigma\times[-T,-1/T])$.\qed

{\em Proof of Step 2}. Let us describe the homeomorphism $h_n$.

Consider the non-cuspidal part of the convex core $\mc{CC}_0(N_n)$. By the structure of the ends of hyperbolic manifolds, there is a homeomorphism $\Sigma\times(0,1)\to N_n-\mc{CC}_0(N_n)$ with the following properties: The inclusion $\Sigma\times\{t\}\to N_n$ is isotopic to $f_n$, the standard marking induced by the inclusion $\Sigma=\partial H_g\subset H_g$. Each cusp $\mb{T}_\ep(\psi^n\gamma)$ corresponds to $A_\gamma\times(0,1)$ where $A_\gamma$ is an annulus around $\psi^n(\gamma)$. Let ${\rm pr}_1:\Sigma\times(0,1)\to\Sigma$ be the projection of the first factor. 

As $\phi_n(\Sigma\times\{-1/T\})$ is contained in $N_n-\mc{CC}(N_n)$ and is parallel to $\Sigma\times\{0\}$ the composition ${\rm pr}_1\phi_n:\Sigma\to\Sigma$ is a degree one homotopy equivalence and $f_n{\rm pr}_1\phi_n$ is homotopic to $\phi_n$. Let $h_n$ be an orientation preserving homeomorphism homotopic to ${\rm pr}_1\phi_n:\Sigma\to\Sigma$. 

In order to compute the homotopy class of $h_n(\gamma)$ for $\gamma\subset P$ we can proceed as follows: In the product $\Sigma\times[-T,-1/T]$ we can isotope the inclusion $\iota_{-1}:\Sigma\to\Sigma\times\{-1\}$ to $\iota:\Sigma\to\Sigma\times[-T,-1/T]$ so that it intersects each of the cusps $\mb{T}_{\ep/2}(\gamma)$ in an annulus $\iota(U_\gamma)$ around $\gamma$. As $\phi_n$ is 2-bilipschitz and $\phi_n\iota$ is homotopic to $f_n\psi^n$, we have that $\phi_n\iota(U_\gamma)\subset\mb{T}_\ep(\psi^n(\gamma))$. The homotopy class of $h_n(\gamma)$ is the homotopy class of the curve ${\rm pr}_1(\phi_n\iota(\gamma))$ which is a primitive essential curve contained in $A_\gamma$. Thus $h_n(\gamma)\simeq\psi^n(\gamma)$.\qed


{\em Proof of Step 3}. Observe that $DT$ has the following property: Let $\delta\in\mc{D}$ be a disk. We have that $f_n(DT(\delta))=f_nh_n(\psi^{-n}\delta)$ is homotopic to $\phi_n(\psi^{-n}\delta)$ which, in turn, is homotopic to $f_n(\psi^n\psi^{-n}\delta)=f_n(\delta)$. Therefore $DT(\delta)$ is again a compressible curve. By the Loop Theorem, we conclude $DT(\delta)\in\mc{D}$. In other words $DT$ preserves $\mc{D}$. However, by assumption, no component of $P_n$ is contained in $\mc{D}$ and this implies that any non-trivial product of Dehn twists around curves in $P_n$ cannot preserve $\mc{D}$.\qed

Thus, we proved Lemma \ref{marking isotopic}.
\end{proof}

This concludes the proof of Proposition~\ref{approximation4}.\end{proof}

\subsection{Sketch of proof for Proposition~\ref{approximation5}}
The proof of Proposition~\ref{approximation5} follows exactly the same strategy as the proof of Proposition~\ref{approximation4}, so we only discuss the small modifications needed.

First, we prove that, under the assumptions of Proposition \ref{approximation5}, namely 
\[
d_{\mc{C}}(P_{-n},R_n)\ge d_{\mc{C}}(P_{-n},P)+d_{\mc{C}}(P,R)+d_{\mc{C}}(R,R_n)-K.
\]
and $n$ large enough, the pants decompositions $P_{-n+1}\times\{1\}$ and $R_{n-1}\times\{2\}$ are (individually) isotopic to their geodesic realizations in $Q(P_{-n},R_n)$ which are links of length at most $2\eta$. Then, we argue that the two isotopies can be realized simultaneously.


{\em The case of $R_{n-1}\times\{2\}$}. The above argument by contradiction still works if we replace $N_n=H(P_n)$ with $Q_n=Q(P_{-n},R_n)$: The sequence of manifolds $\phi^{-n}Q_n$ with holonomies $\theta_n=\rho_{Q_n}\circ\psi^n$ converges algebraically and geometrically to $Q(\lambda_\phi,R)$ with holonomy $\theta=\rho_{Q(\lambda_\phi,R)}$, where $\lambda_\phi$ is the repelling lamination of $\phi$. Let us point out that convergence here is more classical and we refer the reader to Theorem 1.1 of \cite{BBCM13} that covers a much more general setup. 

The manifold $Q(\lambda_\phi,R)$ is geometrically very close to $\Sigma\times(-\infty,0]\subset{\hat T}_\phi$. 

Again, as above (Lemma \ref{marking isotopic}), convergence implies that $R_{n-1}\times\{2\}$ is isotopic to its geodesic representatives in $Q_n$ for every large enough and such geodesic representatives have length at most $2\eta$ (thus contradicting the initial assumption). Let us notice that Lemma \ref{marking isotopic}, modulo some standard 3-dimensional topology, is now straightforward because of incompressibility of the marking. 


{\em The case of $P_{-n+1}\times\{1\}$}. The proof is the same where we replace $\phi^{-n}Q_n$ with $\psi^nQ_n$ and $Q(\lambda_\phi,R)$ with $Q(P,\lambda_\psi)$.

At this point we know that, under the assumptions of Proposition \ref{approximation5}, the pants decompositions $P_{-n+1}\times\{1\}$ and $R_{n-1}\times\{2\}$ are individually isotopic to their geodesic realizations in $Q(P_{-n},R_n)$ which are links of length at most $2\eta$. The only property that remains to be checked is that the two isotopies can be realized simultaneously.

This can be done as follows: Let $\beta_P,\beta_R$ be the geodesic realizations of $P_{-n+1},R_{n-1}$ in $Q(P_{-n},R_n)$. By Theorem \ref{blocksep}, the distance between two $\ep$-Margulis neighborhoods $\mb{T}_\ep(\alpha),\mb{T}_\ep(\beta)$ is bounded by $d_{\mc{C}}(\alpha,\beta)$.

This implies that the $\ep$-Margulis neighborhoods around the curves of $\beta_P$ have (electric) distance from $\partial_+\mc{CC}_0(Q)$ uniformly bounded in terms of 
\[
D_P:=\max\{d_{\mc{C}}(\gamma,\gamma')\left|\gamma\subset P_{-n},\gamma'\subset P_{-n+1}\right.\}.
\]

Similarly, the (electric) distance of the $\ep$-Margulis tubes around the curves in $\beta_R$ from $\partial_-\mc{CC}_0(Q)$ is bounded in terms of 
\[
D_R:=\max\{d_{\mc{C}}(\gamma,\gamma')\left|\gamma\subset R_n,\gamma'\subset R_{n-1}\right.\}.
\]

On the other hand the (electric) distance between $\partial_+\mc{CC}_0(Q)$ and $\partial_-\mc{CC}_0(Q)$ is bounded from below by $d_\mc{C}(P_{-n},R_n)$. If $d_{\mc{C}}(P_{-n},R_n)$ is much larger than $D_P+D_R$, then we can isotope $\beta_P$ into $\partial_+\mc{CC}_0(Q)$ and $\beta_R$ into $\partial_-\mc{CC}_0(Q)$ simultaneously. This comes from the following:

{\bf Claim}. It is possible to find a boundary parallel surface $S\subset\mc{CC}_0(Q)$ that separates $\beta_P$ from $\beta_R$.

{\em Sketch of proof}. First, as the distance between $\partial_+\mc{CC}_0(Q)$ and $\partial_-\mc{CC}_0(Q)$ is very large, we can find a pleated surface that separates $\beta_P$ from $\beta_R$. This can be done as follows: By the Filling Theorem of Canary~\cite{C96} and Thurston~\cite{ThNotes}, one can find a pleated surface $S'$ passing uniformly near a point in $\mc{CC}_0(Q)$ whose distance from $\partial\mc{CC}_0(Q)$ is equal to $d(\partial_+\mc{CC}_0(Q),\partial_-\mc{CC}_0(Q))/2$. Such a pleated surface has uniformly bounded intrinsic (electric) diameter, in particular, it is disjoint from the $D_P,D_R$-(electric) neighborhoods $U_+,U_-$ of $\partial_+\mc{CC}_0(Q),\partial_-\mc{CC}_0(Q)$ provided that $d(\partial_+\mc{CC}_0(Q),\partial_-\mc{CC}_0(Q))\asymp d_{\mc{C}}(P_{-n},R_n)$ is large enough. As $\mc{CC}_0(Q)\simeq\Sigma\times\{0\}$ and the pleated surface $S'$ is homotopic to $\Sigma\times\{0\}$, we have that $S'$ separates such neighborhoods $U_+,U_-$. Using standard techniques, we can find in a small neighborhood of $S'$, still disjoint from $U_+,U_-$, an embedded surface $S\subset\mc{CC}_0(Q)$ homotopic to $S'$. The fact that $S$ is boundary parallel comes from the classification of incompressible surfaces in $\mc{CC}_0(Q)\simeq\Sigma\times[0,1]$.\qed


So, in order to conclude, it is sufficient to observe that
\[
d_{\mc{C}}(P_{-n},R_n)\gtrsim d_{\mc{C}}(P_{-n},P)+d_{\mc{C}}(R,R_n)
\]
and the right hand side, being comparable to $n$ times the minimal translation distance of $\psi,\phi$ on the curve graph, can be made as large as we want provided that $n$ is large.\qed


Let us also point out that it is not hard to deduce Proposition~\ref{approximation5} directly from the structure of the model manifold of~\cite{M10} and~\cite{BrockCanaryMinsky:ELC2}.

\part{Geometric properties of random 3-manifolds}\label{part:random&appl}
\section{The proof of Theorems 1 and 2}
\label{proof1}

In this section we prove a precise form of Theorem~\ref{main1} about the structure of random 3-manifolds. Before we state the theorem, we recall some background and set some notation regarding random walks.

\subsection{Random walks}
We start by recalling some background material on random walks on the mapping class group. We crucially consider only random walks driven by probability measures $\mu$ whose support $S$ is a finite symmetric generating set for the {\em entire} mapping class group.

\begin{dfn}[Random Walk]
Let $(s_n)_{n\in\mb{N}}$ be a sequence of independent random variables with values in $S$ and distribution $\mu$. The {\em $n$-th step of the random walk} is the random variable $f_n:=s_1\dots s_n$. We denote by $\mb{P}_n$ its distribution. The {\em random walk driven by $\mu$} is the process $(f_n=s_1\dots s_n)_{n\in\mb{N}}\in{\rm Mod}(\Sigma)^{\mb{N}}$. It has a distribution which we denote by $\mb{P}$.
\end{dfn} 

The mapping class group acts on Teichmüller space $\text{\rm Mod}(\Sigma)\curvearrowright\T$. If we fix a base point $o\in\T$ we can associate to every random walk $(f_n)_{n\in\mb{N}}$ an orbit $\{f_no\}_{n\in\mb{N}}\subset\T$.

We choose $o\in\T$ with the following property:

\begin{center}
\label{basepoint}
\begin{minipage}{.8\linewidth}
{\bf Standing assumption}: The base point $o\in\T$ is chosen such that its projection to the curve graph lies on the disk set, $\Upsilon(o)\in\mc{D}$.
\end{minipage}
\end{center}

It is a standard consequence of the subadditive ergodic theorem that there exists a constant $L\ge 0$, called the \emph{drift} of the random walk on Teichmüller space, such that for $\mb{P}$-almost every sample path $(f_n)_{n\in\mb{N}}$ we have
\[
\frac{d_\T(o,f_no)}{n}\stackrel{n\to\infty}{\longrightarrow}L.
\]
In general, the drift can be 0. However, it has been established by Kaimanovich and Masur~\cite{KM:Poisson} that, in our case, $L>0$.

\subsection{Statement and discussion}

We are now ready to state the precise version of Theorem \ref{main1}, using the above setup. We say that a sequence of events has asymptotic probability $1$ if the probability of the events goes to $1$ as $n$ tends to infinity.

\begin{mythm}[2]
\label{main1precise}
Fix $K>1$ and $\ep>0$. Let $o\in\T$ be a fixed basepoint. Denote by $L>0$ the drift of the random walk and by $\tau_n$ the parametrization of the geodesic segment $[o,f_no]$ by arc length. With asymptotic probability $1$, we have the following for some uniform constant $B$: There exist pants decompositions $P_n$ and $R_n$ of $\Sigma$ such that
\begin{enumerate}[(a)]
\item{The manifold $M_{f_n}$ admits a hyperbolic metric such that restriction of the metric to a tubular neighborhood of the Heegaard surface is $K$-bilipschitz to the $\eta_3$-non cuspidal part of the convex core of the maximally cusped structure $Q(P_n,R_n)$.}
\item{$d_{\mc{C}}(\Upsilon\tau_n[\ep Ln,2\ep Ln],P_n),d_{\mc{C}}(\Upsilon\tau_n[(1-2\ep) Ln,(1-\ep)Ln],R_n)\le B$ where $\tau_n$ is the Teichmüller segment $[o,f_no]$.}
\end{enumerate}
\end{mythm}

In Subsection~\ref{subsec:thm1viathm3} we also describe how to derive Theorem~\ref{main2} from Theorem~\ref{thm:main} and properties of the random walk.

The proof of Theorem~\ref{main1} does not use 3-dimensional hyperbolic geometry anymore. Rather, via Proposition~\ref{pa examples}, we will only have to work with the dynamics of a random walk on Teichmüller space and the curve graph.

The idea is that thanks to the work done in the previous sections, namely Proposition~\ref{model} and Proposition~\ref{pa examples}, we only need to check that the Teichmüller segment $\tau_n=[o,f_no]$ contains four points $o<S_1<S_2<S_3<S_4<f_no$ satisfying the conditions of Proposition~\ref{pa examples}.

The heuristic picture is the following. Consider the curve graph projection $\Upsilon[o,f_no]$ of the segment $[o,f_no]$. The endpoints $\delta=\Upsilon(o)$ and $f_n\delta=\Upsilon(f_no)$ lie on the disk sets $\mc{D}$ and $f\mc{D}$. 
Hyperbolicity of the curve graph, quasi-convexity of the disk sets and the fact that $\Upsilon$ is monotone along geodesics together imply that, if $\mc{D}$ and $f\mc{D}$ are sufficiently far away, then the path $\Upsilon[o,f_no]$ roughly decomposes into three parts: Initially, it fellow travels $\mc{D}$. Then, it follows a shortest geodesic between $\mc{D}$ and $f_n\mc{D}$. Lastly, it fellow travels $f_n\mc{D}$. 

Any subsegment of the middle piece automatically satisfies property (iii).

Properties (i) and (ii) follow, instead, from ergodic properties of the random walk, see below for discussion and references. In particular, we will use that for any pseudo-Anosov $\phi$, the segment $[o,f_no]$ often fellow travels a translate of the axis $l_\phi$ of the pseudo-Anosov. Therefore, we just have to make sure that the two needed long fellow travelings happen on the subsegment that projects to the middle piece of $\Upsilon[o,f_no]$.

We will deduce this combining the aforementioned ergodic properties of random walks with work of Maher \cite{Maher:Heegaard} who proved that, with asymptotic probability 1, the distance between $\mc{D}$ and $f_n\mc{D}$ increases linearly and up to a sublinear error is the distance between the endpoints $\delta$ and $f_n\delta$. Hence, the middle piece in the above description takes up almost all of $\Upsilon[o,f_no]$.

\subsection{Ergodic properties of random walks}
We can now state the ergodic property of random walks that we need. It is inspired by \cite[Proposition 6.9]{BGH16}. In fact, we believe that the following statement can be extracted from its proof, with the exception, perhaps, of the logarithmic size of the fellow traveling. We include a complete proof of the precise form that we need.

\begin{thm}
\label{randergodic1}
Let $\phi\in{\rm Mod}(\Sigma)$ be a pseudo-Anosov with axis $l_\phi$ in Teichmüller space, and let $0<a<b<1$. Denote by $L>0$ the Teichmüller drift of the random walk. There exists $\ep_0>0$ such that with asymptotic probability $1$ the following holds. Denote by $\tau_n$ the segment $[o,f_no]$. Then $l_\phi$ has a subsegment of length $\ep_0\log(n)$ one of whose translates uniformly fellow-travels a subsegment of $\tau_n[aLn,bLn]$.
\end{thm}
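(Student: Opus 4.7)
The plan is to combine a large-deviation argument that produces a long subword of the random walk spelling out a high power of $\phi$ with the thick-part stability of Teichm\"uller geodesics.

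Since $S$ generates $\text{\rm Mod}(\Sigma)$, fix a word $w = s_{i_1}\cdots s_{i_k}$ in $S$ representing $\phi$, let $p_0 := \min_{s \in S} \mu(s) > 0$, and denote by $T > 0$ the Teichm\"uller translation length of $\phi$. For a small parameter $\ep_0 > 0$ to be tuned later, set $M_n := \lceil \ep_0 T^{-1} \log n \rceil$ and partition $\{1, \dots, n\}$ into disjoint consecutive blocks $B_1, B_2, \dots$ of length $kM_n$. Let $E_j$ be the event that on $B_j$ the random walk spells out $w^{M_n}$. The events $E_j$ are independent and each has probability at least $p_0^{kM_n} \geq n^{-\alpha}$, where $\alpha := k \ep_0 T^{-1}|\log p_0|$ can be made arbitrarily small by shrinking $\ep_0$. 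Fix $a < a' < b' < b$ and restrict attention to those indices $j$ with $B_j \subset [a'n, b'n]$; the number of such blocks is at least $cn/\log n$. Choosing $\ep_0$ small enough that $\alpha < 1$, a standard Chernoff bound yields that at least one $E_j$ occurs with probability tending to $1$. On this event, pick such a $j^\ast$ and set $\ell := (j^\ast - 1)kM_n$, so that $\ell \in [a'n, b'n]$ and $f_{\ell + kM_n} = f_\ell \phi^{M_n}$.

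Next, the orbit $\{\phi^i o\}_{i=0}^{M_n}$ stays within a bounded tubular neighborhood of the axis $l_\phi$ (with constant depending only on $o$ and $\phi$), and its endpoints project along $l_\phi$ to a subsegment of length $(M_n - O(1))T \geq \ep_0 \log n - C$. Pushing forward by $f_\ell$, the translate $f_\ell l_\phi$ contains a subsegment of length at least $\ep_0 \log n - C$ that uniformly fellow-travels the sub-path $\{f_{\ell + ik} o\}_{i=0}^{M_n}$ of the random walk.

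The main obstacle will be to transfer this fellow-traveling with the random walk into fellow-traveling with $\tau_n$ restricted to $[aLn, bLn]$. Since $\phi$ is pseudo-Anosov, $l_\phi$ lies entirely in the $\eta$-thick part $\T_\eta$ of Teichm\"uller space for some $\eta>0$, so it is strongly contracting and enjoys Morse-type stability as a Teichm\"uller geodesic (Minsky, Rafi), precisely the input already exploited in~\cite{BGH16}. By positive drift and the sublinear tracking of the random walk by Teichm\"uller geodesics, with asymptotic probability $1$ the point $f_\ell o$ lies within distance $o(n)$ of $\tau_n$, with nearest-point projection at arclength $L\ell + o(n) \in [a'Ln - o(n), b'Ln + o(n)] \subset [aLn, bLn]$ for $n$ large. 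Since $f_\ell l_\phi$ is a thick, strongly contracting Teichm\"uller geodesic whose relevant endpoints sit at sublinear distance from $\tau_n$, thick-part stability forces $\tau_n$ to uniformly fellow-travel $f_\ell l_\phi$ on a subsegment of length at least $\ep_0 \log n - o(\log n)$. Replacing $\ep_0$ by $\ep_0/2$ will then conclude the proof.
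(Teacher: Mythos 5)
Your Chernoff-bound argument is a genuinely different and more elementary route to producing a logarithmically long subword of the random walk spelling $\phi^{M_n}$: the paper instead invokes the abstract projection-system machinery of \cite{ST:large_proj} (their Theorem 2.3) to obtain a translate $g l_\phi$ receiving a logarithmic projection from the walk, together with positive drift and logarithmic tracking from \cite{MS:clt}. Your first two paragraphs are correct and genuinely interesting, but your final paragraph has a real gap.

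The problem is the passage from fellow-traveling of the \emph{orbit path} $\{f_{\ell+ik}o\}_{i=0}^{M_n}$ with $f_\ell l_\phi$ to fellow-traveling of the \emph{Teichm\"uller geodesic} $\tau_n=[o,f_no]$ with $f_\ell l_\phi$. What is actually needed, via the contraction property of the thick geodesic $f_\ell l_\phi$, is that the projections $\pi^{f_\ell}(o)$ and $\pi^{f_\ell}(f_no)$ of the \emph{global} endpoints are at distance $\gtrsim \ep_0\log n$ on $f_\ell l_\phi$. You only control $\pi^{f_\ell}(f_\ell o)$ and $\pi^{f_\ell}(f_{\ell+kM_n}o)$; nothing in the Chernoff argument prevents the walk from, say, spelling a long power of $\phi^{-1}$ just before time $\ell$, which would cancel the displacement along $f_\ell l_\phi$. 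Your appeal to tracking does not close this: the logarithmic tracking constant $C$ (resp.\ the sublinear bound $\ep m$) is \emph{not} small compared to $\ep_0\log n$, and the Chernoff step already forces $\ep_0 < T/(k|\log p_0|)$, so one cannot simply demand $\ep_0 \gg C$. Concretely, $\tau_n$ passes within $C\log n$ of $f_\ell o$ and $f_{\ell+kM_n}o$, but the segment of $f_\ell l_\phi$ they span has length only $\ep_0\log n$; as in the Euclidean picture of two points at height $C\log n$ above a line and horizontal separation $\ep_0\log n$ with $\ep_0 < 2C$, the geodesic joining them need not come within uniform ($n$-independent) distance of the axis. This is precisely the role of property~(iv) in the paper's proof, i.e.\ \cite[Proposition~3.2]{ST:large_proj}, which bounds $d_\T(\pi^{g}(o),\pi^{g}(f_{\lfloor a'n\rfloor}o))$ and $d_\T(\pi^{g}(f_no),\pi^{g}(f_{\lfloor b'n\rfloor}o))$ by $\ep\log(n)/3$ and thereby controls the projections of the true endpoints. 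Your proposal needs an analogous ``no backtracking'' input; the Chernoff bound alone does not supply it.
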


\begin{proof}
For $g$ in ${\rm Mod}(\Sigma)$, we denote by $\pi^g$ the closest-point projection to $gl_\phi$. 

Such projections have strong contraction properties described in the Contraction Theorem of \cite{M96}. In particular, it is well-known that they imply the following: There exists a constant $D>0$, depending on $\phi$, such that if $d_\T(\pi^g(x),\pi^g(y))\ge D$ then the geodesic $[x,y]$ has a subsegment $[x_1,y_1]$ with $d_\T(x_1,\pi^g(x))$, $d_\T(y_1,\pi^g(y))\le D$. By Theorem 4.2 of \cite{M96}, the segment $[x_1,y_1]$ $\delta$-fellow travels $l_\phi$ for some $\delta$ only depending on $l_\phi$. 

Therefore, in order to conclude, it suffices to prove the following claim (the theorem follows up to moving $a,b$ an arbitrarily small amount and modifying $\ep$).

{\bf Claim:} Given $a,b,\phi$ as in the statement, there exists $\epsilon>0$ such that with asymptotic probability 1, there exists $g$ such that $d_\T(\pi^g(o),\pi^g(f_no))\geq \epsilon \log(n)$ and $d_\T(o,\pi^g(o)), d_\T(o, \pi^g(f_no))\in [aLn,bLn]$.

The claim is a consequence of the following properties, which can be found in the existing literature as we explain below: There exist $a'<b'$, $\epsilon\in (0,(b-a)/10)$ and $C>0$ such that the following hold with asymptotic probability 1
\begin{enumerate}[(i)]
 \item{$d_\T(f_jo,\tau_n)\le C\log(n)$ for every $j\le n$.} 
 \item{$d_\T(f_{\lfloor a'n\rfloor}o,o)\in [(a+\ep)Ln,(a+2\ep)Ln)]$ and $d_\T(f_{\lfloor b'n\rfloor}o,o)\in[(b-2\ep)Ln,(b-\ep)Ln)]$.}
 \item{There are $g$ and $\ep>0$ such that $d_\T(\pi^g(f_{\lfloor a'n\rfloor}o),\pi^g(f_{\lfloor b'n\rfloor}o))\ge\ep\log(n)$.} 
 \item{For the same $g$ of (iii), we have $d_\T(\pi^g(f_{\lfloor a'n\rfloor}o), \pi^g(o))\le\ep\log(n)/3$ and $d_\T(\pi^g(f_{\lfloor b'n\rfloor}o),\pi^g(f_no))\le\ep\log(n)/3$.} 
\end{enumerate}

Assuming (i)-(iv) we prove the claim. Afterwards, we give the references to the literature.

\begin{proof}[Proof of the claim]
In the whole proof, all statements and inequalities are meant to hold with asymptotic probability 1. By properties (iii) and (iv), it follows that $d(\pi^g(o),\pi^g(f_no))\ge\ep\log(n)/3$, whence the first part of the claim. We now argue that $\pi^g(o)$ and $\pi^g(f_no)$ have distance within the desired interval from $o$. 

Observe that the geodesic joining $f_{\lfloor a'n\rfloor}o$ to $f_{\lfloor b'n\rfloor}o$ fellow travels $gl_\phi$ along the subsegment connecting $x_n:=\pi^g(f_{\lfloor a'n\rfloor}o)$ to $y_n:=\pi^g(f_{\lfloor a'n\rfloor}o)$, because the projections are very far apart by property (iii). In particular, $x_n$ and $y_n$ are uniformly close to points $p_n$ and $q_n$ on $[f_{\lfloor a'n\rfloor}o,f_{\lfloor b'n\rfloor}o]$ respectively. By property (iv), the projections $x_n$ and $y_n$ are also logarithmically close to $\pi^g(o)$ and $\pi^g(f_no)$. Therefore, we have 
\begin{align*}
d_\T(o,\pi^g(o))=d_\T(o,p_n)+O(\log(n)),\\
d_\T(o,\pi^g(f_no))=d_\T(o,q_n)+O(\log(n)).
\end{align*}
Hence, we can focus on estimating $d_\T(o,p_n)$ and $d_\T(o,q_n)$. In fact, we will provide an estimate on $d_\T(o,p)$ for any point $p\in[f_{\lfloor a'n\rfloor}o,f_{\lfloor b'n\rfloor}o]$.

By the triangle inequality, for any point $p\in[f_{\lfloor a'n\rfloor}o,f_{\lfloor b'n\rfloor}o]$, we have
\begin{align*}
d_\T(o,f_{\lfloor b'n\rfloor}o)-d_\T(f_{\lfloor a'n\rfloor}o,f_{\lfloor b'n\rfloor}o)\le d_\T(o,p),\\
d_\T(o,p)\le d_\T(o,f_{\lfloor a'n\rfloor}o)+d_\T(f_{\lfloor a'n\rfloor}o,f_{\lfloor b'n\rfloor}o).
\end{align*}
We now estimate distances using properties (i) and (ii). 

In view of (ii) and the inequalities above, for our purposes it suffices to show that
\[
d_\T(f_{\lfloor a'n\rfloor}o,f_{\lfloor b'n\rfloor}o)\leq d_\T(o,f_{\lfloor b'n\rfloor}o)-d_\T(o,f_{\lfloor a'n\rfloor}o)+O(\log(n)).
\]
We obtain this inequality as follows: Let $r_n,s_n\in\tau_n$ be provided by property (i), so that $d_\T(r_n,f_{\lfloor a'n\rfloor}o),d_\T(s_n,f_{\lfloor b'n\rfloor}o)=O(\log(n))$. Using that $r_n$ and $s_n$ lie on a geodesic originating at $o$, we have
\begin{align*}
d_\T(f_{\lfloor a'n\rfloor}o,f_{\lfloor b'n\rfloor}o) &\le d_\T(r_n,s_n)+O(\log(n))\\
 &=|d_\T(o,s_n)-d_\T(o,r_n)|+O(\log(n))\\
 &\le |d_\T(o,f_{\lfloor b'n\rfloor}o)-d_\T(o,f_{\lfloor a'n\rfloor}o)|+O(\log(n)).
\end{align*}
Since $\ep<(b-a)/10$, in view of (ii) we can remove the absolute value, and obtain the required estimate.
\end{proof}

Now we provide references for the properties (i)-(iv). 

Property (i) is a corollary of Theorem 10.7~\cite{MS:clt} obtained summing the probabilities that each step of the walk is logarithmically far from $\tau_n$.

Property (ii) follows from positivity of the Teichmüller drift, which implies that for any $\epsilon>0$, with probability going to $1$ as $k$ tends to infinity we have $d_{\mathcal T}(o,f_ko)\in [(L-\epsilon) k,(L+\epsilon) k]$ (see the argument for \cite[Theorem 4.3$(i)$]{KM:Poisson}).

This easily allows us to choose appropriate $a',b'$.

For later purposes, we also note that (i) and the aforementioned property imply the following proposition, which is a version of a theorem of Tiozzo~\cite{T15} and could also be deduced from said theorem.

\begin{thm}
\label{tracking}
 In the setting of the theorem, for any $\epsilon>0$ with asymptotic probability $1$ we have $d_{\mathcal T}(f_mo,\tau_n(L m))\leq \epsilon m$ for all $\epsilon n\leq m \leq (1-\epsilon)n$.
\end{thm}

Properties (iii) and (iv) follow from Theorem 2.3 and Proposition 3.2 of \cite{ST:large_proj}, where a general framework is provided to show that random walk create logarithmically large projections. We explain how: In the terminology of \cite{ST:large_proj} we want to show that 
\[
\left(\mc{S}:=\{gl_\phi\}_{g\in\text{\rm Mod}(\Sigma)},Y_0:=l_\phi,\{\pi^g\}_{g\in\text{\rm Mod}(\Sigma)},\pitchfork\right),
\]
where we define the projections on the group $\pi^g:\text{\rm Mod}(\Sigma)\to gl_\phi$ to be
\[
\pi^g(h):=\pi^g(ho),
\]
forms a {\em projection system} (as in \cite[Definition 2.1]{ST:large_proj}), where $\pitchfork$ is the relation on the translates of having bounded projection to each other, and that the probability measure $\mu$ is {\em admissible} (as in \cite[Definition 2.2]{ST:large_proj}). 

The fact that the 4-tuple is a projection system follows from the contraction property of the projections $\pi^g$ and well-known arguments. More specifically, referring to the requirements (1)-(5) of \cite[Definition 2.1]{ST:large_proj}, we have that: Properties (1)-(3) are straightforward. Property (4) follows from the contraction property and, e.g., \cite[Lemma 2.5]{Si:contr}. Property (5) follows instead from the fact that there are finitely many cosets of $\langle g \rangle$ such that if the projection of $hl_\phi$ on $l_\phi$ is unbounded, then $h$ belongs to one of these cosets. This follows from, e.g., \cite[Corollary 4.4]{Si:contr}.

The fact that $\mu$ is admissible is also not difficult to be checked: Among the requirements perhaps only property (4) is not immediate. This property says, in our context, that the probability that the random walk ends up in one of the cosets of $\langle g \rangle$ for which the projection of $hl_\phi$ on $l_\phi$ is unbounded  is exponentially small in the length of the walk. This holds because after $n$ steps the random walk can only possibly visit linearly many of the elements of those cosets (they are undistorted), while the probability of ending up at any one of them is exponentially small, just because $\text{\rm Mod}(\Sigma)$ is non-amenable.
Now that we explained why~\cite{ST:large_proj} applies, the third item follows from the \cite[Theorem 2.3]{ST:large_proj}, while the fourth one follows from \cite[Proposition 3.2]{ST:large_proj}, with $R=0$.
\end{proof}

As a different application of the same projection systems framework used above, we have the following statement whose proof is  rather similar to the previous one. It will be used in the proof of Theorem~\ref{main2} via Theorem~\ref{thm:main} and in the application to the decay rate of the shortest geodesics for random 3-manifolds. Recall that the projection $\Upsilon:\T\to\mc{C}$ sends $f_jo$ to $f_j\delta$ with $\delta\in\mc{D}$.

Denote by $L_{\mc{C}}=\lim{d_{\mc{C}}(\delta,f_n\delta)/n}>0$ the drift of the random walk on the curve graph, which is positive by a result of Maher~\cite{Ma}.

\begin{thm}\label{thm:log_proj2}
 Denote by $L_{\mc{C}}$ the curve graph drift of the random walk, and let $0<a<b<1$. Then, there exist $\ep_0>0$ and $C>0$ such that with asymptotic probability $1$ the following holds. There exists a non-separating simple closed curve $\gamma_n\subset\Sigma$ such that
\begin{itemize}
 \item{$d_{\gamma_n}(\delta,f_n\delta)\ge\epsilon_0\log(n)$.}
 \item{$d_Y(\delta,f_n\delta)\le C$ for every proper subsurface $Y\subset\Sigma-\gamma_n$.}
 \item{$d_{\mc{C}}(\delta,\gamma_n)\in [aL_{\mathcal C}n,bL_{\mathcal C}n].$}
\end{itemize}
\end{thm}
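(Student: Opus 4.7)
The proof parallels that of Theorem~\ref{randergodic1}, with the projection system framework of \cite{ST:large_proj} applied to subsurface projections in the curve graph rather than closest-point projections to Teichm\"uller axes. Fix once and for all a non-separating simple closed curve $\gamma_0\subset\Sigma$, set $U_0:=\Sigma-\gamma_0$, and choose a pseudo-Anosov $\phi\in\text{\rm Mod}(U_0)$ extended by the identity on the annular neighborhood of $\gamma_0$. The candidate system will be the orbit $\mc{S}:=\{gU_0\}_{g\in\text{\rm Mod}(\Sigma)}$ of essential subsurfaces, equipped with the subsurface projections $\pi_{gU_0}:\mc{C}(\Sigma)\to\mc{C}(gU_0)$ and the Behrstock transversality relation $\pitchfork$ (two translates are transverse when their projections onto each other are uniformly bounded).

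I would first check that $(\mc{S},U_0,\{\pi_{gU_0}\}_g,\pitchfork)$ is a projection system in the sense of \cite[Definition 2.1]{ST:large_proj} and that $\mu$ is admissible in the sense of \cite[Definition 2.2]{ST:large_proj}. Properties (1)--(3) of a projection system are immediate; the contraction property (4) follows from the bounded geodesic image theorem of Masur--Minsky~\cite{MasurMinsky:II} together with Behrstock's inequality; and property (5) follows from the fact that the set of $g$ for which $d_{gU_0}(\alpha,\beta)$ is unbounded for a fixed pair $\alpha,\beta$ lies in finitely many cosets of $\text{Stab}(U_0)$. Admissibility of $\mu$ is verified exactly as in Theorem~\ref{randergodic1}, using non-amenability of $\text{\rm Mod}(\Sigma)$ and the undistortion of these cosets. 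Applying \cite[Theorem 2.3]{ST:large_proj} and \cite[Proposition 3.2]{ST:large_proj}, with asymptotic probability $1$ there exists $g_n\in\text{\rm Mod}(\Sigma)$ with $d_{g_nU_0}(\delta,f_n\delta)\ge\epsilon_0\log n$; setting $\gamma_n:=g_n(\gamma_0)$ then produces the non-separating curve whose complement carries the desired log-scale projection, which is the first bullet.

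To obtain the third bullet I would combine Theorem~\ref{tracking} with the fact that the shortest-curve projection $\Upsilon$ sends Teichm\"uller geodesics to unparameterized quasi-geodesics in $\mc{C}(\Sigma)$. This allows one to arrange that the group element $g_n$ is realized at walk-time $\approx n/2$, so that $\gamma_n=g_n\gamma_0$ lies within a uniform curve graph neighborhood of the midpoint of the $\mc{C}$-geodesic from $\delta$ to $f_n\delta$, a geodesic whose length is coarsely $L_{\mc{C}}n$ by Maher~\cite{Ma}. Any prescribed target interval $[aL_{\mc{C}}n,bL_{\mc{C}}n]$ is then achievable by restricting the allowed time window for the log-scale projection event, exactly as in the claim in the proof of Theorem~\ref{randergodic1}.

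The hardest step is the second bullet: bounding $d_Y(\delta,f_n\delta)$ uniformly for every proper essential subsurface $Y\subsetneq\Sigma-\gamma_n$. Since $\phi$ is pseudo-Anosov on $U_0$, its translation quasi-axis in $\mc{C}(U_0)$ has bounded projections to every proper subsurface of $U_0$ (Masur--Minsky~\cite{MasurMinsky:II}), so the contribution of the distinguished $\phi$-stretch of the walk to any such $d_Y$ is uniformly bounded by a constant depending only on $\phi$. Any extra log-scale contribution to $d_Y$ for some strict sub-subsurface of $U_n$ would constitute a second independent large-projection event of the walk; I would rule this out via a Borel--Cantelli union-bound argument using the exponential tail estimates for subsurface projections along random walks (as in the arguments of~\cite{MS:clt} and~\cite{ST:large_proj}), applied over the finitely many topological types of proper subsurfaces of $U_n$. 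The main obstacle lies in making this union bound uniform in the position of $U_n$ along the walk: one resolves this by first fixing $U_n$ via the third bullet, which confines $g_n$ to at most linearly many candidate positions, and then choosing the constant $C$ large enough to absorb all the resulting bounded contributions.
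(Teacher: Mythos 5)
The general direction is right: you correctly identify that the first bullet comes from the projection-system framework of~\cite{ST:large_proj} applied to the orbit of subsurfaces $\{g(\Sigma-\gamma_0)\}_g$, and that the third bullet is obtained by confining the large-projection event to a time window and transferring that constraint to the curve graph. For the third bullet, though, the clean mechanism the paper uses is the Bounded Geodesic Image Theorem: a $\log(n)$-sized annular projection between $f_{\lfloor a'n\rfloor}\delta$ and $f_{\lfloor b'n\rfloor}\delta$ forces $\gamma_n$ into the $1$-neighborhood of the $\mathcal{C}$-geodesic $[f_{\lfloor a'n\rfloor}\delta, f_{\lfloor b'n\rfloor}\delta]$, and then (i)$'$--(ii)$'$ pin down its distance from $\delta$. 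Your phrasing ``arrange that $g_n$ is realized at walk-time $\approx n/2$'' gestures at this but is not a precise substitute; what locates $\gamma_n$ in the curve graph is BGIT applied to the annular projection, not where the group element $g_n$ occurs along the walk.

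The second bullet has a genuine gap. You propose to bound $d_Y(\delta,f_n\delta)$ for all proper $Y\subsetneq\Sigma-\gamma_n$ by (i) noting that the pseudo-Anosov stretch on $U_0$ contributes a bounded amount, and (ii) ruling out any extra contribution by a ``Borel--Cantelli union bound over the finitely many topological types of proper subsurfaces of $U_n$.'' This does not work: there are infinitely many actual subsurfaces $Y$ of each topological type inside $\Sigma-\gamma_n$, and a tail estimate for a single subsurface does not union-bound over all of them. Moreover, $d_Y(\delta,f_n\delta)$ sees the whole walk, not just the $\phi$-stretch, so bounding the contribution of that stretch alone does not control the quantity. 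What the paper actually does — following the proof of Proposition~7.1 of~\cite{ST:large_proj} rather than merely citing the black-box Theorem~2.3 / Proposition~3.2 — is produce \emph{three} buffer curves $\gamma^n_1,\gamma^n_2,\gamma^n_3$ with properties (iii)$'$--(v)$'$: large projections on each, bounded $d_Y(f_{\lfloor a'n\rfloor}\delta,f_{\lfloor b'n\rfloor}\delta)$ for all $Y\subset\Sigma-\gamma^n_2$, and cross-control between the buffers. Combined with (vi)$'$ from Proposition~3.2, the Behrstock inequality then shows that $\pi_Y(\delta)$ coarsely agrees with $\pi_Y(f_{\lfloor a'n\rfloor}\delta)$ and $\pi_Y(f_n\delta)$ coarsely agrees with $\pi_Y(f_{\lfloor b'n\rfloor}\delta)$, \emph{simultaneously for every} $Y\subset\Sigma-\gamma^n_2$ (and for the annulus $\gamma^n_2$ itself); this transfers the uniform bound on $d_Y(f_{\lfloor a'n\rfloor}\delta,f_{\lfloor b'n\rfloor}\delta)$ to $d_Y(\delta,f_n\delta)$ without any infinite union bound. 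Setting $\gamma_n:=\gamma^n_2$ finishes the argument. You should replace the Borel--Cantelli step with this buffer-curve/Behrstock mechanism.
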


\begin{proof}
If we exclude the requirement about the location of the curve $\gamma_n$ with respect to $[\delta,f_n\delta]$, that is, the third requirement in the list, then the statement of the theorem is exactly the content of Proposition 7.1 of \cite{ST:large_proj}. Here we want to control simultaneously the presence of a curve $\gamma_n$ with large annular projection and bounded projections to the subsurfaces disjoint from it together with the position of $\gamma_n$ on the segment $[\delta, f_n\delta]$ in order to make sure that it lies far away from the disk sets $\mc{D}$ and $f_n\mc{D}$. 
 
As for the case of Teichmüller space, we have that the following properties hold with asymptotic probability 1:
\begin{enumerate}[(i)']
\item{$d_{\mc{C}}(f_j\delta,[\delta,f_n\delta])\le C\log(n)$.} 
\item{$d_{\mc{C}} (f_{\lfloor a'n\rfloor}\delta,\delta)\in[(a+\ep)L_{\mc{C}}n,(a+2\ep)L_{\mc{C}}n)]$ and $d_{\mathcal{C}} (f_{\lfloor b'n\rfloor}\delta,\delta)\in[(b-2\ep)L_{\mc{C}}n,(b-\ep)L_{\mc{C}}n)]$.}
\end{enumerate} 
 
Observe that, if there is a large annular subsurface projection between $f_{\lfloor a'n\rfloor}\delta$ and $f_{\lfloor b'n\rfloor}\delta$ on some $\gamma_n$, then the curve $\gamma_n$ lies on the 1-neighborhood of $[f_{\lfloor a'n\rfloor}\delta,f_{\lfloor b'n\rfloor}\delta]$ by the Bounded Geodesic Image Theorem~\cite{MasurMinsky:I}. Just like in the proof of the claim in Theorem \ref{randergodic1}, properties (i)' and (ii)' ensure then that $\gamma_n$ is at the appropriate distance from $\delta$. That is, it satisfies the last requirement of Theorem~\ref{thm:log_proj2}. 
 
Regarding the size of the subsurfaces projections we proceed as follows. We need three ``buffer projections'', as in Proposition 7.1 \cite{ST:large_proj}, whose proof yields the following: There are $\ep_1>0$ and $C_1>0$ such that the following holds with asymptotic probability 1. There exist non-separating curves $\gamma^n_1,\gamma^n_2,\gamma^n_3$ such that 
\begin{enumerate}[(i)']
\setcounter{enumi}{2}
 \item{$d_{\gamma^n_j}(f_{\lfloor a'n\rfloor}\delta,f_{\lfloor b'n\rfloor}\delta)\ge\epsilon_1\log(n)$.}
 \item{$d_Y(f_{\lfloor a'n\rfloor}\delta,f_{\lfloor b'n\rfloor}\delta)\leq C_1$ for all subsurfaces $Y\subset\Sigma-\gamma^n_2$.}
 \item{$d_{\gamma^n_1}(f_{\lfloor b'n\rfloor}\delta,\gamma^n_2), d_{\gamma^n_3}(f_{\lfloor a'n\rfloor}\delta,\gamma^n_2), d_{\gamma^n_2}(f_{\lfloor a'n\rfloor}\delta,\gamma^n_1), d_{\gamma^n_2}(f_{\lfloor b'n\rfloor}\delta,\gamma^n_3)\le C_1$.}
\end{enumerate}

Similar to property (iv) used in the proof of Theorem \ref{randergodic1}, we have the following replacement which holds with asymptotic probability 1 and follows again from Proposition 3.2 of \cite{ST:large_proj}
\begin{enumerate}[(i)']
\setcounter{enumi}{5}
 \item{$d_{\gamma^n_1}(f_{\lfloor a'n\rfloor}\delta, \delta)\le\epsilon_1 \log(n)/3$ and $d_{\gamma^n_3}(f_{\lfloor b'n\rfloor}\delta,\delta)\le\ep\log(n)/3$.}
\end{enumerate}

A consequence of these properties is that the annular projections $\pi_{\gamma^n_2}(\delta)$ and $\pi_{\gamma^n_2}(f_{\lfloor a'n\rfloor}\delta)$ coarsely coincide. This is a routine application of the Behrstock Inequality~\cite{Beh06}, which states that there is a constant $B$ such that for all curves $\alpha, \beta,\gamma$ we have $\min\{d_\gamma(\alpha,\beta),d_\alpha(\gamma,\beta)\}\leq B$ (provided that the quantities are well-defined). Here is the argument: By properties (iii)' and (v)', $d_{\gamma^n_1}(f_{\lfloor a'n\rfloor}\delta,\gamma^n_2)$ is large. Hence, by property (vi)', $d_{\gamma^n_1}(\delta,\gamma^n_2)$ is also large. Therefore, by the Behrstock Inequality, $d_{\gamma^n_2}(\delta,\gamma^n_1)$ is bounded and, by property (v)', the same holds for $d_{\gamma^n_2}(\delta,f_{\lfloor a'n\rfloor}\delta)$ as required. 

The same argument also applies to projections $\pi_Y(\delta)$ and $\pi_Y(f_{\lfloor a'n\rfloor}\delta)$ for all subsurfaces $Y$ in the complement of $\gamma_2^n$. In fact, we have the following: Since $d_{\gamma^n_1}(\delta,\gamma^n_2)$ and $d_{\gamma^n_1}(f_{\lfloor a'n\rfloor}\delta,\gamma^n_2)$ are both large and $Y$ is a subsurface of the complement of $\gamma_2^n$, also $d_{\gamma^n_1}(\delta,\partial Y)$ and $d_{\gamma^n_1}(f_{\lfloor a'n\rfloor}\delta,\partial Y)$ are large. Hence, by the Behrstock Inequality, $d_Y(\delta,\gamma_n^1)$ and $d_Y(f_{\lfloor a'n\rfloor}\delta,\gamma_n^1)$ are both uniformly bounded.

Changing the roles of $\delta$ and $f_{\lfloor a'n\rfloor}\delta$ with $f_n\delta$ and $f_{\lfloor b'n\rfloor}\delta$ concludes the proof.
\end{proof}

\subsection{The proof of Theorem~\ref{main1}}
Consider the Teichmüller segment $[o,f_no]$. Fix $\delta>0$ large enough. We need to find two pseudo Anosov mapping classes $\psi$ and $\psi'$ with short pants decompositions and four surfaces $S_0=o<S_1<S_2<S_3<S_4<f_no=S_5$ such that 
\begin{enumerate}[\rm (i)]
\item{$S_j<S_{j+1}$ and their distance is at least $h$, a very large constant.}
\item{$[S_1,S_2]$, $[S_3,S_4]$ $\delta$-fellow travel $l_\psi,l_{\psi'}$.}
\item{$d_{\mc{C}}(\Upsilon[S_1,S_4],\mc{D})\ge h$ and $d_{\mc{C}}(\Upsilon[S_1,S_4],f_n\mc{D})\ge h$.}
\end{enumerate}

%

We start with with a claim that will help us ensure that the last property holds.


{\bf Claim}: For every $h>0$ and $\ep>0$, with asymptotic probability 1 we have that both $d_{\mc{C}}(\Upsilon\tau_n[\ep Ln,(1-\ep)Ln],\mc{D})$ and $d_{\mc{C}}(\Upsilon\tau_n[\ep Ln,(1-\ep)Ln],f_n\mc{D})$ are greater than $h$.

{\em Proof of the claim}. The claim is a consequence of 

\begin{thm}[Maher~\cite{Maher:Heegaard}]
\label{hempeldist}
For every $\ep>0$ we have
\[
\mb{P}_n\qa{\text{\rm $d_{\mc{C}}(\mc{D},f\mc{D})\in[(L_\mc{C}-\ep)n,(L_\mc{C}+\ep)n]$}}\stackrel{n\to\infty}{\longrightarrow}1.
\]
\end{thm}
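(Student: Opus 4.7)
The plan is to bound $d_{\mc{C}}(\mc{D},f_n\mc{D})$ from above and below by quantities that both concentrate at $L_{\mc{C}} n$. The upper bound is almost free: because we chose the basepoint $o$ with $\delta := \Upsilon(o) \in \mc{D}$, we have $f_n\delta \in f_n\mc{D}$, and therefore $d_{\mc{C}}(\mc{D},f_n\mc{D}) \le d_{\mc{C}}(\delta,f_n\delta)$. Maher's earlier positive-drift result on the curve graph~\cite{Ma} gives $d_{\mc{C}}(\delta,f_n\delta)/n \to L_{\mc{C}}$ in probability, yielding the asymptotic upper bound $(L_{\mc{C}}+\ep)n$.

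For the lower bound I would use Gromov hyperbolicity of $\mc{C}$ (Masur--Minsky~\cite{MasurMinsky:I}) together with quasi-convexity of $\mc{D}$ in $\mc{C}$ (Masur--Minsky~\cite{MM95}). The standard $\delta$-hyperbolic projection lemma says: if $A,B$ are quasi-convex subsets of a Gromov hyperbolic space, $a \in A$, $b \in B$, and the coarse closest-point projections satisfy $d(\pi_A(b),a) \le K$ and $d(\pi_B(a),b) \le K$, then $d(A,B) \ge d(a,b) - K'$ with $K'$ depending only on $K$, the hyperbolicity constant, and the quasi-convexity constant. Applying this with $A = \mc{D}$, $B = f_n\mc{D}$, $a = \delta$, $b = f_n\delta$ reduces the whole lower bound to showing that both projections $\pi_{\mc{D}}(f_n\delta)$ and $\pi_{f_n\mc{D}}(\delta)$ stay within bounded distance of $\delta$ and $f_n\delta$, respectively, with probability tending to $1$.

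Establishing this projection control is the step I expect to be the main obstacle. Intuitively the random walk must ``escape'' the quasi-convex set $\mc{D}$ and not drift back to it. One route is via Kaimanovich--Masur~\cite{KM:Poisson}: the Poisson boundary is identified with the Gromov boundary $\partial \mc{C}$, the hitting measure is non-atomic, and the limit set of $\mc{D}$ is a topologically small (proper quasi-convex) subset of $\partial \mc{C}$, so shadows of $\mc{D}$ of any fixed size have hitting measure zero; consequently, the probability that $f_n\delta$ lies deep in the shadow of $\mc{D}$ decays, which by a standard Morse/hyperbolicity argument forces $\pi_{\mc{D}}(f_n\delta)$ to stay close to $\delta$. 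An alternative, already fitted to the technology used in the present paper, is to invoke the projection-systems machinery of Sisto--Taylor~\cite{ST:large_proj} with $\mc{S}$ the set of mapping-class translates of $\mc{D}$ to show that projection distances $d_{\mc{D}}(\delta,f_n\delta)$ are typically bounded (or at most logarithmic), which gives exactly the desired control.

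The symmetric projection $\pi_{f_n\mc{D}}(\delta) \approx f_n\delta$ follows by applying the same argument to the reversed walk $f_n^{-1}$ (using that $\mu$ has finite support generating the full mapping class group, so $f_n$ and $f_n^{-1}$ have the same qualitative escape properties) and then translating by $f_n$. Combining the two projection bounds with the hyperbolic projection lemma gives $d_{\mc{C}}(\mc{D},f_n\mc{D}) \ge d_{\mc{C}}(\delta,f_n\delta) - O(1)$ with probability $\to 1$, which together with the drift lower bound on $d_{\mc{C}}(\delta,f_n\delta)$ yields $(L_{\mc{C}}-\ep)n$ as required.
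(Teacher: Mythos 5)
The paper does not contain its own proof of this theorem: it is cited as a black box from Maher~\cite{Maher:Heegaard}, so there is no in-paper argument to compare your sketch against. That said, your reconstruction follows the same broad strategy Maher uses. The upper bound is indeed immediate from the normalization $\delta:=\Upsilon(o)\in\mc D$ (so $d_{\mc C}(\mc D,f_n\mc D)\le d_{\mc C}(\delta,f_n\delta)$, and the right side concentrates at $L_{\mc C}n$ by positive drift on $\mc C$~\cite{Ma}), and the lower bound reduces, via Gromov hyperbolicity of $\mc C$~\cite{MasurMinsky:I} and quasi-convexity of $\mc D$~\cite{MM95}, to bounding the projections $\pi_{\mc D}(f_n\delta)$ and $\pi_{f_n\mc D}(\delta)$.

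One step is stated too loosely. The assertion that ``the limit set of $\mc D$ is a topologically small (proper quasi-convex) subset of $\partial\mc C$, so shadows of $\mc D$ of any fixed size have hitting measure zero'' does not follow from topological smallness alone: properness and quasi-convexity do not, by themselves, force the harmonic measure to vanish on $\Lambda(\mc D)$. That measure-zero statement is a genuine lemma (established by Maher using the non-elementarity of the walk, the fact that $\mc D$ is stabilized by the handlebody group, an infinite-index subgroup, and contraction properties of quasi-convex sets); one also needs convergence of $f_n\delta$ to the Gromov boundary, which over $\mc C$ is itself a nontrivial result. You correctly identify this as the main obstacle and correctly note that both Kaimanovich--Masur-type boundary arguments and the Sisto--Taylor projection-system machinery~\cite{ST:large_proj} (which is what the present paper leans on elsewhere) can fill the gap. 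Finally, the remark that the reversed walk has the same escape properties is fine here because the support $S$ is assumed symmetric, so the reversed step distribution still has support generating $\text{\rm Mod}(\Sigma)$; that hypothesis is doing real work and should be invoked explicitly.
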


Choose $\ep_2>\ep_1>0$ much smaller than $\ep$. By Theorem~\ref{hempeldist}, for any given $\rho$ we have $d_{\mc{C}}(\mc{D},f_n\delta)\ge(L_\mc{C}-\ep_1)n$ with probability at least $1-\rho$ for $n$ large.

By Theorem~\ref{tracking} we can assume $d_\T(f_mo,\tau_n(Lm))\le\ep_1m$ for every $\ep_1 n<m<(1-\ep_1)n$ with probability $\ge 1-\rho$ for $n$ large. We also assume $L_\mc{C}-\ep_1<d_{\mc{C}}(\delta,f_n\delta)/n<L_\mc{C}+\ep_1$ with probability $\ge 1-\rho$ for every $n$ large.

Consider $m\in[\ep_1n,(1-\ep_2)n]$. 

We have the following estimate on the distance from $\mc{D}$: Let $B>0$ be the Lipschitz constant of $\Upsilon:\T\to\mc{C}$. Recall that $f_m\delta=\Upsilon(f_mo)$
\begin{align*}
d_{\mc{C}}(\Upsilon\tau_n(Lm),\mc{D}) &\ge d_{\mc{C}}(f_m\delta,\mc{D})-d_{\mc{C}}(f_m\delta,\Upsilon\tau_n(Lm))\\
 &\ge(L_\mc{C}-\ep_1)m-B\ep_1m\\
 &\ge(L_\mc{C}-\ep_1-B\ep_1)\ep_1n.
\end{align*}
Notice that if $\ep_1$ is small enough, the right hand side increases linearly in $n$ with uniform constants. 

As for the other disk set $f_n\mc{D}$, we also get
\begin{align*}
d_{\mc{C}}(\Upsilon\tau_n(Lm),f_n\mc{D}) &\ge d_{\mc{C}}(\delta,f_n\mc{D})-d_{\mc{C}}(\delta,f_m\delta)-d_{\mc{C}}(f_m\delta,\Upsilon\tau_n(Lm))\\
 &\ge(L_\mc{C}-\ep_1)n-(L_\mc{C}+\ep_1)m-B\ep_1 m\\
 &\ge[(L_\mc{C}-\ep_1)-(L_\mc{C}+\ep_1)(1-\ep_2)-B\ep_1(1-\ep_2)]n.
\end{align*}
As before, if $\ep_1$ is very small compared to $\ep_2$, the right hand side increases linearly in $n$ with uniform constants. In conclusion, if $\ep_1$ is small enough and $n$ is large enough, the claim holds as $[\ep Ln,(1-\ep)Ln]\subset[\ep_1n,(1-\ep_2)n]$.\qed



The next claim will easily allow us to find surfaces $S_i$ satisfying (i) and (ii).

{\bf Claim}: Let $\phi$ be a pseudo-Anosov element with a short pants decomposition. Let $l_\phi:\mb{R}\to\T$ be its axis. For every $\ep>0$, for every $h>0$, with asymptotic probability 1, the Teichmüller segments $\tau_n[\ep Ln,2\ep Ln]$ and $\tau_n[(1-2\ep)Ln,(1-\ep)Ln]$ $\xi$-fellow travel (with $\xi$ only depending on $\phi$), along subsegments $\tau_n[t^n_1,t^n_2]$ and $\tau_n[t^n_3,t^n_4]$ of length at least $h$, some translates $\psi_n=g_nl_\phi$ and $\psi_n'=g'_nl_\phi$ of the axis $l_\phi$.

{\em Proof of the claim}. We just need to apply Theorem~\ref{randergodic1} with parameters $0<a<b$ given by $0<L\ep<2\ep L$ and $0<(1-2\ep)L<(1-\ep)L$ respectively.\qed

{\bf Conclusion of the proof}: For a fixed $\ep>0$ we define $o<S^n_1<S^n_2<S^n_3<S^n_4<f_no$ to be the four surfaces $\tau_n(t^n_1)<\tau_n(t^n_2)<\tau_n(t^n_3)<\tau_n(t^n_4)$ as given by the second claim. By construction, for every $n$ large enough, these satisfy properties (i), (ii) of Proposition~\ref{pa examples} and by the first Claim they also satisfy property (iii). Hence, by Proposition \ref{pa examples} there are pants decompositions $P_n^1,P_n^2,P_n^3,P_n^4$ that satisfy the filling criterion with parameter smaller than $\eta=\eta(K)$ as given by Proposition~\ref{model} where $K>1$ is the bilipschitz constant that we fixed at the beginning. They also satisfy the property of part (b) of Theorem \ref{main1} by our choice of $\tau(t^n_j)$ and Proposition \ref{pa examples}. Therefore, by Proposition \ref{model}, we have that $M_{f_n}$ is hyperbolic and, moreover, away from the cusps, it is $K$-bilipschitz to the metric $\mb{M}_n$ as described by Lemma \ref{five pieces}. As $\mb{M}_n$ contains an isometric copy of $\mc{CC}(Q(P_n^2,P_n^3))$, the bilipschitz map identifies the non-cuspidal part of such piece with a tubular neighborhood of the Heegaard surface (because of the control on the isotopy class of the map provided by Proposition \ref{model}). This concludes the proof of Theorem~\ref{main1}.\qed

\subsection{The proof of Theorem~\ref{main2} via short curves}\label{subsec:thm1viathm3}
We sketch now a proof of Theorem~\ref{main2} that uses the construction of Theorem~\ref{thm:main}.

The argument also gives that $M_{f_n}$ contains a curve of length $\le 1/\log(n)$ (later on, we improve this estimate using model metric, see Theorem~\ref{main6}).

There is yet another version of Theorems~\ref{randergodic1} and~\ref{thm:log_proj2} which says that, with high probability, there is a curve $\gamma_n$ such that $d_{\Sigma-\gamma_n}(\Upsilon(o),\Upsilon(f_no))$ has size at least $\log(n)$, and $\gamma_n$ lies close to the middle of a geodesic in $\mathcal C$ from $\Upsilon(o)$ to $\Upsilon(f_no)$. Similarly to the first Claim in the proof of Theorem~\ref{main1}, we have that $\gamma_n$ also lies on a shortest geodesic connecting $\mathcal D$ and $f_n\mathcal D$, and far from the endpoints of said geodesic. Using that $\mathcal D$ is quasiconvex and the Bounded Geodesic Image Theorem~\cite{MasurMinsky:I}, we have that the subsurface projection to $\Sigma-\gamma_n$ of $\mathcal D$ is bounded and coarsely coincides with that of $\Upsilon(o)$. A similar statement holds for $f_n\mathcal D$. Hence, we get that $d_{\Sigma-\gamma_n}(\mc{D},f_n\mc{D})$ is logarithmically large, and also that $(H_g,\gamma_n)$ and $(H_g,f_n(\gamma_n))$ are both pared acylindrical. We can therefore use Theorem~\ref{thm:main}.

\section{Four applications}
\label{applications}

We describe four applications of Theorem~\ref{main1}. 

We recall that the model metric decomposition consists of five pieces
\[
\mb{M}_n=H_1^n\cup\Omega_1^n\cup Q_n\cup\Omega_2^n\cup H_2^n,
\]
but, for our applications, we will mainly focus on the maximally cusped structure $Q_n=Q(P_n,R_n)$, as given by Theorem \ref{main1precise}. We recall that $P_n,R_n$ are uniformly close to $\Upsilon(X_n),\Upsilon(Y_n)$ where $X_n\in\tau_n[\ep Ln,2\ep Ln]$ and $Y_n\in\tau_n[(1-2\ep)Ln,(1-\ep)Ln]$. We also recall that it bilipschitz embeds, away from its cusps, into $M_{f_n}$ with bilipschitz constant arbitrarily close to 1 as $n$ goes to $\infty$.

\subsection{Diameter growth}
As a first geometric application, we compute the coarse growth rate for the diameter of random 3-manifolds.

\begin{mythm}[\ref{main4}]
There exists $c>0$ such that
\[
\mb{P}_n[\text{\rm diam}(M_f)\in[n/c,cn]]\stackrel{n\to\infty}{\longrightarrow}1.
\]
\end{mythm}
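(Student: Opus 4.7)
The plan is to deduce both bounds from the model metric structure of Theorem~\ref{main1precise}, with the lower bound additionally invoking White's linear upper bound on volume in terms of diameter for hyperbolic $3$-manifolds with bounded rank fundamental group (equivalently, bounded Heegaard genus).

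For the upper bound, I would use Theorem~\ref{main1precise} to replace $M_{f_n}$ by its $K$-bilipschitz model $\mathbb{M}_n$ (with bounded additive error across the excised Margulis tubes, each of which has small extrinsic diameter when its core is short). The model decomposes as in Section~\ref{overview} into five pieces $H_1^n\cup\Omega_1^n\cup Q_n\cup\Omega_2^n\cup H_2^n$, each the convex core of a maximally cusped handlebody or I-bundle. For each I-bundle piece, the Minsky and Brock--Canary--Minsky hierarchical description exhibits its non-cuspidal part as a chain of ``blocks'' of uniformly bounded diameter whose total count is quasi-isometric to the pants-graph distance between the two pants decompositions labelling its cusps; by Brock's theorem this distance is bounded by the Weil--Petersson, hence by the Teichm\"uller, distance between the corresponding base points of $\tau_n$, which is at most linear in $n$. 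The handlebody pieces $H_1^n,H_2^n$ are treated analogously via the collar comparison of Proposition~\ref{largecollar2}, the remaining ``cores'' contributing only a uniformly bounded amount. Summing the five contributions and composing with the bilipschitz comparison yields $\text{\rm diam}(M_{f_n})\le cn$.

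For the lower bound, the first ingredient is that $\text{\rm vol}(M_{f_n})\ge n/c$ with asymptotic probability $1$: by Theorem~\ref{main1precise}, the middle piece $Q_n=Q(P_2^n,P_3^n)$ bilipschitz embeds into $M_{f_n}$ away from its cusps, so $\text{\rm vol}(M_{f_n})\ge\text{\rm vol}(Q_n)/K^3$, and Brock's theorem gives $\text{\rm vol}(Q_n)$ comparable to the pants-graph distance $d_{\mathcal P}(P_2^n,P_3^n)$, which grows linearly in $n$ because $\Upsilon$ sends Teichm\"uller geodesic segments to uniform unparametrised quasi-geodesics in $\mathcal C$, and $S_2^n,S_3^n$ occupy positions on $\tau_n$ separated by $(1-3\epsilon)Ln$. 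The second ingredient is White's result from~\cite{W01}, applied in the form that there is a constant $C_g$ such that every closed hyperbolic $3$-manifold of Heegaard genus at most $g$ satisfies $\text{\rm vol}(M)\le C_g\cdot\text{\rm diam}(M)$. Combining the two gives $\text{\rm diam}(M_{f_n})\ge n/c'$.

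I expect the main obstacle to be the bookkeeping for the diameter upper bound: each of the five model pieces carries nontrivial $n$-dependent internal geometry, and one must check that the five linear contributions add to a linear bound on the overall diameter and that the Margulis tubes removed by the bilipschitz comparison do not hide an $n$-dependent shortcut between points in different pieces. This is true because a standard Margulis neighbourhood of a short geodesic has extrinsic diameter $O(\log(1/\ell))$ with $\ell$ the length of its core curve, and at most a bounded (in fact $\le 4(3g-3)$) number of such tubes are excised; the lower bound, by contrast, cleanly reduces to the two established inputs of linear volume growth via Brock and the volume--diameter inequality of White.
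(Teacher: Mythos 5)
Your plan deviates from the paper's proof on both bounds, and each deviation has a gap.

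\textbf{Upper bound.} You propose to bound $\text{\rm diam}(M_{f_n})$ by summing the diameters of the five model pieces $H_1^n\cup\Omega_1^n\cup Q_n\cup\Omega_2^n\cup H_2^n$, claiming that the I-bundle pieces contribute linearly while the handlebody ``cores'' contribute only a \emph{uniformly bounded} amount beyond their collars. The latter claim is false: the collars in Proposition~\ref{largecollar2} have a \emph{fixed} width $W$ (depending only on $\psi,\phi,\xi$, not on $n$), whereas the maximally cusped handlebodies $H_1^n=H(P_1^n)$ and $H_2^n=H(f_n^{-1}P_4^n)$ themselves have diameter growing roughly like $d_{\mc{C}}(P_1^n,\mc{D})\asymp n$, since $P_1^n=\Upsilon(S_1^n)$ sits a linear curve-graph distance from the disk set. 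So the ``cores'' are the dominant, $n$-dependent contribution — and, as Section~\ref{overview} points out, the model manifold technology you would need to control them is precisely what is missing for handlebodies. The paper sidesteps all of this: it proves the upper bound by purely topological bookkeeping. From $f_n=s_1\dots s_n$ one builds a triangulation of $M_{f_n}$ with $O(n)$ simplices, hence a triangular presentation of $\pi_1(M_{f_n})$ of length $O(n)$, and then applies White's theorem, which bounds $\text{\rm diam}(M)$ by a constant times the presentation length of $\pi_1(M)$. No use of Theorem~\ref{main1precise}, the model pieces, or any hyperbolic geometry is needed for this direction.

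\textbf{Lower bound.} You invoke ``White's result'' in the form $\text{\rm vol}(M)\le C_g\,\text{\rm diam}(M)$ for closed hyperbolic $3$-manifolds of Heegaard genus at most $g$. This is not White's theorem — White's theorem, as quoted in the paper, is $\text{\rm diam}(M)\le c\cdot l(\pi_1 M)$, a diameter bound in terms of presentation length, and does not contain (nor obviously imply) a linear bound of volume by diameter. In general hyperbolic $3$-manifolds have $\text{\rm vol}(M)$ bounded only exponentially in $\text{\rm diam}(M)$, and it is unclear that bounded Heegaard genus is enough to improve this, especially since the injectivity radius of $M_{f_n}$ is \emph{not} uniformly bounded below (by Theorem~\ref{main6} it decays like $1/\log(n)^2$). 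So your lower bound rests on an unavailable inequality. The paper's lower bound instead works directly inside the model: Theorem~\ref{blocksep} (the electric-distance estimate from the model manifold technology) gives that the two boundary components of $Q_n^{\text{\rm nc}}$ are at distance at least $A\,d_{\mc{C}}(P_2^n,P_3^n)-A$, and the curve-graph distance is $\asymp L_{\mc{C}}n - o(n)$ by the choice of $S_2^n,S_3^n$; the $K$-bilipschitz embedding of $Q_n$ into $M_{f_n}$ (Theorem~\ref{main1precise}) then transports this separation estimate into $M_{f_n}$, giving $\text{\rm diam}(M_{f_n})\gtrsim n$ directly without passing through volume.
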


The proof of Theorem~\ref{main4} has two different arguments, one for the coarse upper bound and one for the coarse lower bound. The upper bound comes from a result by White~\cite{W01} that relates the diameter to the {\em presentation length} of the fundamental group, a topological and algebraic invariant. Of a different nature is the coarse lower bound where we heavily use the $\ep$-model metric structure of Theorem~\ref{main1} and the relation with the model manifold. 

We start with the upper bound. We need the following definition

\begin{dfn}[Presentation Length]
Let $G$ be a finitely presented group. The length of a finite presentation $G=\langle F\left|R\right.\rangle$ is given by
\[
l(F,R)=\sum_{r\in R}{|r|_F-2}
\]
where $|r|_F$ denotes the word length of the relator $r\in R$ with respect to the generating set $F$. The presentation length of $G$ is defined to be
\[
l(G):=\min\ga{l(F,R)\left|\text{ \rm $G=\langle F\left| R\right.\rangle$ finite presentation}\right.}.
\]
We also recall that a relator $r\in R$ is triangular if $|r|_F\le 3$. 
\end{dfn}

\begin{thm}[White~\cite{W01}]
There exists $c>0$ such that for every closed hyperbolic 3-manifold $M$ we have
\[
\text{\rm diam}(M)\le c\cdot l(\pi_1M).
\]
\end{thm}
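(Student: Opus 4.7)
The plan is to build from a short presentation of $\pi_1 M$ a $2$-dimensional spine $\Sigma\subset M$ of controlled area, and then to bound $\operatorname{diam}(M)$ via hyperbolic isoperimetry applied to the complementary components.

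\textbf{Step 1 (spine from presentation).} Fix a presentation $\langle F\mid R\rangle$ realising $\ell := l(\pi_1 M)$. The presentation $2$-complex $K$ has one vertex, $|F|$ edges, and one $|r|$-gon per relator $r\in R$; subdividing each $|r|$-gon into $|r|-2$ triangles yields a simplicial structure on $K$ with exactly $\sum_{r\in R}(|r|-2)=\ell$ triangles. Since $\pi_1 K\cong\pi_1 M$, fix a basepoint-preserving continuous $f\colon K\to M$ realising this isomorphism. After a homotopy arrange that each $1$-cell maps to a geodesic loop and each triangle maps to a totally geodesic (or ruled-straightened) simplex in $M$. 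Because a geodesic triangle in $\mathbb H^3$ has area at most $\pi$, the image $\Sigma := f(K)$ has $\operatorname{area}(\Sigma) \le \pi\,\ell$.

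\textbf{Step 2 (complementary components).} As $f_*\colon \pi_1 K\to\pi_1 M$ is surjective, lifting to $\tilde M=\mathbb H^3$ one checks that the preimage $\tilde\Sigma$ is a $\pi_1 M$-invariant $2$-complex whose complement in $\mathbb H^3$ is a disjoint union of simply-connected regions, each mapping injectively into $M\setminus\Sigma$. Hence every component of $M\setminus\Sigma$ is an open ball with boundary area at most $\operatorname{area}(\Sigma)\le\pi\,\ell$.

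\textbf{Step 3 (diameter from area).} Combine the linear isoperimetric inequality in $\mathbb H^3$ with the exponential volume growth $\operatorname{vol}(B_R)\asymp e^{2R}$: a region bounded by a surface of area $A$ in $\mathbb H^3$ has intrinsic diameter at most $c_0\log(A+1)$. So each complementary ball of $\Sigma$ has diameter $\lesssim\log(\ell+1)$. Combined with a linear bound on the intrinsic diameter of $\Sigma$ itself, one concludes $\operatorname{diam}(M)\le c\,\ell$.

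\textbf{The main obstacle} is bounding the intrinsic diameter of $\Sigma$ linearly in $\ell$. A priori a single generator could be represented by an arbitrarily long geodesic loop, and a straightened triangular $2$-cell could be long and thin. I would resolve this by invoking the \emph{minimality} of the presentation: if some generator $g_i$ were realised by a loop traversing a Margulis tube many times, or a triangular face were degenerately long, then a Tietze transformation (replacing $g_i$ by a shorter product of the remaining generators, or eliminating such a face via a shorter consequence relation) would strictly decrease $l(F,R)$, contradicting minimality. After ruling out such degenerate configurations one arranges that all generator loops and all straightened triangles have length, respectively diameter, $O(\ell)$, which closes the argument.
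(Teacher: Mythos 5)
The paper states this result as a black-box citation of White's paper~\cite{W01} and does not reprove it, so I am assessing your argument on its own terms.

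Your overall strategy---straighten a triangular presentation complex to a $2$-complex $\Sigma$ of area at most $\pi\ell$ and then bound $\operatorname{diam}(M)$ by a filling argument---is the right shape, but two intermediate steps are incorrect as written and the gap you flag cannot be closed by the repair you suggest. In Step~3, the assertion that a region of $\mathbb{H}^3$ bounded by a surface of area $A$ has diameter $O(\log(A+1))$ is false: the $\varepsilon$-tubular neighborhood of a geodesic segment of length $L$ has boundary area comparable to $\varepsilon L$ but diameter comparable to $L$. What isoperimetry together with exponential volume growth does give is an \emph{inradius} bound (every point of the region lies within $O(\log(A+1))$ of its boundary); that weaker statement is the one you actually use downstream, but the step as written is wrong. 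In Step~2, the claim that the components of $\mathbb{H}^3\setminus\tilde\Sigma$ are simply connected and project injectively to $M$ is asserted without proof. Surjectivity of $f_*$ does give connectedness of $\tilde\Sigma$, but it does not automatically give trivial stabilizers (hence injective projections) for the complementary components, nor their simple connectedness, and this is exactly what is needed to transfer the isoperimetric estimate from $\mathbb{H}^3$ to $M$.

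The central gap is the one you identify: controlling $\operatorname{diam}(\Sigma)$ linearly in $\ell$. The Tietze-move repair does not close it. Minimality of $l(F,R)$ is a purely algebraic condition on word lengths and is blind to the \emph{geodesic} lengths of the straightened generator loops in $M$. A generator can straighten to an arbitrarily long geodesic loop---for instance a loop traversing a short core geodesic many times inside a deep Margulis tube---without admitting any shorter combinatorial expression in the remaining generators; replacing a generator by a word in the others is only possible if it is redundant in the presentation, and even then the substitution generically \emph{increases} $\sum_{r}(|r|-2)$ rather than decreasing it. So the geometry forces no Tietze move, and in this construction $\operatorname{diam}(\Sigma)$ is genuinely uncontrolled by $\ell$. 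White's argument does not proceed by bounding $\operatorname{diam}(\Sigma)$ directly, so the missing step requires a different mechanism; for that, one should consult the original proof in~\cite{W01}.
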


Let $S\subset\text{\rm Mod}(\Sigma)$ be the finite support of the probability measure $\mu$.

\begin{lem}
There exists $C(S)>0$ such that for every $f\in\text{\rm Mod}(\Sigma)$ we have
\[
l(\pi_1(M_f))\le C|f|_S.
\]
In particular $\text{\rm diam}(M_f)\le K|f|_S$ where $K=c\cdot C$.
\end{lem}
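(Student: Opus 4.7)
The plan is to build, for each $f \in \text{\rm Mod}(\Sigma)$ with $n := |f|_S$, a finite presentation of $\pi_1(M_f)$ whose length $l(F,R) = \sum_{r \in R}(|r|_F - 2)$ grows linearly in $n$, and then invoke White's theorem to conclude the diameter bound. The main idea is the standard effective-presentation trick of introducing a fresh copy of the generators at each step of the walk, trading one potentially exponentially long relator for many bounded-length ones.

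First, I would fix once and for all a standard presentation $\pi_1(\Sigma) = \langle a_1, b_1, \ldots, a_g, b_g \mid R_\Sigma\rangle$ with $R_\Sigma = \prod_{i=1}^g [a_i, b_i]$, chosen so that $b_1, \ldots, b_g$ bound meridian disks in the reference handlebody $H_g$. Because $S$ is finite, there is a constant $K = K(S)$ such that for every $s \in S$ and every standard generator $c$, a representative of $s_*(c) \in \pi_1(\Sigma)$ can be written as a word $w_{s,c}$ of length at most $K$ in $a_1, b_1, \ldots, a_g, b_g$. Writing $f = s_1 s_2 \cdots s_n$ with $s_j \in S$ and $f_k := s_1 \cdots s_k$, I then introduce fresh generators $\{a_i^{(k)}, b_i^{(k)}\}_{1 \le i \le g,\; 0 \le k \le n}$, intended to stand for the elements $(f_k)_*(a_i)$ and $(f_k)_*(b_i)$ in $\pi_1(\Sigma)$.

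The relations of the candidate presentation are: the surface relation $R_\Sigma$ in the level-$0$ generators (a single relator of length $4g$); for every $1 \le k \le n$ and every standard generator $c$, a transformation relation $c^{(k)} \cdot w_{s_k,c}(a_j^{(k-1)}, b_j^{(k-1)})^{-1} = 1$, of length at most $K + 1$; and the meridian relations $b_i^{(0)} = 1$ and $b_i^{(n)} = 1$ for $1 \le i \le g$, each of length $1$. Eliminating the generators of levels $k \ge 1$ by iterated Tietze substitution via the transformation relations collapses this to
\[
\big\langle a_i^{(0)}, b_i^{(0)} \,\big|\, R_\Sigma,\; b_i^{(0)}=1,\; f_*(b_i) = 1 \big\rangle,
\]
which is the standard Heegaard presentation $\pi_1(M_f) \cong \pi_1(H_g)/\langle\!\langle f_*(b_1), \ldots, f_*(b_g)\rangle\!\rangle$. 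Summing $|r|_F - 2$ over the $1 + 2gn + 2g$ relators yields $l(F,R) \le 2gn(K-1) + O(1)$, hence $l(\pi_1(M_f)) \le C\, |f|_S$ for some $C = C(S)$. The diameter bound then follows at once from White's inequality with $K := cC$.

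I do not foresee a serious obstacle here; the Tietze verification that the telescoping presentation really equals the Heegaard presentation is routine, and the existence of the constant $K(S)$ is immediate from finiteness of $S$. The only conceptual point is the familiar observation that an $O(n)$-length element of $\pi_1(\Sigma)$ can be named using $O(n)$ auxiliary generators and $O(n)$ bounded-length relations, at linear total cost in presentation length, rather than at the naive exponential cost of substituting $(f_n)_*(b_i)$ out in a single relation.
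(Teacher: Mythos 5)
Your proof is correct, but it takes a genuinely different route from the paper's. The paper triangulates $M_f$ with $O(|f|_S)$ simplices (which is possible because the gluing map is encoded by a word of length $n$ in the finite generating set $S$), passes to the 2-skeleton, and invokes the van Kampen fact that a 2-complex with $N$ triangles has a presentation with $N$ triangular relators, so that $l(F,R)\le N = O(n)$. You instead work purely algebraically: you start from the Heegaard presentation $\pi_1(M_f)\cong\pi_1(\Sigma)/\langle\!\langle b_i, f_*(b_i)\rangle\!\rangle$, observe that writing $f_*(b_i)$ directly would blow up exponentially in $n$, and avoid this by introducing a fresh layer of generators at each step of the walk so that each relator has bounded length and there are only linearly many of them. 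The total presentation length is then linear by direct count, and the Tietze elimination certifies that the group presented is really $\pi_1(M_f)$. Both arguments are linear for the same structural reason (the manifold is described by $n$ letters from a finite alphabet), but your version is more self-contained and elementary: it never needs to invoke that $M_f$ admits a linear-size triangulation, which, while standard, would require a separate justification. The paper's version is shorter once one grants that triangulation fact. Either is acceptable; yours would be a reasonable substitute for the paper's proof.
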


\begin{proof}
The 3-manifold $M_f$ admits a triangulation $T$ with a number of simplices uniformly proportional, depending on $S$, to the word length $|f|_S$. We have $\pi_1(M_f)=\pi_1(T_2)$ where $T_2$ denotes the 2-skeleton of $T$. By van Kampen, the fundamental group of a 2-dimensional connected simplicial complex $X$ admits a presentation $\pi_1(X)=\langle F\left| R\right.\rangle$ where every relation is triangular and the number of relations $|R|$ is roughly the number of 2-simplices.   
\end{proof}

As a corollary, we get
\[
\text{\rm diam}(M_{f_n})\le K|f_n=s_1\dots s_n|_S\le Kn
\]
thus proving the upper bound in Theorem~\ref{main4}.

The coarse lower bound follows from the structure of the model metric and the estimate of Theorem~\ref{blocksep} that comes from the model manifold technology of Minsky~\cite{M10}.

In particular, by Theorem~\ref{blocksep}, if $Q_n=Q(P_n,R_n)$ is a maximal cusp then the distance between the boundary components of its non-cuspidal part $Q^{\text{\rm nc}}$ is at least $Ad_{\mc{C}}(P_n,R_n)-A$. In the case of random 3-manifolds we have
\begin{align*}
d_{\mc{C}}(P_n,R_n) &\asymp d_{\mc{C}}(\Upsilon(X_n),\Upsilon(Y_n))\\
 &\ge d_\mc{C}(\Upsilon(o),\Upsilon(f_no))-d_{\mc{C}}(\Upsilon(o),\Upsilon(X_n))-d_{\mc{C}}(\Upsilon(Y_n),f_n\Upsilon(o))\\
 &\asymp L_\mc{C} n-o(n).
\end{align*}

\subsection{Injectivity radius decay}
As a second geometric application, we give a coarse upper bound to the decay rate of the length of the shortest geodesic of random 3-manifolds.
 
\begin{mythm}[\ref{main6}]
There exists $c>0$ such that
\[
\mb{P}_n\left[\text{\rm inj}(M_f)\le c/\log(n)^2\right]\stackrel{n\to\infty}{\longrightarrow}1.
\]
\end{mythm}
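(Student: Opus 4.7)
The plan is to combine the uniform bilipschitz model of Theorem~\ref{main1precise} with the logarithmic annular projection produced by Theorem~\ref{thm:log_proj2} and Minsky's short-curves formula from the model manifold technology~\cite{M10}. The sketch in Subsection~\ref{subsec:thm1viathm3} via Theorem~\ref{thm:main} already yields a short curve of length $\le c/\log(n)$; the improvement to $1/\log(n)^2$ arises from the quadratic behaviour of Minsky's length estimate in the \emph{annular} case, which is only accessible through the model metric.

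First I would apply both Theorem~\ref{main1precise} and Theorem~\ref{thm:log_proj2}; since each holds with asymptotic probability $1$, so does their intersection. After tuning the parameter $\epsilon$ of Theorem~\ref{main1precise} and $0<a<b<1$ of Theorem~\ref{thm:log_proj2}, this produces with asymptotic probability $1$:
\begin{itemize}
\item pants decompositions $P_2^n=\Upsilon(S_2^n)$ and $P_3^n=\Upsilon(S_3^n)$ such that the convex core of $Q_n:=Q(P_2^n,P_3^n)$ is $K$-bilipschitz embedded in $M_{f_n}$ away from its cusps, for some fixed $K>1$;
\item a non-separating simple closed curve $\gamma_n\subset\Sigma$ with $d_{\gamma_n}(\delta,f_n\delta)\ge\epsilon_0\log(n)$ and $d_{\mc{C}}(\delta,\gamma_n)\in[aL_{\mc{C}}n,bL_{\mc{C}}n]$, positioned strictly between $P_2^n$ and $P_3^n$ along curve complex geodesics joining $\delta$ to $f_n\delta$.
\end{itemize}
Since $\Upsilon$ sends Teichm\"uller geodesics to unparameterised quasi-geodesics in $\mc{C}$, the linearity of the Teichm\"uller and curve complex drifts makes such a tuning straightforward.

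Second, I would transfer the annular projection from the pair $(\delta,f_n\delta)$ to the pair $(P_2^n,P_3^n)$ via the Masur--Minsky Bounded Geodesic Image Theorem~\cite{MasurMinsky:II}: any geodesic of $\mc{C}$ that stays at distance $\ge 2$ from $\gamma_n$ has uniformly bounded annular projection on $\gamma_n$. Since $\gamma_n$ sits at curve complex distance $\Omega(n)$ from each of $\delta,f_n\delta,P_2^n,P_3^n$, we obtain $\pi_{\gamma_n}(\delta)\approx\pi_{\gamma_n}(P_2^n)$ and $\pi_{\gamma_n}(f_n\delta)\approx\pi_{\gamma_n}(P_3^n)$ up to $O(1)$, whence
\[
d_{\gamma_n}(P_2^n,P_3^n)\ge\epsilon_0\log(n)-O(1).
\]
Third, I would invoke Minsky's short-curves estimate~\cite{M10} for the maximally cusped I-bundle $Q_n$: there are uniform $c_1,c_2>0$ such that whenever $d_{\gamma_n}(P_2^n,P_3^n)\ge c_1$, the curve $\gamma_n$ admits a closed geodesic representative $\gamma_n^*$ in $Q_n$ with $\ell_{Q_n}(\gamma_n^*)\le c_2/d_{\gamma_n}(P_2^n,P_3^n)^2$. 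Combining with the previous step gives $\ell_{Q_n}(\gamma_n^*)\le c/\log(n)^2$. Finally, the $K$-bilipschitz embedding of $Q_n$ into $M_{f_n}$ covers a neighbourhood of $\gamma_n^*$ (since $\gamma_n$ is combinatorially far from the cusp loci $P_2^n\cup P_3^n$, so its Margulis tube in $Q_n$ is disjoint from the cusp neighbourhoods), giving $\ell_{M_{f_n}}(\gamma_n^*)\le Kc/\log(n)^2$, hence $\text{\rm inj}(M_{f_n})\le c'/\log(n)^2$.

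The main obstacle I anticipate is Step~2, coupled with the careful positioning done in Step~1: one must verify the correct combinatorial ordering of $\delta,P_2^n,\gamma_n,P_3^n,f_n\delta$ along a curve complex geodesic and check that all intermediate curve complex distances grow linearly in $n$, so that BGI gives only $O(1)$ error. This is a routine but delicate bookkeeping exercise using linearity of drift on both $\T$ and $\mc{C}$, quasi-convexity of $\Upsilon$-images of Teichm\"uller geodesics, and hyperbolicity of $\mc{C}$. The rest of the proof is a direct application of the model metric construction and Minsky's formula.
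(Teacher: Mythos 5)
Your approach is the same as the paper's (apply Theorem~\ref{main1precise}, produce $\gamma_n$ via Theorem~\ref{thm:log_proj2}, transfer projections to $(P_2^n,P_3^n)$ via the Bounded Geodesic Image Theorem, then apply the length bound in $Q_n$), but there is a genuine gap in Step~3: the length estimate you quote is not correct as stated. Minsky's and Brock--Canary--Minsky's Length Bound Theorem does not give $\ell_{Q_n}(\gamma_n^*)\le c_2/d_{\gamma_n}(P_2^n,P_3^n)^2$ unconditionally once $d_{\gamma_n}$ is large; the actual bound is of the form
\[
\ell_{Q_n}(\gamma_n^*)\le D\,\frac{S_{\gamma_n}(P_2^n,P_3^n)}{d_{\gamma_n}(P_2^n,P_3^n)^2+S_{\gamma_n}(P_2^n,P_3^n)^2},
\]
where $S_{\gamma_n}(P_2^n,P_3^n)=1+\sum_{Y\subset\Sigma-\gamma_n}\{\{d_Y(P_2^n,P_3^n)\}\}_K$ aggregates the thresholded subsurface projections to subsurfaces of $\Sigma-\gamma_n$. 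If, say, $S_{\gamma_n}\asymp d_{\gamma_n}$, this bound only gives $\ell\lesssim 1/d_{\gamma_n}\approx 1/\log(n)$, not $1/\log(n)^2$. To get the quadratic decay you must also establish that $S_{\gamma_n}(P_2^n,P_3^n)$ is uniformly bounded, equivalently that $d_Y(P_2^n,P_3^n)=O(1)$ for every subsurface $Y\subset\Sigma-\gamma_n$.

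The fix is available and is exactly what Theorem~\ref{thm:log_proj2} provides, but you omitted it: besides the logarithmic annular projection and the linear positioning of $\gamma_n$, that theorem also yields $d_Y(\delta,f_n\delta)\le C$ for all proper $Y\subset\Sigma-\gamma_n$. You therefore need to list that third property of $\gamma_n$ in Step~1, and then extend the transfer argument in Step~2 so that it also applies to every subsurface $Y\subset\Sigma-\gamma_n$ (not only to the annulus with core $\gamma_n$), concluding that $d_Y(P_2^n,P_3^n)$ is uniformly bounded and hence $S_{\gamma_n}(P_2^n,P_3^n)=O(1)$. The BGI argument you already invoke works for these $Y$'s as well since $\partial Y$ contains $\gamma_n$ and so lies at linear curve-graph distance from each of $\delta, f_n\delta, P_2^n, P_3^n$.
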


\begin{proof}
By Theorem~\ref{main1precise}, it is enough to show that $Q(P_n,R_n)$ satisfies 
\[
\text{\rm systole}(Q(P_n,R_n))\le c/\log(n)^2.
\]
We recall that $P_n$ and $R_n$ are uniformly close to $\Upsilon(X_n),\Upsilon(Y_n)$ with $X_n\in\tau_n[\ep L n,2\ep L n]$ and $Y_n\in\tau_n[(1-2\ep)Ln,(1-\ep)Ln]$ where $\tau_n$ is the parametrized Teichmüller segment $[o,f_no]$, for a fixed basepoint $o\in\T$.

By Minsky~\cite{M00}, a curve $\gamma\in\mc{C}$ not contained in $P_n$ or $R_n$ is short in $Q(P_n,R_n)$ if and only if there is a large subsurface projection $d_Y(P_n,R_n)$ on a proper subsurface $Y\subset\Sigma$ with $\gamma\subset\partial Y$. Furthermore, by the Length Bound Theorem~\cite{BrockCanaryMinsky:ELC2}, its length will be bounded by
\[
\ell_{Q(P_n,R_n)}(\gamma)\le D\frac{S_\gamma(P_n,R_n)}{d_\gamma(P_n,R_n)^2+S_\gamma(P_n,R_n)^2}
\]
for some uniform constant $D>0$ and where $d_\gamma(P_n,R_n)$ is the annular projection corresponding to $\gamma$ and 
\[
S_\gamma(P_n,R_n)=1+\sum_{Y\in\mc{Y}_\gamma}{\{\{d_Y(P_n,R_n)\}\}_K}.
\]
Here $\mc{Y}_\gamma$ denotes the collection of essential subsurfaces of $\Sigma-\gamma$, $K>0$ is, again, some uniform constant, and $\{\{\bullet\}\}_K$ is the function defined by $\{\{x\}\}_K=x$ if $x>K$ and 0 otherwise.

Putting things together, it is enough to show that there exists a curve $\gamma\subset\Sigma$ for which $d_\gamma(P_n,R_n)\ge\ep_0\log(n)$ and $S_\gamma(P_n,R_n)\le C$ for some uniform $\ep_0>0$ and $C$. For the purposes of the argument below, we note that the latter condition is equivalent to having uniformly bounded projections on all $Y$ contained in $\Sigma-\gamma$, in view of the distance formula of \cite{MasurMinsky:II}.

Replacing $P_n$ with $\delta=\Upsilon(o)$ and $R_n$ with $f_n\delta=\Upsilon(f_no)$, the aforementioned property is contained the statement of Theorem~\ref{thm:log_proj2}. To conclude, we only have to argue that $\delta$ and $P_n$ have coarsely the same subsurface projections to the annulus corresponding to $\gamma$ and to all subsurfaces contained in $\Sigma-\gamma$ (and similarly for $f_n\delta$ and $R_n$). But this holds provided that we choose $\ep$ small enough in Theorem~\ref{main1}, and $a,b$ sufficiently close to $1/2$ in Theorem~\ref{thm:log_proj2}. In fact, in this case we have that geodesics in $\mathcal C$ from $\delta$ to $P_n$ cannot pass 2-close to $\gamma_n$, just because they are much shorter than the distance from $\delta$ to $\gamma_n$. We can therefore apply the Bounded Geodesic Image Theorem~\cite{MasurMinsky:I} and conclude (since a similar argument also applies to $f_n\delta$ and $R_n$).
\end{proof}

We conclude the discussion with a couple of remarks on the lower bound for the injectivity radius. If we consider only the three middle pieces $\Omega_n^1\cup Q_n\cup\Omega_n^2$ of the $\ep$-model metric, the rate of $1/\log(n)^2$ is exactly the coarse decay rate of the systole. This is again an adaptation of the arguments of \cite{ST:large_proj}. Hence, in order to get a precise lower bound, we have to understand the systole of the handlebody pieces $H_n^1$ and $H_2^n$. Such computation would be possible, for example, in the presence of a model manifold technology for handlebodies analogue to the one of Minsky \cite{M10} and Brock, Canary and Minsky \cite{BrockCanaryMinsky:ELC2} for hyperbolic manifolds diffeomorphic to $\Sigma\times\mb{R}$.

\subsection{Geometric limits of random 3-manifolds}
We now exploit the model metric structure to establish the existence of certain geometric limits (see Chapter E.1 of~\cite{BP92} for the definition of the pointed geometric topology) for families of random 3-manifolds. 

These limits will be used in our last application concerning the arithmeticity and the commensurability class of random 3-manifolds.

\begin{pro}
\label{main5}
Let $\phi\in\text{\rm Mod}(\Sigma)$ be a pseudo-Anosov mapping class. Consider a sequence $A_n\subset\text{\rm Mod}(\Sigma)$ such that $\limsup\mb{P}_n[A_n]>0$. Then, we can find a sequence $n_j\uparrow\infty$ and elements $f_{n_j}\in A_{n_j}$ such that $M_{f_{n_j}}$ are hyperbolic 3-manifolds and the sequence $M_{f_{n_j}}$ converges to the infinite cyclic covering of hyperbolic mapping torus $T_\phi$ in the pointed geometric topology for a suitable choice of base points $x_{n_j}\in M_{f_{n_j}}$.
\end{pro}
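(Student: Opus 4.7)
The plan is to combine the $K$-bilipschitz model of Theorem~\ref{main1precise} with the ergodic fellow-traveling of Theorem~\ref{randergodic1}, and then identify the geometric limit using the bilipschitz model manifold technology of Minsky~\cite{M10} and Brock, Canary, and Minsky~\cite{BrockCanaryMinsky:ELC2}. Since $\limsup_n\mb{P}_n[A_n]>0$, pass to a subsequence, which we still call $(n_j)$, along which $\mb{P}_{n_j}[A_{n_j}]\ge\delta>0$. The events from Theorem~\ref{main1precise} (with $K$ close to $1$ and $\ep$ small) and from Theorem~\ref{randergodic1} (applied to the given $\phi$ with any fixed parameters $0<a<b<1$) both hold with asymptotic probability $1$, so intersecting with $A_{n_j}$ we may select, along a further subsequence, $f_{n_j}\in A_{n_j}$ such that $M_{f_{n_j}}$ is $K$-bilipschitz to its $\ep$-model $\mb{M}_{n_j}$ with middle piece $Q_{n_j}=Q(P_2^{n_j},P_3^{n_j})$, and such that $\tau_{n_j}$ contains a subsegment $J_{n_j}\subset\tau_{n_j}[aLn_j,bLn_j]$ of Teichmüller length $\ep_0\log(n_j)$ uniformly fellow-traveling a translate $g_{n_j}l_\phi$ of the axis of $\phi$.

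Fix a reference surface $X_\phi\in l_\phi$ and its shortest pants decomposition $P_\phi=\Upsilon(X_\phi)$. By the fellow-traveling, the Lipschitz property of $\Upsilon$, and the fact that $\Upsilon l_\phi$ is a uniform quasi-axis for $\phi$, the shortest curves along $J_{n_j}$ agree, up to bounded curve-graph distance, with a string of consecutive translates $g_{n_j}\phi^k P_\phi$, where $k$ ranges over an interval of length $m_{n_j}\to\infty$. Choose $y_{n_j}\in Q_{n_j}$ lying on a pleated surface that realizes the ``middle'' translate $g_{n_j}\phi^{k^*_{n_j}}P_\phi$, and let $x_{n_j}\in M_{f_{n_j}}$ be its image under the bilipschitz identification with $\mb{M}_{n_j}$ provided by Theorem~\ref{main1precise}.

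It remains to show that $(M_{f_{n_j}},x_{n_j})\to(\hat T_\phi,y_\phi)$ in the pointed geometric topology, where $y_\phi\in\hat T_\phi$ is a lift of a basepoint on the fiber of $T_\phi$ dual to $X_\phi$. By the bilipschitz model theorem of \cite{M10,BrockCanaryMinsky:ELC2}, any fixed-radius ball $B(y_{n_j},R)\subset Q_{n_j}$ is determined, up to bilipschitz distortion tending to $1$, by the subsurface projection data for $(P_2^{n_j},P_3^{n_j})$ restricted to subsurfaces whose boundary lies in a curve-graph window around $g_{n_j}\phi^{k^*_{n_j}}P_\phi$ of size depending only on $R$. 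For $n_j$ large, $m_{n_j}$ exceeds this window size, so the relevant data matches, after applying $g_{n_j}\phi^{k^*_{n_j}}$, the Minsky model data for $\hat T_\phi$ near $y_\phi$. Composing with the bilipschitz identification $M_{f_{n_j}}\cong\mb{M}_{n_j}$ from Theorem~\ref{main1precise} yields the desired pointed geometric convergence. The main technical obstacle is precisely this last step: faithfully passing from combinatorial matching of subsurface projections to bilipschitz geometric equivalence on balls of arbitrary fixed radius, which requires invoking the Minsky model and the Length Bound Theorem \emph{simultaneously} for the maximally cusped I-bundles $Q_{n_j}$ and for the doubly degenerate limit $\hat T_\phi$, and verifying that the end invariants of $Q_{n_j}$ (the pants decompositions $P_2^{n_j},P_3^{n_j}$) converge to the stable/unstable laminations of $\phi$ in the appropriate sense.
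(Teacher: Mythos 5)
Your setup (the diagonal extraction from the $\liminf$ assumption, the choice of events combining Theorem~\ref{main1precise} with parameters $K_j\to 1$ and Theorem~\ref{randergodic1}, and the choice of the middle basepoints) follows the paper's proof closely. However, you diverge substantially at the last step, and that step has a genuine gap.

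The paper establishes the geometric convergence $Q(P_2^{n_j},P_3^{n_j})\to\hat T_\phi$ by first observing that the Teichm\"uller endpoints $S_2^{n_j},S_3^{n_j}$ converge to the projective classes of the stable/unstable laminations $\lambda^\pm$ of $\phi$ (using minimality, fillingness and unique ergodicity of $\lambda^\pm$), then invoking Thurston's Double Limit Theorem to get strong (hence geometric) convergence of the quasi-Fuchsian manifolds $Q(S_2^{n_j},S_3^{n_j})$, and finally invoking the Ending Lamination Classification to identify the limit as $\hat T_\phi$; part~(b) of Theorem~\ref{main1precise} then carries this over to the maximally cusped $Q(P_2^{n_j},P_3^{n_j})$. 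You instead try to argue directly via the Minsky--Brock--Canary--Minsky model, asserting that any ball $B(y_{n_j},R)$ in $Q_{n_j}$ is ``determined, up to bilipschitz distortion tending to $1$, by the subsurface projection data\dots in a curve-graph window of size depending only on $R$.'' That statement is not available as a theorem: the bilipschitz model theorem of \cite{M10,BrockCanaryMinsky:ELC2} produces a \emph{global} uniformly bilipschitz map from the model to the hyperbolic manifold, and while the combinatorics of model blocks is local, the bilipschitz \emph{map} is built globally and there is no off-the-shelf ``local determination'' statement that a finite window of projection data pins down the hyperbolic geometry of an $R$-ball with error $\to 0$. You flag this yourself as ``the main technical obstacle'' and do not close it; proving such a localization would essentially amount to re-proving (a local form of) strong convergence, which is exactly what the paper sidesteps by invoking Double Limit + ELC. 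In short: your extraction and basepoint choices are fine, but the proof of geometric convergence is missing, and the tool you plan to use for it does not, as stated, do the job.
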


\begin{proof}
By assumption, there exist $\delta>0$ and a sequence $m_j\uparrow\infty$ such that $\mb{P}_{m_j}[A_{m_j}]\ge\delta$. We choose $(n_j)_{j\in\mb{N}}$ by inductively refining $(m_j)_{j\in\mb{N}}$.

By Theorem~\ref{randergodic1} and Theorem~\ref{main1}, for every $k\in\mb{N}$ we have that the event
\[
G_{n,k}:=\left\{
\begin{array}{c}
\text{\rm $M_{f_n}$ satisfies Theorem~\ref{main1} with parameters $K:=1+1/k$ and $\ep$}\\
\text{\rm $\tau_n=[o,f_no]$ satisfies Theorem~\ref{randergodic1} with parameters $\ep$ and $\phi$}
\end{array}
\right\}
\]
has probability at least $1-\delta/10$ for every sufficiently large $n$, say for $n\ge N_k$. In particular, if $m_i\ge N_k$, we have $A_{m_i}\cap G_{m_i,k}\neq\emptyset$. 

We recall that Theorem \ref{main1} provides a maximally cusped structure $Q(P_n,R_n)$ whose convex core $K$-bilipschitz embeds away from the cusp into $M_{f_n}$. Furthermore, the pants decomposition $P_n$ is located uniformly close to $\Upsilon\tau_n[\ep Ln,2\ep Ln]$ while the pants decomposition $R_n$ is uniformly close to $\Upsilon\tau_n[(1-\ep)Ln,(1-2\ep)Ln]$. 

We define now inductively the sequence $(n_j)_{j\in\mb{N}}$. Suppose that we have already chosen $n_1,\cdots,n_{j-1}$. The next element will be 
\[
n_j:=\min\{m_i\left|m_i>\max\{n_{j-1},N_j\}\right.\}.
\] 
As $n_j>N_j$, we have $A_{n_j}\cap G_{n_j,j}\neq\emptyset$, so we can choose $f_{n_j}\in A_{n_j}\cap G_{n_j,j}$.
 
We recall that $\tau_n=[o,f_no]$ satisfies Theorem~\ref{randergodic1} with parameters $\phi$ and $\ep$, so it has a subsegment $\tau_n[3\ep Ln,(1-3\ep)Ln]$ that uniformly travels a translate $g_nl_\phi$ along a subsegment of length $\ep_0\log(n)$.

Up to remarking $\tau_n$, an operation that does not change the isometry type of $Q(P_{n_j},R_{n_j})$, we can assume that $\tau_{n_j}$ uniformly fellow travels $l_\phi$ along the subsegment $l_\phi[-a_{n_j},a_{n_j}]$ with $a_{n_j}=\ep_0\log(n_j)\uparrow\infty$. Hence, the sequence of remarked Teichmüller segments $\tau_{n_j}[3\ep Ln_j,(1-3\ep)Ln_j]$ is converging uniformly on compact subsets to a geodesic $l$ that uniformly fellow travels the axis $l_\phi$. 

Notice that $l_\phi$ converges in the forward and backward directions to the projective classes of the invariant laminations $\lambda^+$ and $\lambda^-$ of the pseudo-Anosov mapping class $\phi$. Since $l$ fellow travels $l_\phi$ and $\lambda^+$ and $\lambda^-$ are are minimal, filling and uniquely ergodic (see Exposé 9 and Exposé 12 of~\cite{FLP}), we conclude that also $l$ converges in the forward and backward direction to the same laminations and so do the sequences of remarked segments $\tau_{n_j}[0,2\ep Ln_j]$ and $\tau_{n_j}[(1-2\ep)Ln_j,(1-\ep)Ln_j]$ (see, for example, Lemma 1.4.2 of~\cite{KM:Poisson}).  

As $P_{n_j},R_{n_j}$ lie uniformly close to the $\Upsilon$-projections of such segments, this immediately implies, by work of Klarreich \cite{Kla99}, that $P_{n_j}\to\lambda^-$ and $R_{n_j}\to\lambda^+$.

By work of Brock, Bromberg, Canary, and Minsky~\cite{BBCM13} and the solution of the Ending Lamination Conjecture~\cite{M10,BrockCanaryMinsky:ELC2}, we conclude that the sequence of maximally cusped structures $Q(P_{n_j},R_{n_j})$ converges to the infinite cyclic covering of $T_\phi$. As $Q(P_{n_j},R_{n_j})$ becomes geometrically arbitrarily close to $M_{f_{n_j}}$, the claim follows.

\end{proof}

\subsection{Commensurability and arithmeticity}
Dunfield and Thurston, using a simple homology computation, have shown in~\cite{DT06} that their notion of random 3-manifold is not biased towards a certain fixed set of 3-manifolds. This means that for every fixed 3-manifold $M$, with asymptotic probability 1, $M_f$ is not diffeomorphic to $M$. 

Using geometric tools it is possible to strengthen this conclusions and show that Dunfield and Thurston's notion of random 3-manifolds is also transverse, in a sense made precise in the theorem below, to the class of arithmetic hyperbolic 3-manifolds and to the class of 3-manifolds which are commensurable to a fixed 3-manifold $M$.  

\begin{mythm}[\ref{main3}]
With asymptotic probability $1$ the following holds
\begin{enumerate}
\item{$M_f$ is not arithmetic.}
\item{$M_f$ is not in a fixed commensurability class $\mc{R}$.}
\end{enumerate}
\end{mythm}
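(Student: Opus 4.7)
The plan is to prove both~(1) and~(2) by contradiction, combining Proposition~\ref{main5} with rigidity of geometric limits for arithmetic and commensurable hyperbolic 3-manifolds, as developed by Biringer and Souto~\cite{BS11}.

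First, I would fix a pseudo-Anosov mapping class $\phi\in\text{\rm Mod}(\Sigma)$ whose mapping torus $T_\phi$ is hyperbolic and satisfies additionally: non-arithmetic, for part~(1); not commensurable to any manifold in $\mathcal{R}$, for part~(2). Such $\phi$ exist in abundance, since a ``generic'' hyperbolic mapping torus is non-arithmetic by classical rigidity results, and since $\mathcal{R}$ is a single commensurability class while infinitely many commensurability classes are realized by mapping tori $T_\phi$.

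Next, I would assume towards a contradiction that $\limsup_n\mathbb{P}_n[A_n]>0$, where $A_n:=\{f\in\text{\rm Mod}(\Sigma)\mid M_f\text{ is arithmetic}\}$ in case~(1) and $A_n:=\{f\in\text{\rm Mod}(\Sigma)\mid M_f\in\mathcal{R}\}$ in case~(2). Applying Proposition~\ref{main5} to this $\phi$ and $A_n$, I obtain a subsequence $n_j\uparrow\infty$, elements $f_{n_j}\in A_{n_j}$, and basepoints $x_{n_j}\in M_{f_{n_j}}$ such that the pointed hyperbolic 3-manifolds $(M_{f_{n_j}},x_{n_j})$ converge, in the geometric topology, to the infinite cyclic cover $\widetilde{T_\phi}$. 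We have thus produced a sequence of pairwise arithmetic (resp. pairwise commensurable) closed hyperbolic 3-manifolds whose geometric limit is a doubly-degenerate cover of $T_\phi$.

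The core of the argument is then to invoke the rigidity of geometric limits from~\cite{BS11}: geometric limits of sequences of arithmetic hyperbolic 3-manifolds (respectively, of manifolds in a fixed commensurability class) are themselves constrained to cover an arithmetic (respectively, commensurable) manifold. Concretely, by embedding a compact core of $\widetilde{T_\phi}\cong\Sigma\times[0,1]$ almost isometrically into $M_{f_{n_j}}$ for large $j$ and tracking how the image of $\pi_1(\Sigma)$ sits inside the commensurators $\text{\rm Comm}(\pi_1 M_{f_{n_j}})$, one forces $T_\phi$ itself to be commensurable to the $M_{f_{n_j}}$. This yields that $T_\phi$ is arithmetic in case~(1), and that $T_\phi$ lies in the commensurability class $\mathcal{R}$ in case~(2), both of which contradict our initial choice of $\phi$.

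The hardest step will be the precise application of the Biringer--Souto rigidity to the infinite-covolume limit $\widetilde{T_\phi}$: one must control how the commensurators $\text{\rm Comm}(\pi_1 M_{f_{n_j}})$ behave under geometric convergence and extract a usable commensurability relation with $T_\phi$, as opposed to merely a coarse geometric coincidence of compact cores. Once this rigidity is in place, both parts of the theorem follow immediately from the contradiction with the generic choice of $\phi$ arranged at the outset.
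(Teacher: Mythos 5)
The proposed proof has a genuine gap at its central step: the claimed ``rigidity of geometric limits'' — that if a sequence of arithmetic (resp.\ pairwise commensurable) closed hyperbolic 3-manifolds $(M_{f_{n_j}},x_{n_j})$ converges geometrically to the infinite cyclic cover $\widetilde{T_\phi}$, then $T_\phi$ itself must be arithmetic (resp.\ lie in the same commensurability class) — is not a theorem, and neither \cite{BS11} nor anything else provides it. Geometric (pointed Gromov--Hausdorff/Chabauty) convergence is far too weak to transport a commensurability relation onto the limit in this way: almost-isometric embeddings of compact cores of $\widetilde{T_\phi}$ into the $M_{f_{n_j}}$ give you nothing about the arithmetic invariants of $T_\phi$. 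Moreover, the limit you actually produce is the \emph{infinite cyclic cover} $\widetilde{T_\phi}$, an infinite-volume manifold with infinite-index fundamental group in $\pi_1(T_\phi)$, so even if some form of ``limit rigidity'' were available it is unclear how it would let you conclude anything about $T_\phi$ itself. As a result, the contradiction you set up (with a non-arithmetic $T_\phi$) never materializes.

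What the paper does instead is quite different and relies on two ingredients you have not used. First, Theorem~\ref{main6} is invoked to ensure that, with positive limsup probability, the $M_{f_{n_j}}$ can also be taken to have injectivity radius $\le c/\log(n_j)^2\to 0$. Second, each $M_{f_{n_j}}$ finitely covers a \emph{maximal orbifold} $\mc{O}_{n_j}$, and arithmeticity gives the uniform spectral bound $\lambda_1(\mc{O}_{n_j})\ge 3/4$ (these orbifolds are congruence). Proposition~4.3 of \cite{BS11} then uses the geometric convergence of $(M_{f_{n_j}},x_{n_j})$ to a doubly degenerate $Q_\infty$ with $\text{\rm inj}(Q_\infty)>0$ together with the uniform $\lambda_1$-bound to conclude that only finitely many orbifolds $\mc{O}_{n_j}$ occur, so after passing to a further subsequence one may take $\mc{O}_{n_j}=\mc{O}$ fixed. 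The contradiction is then elementary: a fixed compact orbifold $\mc{O}$ cannot be finitely covered by closed manifolds of arbitrarily small injectivity radius, since short geodesics descend. For part (2), once arithmeticity is excluded, non-arithmetic manifolds in a fixed commensurability class all finitely cover the \emph{same} orbifold, namely the quotient by the commensurator (discrete by Margulis), and the same contradiction applies. To repair your proof you would need to replace the invalid rigidity step with this spectral-gap/finiteness argument and bring in the injectivity radius decay; the choice of a non-arithmetic $\phi$ at the outset, while harmless, is not the mechanism that produces the contradiction.
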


\begin{proof}
The argument is mostly borrowed from Biringer-Souto~\cite{BS11}.

The proof of both points starts from the following observation: Each $M_{f_n}$ finitely covers a maximal orbifold $M_{f_n}\to\mc{O}_n$. 

We first prove the non-arithmeticity. We argue by contradiction: Suppose that $\mb{P}_n[\text{\rm $M_f$ is arithmetic}]$ does not go to 0. Combining with Theorem~\ref{main6}, we also have 
\[
\limsup\mb{P}_n[\text{$M_f$ is arithmetic and $\text{\rm inj}(M_f)\le c/\log(n)^2$}]>0.
\]
By Proposition~\ref{main5}, up to passing to a subsequence, say the whole sequence for simplicity, we can pick $M_{f_n}$ such that $M_{f_n}$ is arithmetic, has injectivity radius $\text{\rm inj}(M_{f_n})\le c/\log(n)^2$ and there are base points $x_n\in M_{f_n}$ such that the sequence $(M_{f_n},x_n)$ converges geometrically to $(Q_\infty,x_\infty)$ where $Q_\infty$ is a doubly degenerate structure on $\Sigma\times\mb{R}$ with $\text{\rm inj}(Q_\infty)>0$.

Since $M_{f_n}$ are arithmetic, the orbifolds $\mc{O}_n$ are congruence and have $\lambda_1(\mc{O}_n)\ge 3/4$ (see~\cite{BS91} or Theorem 7.1 in~\cite{BS11}). By Proposition 4.3 of~\cite{BS11}, the orbifolds $\mc{O}_n$ cannot be all different, hence we can assume that they are fixed all the time $\mc{O}_n=\mc{O}$. We get a contradiction by observing that $\mc{O}$ is covered by closed 3-manifolds $M_{f_n}$ with arbitrarily small injectivity radius. 

We now discuss commensurability. Proceed again by contradiction and assume that $\mb{P}_n[\text{\rm $M_f$ is in the commensurability class $\mc{R}$}]$ does not go to 0. By the arithmetic part we know that we also have
\[
\limsup\mb{P}_n[\text{\rm $M_f\in\mc{R}$, $M_f$ not arithmetic, and $\text{\rm inj}(M_f)\le c/\log(n)^2$}]>0.
\]
As before, using Proposition~\ref{main5}, choose a geometrically convergent sequence $(M_{f_n},x_n)\to(Q_\infty,x_\infty)$ of non-arithmetic, commensurable hyperbolic 3-manifolds with $\text{\rm inj}(M_{f_n})\downarrow 0$. Commensurability and non-arithmeticity imply together that $\mc{O}_n=\mc{O}$ is fixed all the time: It is the orbifold corresponding to the commensurator $\text{\rm Comm}(\pi_1(M_{f_n}))$, which is a discrete subgroup of $\text{\rm PSL}_2\mb{C}$ by Margulis (see Theorem 10.3.5 in~\cite{MR}) and is an invariant of the commensurability class. We conclude with the same argument as before.
  
\end{proof}

\appendix
\section{Curve graph and pared acylindrical manifolds}
\label{appendix c}

\begin{lemma}[2.4]
Let $\gamma,\gamma'\subset\Sigma$ be essential multicurves. We have
\begin{enumerate}[{\rm (i)}]
\item{If $d_{\mc{C}}(\gamma,\mc{D})\ge 2$ then $(H_g,\gamma)$ is pared.}
\item{If $d_{\mc{C}}(\gamma,\mc{D})\ge 3$ then $(H_g,\gamma)$ is pared acylindrical.}
\item{If $d_{\mc{C}}(\gamma,\gamma')\ge 1$ then $(\Sigma\times[0,1],\gamma\times\{0\}\sqcup\gamma'\times\{1\})$ is pared.}
\item{If $d_{\mc{C}}(\gamma,\gamma')\ge 3$ then $(\Sigma\times[0,1],\gamma\times\{0\}\sqcup\gamma'\times\{1\})$ is pared acylindrical.}
\end{enumerate}
\end{lemma}

\begin{proof}
Let us prove (i). By Dehn's Lemma, if $\Sigma-\gamma$ is not $\pi_1$-injective, then $\Sigma-\gamma$ admits a compression which implies $d_{\mc{C}}(\gamma,\mc{D})\le 1$. Therefore, if  $d_{\mc{C}}(\gamma,\mc{D})\ge 2$, the inclusion $\Sigma-\gamma\subset H_g$ satisfies property (1). 

As for property (2), we proceed as follows: Consider an essential annulus $f:(A,\partial A)\to(H_g,N(\gamma))$ which cannot be properly homotoped into $N(\gamma)$. The boundary curves $\alpha_j=f(\partial_jA)$, for $j=1,2$, are homotopic to a power of the simple core curve $\gamma\subset N(\gamma)$, say $\alpha_j\simeq \gamma^{n_j}$ with $n_j\neq 0$ because $f$ is essential. We freely homotope $f$ such that $\alpha_j=\gamma^{n_j}$ and a simple arc $\eta\subset A$ joining the two boundary components of $\partial A$ maps to a loop $\beta=f(\eta)$ in $H_g$. 

By assumption $\alpha_1$ and $\alpha_2$ are homotopic in $H_g$. At the level of fundamental groups, we have $\gamma^{n_1}=\beta\gamma^{n_2}\beta^{-1}$ in $\pi_1(H_g)$. Since $\pi_1(H_g)=\mb{F}_g$ is a free group, necessarily $\beta=\gamma^k$ for some $k\in\mb{Z}$. Hence we can homotope $f$ such that it maps a regular neighborhood $U$ of $\eta\cup\partial A$ to $N(\gamma)$. The complement of $U$ in $A$ is a disk with boundary $\delta$ mapped to $N(\gamma)$. Since $N(\gamma)$ maps $\pi_1$-injectively to $H_g$, the loop $f(\delta)$ is also homotopically trivial in $N(\gamma)$. Since $H_g$ is aspherical we can homotope $f$ restricted to the complement of $U$ to a nullhomotopy of $f(\delta)$ in $N(\gamma)$. Hence $\Sigma-\gamma\subset H_g$ satisfies (2). 

We now prove (ii). We need to check property (3). Consider an essential annulus $f:(A,\partial A)\to(H_g,\Sigma-\gamma)$ which cannot be properly homotoped into $\Sigma-\gamma$. By the Annulus Theorem, we can assume that $f$ is an embedding. We conclude using the following.

{\bf Claim}. $H_g-\gamma$ does not contain any properly embedded essential annulus $(A,\partial A)\subset (H_g,\Sigma-\gamma)$.

{\em Proof of the claim}. Since handlebodies do not contain incompressible and $\partial$-incompressible surfaces, the annulus $A$ admits a boundary compression. This means that we find an embedded disk $D^2\subset H_g$ whose boundary is divided into two segments $\partial D^2=\alpha\cup\beta$ with $\alpha\subset A$ and $\beta\subset\Sigma$, both joining the two components of the boundary $\partial A$. The boundary $\delta$ of a tubular neighborhood of $\partial A\cup\beta$ is a disk bounding curve. By construction it has distance at most 2 from $\gamma$. This concludes the proof.\qed

The proofs of (iii) and (iv) are analogous. 

Consider (iii) first. Conditions (1) is clear as $\Sigma\times[1,2]$ deformation retracts to $\Sigma\times\{i\}$ for $i=0,1$. Also conditions (2) is straightforward: Let $P:=\gamma\times\{1\}\sqcup\gamma'\times\{2\}$ be the pared locus. Consider a map $f:(A,\partial A)\to (\Sigma\times[1,2],U)$ where $U$ is a regular neighborhood of $P$. Consider the homotopic loops $\alpha_1=f(\partial_1A),\alpha_2=f(\partial_2A)$. Since $d_{\mc{C}}(\gamma,\gamma')\ge 1$ and $\Sigma\times[0,1]$ deformation retracts to $\Sigma=\Sigma\times\{0\}$, different components of $P$ are not homotopic in $\Sigma\times[0,1]$. Thus $\alpha_1,\alpha_2$ are contained in a single component $U(\alpha)$ of $U$ with core curve $\alpha\subset P$, either on $\Sigma\times\{1\}$ or on $\Sigma\times\{2\}$ (say the first). Since $\Sigma\times[0,1]$ deformation retracts to $\Sigma$, we have that $\alpha_1\simeq\alpha_2\simeq\alpha^n$ for some $n\neq 0$. We freely homotope $f$ so that $\alpha_j=\alpha^n$. Let $\eta\subset A$ be a horizontal arc. Notice that the curve $\beta=f(\eta)$ is a loop and satisfies $\alpha^n\simeq\beta\alpha^n\beta^{-1}$ in $\pi_1(\Sigma)$. This implies that $\beta\simeq\alpha^k$ for some $k\in\mb{Z}$. We conclude as in the proof of (i).  

Consider now (iv). Let $f:(A,\partial A)\to(\Sigma\times[0,1],\Sigma_0\sqcup\Sigma_1-P)$ be an essential annulus that cannot be properly homotoped in $\Sigma_0\sqcup\Sigma_1-P$. Again, by the Annulus Theorem, we can replace $f$ with an embedding. In $\Sigma\times[0,1]$ properly embedded annuli are either isotopic to vertical ones $\alpha\times[0,1]$ or boundary parallel. Thus it is enough to show that $f(A)$ is not vertical: As before, consider the homotopic simple closed curves $\alpha_j=f(\partial_jA)$ with $j=1,2$. On $\Sigma$, they represent the same isotopy class and $\alpha_1$ is disjoint from $\gamma$ while $\alpha_2$ is disjoint from $\gamma'$. This means that $d_{\mc{C}}(\gamma,\gamma')\le 2$ contradicting our initial assumption. Thus $f(A)$ is not a vertical annulus and, hence, can be homotoped relative into the boundary.
\end{proof}

\section{Double incompressibility for pared handlebodies}
\label{appendix b}

We give a proof of Proposition~\ref{doubleincomp} whose statement we recall

\begin{proposition}[5.6]
If $(H_g,\gamma)$ is pared acylindrical, then the inclusion $\Sigma-\gamma\subset H_g$ is doubly incompressible.
\end{proposition}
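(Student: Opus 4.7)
The plan is to verify, in order, the conditions (a), (c), (d), and (b) of double incompressibility, starting from the three defining conditions (1), (2), (3) of pared acylindricality. Condition (a) is literally condition (1), so no work is needed. For the remaining three, the common strategy will be to convert each potential failure into an essential cylinder in one of the two pairs $(H_g, N(\gamma))$ or $(H_g, \Sigma - N(\gamma))$ and then apply the relevant acylindricity hypothesis (2) or (3).

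For (c), I would start with an essential map $f \colon (A, \partial A) \to (H_g, \Sigma - N(\gamma))$ and use condition (3) to homotope $f$ as a map of pairs to a map $A \to \Sigma$ with $\partial A$ still on $\Sigma - N(\gamma)$. Up to homotopy, any essential annulus in a surface factors through an immersion of a simple closed curve $\delta \subset \Sigma$. Since $f(\partial A) \subset \Sigma - N(\gamma)$, the curve $\delta$ is freely homotopic into $\Sigma - N(\gamma)$, so a further isotopy of $\delta$ yields a map $A \to \Sigma - N(\gamma)$ extending $f|_{\partial A}$, as required.

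For (d) (equivalently, the maximal cyclic condition stated as (e) in the definition), I would use that $\pi_1(H_g)$ is free, so maximal cyclic subgroups are generated by primitive elements. Let $c \in \pi_1(\Sigma - \gamma)$ be primitive and suppose $c = d^k$ in $\pi_1(H_g)$ for some primitive $d$ and some $k \geq 2$. Representing $c$ by a loop $\alpha \subset \Sigma - \gamma$ and $d$ by the core of a tubular neighborhood $V = S^1 \times D^2$ in $H_g$, the free homotopy $\alpha \simeq d^k$ together with the standard cabling of $\partial V$ produces a singular cylinder whose one boundary is $\alpha \subset \Sigma - N(\gamma)$ (if $c$ is non-peripheral) or $\gamma \subset N(\gamma)$ (if $c$ is peripheral, i.e.\ a power of $\gamma$) and whose other boundary is a $(k,1)$-cable on $\partial V$. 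Pushing the cabled boundary across to $\Sigma$ by a homotopy converts this cylinder into an essential map in $(H_g, \Sigma - N(\gamma))$ or $(H_g, N(\gamma))$ respectively, and applying condition (3) or (2) homotopes it into $\Sigma$. Counting the covering degree of the resulting immersed simple closed curve in $\Sigma$ then forces $k = 1$, a contradiction.

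Condition (b) will be the main obstacle. Suppose $\alpha_0, \alpha_1 \colon (I, \partial I) \to (\Sigma - N(\gamma), \partial N(\gamma))$ are essential arcs that are properly homotopic in $(H_g - U(\gamma), \partial U(\gamma))$ via a map $H \colon I \times I \to H_g - U(\gamma)$. The two vertical sides of $H$ map to arcs $\sigma_0, \sigma_1$ on the torus $T^2 = \partial U(\gamma)$ whose endpoints lie on the two parallel longitudes that constitute $\partial N(\gamma) = T^2 \cap \Sigma$. The plan is to complete $H$ by attaching two strips running along the annular components of $T^2 - \partial N(\gamma)$ from $\sigma_i$ to arcs of $\partial N(\gamma)$; this converts $H$ into an essential cylinder $(A, \partial A) \to (H_g, N(\gamma))$. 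Condition (2) then provides a homotopy of pairs into $N(\gamma) \cong \gamma \times I$, and from the product structure of $N(\gamma)$ together with the $\pi_1$-injectivity from (1), I would extract a proper homotopy of $\alpha_0$ to $\alpha_1$ within $\Sigma - N(\gamma)$. The hard part will be precisely this extraction: making sure that the annulus-homotopy can be decomposed to yield exactly the arc-homotopy sought, which requires an innermost-arc analysis on the trace $A \cap \Sigma$ controlled by $\pi_1$-injectivity of $\Sigma - \gamma$ in $H_g$ and by condition (2) applied to any spurious essential subannuli produced along the way.
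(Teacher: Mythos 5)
Your overall plan (walk through the conditions one at a time, converting each potential failure into something the acylindricity hypotheses (2) and (3) rule out) matches the paper's strategy, and your handling of (a) and (c) is in the same spirit as the paper's, which treats those as immediate. The two substantive parts, (b) and (e), are where your route diverges, and in both cases there is a real gap.

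\textbf{Condition (b).} You propose to glue strips to the vertical sides of the homotopy square $H\colon I\times I\to H_g-U(\gamma)$ and thereby obtain an essential cylinder $(A,\partial A)\to(H_g,N(\gamma))$ to which condition (2) applies. This does not work as stated: a square with two strips attached along opposite sides is still a disk, and there is no natural identification that makes it an annulus. What the square \emph{does} give you, after pushing the vertical sides across $\partial U(\gamma)$ into $N(\gamma)$, is a loop $\kappa=\alpha_0 * \sigma_1' * \alpha_1^{-1} * (\sigma_0')^{-1}\subset\Sigma$ which is nullhomotopic in $H_g$. This is precisely the paper's starting point, but the paper then goes a completely different way: it applies the Loop Theorem to $\kappa$ to produce a diskbounding simple closed curve $\eta\subset\Sigma$ with $i(\eta,\gamma)\le 2$, and then, by surgering a regular neighborhood of $\gamma\cup D^2$, produces an essential disk or essential annulus in $\Sigma-\gamma$, contradicting acylindricity directly. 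That step — extracting a compressing disk with controlled intersection with $\gamma$ — is the engine of the argument, and your proposal does not contain a substitute for it.

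\textbf{Condition (e).} Your plan is to push the $(k,1)$-cable on $\partial V$ into $\Sigma$, apply condition (3) or (2), and then ``count the covering degree.'' The gap is in the push: after a free homotopy of the cable into $\Sigma$, there is no reason the image retains the structure of a $k$-fold cover of a curve \emph{in} $\Sigma-N(\gamma)$; you only control its conjugacy class. Nor can you push the core $\beta$ itself off $N(\gamma)$, since $\beta$ may intersect $\gamma$ essentially. The paper's solution is to enlarge the singular annulus to a map of a whole solid torus $\mathbb{T}\to H_g$ (using that $\mathbb{T}$ deformation retracts to the quotient annulus $A_0$), then push $\partial\mathbb{T}$ into $\Sigma-N(\gamma)$ via the Annulus Theorem and condition (3), cap off the meridian disk using $\pi_1$-injectivity of $\Sigma-\gamma$, and finally push the $3$-ball using asphericity. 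Only then does the $k$th root $\beta=F_0(\partial_2 A_0)$ land in $\Sigma-N(\gamma)$, giving the contradiction with primitivity of $c$ in $\pi_1(\Sigma-\gamma)$. Your ``covering degree'' count skips all of this and does not, as stated, furnish a $k$-th root in the correct group.

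Both gaps are at the crux of the proof; the remainder of your sketch is sound but covers only the routine parts.
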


We have to prove that the conditions (a)-(e) of the definition of double incompressibility hold. We proceed step by step by checking one condition at a time. Note that conditions (a) and (c) both follow immediately from the defining properties of pared acylindrical handlebodies. Hence, we only focus on (b) and (e).

\subsection{Homotopy classes of arcs}
We check condition (b). 

\begin{lem*}
Essential relative homotopy classes of arcs $(I,\partial I)\to (\Sigma-\gamma,N(\gamma))$ map injectively into relative homotopy classes of arcs $(I,\partial I)\to (H_g,U(\gamma))$.
\end{lem*}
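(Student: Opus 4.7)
The plan is to upgrade a relative homotopy in $(H_g, U(\gamma))$ between two essential arcs in $(\Sigma - \gamma, N(\gamma))$ to a relative homotopy in $(\Sigma - \gamma, N(\gamma))$ itself, using acylindricity. Let $\alpha_0, \alpha_1 : (I, \partial I) \to (\Sigma - \gamma, N(\gamma))$ be essential arcs, and suppose $F : (I \times I, \{0,1\} \times I) \to (H_g, U(\gamma))$ is a relative homotopy between them.

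First, I would simplify the side arcs $\beta_i := F|_{\{i\} \times I} \subset U(\gamma)$ so that they lie in $N(\gamma) \subset \Sigma$. The inclusion $N(\gamma) \hookrightarrow U(\gamma)$ is a homotopy equivalence, since both deformation retract onto $\gamma$; hence any path in $U(\gamma)$ with endpoints on $N(\gamma)$ is homotopic rel endpoints to a path in $N(\gamma)$. After this modification $F$ sends $\partial(I \times I)$ into $\Sigma$, while the interior may still roam freely through $H_g$.

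Next, I would close each $\alpha_i$ to a loop $\ell_i := \alpha_i \cdot \rho_i$ by concatenating with a suitable arc $\rho_i \subset \partial N(\gamma)$, slightly pushed into $S := \Sigma - N(\gamma)$. Essentiality of $\alpha_i$ together with the pared condition $\pi_1(\Sigma-\gamma) \hookrightarrow \pi_1(H_g)$ ensures $\ell_i$ is nontrivial in $\pi_1(H_g)$. By varying the $\rho_i$ by powers of the $\partial N(\gamma)$ generators (which in $\pi_1(H_g)$ become powers of $\gamma$), one can arrange that the loop $\rho_1^{-1} \cdot \beta_0^{-1} \cdot \rho_0 \cdot \beta_1 \subset U(\gamma)$, which has class in $\pi_1(U(\gamma)) = \Z$ determined by $F$, becomes trivial in $\pi_1(H_g)$. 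With such choices, the homotopy $F$ assembles into a singular annulus $A : S^1 \times I \to H_g$ between $\ell_0$ and $\ell_1$ with $A(S^1 \times \{0,1\}) \subset S$.

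I would then apply acylindricity: condition (3) of pared acylindrical states that every essential map $(A', \partial A') \to (H_g, \Sigma - N(\gamma))$ is homotopic as a map of pairs into $\Sigma$. Verifying that $A$ is essential (using that the $\ell_i$ are nontrivial and standard annulus-surgery arguments) gives a homotopy of $A$ into $\Sigma$, hence a free homotopy of $\ell_0$ and $\ell_1$ inside $\Sigma$. Finally, transversality with $\gamma$ together with $\pi_1$-injectivity of $\Sigma - \gamma \hookrightarrow H_g$ lets me push this homotopy into $\Sigma - \gamma$; cutting the resulting annulus open along the $\rho_i$ yields the desired relative homotopy of $\alpha_0$ to $\alpha_1$ in $(\Sigma - \gamma, N(\gamma))$.

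The main obstacle I expect is the transition through acylindricity: producing a genuinely $\pi_1$-injective annulus from the homotopy of loops, and then, after invoking condition (3), translating a free homotopy of loops in $\Sigma$ cleanly into a relative homotopy of arcs in $\Sigma - \gamma$ that respects the pair structure. The subcase in which $\alpha_0$ and $\alpha_1$ have endpoints on opposite components of $\partial N(\gamma)$ (forcing the $\beta_i$ and the $\rho_i$ to cross $\gamma$) requires additional care and is where the pared part of the hypothesis most visibly enters.
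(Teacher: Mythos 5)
Your proposal takes a genuinely different route from the paper, and it contains gaps that are not routine to fill.

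The paper's proof does not close the two arcs to two separate loops and does not invoke the annulus/acylindricity conditions. Instead it concatenates the two arcs, together with two short connecting arcs $\xi,\delta\subset \text{int}(N(\gamma))$, into a single loop $\kappa=\xi*\alpha*\delta^{-1}*\beta^{-1}$ which is nullhomotopic in $H_g$ (using that $\alpha,\beta$ are homotopic as maps of pairs). Since $\xi$ and $\delta$ can be chosen to meet $\gamma$ at most once each and $\alpha,\beta$ are disjoint from $\gamma$, the loop $\kappa$ has geometric intersection at most $2$ with $\gamma$. If $\kappa$ is essential in $\Sigma$, the Loop Theorem produces an embedded compressing disk whose boundary lies in $\kappa\cup U$ (with $U$ a tiny neighbourhood of the self-intersections) and hence still meets $\gamma$ at most twice; surgering along $\gamma$ then produces an essential disk or annulus disjoint from $\gamma$, contradicting pared acylindricity. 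If $\kappa$ is inessential in $\Sigma$, the arcs are already in the same relative class. This handles both the ``same side'' and ``opposite side'' endpoint configurations uniformly and goes from nullhomotopy to a contradiction in one step.

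Your approach needs several nontrivial repairs. First, the closing-to-loops step as written fails when $\alpha_i$ has endpoints on opposite components of $\partial N(\gamma)$: you cannot take $\rho_i$ inside a single component of $\partial N(\gamma)$, and if you instead let $\rho_i$ run across $N(\gamma)$ then $\ell_i$ is no longer contained in $\Sigma-N(\gamma)$, so condition (3) of pared acylindricity does not apply; if you instead take $\rho_i\subset\Sigma-N(\gamma)$ connecting the two boundary components, then $\rho_1^{-1}\beta_0^{-1}\rho_0\beta_1$ no longer lies in $U(\gamma)$ and the ``adjust by powers of $\gamma$'' argument is unavailable. Second, even in the easy case, obtaining essentiality of the singular annulus requires an argument: $\ell_i$ being nontrivial gives $\pi_1$-injectivity, but you must also rule out that the annulus is properly homotopic into $\Sigma-N(\gamma)$ already (which, to be fair, would still be useful, but then you are not actually using acylindricity and you still have the third issue). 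Third, and most seriously, passing from a free homotopy of the closed curves $\ell_0\simeq\ell_1$ in $\Sigma$ to a proper homotopy of the arcs $\alpha_0\simeq\alpha_1$ in $(\Sigma-\gamma,N(\gamma))$ is not automatic: a free homotopy of loops carries no information about how the arc and the closing arc are permuted, and cutting the homotopy annulus open along a curve does not in general recover a homotopy of the original arcs, unless you have arranged the homotopy to control a marked point tracing out $\rho_i$. You flag the last two issues yourself, but they are the crux, not loose ends, and the paper's route avoids them entirely.
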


\begin{proof}
For simplicity denote $A:=N(\gamma)$ and $U:=U(\gamma)$. Consider two arcs $\alpha,\beta$ with endpoints in $\text{\rm int}(A)$, each intersecting $\partial A$ transversely in exactly two points. 

Suppose that they are homotopic as maps into $(H_g,A)$. Then, we can find arcs $\xi,\delta$ in $\text{\rm int}(A)$, each joining an endpoint of $\alpha$ and an endpoint of $\beta$, such that the concatenation $\kappa=\xi*\alpha*\delta^{-1}*\beta^{-1}\subset\Sigma$ is nullhomotopic in $H_g$.

Either $\kappa$ is nullhomotopic in $\Sigma$, in which case $\alpha$ and $\beta$ represent the same homotopy class $(I,\partial I)\to (\Sigma,A)$, or $\kappa$ is essential in $\Sigma$.

Suppose we are in the second case. Up to a little perturbation we can assume that $\kappa$ has only transverse self intersections and intersects $\partial A$ exactly in $(\alpha\cap\partial A)\cup(\beta\cap\partial A)$. By the Loop Theorem there is a diskbounding curve $\eta$ in $\kappa\cup U$ where $U$ is a tiny neighborhood of the singular set of transverse self intersections of $\kappa$. Such a curve $\eta$ has geometric intersection at most 2 with $\partial A$ and hence with $\gamma$.

{\bf Claim}. If $i(\eta,\gamma)\le 2$, then $\Sigma-\gamma$ has either an essential disk or an essential annulus.  

In particular, the existence of $\eta$ contradicts the assumption on $(H_g,\gamma)$ being pared acylindrical.

{\em Proof of the claim}. The curve $\eta$ bounds an essential disk $\eta=\partial D^2$ in $H_g$.

If $i(\eta,\gamma)=0$, then $D^2$ is an essential disk disjoint from $\gamma$. 

If $i(\eta,\gamma)=1$, then the boundary of a regular neighborhood of $D^2\cup\gamma$ in $H_g$ is an essential disk disjoint from $\gamma$.

If $i(\eta,\gamma)=2$, then the boundary of a regular neighborhood of $\gamma\cup D^2$ in $H_g$ contains an essential annulus disjoint from $\gamma$.
\end{proof}

\subsection{Maximal abelian subgroups}
We check condition (e). 

\begin{lem*}
Maximal cyclic subgroups of $\pi_1(\Sigma-\gamma)$ are mapped to maximal cyclic subgroups of $\pi_1(H_g)$.
\end{lem*}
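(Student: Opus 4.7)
The plan is to argue by contradiction and produce an essential embedded annulus in $(H_g,\Sigma-N(\gamma))$ whose existence is inconsistent with the acylindricity condition (3). Suppose $\langle c\rangle$ is maximal cyclic in $\pi_1(\Sigma-\gamma)$, but $c=d^n$ in $\pi_1(H_g)=F_g$ for some $n\ge 2$. Since the free group $F_g$ has unique roots, $d$ is uniquely determined and primitive in $F_g$. We aim to show $d$ admits a representative as a loop in $\Sigma-\gamma$, so that $c=d^n$ already holds in $\pi_1(\Sigma-\gamma)$, contradicting the maximality of $\langle c\rangle$.

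First, construct a singular annulus. Represent $c$ by an immersed loop $\alpha\subset\Sigma-N(\gamma)$. Using surjectivity of $\pi_1(\Sigma)\twoheadrightarrow\pi_1(H_g)$ (which holds because $\Sigma=\partial H_g$), represent $d$ by a loop $\beta\subset\Sigma$. The free homotopy $\alpha\simeq\beta^n$ in $H_g$ packages as a singular annulus $f\colon(A,\partial A)\to(H_g,\Sigma)$, which is essential: the induced map $f_*\colon\pi_1(A)=\mb{Z}\to F_g$ sends the generator to $c=d^n\ne 1$ in the torsion-free group $F_g$, and both boundary loops are essential on $\Sigma$.

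Second, apply the Annulus Theorem of Jaco--Shalen and Scott to extract an essential embedded annulus $A'\hookrightarrow H_g$ with $\partial A'\subset\Sigma$. Using the characteristic Seifert-fibered submanifold of $(H_g,N(\gamma))$, refine the choice so that the core of $A'$ represents a conjugate of $d$ in $\pi_1(H_g)$; this encodes the fact that the singular annulus lives in the Seifert piece of the JSJ decomposition associated to the axis of $d$. If either boundary component of $A'$ lies in $N(\gamma)$ it is a power of $\gamma$, and we push it onto a parallel copy of $\gamma$ in $\partial N(\gamma)\subset\Sigma-N(\gamma)$; essentiality is preserved because $\gamma$ is itself primitive in $\pi_1(H_g)$, an independent consequence of pared acylindricity applied to $\gamma$ alone. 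Now acylindricity condition (3) forces $A'$ to be homotopic, as a map of pairs, to a map $(A',\partial A')\to(\Sigma,\Sigma-N(\gamma))$, whose image is necessarily an annular neighborhood of a simple closed curve $\delta\subset\Sigma-\gamma$. Hence each boundary of $A'$ represents $\delta$ in $\pi_1(\Sigma-\gamma)$, while also representing a conjugate of $d$ in $\pi_1(H_g)$; combining, $d$ is conjugate in $\pi_1(H_g)$ to $\delta\in\pi_1(\Sigma-\gamma)$, so $c$ is conjugate in $\pi_1(H_g)$ to $\delta^n$. A further application of condition (3), to the annulus realizing this conjugation between the two elements $c,\delta^n\in\pi_1(\Sigma-\gamma)$, upgrades the conjugation to one inside $\pi_1(\Sigma-\gamma)$. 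By maximality of $\langle c\rangle$ this forces $\langle c\rangle=\langle\delta\rangle$, so $c=\delta^{\pm 1}$ in $\pi_1(\Sigma-\gamma)$, and the relation $c=\delta^n$ yields $n=1$, the desired contradiction.

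The main obstacle is the refinement of the Annulus Theorem: the plain theorem provides some essential embedded annulus, but gives no a priori control on $\pi_1(A')$. Ensuring that the core of $A'$ realizes the primitive root $d$ (rather than an unrelated primitive element of $F_g$) requires either the characteristic submanifold theory of Jaco--Shalen, which organizes essential annuli into Seifert pieces indexed by maximal cyclic subgroups of $\pi_1$, or a direct Papakyriakopoulos--Stallings tower argument that preserves the $\pi_1$-image through the simplification steps. Without this refinement one might be left with an embedded annulus carrying an irrelevant $\pi_1$-class, which would fail to contradict the maximality of $\langle c\rangle$.
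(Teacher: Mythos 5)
Your argument hinges on the Annulus Theorem applied to the pair $(H_g,\Sigma)$, and this is where it breaks. The standard forms of the Annulus Theorem (Jaco--Shalen, Johannson, Scott, Cannon--Feustel) all require the boundary subsurface carrying the ends of the annulus to be \emph{incompressible}; here $\Sigma=\partial H_g$ is compressible. This is not a technicality: a handlebody admits essential \emph{singular} annuli but contains no essential \emph{embedded} annulus at all (any incompressible annulus in a handlebody is $\partial$-compressible or $\partial$-parallel, as the paper itself records in the proof of Lemma~\ref{lem:d>=3impliesparedacyl}), so the theorem as you invoke it is simply false in this setting. The only valid application of the Annulus Theorem in this context is to the pair $(H_g,\Sigma-N(\gamma))$, where $\Sigma-N(\gamma)$ is incompressible --- but your singular annulus $f\colon(A,\partial A)\to(H_g,\Sigma)$ has one end at $\beta^n$, and $\beta$ is, almost by construction, \emph{not} homotopic into $\Sigma-N(\gamma)$ (if it were, $c=d^n$ would already hold in $\pi_1(\Sigma-\gamma)$). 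So you cannot reposition the annulus to make the theorem applicable. The further difficulty you flag --- controlling the $\pi_1$-class carried by the core of the extracted annulus --- is a real concern, and your appeal to characteristic submanifold theory is dubious for the same reason: that theory is built for Haken (hence $\partial$-irreducible) manifolds, and adapting it to compressible boundary (Bonahon, McCullough--Miller) requires first compressing $\partial H_g$, which would destroy the data you need. Finally, even setting these aside, the step where you push a boundary component of $A'$ off $N(\gamma)$ assumes $\partial A'$ is either contained in $N(\gamma)$ or disjoint from it, whereas the Annulus Theorem gives no such position control.

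The paper's proof (following Canary--McCullough, Lemma~5.1.1) avoids all of this by a different device: the singular annulus is factored through the quotient $A_0$ (identifying $\partial_2 A$ by a $2\pi/k$-rotation), which sits inside a solid torus $\mb{T}$ that deformation retracts onto $A_0$. The map is then extended to $F_0\colon\mb{T}\to H_g$, and one pushes the \emph{entire solid torus} into $\Sigma-N(\gamma)$ in stages: the annulus $V\subset\partial\mb{T}$ has \emph{both} its boundary circles in $\Sigma-N(\gamma)$, so the Annulus Theorem does apply to $(H_g,\Sigma-N(\gamma))$ and acylindricity forces $F_0|_V$ into $\Sigma-N(\gamma)$; the meridian disk and the remaining $3$-ball then follow by $\pi_1$-injectivity of $\Sigma-\gamma$ and asphericity of $H_g$. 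Once $F_0(\mb{T})\subset\Sigma-N(\gamma)$, the relation $\alpha\simeq F_0(\partial_2 A_0)^k$ holds inside $\Sigma-\gamma$, contradicting primitivity directly. The solid-torus extension is exactly what compensates for the unavailability of an Annulus Theorem in the compressible-boundary setting, and it is the ingredient your proposal is missing.
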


\begin{proof}
We need to check that every primitive element of $\pi_1(\Sigma-\gamma)$ is also primitive in $\pi_1(H_g)$. We proceed as in Canary-McCullogh (see Lemma 5.1.1 in~\cite{CMcC}). Suppose this is not the case, then there exists an essential map $f:A=S^1\times I\to H_g$ such that $f(\partial_1A)=\alpha$, a loop representing a primitive element in $\pi_1(\Sigma-\gamma)$, and $f(\partial_2A)=\beta^k$ for some $k\ge 2$ and $\beta\not\in\pi_1(\Sigma-\gamma)$. 

The map $f:A\to H_g$ factors through $f_0:A_0\to H_g$ where $A_0$ is the quotient space obtained by identifying points on $\partial_2A$ that differ by a $2\pi/k$-rotation. We have $f_0(\partial_1A_0)=\alpha$ and $f_0(\partial_2A_0)=\beta$. Notice that $A_0$ embeds in a solid torus $\mb{T}=D^2\times S^1$ in such a way that $\partial_1A_0$ is a simple closed curve on $T:=\partial\mb{T}$ and $\partial_2A_0$ is the core curve $0\times S^1$ and moreover $\mb{T}$ deformation retracts to $A_0$. By the last property we can extend $f_0$ to a map $F_0:\mb{T}\to H_g$. 

We show that $F_0$ can be homotoped relative to $\partial_1A_0$ such that $F_0(\mb{T})\subset\Sigma-N(\gamma)$. 
This implies that $\alpha=F_0(\partial_1A_0)$ is homotopic in $\Sigma-\gamma$ to $F_0(\partial_2A_0)^k$ and, hence, it could not have been primitive.

The boundary $T=\partial\mb{T}$ is divided into two annuli $T=U\cup V$: A tubular neighborhood $U$ of $\partial_1A_0$ and the complement $V$. Up to a small homotopy we can assume $F_0(U)\subset\Sigma-N(\gamma)$. Consider the restriction of $F_0$ to the annulus $V$. We claim that we can homotope it into $\Sigma-N(\gamma)$. In fact, if this were not the case, then by the Annulus Theorem we would find an essential embedded annulus $(A,\partial A)\subset (H_g,\Sigma-N(\gamma))$ contradicting the fact that $(H_g,\gamma)$ is pared acylindrical. Therefore we can homotope $F_0$ relative to $U$ in such a way that $F_0(T)\subset\Sigma-N(\gamma)$.

We finally show that we can homotope $F_0$ such that $F_0(\mb{T})\subset\Sigma-\gamma$. The meridian $\mu=\partial D^2\times\{\star\}$ of the solid torus $\mb{T}$ is now mapped to a loop in $\Sigma-N(\gamma)$ which is nullhomotopic in $H_g$. Since $\Sigma-\gamma$ is $\pi_1$-injective, the loop $F_0(\mu)$ is also trivial in $\Sigma-\gamma$. As $H_g$ is aspherical, we can homotope the restriction of $F_0$ to $D^2\times\{\star\}$ to a nullhomotopy that takes place in $\Sigma-N(\gamma)$. Finally, as the the complement of $T\cup D^2\times\{\star\}$ is a 3-ball $B$, using again the fact that $H_g$ is aspherical we can homotope $F_0$ restricted to $B$ such that the image of the entire solid torus $\mb{T}$ lies in $\Sigma-N(\gamma)$.        
\end{proof}

\section{Isotopies of Margulis tubes}
\label{appendix}

We prove the following elementary lemma:

\begin{lemma}[4.10]
There exists $\eta<\eta_3/2$ such that the following holds. Let $\mb{T}_{\eta_3}$ be a Margulis tube with core geodesic $\alpha$ of length $l(\alpha)\le\eta$. Suppose that there exists a $2$-bilipschitz embedding of the tube in a hyperbolic 3-manifold $f:\mb{T}_{\eta_3}\to M$. Then $f(\alpha)$ is homotopically non-trivial and it is isotopic to its geodesic representative within $f(\mb{T}_{\eta_3})$.
\end{lemma}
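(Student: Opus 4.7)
Let $\pi:\mathbb{H}^3\to M$ be the universal cover and fix a component $\widetilde{\mb T}\subset\pi^{-1}(\mb T_{\eta_M}(\alpha))$: it is the $R$-neighborhood of a geodesic line $\tilde\alpha\subset\mathbb{H}^3$, with $R=R(l(\alpha))$ varying in a compact interval depending only on $\eta,\eta_M$ (since $l(\alpha)\in[\eta,\eta_M/2]$). Let $\gamma$ generate $\mathrm{Stab}(\widetilde{\mb T})$, translating $\tilde\alpha$ by $l(\alpha)$. The lift $\tilde f:\widetilde{\mb T}\to\mathbb{H}^3$ is equivariant for some $g\in\pi_1(M)\subset\mathrm{Isom}^+(\mathbb{H}^3)$: $\tilde f(\gamma y)=g\tilde f(y)$.

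\textbf{Step 1: non-triviality and loxodromicity.} I would establish, by a compactness argument, that for some $\xi_0=\xi_0(\eta)>0$ and every $\xi<\xi_0$, the element $g$ is loxodromic (so in particular $f(\alpha)$ is non-trivial and, by torsion-freeness of $\pi_1(M)$, $\tilde f$ is injective). If $g$ were trivial or parabolic, $\tilde f$ would descend to a $(1+\xi)$-bilipschitz embedding $\hat f:\mb T_{\eta_M}(\alpha)\to N:=\mathbb{H}^3/\langle g\rangle$, and crucially $N$ contains no closed geodesic, since $\pi_1(N)$ is either trivial or rank-one parabolic. Given a sequence of counterexamples with $\xi_n\to 0$ and $l(\alpha_n)\to l_\infty\in[\eta,\eta_M/2]$, with an appropriate pointed normalization of basepoints $q_n\in \hat f_n(\alpha_n)$ (so that the injectivity radius at $q_n$ in $N_n$ is uniformly bounded below), the pointed manifolds $(N_n,q_n)$ converge geometrically to a pointed hyperbolic $3$-manifold $(N_\infty,q_\infty)$ that is either $\mathbb{H}^3$ itself (when the parabolic displacement collapses) or a rank-one parabolic quotient; in either case $N_\infty$ contains no closed geodesic. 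The $\hat f_n$ then subconverge uniformly on compact sets to a $1$-bilipschitz, hence metric-isometric, embedding $\hat f_\infty:\mb T_\infty\to N_\infty$; its restriction to the core circle $\alpha_\infty$ of length $l_\infty>0$ is locally length-preserving, so $\hat f_\infty(\alpha_\infty)$ is a closed geodesic in $N_\infty$---a contradiction.

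\textbf{Step 2: axis inclusion and isotopy.} Let $A_g\subset\mathbb{H}^3$ be the axis of the loxodromic $g$ and $\alpha^*:=\pi(A_g)\subset M$ its closed geodesic image. A second stability argument---applied directly to the injective lift $\tilde f$---shows that for $\xi$ small, $\tilde f(\widetilde{\mb T})$ is uniformly close in $\mathbb{H}^3$ to a standard isometrically embedded $R$-tube around some geodesic line $\tilde\alpha'$. The $g$-invariance of $\tilde f(\widetilde{\mb T})$ forces $\tilde\alpha'$ to be approximately $g$-invariant, hence uniformly close to the unique $g$-invariant geodesic $A_g$. Consequently $\tilde f(\widetilde{\mb T})$ actually contains a genuine tube of uniform positive radius around $A_g$ for $\xi<\xi_0(\eta)$; in particular $A_g\subset\tilde f(\widetilde{\mb T})$ and $\alpha^*\subset f(\mb T_{\eta_M}(\alpha))$. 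The desired isotopy is then obtained by moving each point of $\tilde f(\tilde\alpha)$ along the $\mathbb{H}^3$-geodesic segment to its nearest-point projection on $A_g$: this straight-line homotopy is $g$-equivariant (as $A_g$ is $g$-invariant and the projection is canonical) and, for $\xi$ small, is supported inside $\tilde f(\widetilde{\mb T})$, whence it descends to an ambient isotopy in $f(\mb T_{\eta_M}(\alpha))$ carrying $f(\alpha)$ to $\alpha^*$. The main technical delicacy I foresee is the pointed convergence in Step 1, especially when the parabolic displacement at $q_n$ degenerates forcing $N_\infty=\mathbb{H}^3$; careful basepoint selection and normalization by isometries of $\mathbb{H}^3$ should resolve this, but it requires attention.
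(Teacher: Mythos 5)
Your proposal takes a genuinely different route from the paper's. The paper proves Step~1 directly and quantitatively: it observes that for each subsegment $[p,q]\subset l$ of length $\le\eta$ (where $l$ is the lift of $\alpha$), the image $F[p,q]$ lies in an $O(\xi)$-neighbourhood of the geodesic $[F(p),F(q)]$, and then invokes the local-to-global principle for quasi-geodesics in $\mathbb{H}^3$ to conclude that $F|_l$ is a global quasi-geodesic. This immediately gives loxodromicity of $g$ and, at the same time, that $f(\alpha)$ is freely homotopic to its geodesic representative $\beta$ inside an $O(\xi)$-neighbourhood $N_\ep(\beta)$, which is the input the paper needs for its Step~2. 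Your compactness argument for Step~1 is plausible (and the degenerate cases you worry about are manageable since the injectivity radius at the image of $\alpha$ is bounded below in terms of $\eta$), but it buys less for the price: it gives non-triviality and loxodromicity, but not the precise localization of $f(\alpha)$ near $\beta$ that is then fed into the second step.

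The genuine gap is in your Step~2. You assert that the radial (Fermi-coordinate) straight-line homotopy pushing $\tilde f(\tilde\alpha)$ onto $A_g$ is an isotopy, but at time $s<1$ this map is injective only because multiplying the radial coordinate by $(1-s)>0$ is invertible; at $s=1$ the radial coordinate vanishes and injectivity is exactly the statement that the longitudinal Fermi coordinate $l(t)$ of $\tilde f(\tilde\alpha(t))$ is strictly monotone. This is not established: a $(1+\xi)$-bilipschitz image of a geodesic that is Hausdorff-$\delta$-close to $A_g$ can still backtrack in the longitudinal direction at scale $O(\delta)$, so $\pi_{A_g}\circ\tilde f|_{\tilde\alpha}$ need not be injective, and the ``isotopy'' may fail precisely at its endpoint. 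The subsequent jump from a curve isotopy to an ambient isotopy also requires the isotopy extension theorem, which you do not invoke. The paper sidesteps all of this with a clean 3-manifold topology argument: after Step~1, it places a small tube $N_r(\beta)$ with $r>\ep$ inside the solid torus $f(\mb{T}_{\eta_M}(\alpha))\smallsetminus f(N_\delta(\alpha))\cong T_\alpha\times[0,1]$, shows the boundary torus $T_\beta=\partial N_r(\beta)$ is incompressible there (the only candidate compressing curve is the meridian of $\beta$, and compressing it would yield a sphere meeting $\beta$ once, impossible in a solid torus), concludes by standard 3-manifold topology that $T_\beta$ is boundary-parallel, and hence that $\beta$ is a core curve of $f(\mb{T}_{\eta_M}(\alpha))$, i.e., isotopic to $f(\alpha)$. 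That route is insensitive to the small-scale wiggling that defeats the naive straight-line homotopy, and I would suggest adopting it.
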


\begin{proof}
We proceed by small steps.

Let $T_\alpha:=\partial\mb{T}_{\eta_3}$ be the boundary of the Margulis tube.  

{\bf Claim}. We have $d(f(\mb{T}_\eta),f(T_\alpha))\ge (c\log(\eta_3/\eta)-c)/2$ for some universal constant $c>0$.

{\em Proof of the claim}. By Brooks-Matelski \cite{BM82}, the analogous claim is true in $\mb{T}_{\eta_3}$, that is $d(\mb{T}_\eta,T_\alpha)\ge c\log(\eta_3/\eta)-c$ for some universal constant $c>0$. If $\gamma$ is a path of minimal length joining $f(\mb{T}_\eta)$ to $f(T_\alpha)$ in $M$ then it must be contained in $f(\mb{T}_{\eta_3})$ because $f(T_\alpha)$ separates $M$. Thus $\ell(\gamma)\ge\ell(f^{-1}\gamma)/2$, as $f$ is $2$-bilipschitz. The claim follows.\qed 

We use the fact that $f(\mb{T}_\eta)$ lies deeply inside $f(\mb{T}_{\eta_3})$ to control how $f$ can distort distances:  

{\bf Claim}. For every (large) $L>0$, there exists an $\eta>0$ such that the following holds. Consider a geodesic arc $\gamma$ contained in $\mb{T}_\eta$ with length $\ell(\gamma)\le L$. Let $\kappa$ be the geodesic representative of $f(\gamma)$ in $M$ (i.e.~the unique geodesic homotopic to $f\circ\gamma$ relative endpoints). Then $\kappa$ is contained in $f(\mb{T}_{\eta_3})$ and homotopic within it to $f(\gamma)$. Furthermore, we have $\ell(\gamma)/2\le\ell(\kappa)\le 2\ell(\gamma)$.

Before proving the claim, notice that $\mb{T}_\eta$ is convex, so it contains all geodesics between any two of its points. 

{\em Proof of the claim}. Observe that $\kappa$ must be contained in $f(\mb{T}_{\eta_3})$ for sufficiently small $\eta$: In fact we have $\ell(\kappa)\le 2\ell(\gamma)\le 2L$ and if $\kappa$ leaves $f(\mb{T}_{\eta_3})$ then its length is at least $(c\log(\eta_3/\eta)-c)/2$ (and we can suppose that $\eta$ is small enough so that $(c\log(\eta_3/\eta)-c)/2>2L$). Also notice that the homotopy between $f\circ\gamma$ and $\kappa$ can be chosen to be length non-increasing so that the whole homotopy is contained in $f(\mb{T}_{\eta_3})$. As a consequence we can consider $f^{-1}\circ\kappa$ which is a path in $\mb{T}_{\eta_3}$ homotopic relative to the endpoints to $\gamma$. In particular $2\ell(\kappa)\ge\ell(f^{-1}\kappa)\ge\ell(\gamma)$. \qed


The universal cover of $\mb{T}_{\eta_3}(\alpha)$ is a $a$-neighborhood $N_a(\ell)$ of a geodesic $\ell\subset\mb{H}^3$. Denote by $F:N_a(\ell)\to\mb{H}^3$ the lift of $f$ to the universal coverings. Consider the restriction of $F$ to $\ell$. We can subdivide $\ell$ into segments of length $L/2$. By the claim, the restriction of $F$ to each of these segments is a $2$-bilipschitz embedding in the metric sense.

{\bf Claim}. The restriction of $F$ to $\ell$ is a uniform quasi geodesic (with universal constants).

{\em Proof of the claim}. This follows from the local to global property of quasi geodesics in hyperbolic spaces.\qed

As a consequence, $f(\alpha)$ is not nullhomotopic and, by stability of quasi-geodesics, it lies $r$-close (where $r$ is a universal constant) to its geodesic representative $\beta$ in $M$ (a very short geodesic of length $\ell(\beta)\le 2\ell(\alpha)<2\eta$). If $\eta$ is chosen small enough, then $\beta$ is contained in $f(\mb{T}_{\eta_3})$ and $f(\alpha)\subset N_r(\beta)\subset f(\mb{T}_{\eta_3})$ where $N_r(\beta)$ is the $r$-neighborhood of $\beta$ in $M$.

We want to show that $f(\alpha)$ is actually isotopic to $\beta$. 

Notice, however, that, a priori, $\beta$ can be non-primitive, so we will consider instead $\beta=\gamma^k$ with $\gamma$ primitive. The simple closed geodesic $\gamma$ is the core of the Margulis tube.

The proof can now be concluded using topological tools.

Up to a very small isotopy we can assume that $f(\alpha)$ is disjoint from $\gamma$ and still contained in $N_r(\gamma)=N_r(\beta)$. For safety, we assume that an entire metric tubular neighborhood of $f(\alpha)$ of the form $f(N_\delta(\alpha))$ for some tiny $\delta$ is disjoint from $\gamma$ and contained in $N_r(\gamma)$.


Denote by $T_\gamma=\partial N_r(\gamma)$ the boundary of the metric neighborhood of $\gamma$ and observe that $T_\gamma\subset f(\mb{T}_{\eta_3})-f(N_\delta(\alpha))$. The complementary region $f(\mb{T}_{\eta_3})-f(N_\delta(\alpha))$ is diffeomorphic to $T_\alpha\times[0,1]$.

{\bf Claim}. $T_\gamma$ is incompressible in $T_\alpha\times[0,1]$. 

{\em Proof of the claim}. In fact, the only possible compressible curve on $T_\gamma$ is the boundary $\partial D_\gamma$ of the compressing disk $D_\gamma$ of the tubular neighborhood of $N_r(\gamma)$. Every other simple closed curve is homotopic in $f(\mb{T}_{\eta_3})$ to a multiple of $\gamma$ and hence it is not trivial (recall $\gamma^k\simeq f(\alpha)\neq0$). However, the curve $\partial D_\gamma$ cannot be compressible in $T_\alpha\times[0,1]$ otherwise it would bound a disk $D'_\gamma$ with interior disjoint from $D_\gamma$ and together they would give a 2-sphere $S^2\cong D_\gamma\cup D'_\gamma$ intersecting once $\gamma$. Such a sphere is homologically non trivial in $f(\mb{T}_{\eta_3})$, but a solid torus does not contain such an object.\qed 

By standard 3-dimensional topology (see \cite[Proposition~3.1 and Corollary~3.2]{W69}), incompressibility implies that $T_\gamma$ is parallel to $T_\alpha\times\{1\}=f(T_\alpha)$. Therefore, $\gamma$ is the core curve $\gamma\cong 0\times S^1$ for another product structure $f(\mb{T}_{\eta_3})\cong D^2\times S^1$ or, in other words, there exists an orientation preserving self diffeomorphism of $f(\mb{T}_{\eta_3})$ that sends $f(\alpha)$ to $\gamma$. Such a diffeomorphism is isotopic to a power of the Dehn twist along the meridian disk of the solid torus, hence it does not change the isotopy class of the core curve. Thus $f(\alpha)$ is isotopic to $\gamma$ and $\gamma=\beta$.

This concludes the proof.   
\end{proof}

\bibliographystyle{alpha}
\bibliography{bibliography}

\end{document}